\theoremstyle{definition}
\newtheorem{thm}{Theorem}[section]
\newtheorem{dfn}[thm]{Definition}
\newtheorem{prp}[thm]{Proposition}
\newtheorem{lem}[thm]{Lemma}
\newtheorem{cor}[thm]{Corollary}
\newtheorem{rmk}[thm]{Remark}
\newtheorem{ntt}[thm]{Notation}
\newtheorem*{thm*}{Theorem}
\newtheorem*{dfn*}{Definition}
\newtheorem*{cor*}{Corollary}
\newtheorem*{prp*}{Proposition}
\newtheorem*{prb*}{Problem}
\newcommand{\Ga}{\Gamma}
\newcommand{\ga}{\gamma}
\newcommand{\De}{\Delta}
\newcommand{\inn}{\in\mathbb{N}}
\newcommand{\al}{\alpha}
\newcommand{\de}{\delta}
\newcommand{\la}{\lambda}
\newcommand{\e}{\varepsilon}
\newcommand{\N}{\mathbb{N}}
\newcommand{\R}{\mathbb{R}}
\newcommand{\adxk}{\al((x_k)_k)}
\newcommand{\adyk}{\al((y_k)_k)}
\newcommand{\adyq}{\al((y_q)_q)}
\newcommand{\adzk}{\al((z_k)_k)}
\newcommand{\aduk}{\al((w_k)_k)}
\newcommand{\ac}{$\al_c$}
\newcommand{\ic}{$\mathcal{IC}$}
\newcommand{\co}{$\mathcal{CO}$}
\newcommand{\ir}{$\mathcal{IR}$}
\newcommand{\Xbd}{\mathfrak{X}_{(\Ga_q, i_q)_q}}
\newcommand{\Xq}{\mathfrak{X}_{(\Ga_{q_s}', i_{q_s}')_s}}
\newcommand{\BmT}{\mathfrak{B}_{\mathrm{mT}}}
\newcommand{\X}{\mathfrak{X}_{\mathfrak{nr}}}
\newcommand{\Xstar}{\mathfrak{X}^*_{\mathfrak{nr}}}
\DeclareMathOperator{\supp}{supp}
\DeclareMathOperator{\ran}{ran}
\DeclareMathOperator{\ra}{rank}
\DeclareMathOperator{\ag}{age}
\DeclareMathOperator{\we}{weight}
\long\def\symbolfootnote[#1]#2{\begingroup
\def\thefootnote{\fnsymbol{footnote}}\footnote[#1]{#2}\endgroup}
\begin{document}

\title[Scalar-plus-compact property without reflexive subspaces]{The scalar-plus-compact property in spaces without reflexive subspaces}

\author[S. A. Argyros]{Spiros A. Argyros}
\address{National Technical University of Athens, Faculty of Applied Sciences,
Department of Mathematics, Zografou Campus, 157 80, Athens, Greece}
\email{sargyros@math.ntua.gr}

\author[P. Motakis]{Pavlos Motakis}
\address{Department of Mathematics, Texas A\&M University, College Station, TX 77843-3368, U.S.A.}
\email{pavlos@math.tamu.edu}


\thanks{{\em 2010 Mathematics Subject Classification:} Primary 46B03, 46B06, 46B25, 46B45.}
\thanks{The second author's research was supported by NSF DMS-1600600.}

\begin{abstract}
A hereditarily indecomposable Banach space $\X$ is constructed that is the first known example of a $\mathscr{L}_\infty$-space not containing $c_0$, $\ell_1$, or reflexive subspaces and answers a question posed by J. Bourgain. Moreover, the space $\X$ satisfies the ``scalar-plus-compact'' property and it is the first known space without reflexive subspaces having this property. It is constructed using the Bourgain-Delbaen method in combination with a recent version of saturation under constraints in a mixed-Tsirelson setting. As a result, the space $\X$ has a shrinking finite dimensional decomposition and does not contain a boundedly complete sequence.
\end{abstract}

\maketitle


\section*{Introduction}

The class of $\mathscr{L}_\infty$ hereditarily indecomposable (HI) spaces is perhaps the most interesting class of non-classical Banach spaces. This happens since in such a space $\mathfrak{X}$ conditional and unconditional structures strongly coexist. More precisely, in   $\mathfrak{X}$ there is no unconditional basic sequence and on the other hand $\mathfrak{X} = \overline{\cup_nF_n}$ with $(F_n)_n$ an increasing sequence of finite dimensional subspaces, each one $C$-isomorphic to $\ell_\infty^{\dim F_n}$. The latter yields that  $\mathfrak{X}$ admits Gordon-Lewis LUST \cite{GL}. It is an important open problem whether there exists a reflexive HI space with LUST. As a consequence of the above described peculiar structure, the scalar-plus-compact property is satisfied by several $\mathscr{L}_\infty$-spaces (e.g. \cite{AH},\cite{7a}). In this paper we present a new $\mathscr{L}_\infty$ HI space denoted $\X$. This is the first example of a $\mathscr{L}_\infty$ HI space not containing a reflexive subspace.
Moreover, every $T\in\mathcal{L}(\X)$ is of the form $\la I + K$ with $K$ a compact operator, and thus, this is the first example of a space without reflexive subspaces satisfying the scalar-plus-compact property.

In 1981 J. Bourgain \cite[Problem 4, page 46]{B} suggested   the class of $\mathscr{L}_\infty$-spaces as a possible subclass of Banach spaces where the problem ``$\ell_1$, $c_0$, or reflexive subspace''   could have a positive answer. This would be in line with a multitude of results hinting that $\mathscr{L}_\infty$-spaces exhibit highly canonical structure. For example, such spaces have the aforementioned Gordon-Lewis LUST and the dual of a separable $\mathscr{L}_\infty$-space is either isomorphic to $\ell_1$ or isomorphic to $\mathcal{M}[0,1]$ \cite{St}. Furthermore, as it was proved by H. P. Rosenthal in \cite{R}, whenever a $\mathscr{L}_\infty$-space embeds in a space with an unconditional basis then it is necessarily isomorphic to $c_0$. It follows from the work of D. Lewis-C. Stegall \cite{LS},\cite{St} and A. Pe\l czy\' nski \cite{P} that if the dual of a separable $\mathscr{L}_\infty$-space $\mathfrak{X}$ is non-separable, then $\ell_1$ is isomorphic to a subspace of $\mathfrak{X}$. If in addition $\mathfrak{X}^*$ is separable (i.e. $\mathfrak{X}^*\simeq\ell_1$) and $\mathfrak{X}$ does not contain a reflexive subspace, then $c_0$ appears as a strong candidate to be a subspace of $\mathfrak{X}$.

The aforementioned problem, in the general setting, was answered in 1994 by W. T. Gowers. More precisely, in  \cite{G},  the Gowers Tree space is presented, the first example of a Banach space not containing $\ell_1$, $c_0$, or a reflexive subspace. A systematic study of this type of spaces has appeared in \cite{AAT}. Gowers Tree space  and the spaces in \cite{AAT} satisfy a stronger property, namely every subspace has non-separable dual. Actually, in every subspace there exists a tree-basis similar to the basis of the classical James Tree space \cite{J}. This is an obstacle to any attempt to combine Gowers' norming set with the Bourgain-Delbaen techniques to obtain a $\mathscr{L}_\infty$ HI space with no reflexive subspace. Indeed, Gowers' norming set would enforce the dual of the space to be non-separable and since the space is $\mathscr{L}_\infty$, as we have mentioned before, the space $\ell_1$ would be isomorphic to a subspace of the space.

Motivated by the above, we recently introduced a new method of defining norming sets that, among others, yields HI Banach spaces with separable dual containing no reflexive subspaces. A Tsirelson version of this method and its consequences in a classical setting appeared in \cite{AM2}. It is worth pointing out that the new method leads to a unified approach for constructing HI spaces that are either reflexive or do not contain a reflexive subspace. Moreover, this is rather simpler than the initial method for constructing Gowers Tree spaces \cite{G},\cite{AAT}.

All known non-classical separable $\mathscr{L}_\infty$-spaces are Bourgain-Delbaen $\mathscr{L}_\infty$-spaces (BD-$\mathscr{L}_\infty$-spaces). This class of spaces was introduced by J. Bourgain and F. Delbaen \cite{BD} and they are defined as follows. A BD-$\mathscr{L}_\infty$-space is a subspace $\mathfrak{X}$ of $\ell_\infty(\Gamma)$, with $\Gamma$ a countable set. It is determined by a sequence $(\Ga_q,i_q)_q$ where $(\Ga_q)_q$ is an increasing sequence of finite subsets of $\Ga$ with $\cup_q\Ga_q = \Ga$ and $i_q:\ell_\infty(\Ga_q)\to\ell_\infty(\Ga)$, $q\in\N$ are uniformly bounded extension operators (i.e. $i_q(x)|_{\Ga_q} = x$) that are in addition compatible. This last property means that for $q<p$ and $x\in\ell_\infty(\Ga_q)$ we have $i_q(x) = i_p(i_q(x)|_{\Ga_p})$. For $q\in\N$ we set $\De_q = \Ga_q\setminus\Ga_{q-1}$ and for $\ga\in\De_q$ we set $d_\ga = i_q(e_\ga)$. Then, the BD-$\mathscr{L}_\infty$-space is defined to be $\mathfrak{X} = \overline{\langle\{d_\ga: \ga\in\Ga\}\rangle}$, as a
subspace of $\ell_\infty(\Ga)$. The sequence $(d_\ga)_{\ga\in\Ga}$ forms a Schauder basis for $\mathfrak{X}$, however it is usually more convenient to consider the finite dimensional decomposition (FDD) $(M_q)_q$ with $M_q = \langle\{d_\ga:\ga\in \De_q\}\rangle$. For an interval $E$ of $\N$, $P_E$ denotes the natural projection onto $E$ associated to the FDD $(M_q)_q$. As we mentioned above, this class of $\mathscr{L}_\infty$-spaces appeared for the first time in \cite{BD} as a specific class of $\mathscr{L}_\infty$-spaces. Recently, in \cite{AGM} it was shown that every separable $\mathscr{L}_\infty$-space is isomorphic to a BD-$\mathscr{L}_\infty$-space. Thus, BD-$\mathscr{L}_\infty$-spaces are the generic ones.

A second component in the Bourgain-Delbaen invention, which is equally important to the definition of  the spaces,  concerns the method of constructing the sequence $(i_q)_q$. It is defined inductively in a way to preserve the uniform bound of the the norms of the $i_q$'s.  Moreover, analyzing the initial spaces defined in \cite{BD} one can observe that the saturation of the structure is an inevitable ingredient. This is more transparent in the alternative definition of $\ell_p$-saturated $\mathscr{L}_\infty$-spaces in \cite{GPZ}. This explains why it is possible to combine BD-$\mathscr{L}_\infty$ structure with saturated norms resulting in $\mathscr{L}_\infty$ HI spaces. The relation of BD-$\mathscr{L}_\infty$-spaces with saturated norms was established for the first time by R. Haydon in \cite{H}.

Let us pass to the description of some features of the space $\X$. As we have already mentioned, we will use a new method of defining HI spaces, which we will combine with the Bourgain-Delbaen techniques in order to obtain a $\mathscr{L}_\infty$ HI space without reflexive subspaces. The new method requires a preexisting space that in the classical setting would be either the Tsirelson space \cite{T} or a mixed-Tsirelson space $T[(\mathscr{A}_{n_j},m_j^{-1})_j]$. The norming sets of the new spaces are defined to be subsets of the corresponding ones of the initial spaces. A typical and known example is Schlumprecht space $S[(\mathscr{A}_n,1/\sqrt{\log(n+1)})_n]$ \cite{S} that serves as a preexisting reflexive space with an unconditional basis for Gowers-Maurey space \cite{GM}. Furthermore, in \cite{AM2} the norming set $W$ is a subset of $W_T$, the norming set of Tsirelson space.

Attempting to adapt the above scheme to a $\mathscr{L}_\infty$ setting, we have to use an initial $\mathscr{L}_\infty$-space in which the obvious norming set is the set $\{e_\ga^*: \ga\in\Ga\}$. We then have to carefully select a subset of $\Ga$ that will define the space $\X$. This is the motivation behind introducing the self-determined subsets of a set $\Ga$, which are defined as follows.

\begin{dfn*}
Let $\mathfrak{X}$ be a BD-$\mathscr{L}_\infty$-subspace of $\ell_\infty(\Ga)$. A subset $\Ga'$ of $\Ga$ is self-determined if $\langle\{d_\ga^*: \ga\in\Ga'\}\rangle = \langle\{e_\ga^*: \ga\in\Ga'\}\rangle$, where $(d_\ga^*)_{\ga\in\Ga}$ denotes the sequence biorthogonal to the basis $(d_\ga)_{\ga\in\Ga}$ and for $\ga\in\Ga$, $e_\ga^*$ denotes the element $e_\ga$ of $\ell_1(\Ga)$ restricted on $\mathfrak{X}$.
\end{dfn*}
 The following holds.

\begin{prp*}
Let $\mathfrak{X}$ be a BD-$\mathscr{L}_\infty$-subspace of $\ell_\infty(\Ga)$ and $\Ga'$ be a self-determined subset of $\Ga$.
\begin{itemize}

\item[(i)] The space $Y = \overline{\langle\{d_\ga: \ga\in\Ga\setminus\Ga'\}\rangle}$ is a $\mathscr{L}_\infty$-space.

\item[(ii)] The quotient $\mathfrak{X}/Y$ is a $\mathscr{L}_\infty$-space.

\end{itemize}
\end{prp*}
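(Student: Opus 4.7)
The plan is, for (i), to take $Y_q = \langle\{d_\ga : \ga \in \Ga_q \setminus \Ga'\}\rangle$ and show these nested finite-dimensional subspaces are uniformly isomorphic to finite-dimensional $\ell_\infty$-spaces, with $Y$ the closure of their union; for (ii), to take the images $\pi(F_q)$ with $F_q = \langle\{d_\ga : \ga \in \Ga_q\}\rangle$ under the quotient map $\pi:\mathfrak{X}\to\mathfrak{X}/Y$ and run the same verification.

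The central combinatorial fact, which I would isolate first, is that self-determinedness forces $d_\eta(\ga) = 0$ for every $\eta \in \Ga \setminus \Ga'$ and every $\ga \in \Ga'$. The Bourgain-Delbaen compatibility and extension properties give, for $\ga \in \De_q$, the biorthogonal expansion
\begin{equation*}
e_\ga^* = d_\ga^* + \sum_{\eta \in \Ga_{q-1}} d_\eta(\ga)\, d_\eta^*,
\end{equation*}
since for $\eta \in \De_p$ with $p \geq q$ and $\eta \neq \ga$ one has $d_\eta(\ga) = e_\eta(\ga) = 0$. When $\ga \in \Ga'$, self-determinedness places $e_\ga^* - d_\ga^*$ inside $\langle\{d_\xi^* : \xi \in \Ga'\}\rangle$; by linear independence of the $d_\xi^*$'s, the coefficient $d_\eta(\ga)$ must vanish for every $\eta \in \Ga_{q-1}\setminus\Ga'$, yielding the claim.

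For (i), fix $y = \sum_{\ga \in \Ga_q \setminus \Ga'} a_\ga d_\ga \in Y_q$; for every $\zeta \in \Ga_q \cap \Ga'$ the preceding fact gives $y(\zeta) = 0$. Since $F_q$ is $C$-isomorphic to $\ell_\infty(\Ga_q)$ via the restriction $x \mapsto x|_{\Ga_q}$ (with $C$ the Bourgain-Delbaen constant), we conclude
\begin{equation*}
\|y|_{\Ga_q \setminus \Ga'}\|_\infty \leq \|y\| \leq C\|y|_{\Ga_q}\|_\infty = C\|y|_{\Ga_q \setminus \Ga'}\|_\infty.
\end{equation*}
Thus the restriction $Y_q \to \ell_\infty(\Ga_q \setminus \Ga')$ is injective, and a dimension count makes it a surjective $C$-isomorphism. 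Since $\bigcup_q Y_q$ is dense in $Y$, this establishes (i).

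For (ii), the uniformly bounded FDD projections $P_{[1,q]}$ give $F_q \cap Y = Y_q$ by applying $P_{[1,q]}$ to approximations of $x \in F_q \cap Y$ by elements of $\langle\{d_\ga : \ga \in \Ga \setminus \Ga'\}\rangle$. The same projections yield a uniformly bounded isomorphism $\pi(F_q) \cong F_q/Y_q$: for $x \in F_q$ and $y \in Y$ one has $P_{[1,q]}(x+y) = x + P_{[1,q]}(y) \in x + Y_q$, so $\|x+Y_q\|_{F_q/Y_q} \leq \|P_{[1,q]}\|\,\|x+Y\|_{\mathfrak{X}/Y}$. Under the $C$-isomorphism $F_q \cong \ell_\infty(\Ga_q)$, the image of $Y_q$ is, by the proof of (i), exactly the subspace of functions vanishing on $\Ga_q \cap \Ga'$, so $F_q/Y_q$ is uniformly isomorphic to $\ell_\infty(\Ga_q \cap \Ga')$. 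Density of $\bigcup_q \pi(F_q)$ follows from continuity of $\pi$. The main technical obstacle is the self-determined vanishing $d_\eta(\ga) = 0$; once this is in hand, both parts reduce to elementary restriction and quotient computations inside finite-dimensional $\ell_\infty$-spaces.
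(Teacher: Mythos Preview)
Your argument is correct. One small step deserves a sentence more: when you assert that self-determinedness places $e_\gamma^* - d_\gamma^*$ inside $\langle\{d_\xi^* : \xi \in \Gamma'\}\rangle$, you are implicitly using the reverse inclusion $\langle\{e_\xi^* : \xi \in \Gamma'\}\rangle \subset \langle\{d_\xi^* : \xi \in \Gamma'\}\rangle$, which is not literally the definition. This follows by a dimension count at each level $q$: the definition gives $\langle\{d_\gamma^* : \gamma \in \Gamma_q'\}\rangle \subset \langle\{e_\gamma^* : \gamma \in \Gamma_q'\}\rangle$ (using that $d_\gamma^*$ lies in $\langle\{e_\eta^* : \eta \in \Gamma_q\}\rangle$ and the $e_\eta^*$'s are independent), and equality of dimensions forces equality of spans. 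The paper records this as one of several equivalent formulations of self-determinedness (its condition (d), your vanishing fact, is another).

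For part (i) your proof and the paper's are essentially the same: both identify $Y_q$ with $\ell_\infty(\Gamma_q \setminus \Gamma')$ via the restriction/extension isomorphism, using the vanishing of $e_\gamma^*(d_\eta)$ for $\gamma \in \Gamma'$, $\eta \notin \Gamma'$.

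For part (ii) the approaches diverge. You argue directly that $\pi(F_q) \cong F_q/Y_q \cong \ell_\infty(\Gamma_q \cap \Gamma')$ with uniform constants, which is clean and suffices for the bare statement that $\mathfrak{X}/Y$ is $\mathscr{L}_\infty$. The paper instead constructs an explicit Bourgain--Delbaen structure on the quotient: it defines new extension operators $i_{q_s}' = R \circ i_{q_s}$ on $\ell_\infty(\Gamma_{q_s}')$, verifies their compatibility, and shows the restriction $R$ is a quotient map onto the resulting BD-space with kernel $Y$. Your route is shorter; the paper's route yields more, namely a concrete BD-presentation of the quotient (with identifiable $c_\gamma^{\prime*}$, $d_\gamma^{\prime*}$, evaluation analyses, etc.), which is exactly what is needed later when the quotient is the space $\mathfrak{X}_{\mathfrak{nr}}$ under study.
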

The above proposition and a result from \cite{KL} yield the following.

\begin{thm*}
There is a continuum of $\mathscr{L}_\infty$-subspaces  $\{Y_\alpha:\alpha \in\mathfrak{c}\}$ of $\mathfrak{X}_{\mathrm{AH}}$, satisfying the following.
\begin{itemize}

\item[(i)] Each space $Y_\alpha$ has the scalar-plus-compact property and for every $\alpha \neq \beta$ every bounded linear operator $T:Y_\alpha\rightarrow Y_\beta$ is compact.

\item[(ii)] Each $X_\alpha = \mathfrak{X}_{\mathrm{AH}}/Y_\alpha$ is a hereditarily indecomposable space with the scalar-plus-compact property and for every $\alpha\neq\beta$ every bounded linear operator $T:X_\alpha\rightarrow X_\beta$ is compact.

\end{itemize}
\end{thm*}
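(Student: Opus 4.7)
The plan is to produce a continuum $\{\Gamma_\alpha:\alpha\in\mathfrak{c}\}$ of pairwise almost disjoint self-determined subsets of the Bourgain--Delbaen index set $\Gamma_{\mathrm{AH}}$ for $\mathfrak{X}_{\mathrm{AH}}$, using the freedom in choosing self-determined subsets together with the combinatorial/structural input from \cite{KL}. Setting
\[
Y_\alpha=\overline{\langle\{d_\gamma:\gamma\in\Gamma_{\mathrm{AH}}\setminus\Gamma_\alpha\}\rangle}\qquad\text{and}\qquad X_\alpha=\mathfrak{X}_{\mathrm{AH}}/Y_\alpha,
\]
the Proposition above immediately supplies the $\mathscr{L}_\infty$ structure of both $Y_\alpha$ and $X_\alpha$, while hereditary indecomposability of $Y_\alpha$ is inherited from the HI property of $\mathfrak{X}_{\mathrm{AH}}$.

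For (i), I would invoke the strong dichotomy from \cite{AH}: every bounded operator from a subspace $Z\subseteq \mathfrak{X}_{\mathrm{AH}}$ into $\mathfrak{X}_{\mathrm{AH}}$ is of the form $\lambda i_Z+K$ with $K$ compact. Composing $T\in\mathcal{L}(Y_\alpha)$ with the inclusion into $\mathfrak{X}_{\mathrm{AH}}$ and applying the dichotomy gives the scalar-plus-compact decomposition of $T$, the invariance $(T-\lambda I)(Y_\alpha)\subseteq Y_\alpha$ being automatic. For a cross-operator $T:Y_\alpha\to Y_\beta$ with $\alpha\neq\beta$ the same dichotomy yields $T=\lambda i_{Y_\alpha}+K$, and one forces $\lambda=0$ by testing $T$ on a normalized weakly null block sequence $(x_n)_n\subseteq Y_\alpha$ built from $(d_\gamma)_{\gamma\in\Gamma_\beta\setminus\Gamma_\alpha}$ (which is cofinite in $\Gamma_\beta$ by the almost disjointness and lies in $\Gamma_{\mathrm{AH}}\setminus\Gamma_\alpha$): the images $\lambda x_n$ would then need to lie essentially in $Y_\beta$, but the FDD-projection associated with the levels of the $\gamma\in\Gamma_\beta\setminus\Gamma_\alpha$ separates them from $Y_\beta$ uniformly, a contradiction.

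For (ii), the self-determined hypothesis gives the canonical isometric identification
\[
X_\alpha^*\;=\;Y_\alpha^\perp\;=\;\overline{\langle\{e_\gamma^*:\gamma\in\Gamma_\alpha\}\rangle}\;\subseteq\;\mathfrak{X}_{\mathrm{AH}}^*.
\]
Dualizing any $T\in\mathcal{L}(X_\alpha)$, respectively any cross-operator $T:X_\alpha\to X_\beta$, produces an operator between these concrete subspaces of $\mathfrak{X}_{\mathrm{AH}}^*$, to which the arguments of (i) can be ported via the dual version of the scalar-plus-compact dichotomy of $\mathfrak{X}_{\mathrm{AH}}$ and the fact that adjoints of compact operators are compact. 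The main obstacle will be precisely this dual transfer: hereditary indecomposability is not automatic for quotients of HI spaces, so one must use the self-determined structure to lift an arbitrary $T\in\mathcal{L}(X_\alpha)$ to an operator on $\mathfrak{X}_{\mathrm{AH}}$, or equivalently to verify that every operator on $Y_\alpha^\perp$ is weak${}^*$-to-weak${}^*$ continuous and thus arises as an adjoint of some $T\in\mathcal{L}(X_\alpha)$, so that the dichotomy on $\mathfrak{X}_{\mathrm{AH}}$ can be pulled back through the duality to deliver both HI and scalar-plus-compact for $X_\alpha$.
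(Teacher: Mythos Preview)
Your approach to (i) is essentially the paper's: the scalar-plus-compact property for each $Y_\alpha$ comes from the strong $\mathscr{L}(Y,\mathfrak{X}_{\mathrm{AH}})$ dichotomy (which in the paper is packaged as Proposition~\ref{scalar plus compact on complements proposition}, itself extracted from \cite{KL}), and the cross-operator compactness is forced by testing on basis vectors $d_\gamma$ with $\gamma\in\Gamma_\beta\setminus\Gamma_\alpha$. The paper does this last step slightly more cleanly via the general Lemma~\ref{lemma about inclusion plus compact subsets}: one simply observes $d_\gamma^*(Td_\gamma)=0$ for such $\gamma$, and the scalar $\lambda$ drops out immediately.

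Your approach to (ii), however, has a genuine gap. The dual transfer you propose cannot work: $\mathfrak{X}_{\mathrm{AH}}^*$ is isomorphic to $\ell_1$, so there is no scalar-plus-compact dichotomy on the dual side to port. Nor is there any mechanism to lift an arbitrary $T\in\mathcal{L}(X_\alpha)$ to an operator on $\mathfrak{X}_{\mathrm{AH}}$, and your suggestion of verifying automatic weak$^*$-to-weak$^*$ continuity of operators on $Y_\alpha^\perp$ is simply false in general. Most seriously, HI does \emph{not} pass to quotients, so you cannot deduce it for $X_\alpha$ from the HI property of $\mathfrak{X}_{\mathrm{AH}}$ by any abstract argument.

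The paper's route for (ii) is entirely different and depends on choosing the $\Gamma_\alpha$ with much more care than ``generic almost disjoint self-determined sets''. One takes an almost disjoint family $\{L_\alpha:\alpha\in\mathfrak{c}\}$ of infinite subsets of $\N$ and builds $\Gamma_\alpha$ using only the weights $(m_{2j},n_{2j})_{j\in L_\alpha}$, splitting them so that $(m_{4j},n_{4j})_{j\in L_\alpha}$ play the unconditional role and $(m_{4j-2},n_{4j-2})_{j\in L_\alpha}$ the conditional one, with a coding function specific to $\Gamma_\alpha$. The point is that the resulting quotient $X_\alpha$ is then \emph{itself} an Argyros--Haydon space, namely (up to inessential changes) the space $\mathfrak{X}_{\mathrm{AH}}[(\mathscr{A}_{n_j},1/m_j)_{j\in 2L_\alpha}]$ of \cite[Subsection~10.2]{AH}. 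Both HI and scalar-plus-compact for $X_\alpha$, as well as the cross-operator compactness between different $X_\alpha$'s, then come directly from \cite[Theorem~10.4]{AH}, not from any dual or lifting argument.
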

The space $\mathfrak{X}_{\mathrm{AH}}$ above, is the $\mathscr{L}_\infty$ HI space from \cite{AH}. Another consequence of self-determined sets is an intriguing result that displays the complete divergence between the structure of a $\mathscr{L}_\infty$-space and its quotients. This contrasts corresponding results concerning classical $\mathscr{L}_\infty$-spaces \cite{JZ}.
\begin{thm*}
There exist $\mathscr{L}_\infty$ Banach spaces $X_1, X_2, X_3$ with separable dual so that $X_2$ is a quotient of $X_1$ and $X_3$ is a quotient of $X_2$ and moreover $X_1$ and $X_3$ are reflexive saturated whereas $X_2$ contains no reflexive subspaces.
\end{thm*}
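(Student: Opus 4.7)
The plan is to realize all three spaces as BD-$\mathscr{L}_\infty$-quotients obtained through the self-determined-subset proposition stated above, with $X_2=\X$ in the middle, $X_1$ obtained by \emph{enlarging} its Bourgain--Delbaen data to produce a reflexive saturated ambient space, and $X_3$ obtained by \emph{restricting} that data to a self-determined subset that strips away the coding nodes responsible for the absence of reflexive subspaces in $\X$.

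For $X_1$, I would take the BD-system $(\Ga_q,i_q)_q$ defining $\X$ and enlarge it to $(\tilde\Ga_q,\tilde\imath_q)_q$, with $\tilde\Ga_q\supset\Ga_q$, by adjoining at each step new nodes that carry a mixed-Tsirelson saturation in the spirit of \cite{AH}. The extension operators $\tilde\imath_q$ are defined inductively so as to preserve the uniform bound on their norms, to be compatible with the original $i_q$ (which guarantees that $\Ga$ becomes self-determined inside $\tilde\Ga$), and to add no coding data that would destroy reflexivity. The resulting space $X_1\subset\ell_\infty(\tilde\Ga)$ is a BD-$\mathscr{L}_\infty$-space whose FDD is both shrinking and boundedly complete by the standard analysis of such saturated norms; hence $X_1$ is reflexive saturated with separable dual. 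Applying the Proposition to $\Ga\subset\tilde\Ga$, the subspace $Y_1=\overline{\langle d_\ga:\ga\in\tilde\Ga\setminus\Ga\rangle}$ is $\mathscr{L}_\infty$ and $X_1/Y_1$ is $\mathscr{L}_\infty$; by compatibility of the extension operators one identifies $X_1/Y_1$ with $\X=X_2$, producing the required surjection $X_1\twoheadrightarrow X_2$.

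For $X_3$, within the BD-data defining $X_2=\X$ I would isolate a self-determined subset $\Ga'\subset\Ga$ whose nodes correspond precisely to the tame mixed-Tsirelson template that underlies the saturation of $\X$, with all the exotic coding nodes responsible for the failure of reflexivity lying in $\Ga\setminus\Ga'$. Setting $Y_2=\overline{\langle d_\ga:\ga\in\Ga\setminus\Ga'\rangle}$, the Proposition again gives that $X_3=X_2/Y_2$ is a $\mathscr{L}_\infty$-space; passing to the quotient, only the tame functionals survive, so the FDD of $X_3$ is shrinking and boundedly complete, making $X_3$ reflexive and in fact reflexive saturated with separable dual. The two quotient relations, the $\mathscr{L}_\infty$ property of all three spaces, and the fact that $X_2$ contains no reflexive subspaces are then direct consequences of the construction together with the main result of the paper concerning $\X$.

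The main obstacle is the simultaneous design of the two stratifications $\Ga\subset\tilde\Ga$ and $\Ga'\subset\Ga$: one must engineer the extension operators and the norming set of $\X$ so that (i) the enlargement to $\tilde\Ga$ yields a reflexive saturated space with $\Ga$ self-determined, (ii) the exotic coding machinery that prevents $\X$ from containing reflexive subspaces is entirely supported on $\Ga\setminus\Ga'$ and thus dies upon quotienting by $Y_2$, and (iii) the uniform bounds of the BD inductive construction are respected at every step. This triple compatibility is the heart of the argument and is precisely what the self-determined-subset machinery of the paper is designed to handle.
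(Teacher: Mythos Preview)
Your overall architecture is correct: take $X_2=\X$ and use self-determined subsets to pass up to a reflexive-saturated $X_1$ and down to a reflexive-saturated $X_3$. However, both ends of your construction have problems.

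For $X_1$, the paper does not enlarge $\X$; it simply takes $X_1=\BmT$. Recall that $\X$ is \emph{defined} in the paper as the quotient of $\BmT$ corresponding to a self-determined subset $\Ga\subset\bar\Ga$, so the quotient map $\BmT\twoheadrightarrow\X$ is already in place and $\BmT$ is already known to be reflexive-saturated. Your proposed enlargement would in effect rebuild $\BmT$ from scratch. More seriously, your justification is wrong: you claim the FDD of $X_1$ would be ``both shrinking and boundedly complete,'' but that would force $X_1$ itself to be reflexive, which is impossible for an infinite-dimensional $\mathscr{L}_\infty$-space with $X_1^*\simeq\ell_1$. Reflexive saturation of $\BmT$ comes from the mixed-Tsirelson estimates on block subspaces, not from reflexivity of the whole space.

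For $X_3$, your description is too vague to work as stated. In $\X$ there is no clean separation into ``tame mixed-Tsirelson nodes'' and ``exotic coding nodes'': every $\ga\in\Ga$ is built from $\alpha_c$-averages, and these are all defined through the tree $\mathcal U$ of special sequences. The failure of reflexivity stems from the fact that $\mathcal U$ is ill-founded (every maximal chain is infinite). The paper's $X_3=\mathcal Y_\xi$ is obtained by fixing $2\leqslant\xi<\omega_1$, considering the well-founded subtree $\mathcal U_\xi$ of special sequences with $\{\ra(\ga_k)\}_k\in\mathcal S_\xi$, collecting the weights that appear in $\mathcal U_\xi$, and taking the self-determined subset of $\Ga$ built from precisely those weights. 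It is the well-foundedness of $\mathcal U_\xi$ that forces skipped block sequences in the quotient to be boundedly complete, yielding reflexive saturation. Without identifying this well-founded-tree mechanism, ``removing the coding nodes'' does not specify a workable self-determined subset.
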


In the present construction, the preexisting space is the space $\mathfrak{B}_{\mathrm{mT}}$ from \cite{AH}. This is a BD-$\mathscr{L}_\infty$-space, saturated by  reflexive subspaces that have an unconditional basis. The norming set of the space $\X$ will be a self-determined subset $\Ga'$ of the set $\Ga$ defining the space $\mathfrak{B}_{\mathrm{mT}}$ as a subspace of $\ell_\infty(\Ga)$. As in all Bourgain-Delbaen constructions, to each $\ga\in\Ga$ with $\ga\in\De_{q+1}$ we associate a linear functional $c_\ga^*:\ell_\infty(\Ga_q)\to\R$ such that $e^*_\gamma=c_\gamma^*+d_\gamma^*$. In the case of $\X$, the functional $c^*_\gamma$ is defined as
$$c_\ga^* =   \frac{1}{m_j}b^*, \ \ \text{or } \ \ c_\ga^* = e_\xi^* + \frac{1}{m_j}b^*,$$
where $\xi\in\De_p\cap\Ga'$, $p<q$  and
$$b^* = \frac{1}{n}\left(\e_1 e^*_{\zeta_1}\circ P_{E_1}+ \cdots + \e_n e_{\zeta_n}^*\circ P_{E_n}\right)$$
with $\zeta_1,\ldots,\zeta_n\in \Ga'\cap(\Ga_q\setminus\Ga_p)$, $p<E_1<\cdots<E_n\leqslant q$ and $\e_1,\ldots,\e_n\in\{-1,1\}$.  If $\xi $ does not exist, then $\zeta_1,\ldots,\zeta_n\in \Ga'\cap \Ga_q $.

The functional $b^*$ is a special type of average, called an  \ac-average. The latter are defined by a countable tree $\mathcal{U}$ with elements $\{(\ga_k,x_k)\}_{k=1}^n$ so that $\ga_k\in\Ga$ and $x_k$ is a  block vector in $\mathfrak{B}_{\mathrm{mT}}$ with respect to its basis $(d_\ga)_{\ga\in\Ga}$. The use of the tree $\mathcal{U}$ in the definition of  \ac-averages explains the necessity of the preexisting space $\mathfrak{B}_{\mathrm{mT}}$. Furthermore, since $\Gamma'$ is a self determined subset of $\Gamma$, the space $\X$ is a quotient of $\mathfrak{B}_{\mathrm{mT}}$.

Note that in the definition of $\Ga'$ we use saturation under constraints, which has occurred in earlier papers (e.g. \cite{AM1}, \cite{ABM}). The version appearing here is similar to the one used in \cite{AM2}. The difference from \cite{AM2} is that here we deal with families $(\mathscr{A}_{n_j})_j$ instead of $(\mathcal{S}_n)_n$, which makes the definitions and the proofs easier. It is also worth pointing out that, as in \cite{AM2}, the conditional structure  of the space $\X$ is imposed by certain \ac-averages and not by special sequences $(\ga_k)_{k=1}^n$.

The space $\X$ satisfies the scalar-plus-compact property. In the space $\mathfrak{X}_{\mathrm{AH}}$ from \cite{AH} the same result is proved using LUST. In the case of $\X$ the proof is more involved. This is due to the fact that we apply saturation under constraints. Actually, we combine the LUST of $\X$ and the fact $\X^*\simeq\ell_1$. Another property of $\X$ is that every subspace fails the point of continuity property (PCP). Thus, $\X$ answers in a strong sense a problem posed by J. Bourgain \cite[Problem 3, page 46]{B}. We mention that a $\mathscr{L}_\infty$-space without PCP and not containing $c_0$ could also be obtained by the results in \cite{FOS}.

We close the introduction by mentioning a problem attributed to H. P. Rosenthal. The problem in question is the following.
\begin{prb*}
Let $\mathfrak{X}$ be a $\mathscr{L}_\infty$ saturated Banach space. Does $\mathfrak{X}$ contain $c_0$ isomorphically?
\end{prb*}

Note that if $\mathfrak{X}$ is $\mathscr{L}_\infty$ saturated and does not contain $c_0$ then it does not contain an unconditional basic sequence. This follows from James' classical characterization of reflexivity for spaces with an unconditional basis. Indeed, let us assume that $\mathfrak{X}$ contains a subspace $Y$ with an unconditional basis. Then $Y$ is $\mathscr{L}_\infty$ saturated i.e. it is not reflexive. For the same reason $Y$ cannot contain $\ell_1$  and by assumption it does not contain $c_0$, i.e. $Y$ is reflexive which is absurd. Therefore, by Gowers' dichotomy \cite{G2} any $\mathscr{L}_\infty$ saturated space $\mathfrak{X}$ not containing $c_0$ is HI saturated. If a space $\mathfrak{X}$ answers Rosenthal's problem negatively, then it has to be saturated with HI spaces with LUST and probably with the scalar-plus-compact property. We show that $\X$ is not $\mathscr{L}_\infty$ saturated. However, we believe that  the techniques deployed in the present paper are a step   towards the solution of Rosenthal's problem.

\section{The self-determined sets}

In this preparatory section we introduce the self-determined subsets of the norming set $\Gamma$ of a BD-$\mathscr{L}_\infty$-space  $\mathfrak{X}$.  It is shown that the self-determined sets are able to provide  $\mathscr{L}_\infty$  subspaces and quotients of a given BD-$\mathscr{L}_\infty$-space. Also, they are a key ingredient for the definition of the space  $\X$. We start by recalling the definition of the BD-$\mathscr{L}_\infty$-spaces given in  \cite{AGM}. We remind that every separable $\mathscr{L}_\infty$ space is isomorphic to a BD-$\mathscr{L}_\infty$-space,  (\cite[Theorem 3.6]{AGM}).

\begin{ntt}
For $\Gamma_1$, $\Gamma$ sets with $\Gamma_1\subseteq \Gamma$, we denote by $r: \ell^\infty(\Gamma)\rightarrow \ell^\infty (\Gamma_1)$ the natural restriction operator. An operator $i: \ell^\infty (\Gamma_1)\rightarrow \ell^\infty (\Gamma)$ is an extension operator if $r\circ i$ is the identity operator of $\ell^\infty (\Gamma_1)$. 
Also, if $(\Ga_q)_{q}$ is a strictly increasing sequence of non-empty sets and $\Gamma = \cup_q\Ga_q$, a sequence of extension operators $(i_q)_{q}$, with $i_q: \ell_\infty(\Ga_q) \rightarrow \ell_\infty(\Ga)$ for all $q\inn$, will be called compatible, if for every $p,q \inn$ with $p < q$, $i_p = i_q\circ r_q\circ i_p$, where $r_q$ denotes the restriction onto $\Ga_p$.
\end{ntt}

\begin{dfn}\label{definition BD}
Let $(\Ga_q)_{q=1}^\infty$ be a strictly increasing sequence of non-empty finite sets, $\Ga = \cup_q\Ga_q$ and $(i_q)_{q=1}^\infty$, with $i_q: \ell_\infty(\Ga_q) \rightarrow \ell_\infty(\Ga)$ for all $q\inn$,  such that $C = \sup_q\|i_q\|$ is finite. Define $\De_1 = \Ga_1$, $\De_{q+1} = \Ga_{q+1}\setminus\Ga_{q}$ for $q\inn$ and for every $\ga\in\Ga$ we define $d_\ga$, a vector in $\ell_\infty(\Ga)$, as follows: if $\ga\in\De_q$ for some $q\inn$, then $d_\ga = i_q(e_\ga)$. The closed linear span of the set $\{ d_\ga:\ga\in\Ga \}$ will be denoted by $\Xbd$ and called a Bourgain-Delbaen space.
\end{dfn}

If for all $q\inn$ we define $M_q = \langle\{d_\ga:\ga\in\De_q\}\rangle$, then $(M_q)_q$ forms a Finite Dimensional Decomposition (FDD) for the space $\Xbd$ and for every interval $E$ of $\N$  we denote by $P_E$ the projection associated to this FDD and $E$. For every $\ga\in\Ga$ we denote by $e_\ga^*:\mathfrak{X}_{(\Ga_q,i_q)_q}\rightarrow\mathbb{R}$ the evaluation functional on the $\ga$'th coordinate, defined on $\ell_\infty(\Ga)$ and then restricted to the subspace $\mathfrak{X}_{(\Ga_q,i_q)_q}$. Moreover, for every $\ga\in\Ga$ we define two specific linear functionals $c_\ga^*$ and $d_\ga^*$ so that $e_\ga^* = c_\ga^* + d_\ga^*$. For the precise definition see \cite[Definition 2.14]{AGM}. We summarize some properties of these functionals. Their proofs can be found in \cite[Lemma 2.16 and Proposition 2.17]{AGM}.

\begin{prp}\label{functional properties}
Let $\Xbd$ be a Bourgain-Delbaen space. The following hold.
\begin{itemize}

 \item[(i)] The sequence $(e_\ga^*)_{\ga\in\Ga}$ is equivalent to the unit vector basis of $\ell_1(\Ga)$.

 \item[(ii)] The functionals $(d_\ga^*)_{\ga\in\Ga}$ are biorthogonal to the vectors $(d_\ga)_{\ga\in\Ga}$.

 \item[(iii)] For $q\inn\cup\{0\}$, $\{c_\ga^*:\ga\in\De_{q+1}\} \subset \langle\{e_\ga^*:\ga\in\Ga_q\}\rangle = \langle\{d_\ga^*:\ga\in\Ga_q\}\rangle$.

 \item[(iv)] If the FDD $(M_q)_q$ is shrinking, the closed linear span of the functionals $(d_\ga^*)_{\ga\in\Ga}$ is $\mathfrak{X}_{(\Ga_q,i_q)_q}^*$ and hence, $\mathfrak{X}_{(\Ga_q,i_q)_q}^*$ is isomorphic to $\ell_1$.
 \end{itemize}
\end{prp}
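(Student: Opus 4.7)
The plan is to exploit the extension property of the operators $i_q$ and the compatibility relation $i_p = i_q\circ r_q\circ i_p$ for $p<q$, peeling off one level $\De_q$ at a time. Parts (ii) and (iii) are proved by a common induction on the level, (i) is a direct test-vector argument using the $i_q$'s, and (iv) combines (i) with the shrinking hypothesis via a triangular change of basis.

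For (i), the upper $\ell_1$-bound $\|\sum_{\ga\in F}a_\ga e_\ga^*\|\leq\sum_{\ga\in F}|a_\ga|$ is immediate since each $e_\ga^*$ has norm at most one as a coordinate functional on $\ell_\infty(\Ga)$. For the lower bound, given a finite $F\subset\Ga_q$ and scalars $(a_\ga)_{\ga\in F}$, I would set $y=\sum_{\ga\in F}\sgn(a_\ga)e_\ga\in\ell_\infty(\Ga_q)$, note $\|y\|_\infty=1$, and test against $x=i_q(y)\in\Xbd$, which has norm at most $C=\sup_q\|i_q\|$. Because $i_q$ is an extension operator, $x(\ga)=\sgn(a_\ga)$ for every $\ga\in F$, so $\langle\sum_\ga a_\ga e_\ga^*,x\rangle=\sum_\ga |a_\ga|$, giving the lower bound $1/C$.

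For (ii) and (iii), the induction is on the level. The definition of $c_\ga^*$ in \cite{AGM} expresses it, for $\ga\in\De_{q+1}$, as a concrete linear combination of the $e_\xi^*$ with $\xi\in\Ga_q$, yielding the inclusion in (iii); the equality $\langle\{e_\xi^*:\xi\in\Ga_q\}\rangle=\langle\{d_\xi^*:\xi\in\Ga_q\}\rangle$ then follows from the upper-triangular identity $d_\xi^*=e_\xi^*-c_\xi^*$ and the inductive hypothesis at lower levels. For biorthogonality, fix $\ga\in\De_q$ and $\ga'\in\De_p$. If $p=q$, then $d_{\ga'}=i_q(e_{\ga'})$ and the extension property gives $e_\ga^*(d_{\ga'})=e_{\ga'}(\ga)=\de_{\ga,\ga'}$, while $c_\ga^*(d_{\ga'})=0$ because $c_\ga^*$ is supported on coordinates in $\Ga_{q-1}$ and $\ga'\notin\Ga_{q-1}$. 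For $p\neq q$, the compatibility $i_p=i_q\circ r_q\circ i_p$ (when $p<q$) together with the explicit form of $c_\ga^*$ yields $e_\ga^*(d_{\ga'})=c_\ga^*(d_{\ga'})$, so that $d_\ga^*(d_{\ga'})=0$; the case $p>q$ is simpler, since $c_\ga^*$ annihilates coordinates above $\Ga_{q-1}$ while $e_\ga^*(d_{\ga'})$ vanishes by extension.

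For (iv), the hypothesis that $(M_q)_q$ is a shrinking FDD means every $x^*\in\Xbd^*$ is the norm limit of its initial projections relative to $(M_q)_q$; since each initial projection of $x^*$ belongs to $\langle\{d_\ga^*:\ga\in\Ga_q\}\rangle$, the closed linear span of $(d_\ga^*)_{\ga\in\Ga}$ is all of $\Xbd^*$. To pass from there to $\Xbd^*\simeq\ell_1$, I would invert the triangular relation $d_\ga^*=e_\ga^*-c_\ga^*$: by (iii) and the uniform bound $\sup_q\|i_q\|<\infty$, the change of basis between the $e$-functionals and the $d$-functionals is bounded in both directions, and the $\ell_1$-equivalence of (i) transfers to $(d_\ga^*)_{\ga\in\Ga}$. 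The main obstacle is the off-diagonal case of (ii), which requires a careful unpacking of the definition of $c_\ga^*$ from \cite{AGM} to verify the cancellation against $e_\ga^*(d_{\ga'})$; the rest is essentially formal from the extension and compatibility axioms.
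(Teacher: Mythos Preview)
The paper does not actually prove this proposition; it only summarizes the statements and refers the reader to \cite[Lemma 2.16 and Proposition 2.17]{AGM} for proofs. So there is no in-paper argument to compare against, and your sketch stands on its own.

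Your arguments for (i), (ii), (iii) are correct and essentially the standard ones. The test-vector argument for (i) is exactly right, and the inductive triangular argument for (iii), together with the case analysis for (ii), is the expected route. The only delicate spot is the $p<q$ case of (ii), where you assert $e_\ga^*(d_{\ga'})=c_\ga^*(d_{\ga'})$; this indeed holds because, by compatibility, $d_{\ga'}=i_{q-1}(r_{q-1}(d_{\ga'}))$ and the extension $i_{q-1}$ is defined at the $\ga$-coordinate precisely via $c_\ga^*$, but you should say this rather than leaving it to ``the explicit form of $c_\ga^*$.''

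Your treatment of (iv), however, has a soft spot. The first half is fine: shrinking FDD implies the closed span of $(d_\ga^*)$ is all of $\Xbd^*$. But for the second half you try to show that $(d_\ga^*)_\ga$ is itself equivalent to the $\ell_1$ basis via a ``bounded in both directions'' change of basis between the $e^*$'s and the $d^*$'s. One direction (expressing $d_\ga^*$ in $e^*$-coordinates) is indeed controlled by $\sup_q\|i_q\|$, but the reverse direction (expressing $e_\ga^*$ as an $\ell_1$-combination of $d^*$'s with uniformly bounded coefficient sum) is not obvious and in general need not hold; $(d_\ga^*)_\ga$ can fail to be equivalent to the unit vector basis of $\ell_1$ even when $\Xbd^*\simeq\ell_1$. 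Fortunately the statement does not require this: by (iii) the finite linear spans of $(e_\ga^*)$ and $(d_\ga^*)$ coincide at every level, hence so do their closures, and by (i) that common closure is isomorphic to $\ell_1$. Combined with the first half, this gives $\Xbd^*\simeq\ell_1$ directly. Replace the change-of-basis step with this observation and your proof of (iv) is complete.
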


\subsection{Self-determined subsets of $\Ga$}
We define and study self-determined subsets $\Ga'$ of $\Ga$, which define quotients of $\Xbd$, which are Bourgain-Delbaen spaces as well.

\begin{dfn}
Let $\Xbd$ be a Bourgain-Delbaen space. An infinite subset $\Ga'$ of $\Ga$ will be called self-determined, if for every $\ga\in\Ga'$ the functional $d_\ga^*$ is in the linear span of $\{e_\ga^*:\ga\in\Ga'\}$.
\end{dfn}

\begin{prp}\label{self-determinacy is all those nice things}
Let $\Xbd$ be a Bourgain-Delbaen space, $\Ga'$ be an infinite subset of $\Ga$, for all $q$ set $\Ga_q' =\Ga'\cap\Ga_q$, $\De_q' = \Ga'\cap\De_q$ and $q_0 = \min\{q: \Ga_q'\neq\varnothing\}$. The following assertions are equivalent.
\begin{itemize}

\item[(a)] The set $\Ga'$ is self-determined.

\item[(b)] For all $q\geqslant q_0$, $\langle\{d_\ga^*:\ga\in\Ga_q'\}\rangle = \langle\{e_\ga^*:\ga\in\Ga_q'\}\rangle$.

\item[(c)] We have $\langle\{d_\ga^*:\ga\in\Ga_{q_0}'\}\rangle = \langle\{e_\ga^*:\ga\in\Ga_{q_0}'\}\rangle$ and for all $q\geqslant q_0$, $\langle\{c_\ga^*:\ga\in\De_{q+1}'\}\rangle \subset \langle\{e_\ga^*\circ P_E:\ga\in\Ga_q', E\subset\N\cup\{0\}\}\rangle$.

\item[(d)] For all $\ga\in\Ga\setminus\Ga'$ and $\eta\in\Ga'$, $e_\eta^*(d_\ga) = 0$.

\item[(e)] For all $\ga\in\Ga\setminus\Ga'$ and $\eta\in\Ga'$, $c_\eta^*(d_\ga) = 0$.

\end{itemize}
\end{prp}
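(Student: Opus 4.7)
The plan is to prove the equivalences via the cycle (a)$\Rightarrow$(b)$\Rightarrow$(c)$\Rightarrow$(e)$\Leftrightarrow$(d)$\Rightarrow$(b) together with the trivial (b)$\Rightarrow$(a). The central tool is the linear independence of $(e_\ga^*)_{\ga\in\Ga}$ from Proposition \ref{functional properties}(i), used in tandem with Proposition \ref{functional properties}(iii), which confines $c_\ga^*$ (for $\ga\in\De_{q+1}$) and $d_\ga^*$ (for $\ga\in\Ga_q$) to $\langle\{e_\eta^*:\eta\in\Ga_q\}\rangle$; together these let me pin down supports of various functionals in the $e^*$-basis.

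For (a)$\Rightarrow$(b), with $\ga\in\Ga_q'$ I would expand $d_\ga^*$ in $\{e_\eta^*:\eta\in\Ga'\}$ via (a); since $d_\ga^*=e_\ga^*-c_\ga^*$ already lies in $\langle\{e_\eta^*:\eta\in\Ga_q\}\rangle$, linear independence forces the expansion to be supported in $\Ga_q'$, and a dimension count upgrades the resulting inclusion to equality. For (b)$\Rightarrow$(c), the first clause is (b) at $q=q_0$; for the second, noting that $c_\ga^*=e_\ga^*-d_\ga^*$ lies in $\langle\{e_\eta^*:\eta\in\Ga_{q+1}'\}\rangle\cap\langle\{e_\eta^*:\eta\in\Ga_q\}\rangle$, which by linear independence equals $\langle\{e_\eta^*:\eta\in\Ga_q'\}\rangle$; taking $E=\N\cup\{0\}$ yields the form demanded by (c). The equivalence (d)$\Leftrightarrow$(e) is immediate from $e_\eta^*=c_\eta^*+d_\eta^*$ and $d_\eta^*(d_\ga)=\de_{\eta,\ga}=0$ when $\eta\in\Ga'$ and $\ga\in\Ga\setminus\Ga'$.

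The substantive step (c)$\Rightarrow$(e) I would prove by induction on the level of $\eta$. At the base $\eta\in\De_{q_0}'$, the first clause of (c) places $c_\eta^*$ in $\langle\{e_\xi^*:\xi\in\Ga_{q_0}'\}\rangle$, while Proposition \ref{functional properties}(iii) also places it in $\langle\{e_\xi^*:\xi\in\Ga_{q_0-1}\}\rangle$; since $\Ga_{q_0-1}'=\varnothing$, linear independence forces $c_\eta^*=0$. In the inductive step at $\eta\in\De_{q+1}'$, write $c_\eta^*=\sum_i a_i\,e_{\xi_i}^*\circ P_{E_i}$ with $\xi_i\in\Ga_q'$ via (c). Applied to $d_\ga$ with $\ga\in\De_s\setminus\Ga'$, the projection $P_{E_i}$ either kills $d_\ga$ or acts as the identity, and each surviving summand reduces to $a_i\,c_{\xi_i}^*(d_\ga)$ (using $d_{\xi_i}^*(d_\ga)=0$), which vanishes by the inductive hypothesis on $\xi_i$.

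For (d)$\Rightarrow$(b) I would switch to finite-dimensional duality: with $Z_q=\spn\{d_\ga:\ga\in\Ga_q\}$, both $\{d_\ga^*|_{Z_q}\}_{\ga\in\Ga_q}$ and $\{e_\ga^*|_{Z_q}\}_{\ga\in\Ga_q}$ are bases of $Z_q^*$ by Proposition \ref{functional properties}(iii). Setting $W=\spn\{d_\ga:\ga\in\Ga_q\setminus\Ga_q'\}$, both $\{d_\xi^*|_{Z_q}:\xi\in\Ga_q'\}$ (by biorthogonality) and $\{e_\xi^*|_{Z_q}:\xi\in\Ga_q'\}$ (by (d)) are $|\Ga_q'|$-element linearly independent subsets of the $|\Ga_q'|$-dimensional annihilator $W^\perp\subset Z_q^*$, hence both are bases of $W^\perp$ and their spans coincide on $Z_q$. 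This lifts to $\Xbd^*$ because functionals in $\langle\{e_\xi^*:\xi\in\Ga_q\}\rangle=\langle\{d_\xi^*:\xi\in\Ga_q\}\rangle$ vanish on each $M_r$ with $r>q$, and so are determined by their restrictions to $Z_q$. The main obstacle I anticipate is precisely this upgrade from the pointwise condition (d) to the global span equality (b): one cannot argue algebraically and must instead match dimensions in the annihilator of $W$ and then transfer the equality back to $\Xbd^*$; the rest of the equivalences are algebraic manipulations with $e_\ga^*=c_\ga^*+d_\ga^*$ together with linear independence.
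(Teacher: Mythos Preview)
Your proof is correct and uses essentially the same ingredients as the paper's: linear independence of $(e_\ga^*)_\ga$, biorthogonality of $(d_\ga^*)_\ga$, one inductive step, and one kernel/annihilator argument. The paper organizes things slightly differently by introducing an auxiliary statement (c$'$) --- namely that $\langle\{c_\ga^*:\ga\in\De_{q+1}'\}\rangle\subset\langle\{d_\ga^*:\ga\in\Ga_q'\}\rangle$ --- which makes (c$'$)$\Rightarrow$(e) trivial and shifts the induction to (c)$\Rightarrow$(b), while the kernel argument becomes (e)$\Rightarrow$(c$'$); your direct route (c)$\Rightarrow$(e) by induction and (d)$\Rightarrow$(b) via the annihilator of $W$ in $Z_q^*$ accomplishes the same thing.
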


\begin{proof}
We add an auxiliary assertion in order to obtain the equivalence.
\begin{itemize}
\item[(c')] We have $\langle\{d_\ga^*:\ga\in\Ga_{q_0}'\}\rangle = \langle\{e_\ga^*:\ga\in\Ga_{q_0}'\}\rangle$ and for all $q\geqslant q_0$, $\langle\{c_\ga^*:\ga\in\De_{q+1}'\}\rangle \subset \langle\{d_\ga^*:\ga\in\Ga_q'\}\rangle$.
\end{itemize}
The main facts we shall use are (ii) and (iii) from Proposition \ref{functional properties}, as well as $e_\ga^* = c_\ga^* + d_\ga^*$ for all $\ga\in\Ga$. The assertions (a)$\Leftrightarrow$(b), (b)$\Rightarrow$(c'), (c')$\Rightarrow$(e) and (d)$\Leftrightarrow$(e) are very easy to prove. To see (c')$\Rightarrow$(c) recall that from \cite[Remark 2.15]{AGM}, for each $p\inn$ and $\ga\in\De_p$, $d_\ga^* = e_\ga^*\circ P_{\{p\}}$, while (c)$\Rightarrow$(b) is proved by induction on $q$. An argument involving kernels of linear functionals yields (e)$\Rightarrow$(c'). Drawing a diagram will convince the reader that the proof is complete.
\end{proof}

\begin{ntt}\label{some prime notation}
Given a Bourgain-Delbaen space $\Xbd$ as well as a self-determined subset $\Ga'$ of $\Ga$, we denote by
\begin{itemize}

\item[(i)] $R$ the restriction onto $\Ga'$,

\item[(ii)] $\Ga_q' = \Ga'\cap\Ga_q$ and $\De_q' = \Ga'\cap\De_q$ for all $q\inn$,

\item[(ii')] $\Ga_q'' = \Ga_q\setminus\Ga_q'$ and $\De_q'' = \De_q\setminus\De_q'$ for all $q\inn$

\item[(iii)] $S = \{q\inn\cup\{0\}: \De_q'  \neq\varnothing\} = \{q_0 < q_1 <\cdots < q_s \cdots\}$,

\item[(iv)] for all $s\inn\cup\{0\}$, $r_{q_s}'$ the restriction onto $\Ga_{q_s}'$ and

\item[(iv)] for $s\inn\cup\{0\}$, $i_{q_s}':\ell_\infty(\Ga_{q_s}')\rightarrow\ell_\infty(\Ga')$ with $i_{q_s}'(x) = R(i_{q_s}(x))$, where we naturally identify $x$ with a vector in $\ell_\infty(\Ga_{q_s})$.

\end{itemize}
Observe that $(\Ga_{q_s}')_{s=1}^\infty$ is a strictly increasing sequence of finite sets, whose union is $\Ga'$ and that $(i_{q_s}')_{q=1}^\infty$ is a uniformly bounded sequence of extension operators, in particular $\sup_s\|i_{q_s}'\| \leqslant \sup_q\|i_q\|$.
\end{ntt}

For the rest of this section we follow the above notation.

\begin{prp}\label{complement gives subspace}
Let $\Xbd$ be a Bourgain-Delbaen space and $\Ga'$ be a self-determined subset of $\Ga$. Then for every $q\inn$, we have that $i_q[\ell_\infty(\Ga_q'')] = \langle\{d_\ga:\ga\in\Ga_q''\}\rangle$, where we naturally identify $\ell_\infty(\Ga_q'')$ with a subspace of $\ell_\infty(\Ga_q)$. In particular, if we denote by $Y$ the closed linear span of $\{d_\ga:\ga\in\Ga\setminus\Ga'\}$, then $Y$ is a $\mathscr{L}_\infty$-space.
\end{prp}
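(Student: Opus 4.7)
The plan is to establish the displayed equality by combining one inclusion coming directly from self-determinacy with a dimension count, and then to read off the $\mathscr{L}_\infty$-property of $Y$ from the uniform bound $\sup_q\|i_q\| < \infty$ built into the BD framework.

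For the inclusion $\langle\{d_\gamma : \gamma \in \Gamma_q''\}\rangle \subseteq i_q[\ell_\infty(\Gamma_q'')]$, I would fix $\gamma \in \Gamma_q''$ and let $p \leqslant q$ with $\gamma \in \Delta_p$. Compatibility of the $i_q$'s together with the extension property yield
\[
d_\gamma = i_p(e_\gamma) = i_q\bigl(r_q(i_p(e_\gamma))\bigr) = i_q(d_\gamma|_{\Gamma_q}),
\]
so it suffices to check that $d_\gamma|_{\Gamma_q}$ is supported on $\Gamma_q''$. This is precisely assertion (d) of Proposition \ref{self-determinacy is all those nice things}: since $\gamma \in \Gamma \setminus \Gamma'$, one has $e_\eta^*(d_\gamma) = 0$ for every $\eta \in \Gamma'$, and in particular $d_\gamma(\eta) = 0$ for all $\eta \in \Gamma_q' \subseteq \Gamma'$.

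The reverse inclusion is a dimension count. The identity $r_q \circ i_q = \mathrm{Id}$ forces $i_q$ to be injective, so $\dim i_q[\ell_\infty(\Gamma_q'')] = |\Gamma_q''|$; and the fact that $(d_\gamma)_{\gamma\in\Gamma}$ is a Schauder basis of $\mathfrak{X}_{(\Gamma_q,i_q)_q}$ gives the same dimension for $\langle\{d_\gamma : \gamma \in \Gamma_q''\}\rangle$.

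For the ``in particular'' part, I would set $F_q = \langle\{d_\gamma : \gamma \in \Gamma_q''\}\rangle \subseteq Y$. The subspaces $F_q$ are increasing, their union is dense in $Y$ because $\bigcup_q \Gamma_q'' = \Gamma \setminus \Gamma'$, and by the first part $F_q = i_q[\ell_\infty(\Gamma_q'')]$. Since $\|i_q\| \leqslant C := \sup_q\|i_q\| < \infty$ and $i_q$ is an extension operator (so its inverse on its image is contractive), each $F_q$ is $C$-isomorphic to $\ell_\infty^{|\Gamma_q''|}$, which realizes $Y$ as a $\mathscr{L}_\infty$-space. The only conceptual step is the reduction of self-determinacy to the support condition on $d_\gamma|_{\Gamma_q}$; everything else is dimension counting and the standard BD estimates.
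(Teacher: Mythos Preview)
Your proof is correct and follows essentially the same approach as the paper: both establish the inclusion $\langle\{d_\gamma:\gamma\in\Gamma_q''\}\rangle\subseteq i_q[\ell_\infty(\Gamma_q'')]$ via compatibility (writing $d_\gamma = i_q(d_\gamma|_{\Gamma_q})$) together with Proposition~\ref{self-determinacy is all those nice things}(d), then invoke a dimension count for equality, and finally read off the $\mathscr{L}_\infty$-property from the uniform bound on $\|i_q\|$. Your writeup is somewhat more explicit about the dimension count and the increasing-$F_q$ structure, but the argument is identical.
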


\begin{proof}
Fix $q\inn$. If $\De_q'' = \varnothing$ then there is nothing to prove. If this is not the case, we will show that $d_\ga\in i_q[\ell_\infty(\Ga_q'')]$ for all $\ga\in\Ga_q''$ which, due to dimensional reasons, yields the desired result. The compatibility property of the operators implies $d_\ga = i_q(\sum_{\eta\in\Ga_q}e_\eta^*(d_\ga)e_\eta)$ whereas Proposition \ref{self-determinacy is all those nice things} (d) yields $d_\ga = i_q(\sum_{\eta\in\Ga_q''}e_\eta^*(d_\ga)e_\eta)$. The second part of this Proposition follows from the fact that for all $q$, $i_q$ is a $C$-isomorphism, where $C = \sup_q\|i_q\|$.
\end{proof}

\begin{lem}\label{commute}
Let $\Xbd$ be a Bourgain-Delbaen space and $\Ga'$ be a self-determined subset of $\Ga$. Then for every $s\inn$ and $x\in\ell_\infty(\Ga_{q_s})$ we have $R(i_{q_s}(x)) = i_{q_s}'(r_{q_s}'(x))$.
\end{lem}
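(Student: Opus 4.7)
The plan is to reduce the identity to a statement that follows directly from Proposition \ref{complement gives subspace} combined with part (d) of Proposition \ref{self-determinacy is all those nice things}. The only care required is in tracking the ``natural identification'' used to define $i_{q_s}'$ in Notation \ref{some prime notation}(iv).

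First I would split the given $x \in \ell_\infty(\Ga_{q_s})$ as $x = \tilde{x} + y$, where $\tilde{x} \in \ell_\infty(\Ga_{q_s})$ is the vector that agrees with $x$ on $\Ga_{q_s}'$ and vanishes on $\Ga_{q_s}'' = \Ga_{q_s}\setminus\Ga_{q_s}'$, and $y = x - \tilde{x}$ is supported on $\Ga_{q_s}''$. Under the identification of $\ell_\infty(\Ga_{q_s}')$ with a subspace of $\ell_\infty(\Ga_{q_s})$, the vector $r_{q_s}'(x)$ is exactly $\tilde{x}$, and hence by the very definition of $i_{q_s}'$ we have $i_{q_s}'(r_{q_s}'(x)) = R(i_{q_s}(\tilde{x}))$. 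By linearity of $R\circ i_{q_s}$, the identity $R(i_{q_s}(x)) = i_{q_s}'(r_{q_s}'(x))$ is therefore equivalent to $R(i_{q_s}(y)) = 0$.

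To establish this, I would apply Proposition \ref{complement gives subspace}, which gives $i_{q_s}[\ell_\infty(\Ga_{q_s}'')] = \langle\{d_\ga:\ga\in\Ga_{q_s}''\}\rangle$. Since $y$ lies in $\ell_\infty(\Ga_{q_s}'')$ (extended by zero), I can write $i_{q_s}(y) = \sum_{\ga \in \Ga_{q_s}''} \la_\ga d_\ga$ for some scalars $\la_\ga$. For every $\eta \in \Ga'$ and every $\ga \in \Ga_{q_s}'' \subseteq \Ga\setminus\Ga'$, part (d) of Proposition \ref{self-determinacy is all those nice things} yields $e_\eta^*(d_\ga) = 0$. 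Applying $e_\eta^*$ termwise shows $e_\eta^*(i_{q_s}(y)) = 0$ for every $\eta \in \Ga'$, that is, $R(i_{q_s}(y)) = 0$, finishing the proof.

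There is no real obstacle: the entire content is carried by the two prior results, and the argument is essentially a verification that the self-determinacy of $\Ga'$ is exactly the condition that makes the restriction $R$ commute with the Bourgain--Delbaen extension operators. The single subtle point worth emphasizing in the write-up is the distinction between the element $r_{q_s}'(x) \in \ell_\infty(\Ga_{q_s}')$ and its zero-extension $\tilde{x} \in \ell_\infty(\Ga_{q_s})$, which is what legitimizes rewriting $i_{q_s}'(r_{q_s}'(x))$ as $R(i_{q_s}(\tilde{x}))$.
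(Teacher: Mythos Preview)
Your proof is correct and follows essentially the same approach as the paper: both split $x$ into its $\Ga_{q_s}'$-supported and $\Ga_{q_s}''$-supported parts, observe that the definition of $i_{q_s}'$ handles the first part directly, and then invoke Proposition~\ref{complement gives subspace} together with Proposition~\ref{self-determinacy is all those nice things}(d) to kill the second part under $R\circ i_{q_s}$. The paper phrases the last step as $R(i_{q_s}(e_\eta)) = 0$ for each $\eta\in\Ga_{q_s}''$ rather than treating $y$ as a whole, but the content is identical.
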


\begin{proof}
Note that $x - r_{q_s}'(x) = \sum_{\eta\in\Ga_{q_s}''}e_\eta^*(x)e_\eta$ and hence $i_{q_s}'(r_{q_s}'(x)) = R(i_{q_s}(r_{q_s}'(x))) = R(i_{q_s}((x))) - R(i_{q_s}(\sum_{\eta\in\Ga_{q_s}''}e_\eta^*(x)e_\eta))$. We will show that $R(i_{q_s}(e_\eta)) = 0$ for all $\eta\in\Ga_{q_s}''$, which clearly yields the desired result. By Proposition \ref{complement gives subspace},  $i_{q_s}(e_\eta)$ is in $\langle\{d_\ga:\ga\in\Ga_q''\}\rangle$ and by Proposition \ref{self-determinacy is all those nice things} (d) the conclusion follows.
\end{proof}

\begin{prp}\label{self-determined defines bd space}
Let $\Xbd$ be a Bourgain-Delbaen space and $\Ga'$ be a self-determined subset of $\Ga$. Then the sequence $(i_{q_s}')_{s=1}^\infty$ is compatible, hence it defines a Bourgain-Delbaen space $\Xq$.
\end{prp}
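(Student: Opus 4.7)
The plan is to verify, for $s<t$ in the enumeration of $S$ and any $x\in\ell_\infty(\Ga_{q_s}')$, the identity
\[
i_{q_s}'(x) \;=\; i_{q_t}'\bigl(r_{q_t}'(i_{q_s}'(x))\bigr),
\]
which is exactly compatibility for the primed sequence. The two tools I would use are Lemma \ref{commute} (which essentially says $R$ intertwines $i_{q_s}$ with $i_{q_s}'$) and the known compatibility of the original $(i_q)_q$. Everything else is bookkeeping between restrictions to $\Ga$, $\Ga_{q_s}$, $\Ga'$, and $\Ga_{q_s}'$.

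First I would fix $x\in\ell_\infty(\Ga_{q_s}')$ and identify it with its extension-by-zero $\tilde x\in\ell_\infty(\Ga_{q_s})$, so that $r_{q_s}'(\tilde x)=x$. Lemma \ref{commute} then gives
\[
i_{q_s}'(x)\;=\;i_{q_s}'(r_{q_s}'(\tilde x))\;=\;R(i_{q_s}(\tilde x)).
\]
Next I would apply compatibility of $(i_q)_q$ at the pair $q_s<q_t$ to rewrite $i_{q_s}(\tilde x)=i_{q_t}(r_{q_t}(i_{q_s}(\tilde x)))$, and then apply $R$ to both sides to obtain
\[
i_{q_s}'(x)\;=\;R\bigl(i_{q_t}(r_{q_t}(i_{q_s}(\tilde x)))\bigr).
\]
Setting $y=r_{q_t}(i_{q_s}(\tilde x))\in\ell_\infty(\Ga_{q_t})$ and invoking Lemma \ref{commute} once more at level $q_t$ yields $R(i_{q_t}(y))=i_{q_t}'(r_{q_t}'(y))$, so
\[
i_{q_s}'(x)\;=\;i_{q_t}'(r_{q_t}'(y)).
\]

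To close the loop I would check that $r_{q_t}'(y)=r_{q_t}'(i_{q_s}'(x))$. Since $R$ is literal restriction to $\Ga'$ and $r_{q_t}$ is restriction to $\Ga_{q_t}$, both sides agree with the restriction of $i_{q_s}(\tilde x)$ to $\Ga_{q_t}'=\Ga'\cap\Ga_{q_t}$, using on the right the already-established equality $i_{q_s}'(x)=R(i_{q_s}(\tilde x))$. Substituting gives the desired identity. The only conceptual subtlety (hence the ``main obstacle,'' though it is quite mild) is keeping straight the three different restriction maps $R$, $r_{q_t}$, and $r_{q_t}'$ and the two identifications of $\ell_\infty(\Ga_{q_s}')$ as a subspace of $\ell_\infty(\Ga_{q_s})$ and of $\ell_\infty(\Ga')$; once that is done, the uniform bound $\sup_s\|i_{q_s}'\|\le\sup_q\|i_q\|$ noted in Notation \ref{some prime notation} together with the just-proved compatibility ensures $\Xq$ is a well-defined Bourgain--Delbaen space.
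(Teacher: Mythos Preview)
Your proof is correct and follows essentially the same approach as the paper: both arguments reduce to two applications of Lemma~\ref{commute} together with the compatibility of the original sequence $(i_q)_q$, differing only in the direction the chain of equalities is traversed and in your more explicit bookkeeping with the extension-by-zero $\tilde x$.
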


\begin{proof}
Fix $s$, $t$ in $\N$ with $s < t$ and $x\in\ell_\infty(\Ga_{q_s}')$, we will show that $i_{q_s}'(x) = i_{q_t}'(r_{q_t}'(i_{q_s}'(x)))$. Define $y = r_{q_t}'(i_{q_s}'(x))$ and observe that $r_{q_t}'\circ R = r_{q_t}'\circ r_{q_t}$, which in conjunction with $i_{q_s}' = R\circ i_{q_s}$, yields $y = r_{q_t}'(r_{q_t}(i_{q_s}(x)))$. Applying Lemma \ref{commute} and using the compatibility of $(i_q)_q$ we obtain
\begin{eqnarray*}
i_{q_t}'\left(r_{q_t}'\left(i_{q_s}'(x)\right)\right) &=& i_{q_t}'\left(r_{q_t}'\left(r_{q_t}\left(i_{q_s}(x)\right)\right)\right)
=  R\left(i_{i_{q_t}}\left(r_{q_t}\left(i_{q_s}(x)\right)\right)\right)\\ &=& R(i_{q_s}(x)) = i_{q_s}'(x).
\end{eqnarray*}
Since $\sup_{s}\|i_{q_s}'\| \leqslant \sup_q\|i_q\|$, the proof is complete.
\end{proof}

\begin{ntt}\label{some extra-prime notation}
Given a Bourgain-Delbaen space $\Xbd$ as well as a self-determined subset $\Ga'$ of $\Ga$, we denote by $(d_\ga')_{\ga\in\Ga'}$ the vectors that span the space $\Xq$. Moreover, for $\ga\in\Ga'$ we denote by $d_\ga^{\prime*}$ and $c_\ga^{\prime*}$ the corresponding functionals from \cite[Definition 2.14]{AGM}, whereas for the evaluation functionals on $\gamma$'th coordinate we retain the symbol $e_\ga^*$. We also denote by $(M_s')_{s=0}^\infty$ the FDD of $\Xq$ as defined in \cite[Proposition 2.8]{AGM} and by $P_E'$ the corresponding projections.
\end{ntt}

\begin{rmk}\label{quotient map}
Note that Lemma \ref{commute} easily implies that for every $\ga\in\Ga$, $R(d_\ga)$ is in $\Xq$. Hence, $R:\Xbd\rightarrow\Xq$ is a well defined linear operator of norm at most one. Moreover, the following hold:
\begin{itemize}

\item[(i)] for all $\ga\in\Ga'$, $R(d_\ga) = d_\ga'$,

\item[(ii)] for all $\ga\in\Ga\setminus\Ga'$, $R(d_\ga) = 0$,

\item[(iii)] for all $\ga\in\Ga'$, $R^{*}(e_\ga^*) = e_\ga^*$,

\item[(iv)] for all $\ga\in\Ga'$, $R^{*}(d_\ga^{\prime*}) = d_\ga^*$ and

\item[(v)] for all $\ga\in\Ga'$, $R^{*}(c_\ga^{\prime*}) = c_\ga^*$.

\end{itemize}
The first assertion is easily implied by Lemma \ref{commute} while the second one clearly follows from Proposition \ref{self-determinacy is all those nice things} (d). The third assertion is an easy consequence of the definition of $R$, the fourth follows from the first two ones while the last one follows from (iii) and (iv).
\end{rmk}

\begin{prp}\label{quotient onto prime space}
Let $\Xbd$ be a Bourgain-Delbaen space, $\Ga'$ be a self-determined subset of $\Ga$ and $Y$ be the closed linear span of $\{d_\ga:\ga\in\Ga\setminus\Ga'\}$. Then $R$ is onto $\Xq$ and its kernel is the space $Y$. Hence, $\Xbd/Y$ is isomorphic to $\Xq$.
\end{prp}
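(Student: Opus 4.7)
The plan is to split the proof into two pieces: first identify the kernel of $R$ as $Y$, and then show that the induced map $\tilde R:\Xbd/Y\to\Xq$ is an isomorphism.

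For the kernel, $Y\subseteq\ker R$ is immediate from Remark \ref{quotient map}(ii) and the closedness of $\ker R$. For the reverse inclusion, I first verify (checking on basis generators $d_\eta$) the identity $R\circ P_{[1,q]} = P'_{[1,s]}\circ R$, where $s$ is the largest index with $q_s\leq q$. Given $x\in\ker R$, this forces $R(P_q(x)) = 0$ for every $q$. Expanding $P_q(x) = \sum_{\ga\in\De_q}d_\ga^*(x)d_\ga$ and applying $R$, which by Remark \ref{quotient map} sends $d_\ga\mapsto d_\ga'$ on $\Ga'$ and $d_\ga\mapsto 0$ off $\Ga'$, yields $\sum_{\ga\in\De_q'}d_\ga^*(x)d_\ga' = 0$. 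Linear independence of $\{d_\ga':\ga\in\De_q'\}$ in $\Xq$ gives $d_\ga^*(x) = 0$ on $\De_q'$, hence $P_q(x)\in\langle\{d_\ga:\ga\in\De_q''\}\rangle\subseteq Y$. Summing the FDD expansion and using that $Y$ is closed gives $x\in Y$.

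Once $\ker R = Y$, the induced map $\tilde R:\Xbd/Y\to\Xq$ is injective, $1$-Lipschitz, and has dense range, sending each $d_\ga+Y$ with $\ga\in\Ga'$ to $d_\ga'$. To promote $\tilde R$ to an isomorphism I will show it is bounded below by $1/C_0$, where $C_0=\sup_q\|i_q\|$, and by density and continuity it suffices to check this on $\pi(\langle\{d_\ga:\ga\in\Ga'\}\rangle)\subseteq\Xbd/Y$, where $\pi$ denotes the quotient map. Fix any finite combination $x' = \sum_{\ga\in F}c_\ga d_\ga$ with $F\subseteq\Ga'_{q_s}$, and consider the candidate lift
\[
z := i_{q_s}(j_s(r'_{q_s}(R(x')))),
\]
where $j_s:\ell_\infty(\Ga'_{q_s})\hookrightarrow\ell_\infty(\Ga_{q_s})$ is extension by $0$ on $\Ga''_{q_s}$. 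By Lemma \ref{commute}, $R(z) = i'_{q_s}(r'_{q_s}(R(x')))$. Compatibility of $(i'_{q_s})_s$ (Proposition \ref{self-determined defines bd space}) gives $d_\ga' = i'_{q_s}(r'_{q_s}(d_\ga'))$ for every $\ga\in\Ga'_{q_s}$, so $R(x')\in\mathrm{image}(i'_{q_s})$, which yields $R(z) = R(x')$. Therefore $z-x'\in\ker R = Y$, and
\[
\|x'+Y\|_{\Xbd/Y}\leq\|z\|\leq C_0\|r'_{q_s}(R(x'))\|_\infty\leq C_0\|R(x')\|_{\Xq}.
\]

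Combined with the trivial bound $\|R(x')\|_{\Xq}\leq\|x'+Y\|_{\Xbd/Y}$, this gives a uniform two-sided norm equivalence on a dense subspace of $\Xbd/Y$, which propagates to all of $\Xbd/Y$ by continuity. Hence $\tilde R$ is bounded below with closed, dense range, so it is an isomorphism and $\Xbd/Y\simeq\Xq$. The main obstacle I expect is selecting the correct lift $z$: it must both map to $R(x')$ under $R$ and have norm controlled by $\|R(x')\|_{\Xq}$, and both properties hinge on the observation that $R(x')$ already lies in the image of the extension operator $i'_{q_s}$, so that it is canonically recovered from $r'_{q_s}(R(x'))$.
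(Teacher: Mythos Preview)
Your proof is correct and takes essentially the same approach as the paper. The kernel argument is just a more detailed unpacking of the paper's one-line observation that Remark~\ref{quotient map} together with the Schauder basis property of $(d_\ga)_\ga$ and $(d_\ga')_\ga$ forces $\ker R = Y$; and your bounded-below argument constructs exactly the same lift $z = i_{q_s}(j_s(y))$ with $y = r'_{q_s}(R(x'))$ that the paper builds (there phrased as: given $x = i'_{q_s}(y)$ in the unit ball of $\Xq$, set $z = i_{q_s}(y)$ and observe $\|z\|\leqslant C_0$ and $R(z)=x$). The only cosmetic difference is that the paper packages surjectivity via the criterion ``closure of $R[\{\|x\|\leqslant C_0\}]$ contains the unit ball of $\Xq$'', whereas you phrase it as ``$\tilde R$ is bounded below by $1/C_0$ on a dense set and has dense range''; both rest on Lemma~\ref{commute} in the identical way.
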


\begin{proof}
To conclude that $R$ is onto, it suffices to show that the closure of $R[A]$ contains $B$, where $A = \{x\in\Xbd: \|x\| \leqslant \sup_q\|i_q\|\}$ and $B$ is the unit ball of $\Xq$. To this end, let $x$ be in the linear span of $\{d_\ga':\ga\in\Ga'\}$, with $\|x\|\leqslant 1$. Then there is $s\inn\cup\{0\}$ and $x\in\ell_\infty(\Ga_{q_s}')$ so that $x = i_{q_s}'(y)$. If we define $z = i_{q_s}(y)$, then $\|z\| \leqslant \|i_{q_s}\|$ and by Lemma \ref{commute}, $R(z) = x$. To conclude the proof, observe that Remark \ref{quotient map}, in conjunction that $((d_\ga)_{\ga\in\De_q})_q$ and $((d_\ga')_{\ga\in\De_{q_s}'})_s$ are be Schauder bases for $\Xbd$ and $\Xq$ respectively, yields that $\ker R = Y$.
\end{proof}

The Proposition below and the remark following it, state that if $\ga\in\Ga'$ and the action of the extension function $c_\ga^*$ is understood, then the action of $c_\ga^{\prime*}$ is understood as well. This is useful for determining the evaluation analysis of the coordinate $\ga$.

\begin{prp}\label{analysis invariant}
Let $\Xbd$ be a Bourgain-Delbaen space and $\Ga'$ be a self-determined subset of $\Ga$. Let moreover $\ga$ be in $\Ga'$ and assume that there are a finite subset $F$ of $\Ga'$, scalars $(\la_\eta)_{\eta\in F}$ and intervals  $(E_\eta)_{\eta\in F}$ of $\N$ so that $c_\ga^* = \sum_{\eta\in F}\la_\eta e_\eta^*\circ P_{E_\eta}$. Then $c_\ga^{\prime*} = \sum_{\eta\in F}\la_\eta e_\eta^*\circ P_{E_\eta'}'$, where  for $\eta\in F$, $E_\eta' = \{s\inn\cup\{0\}: q_s\in E_\eta\}$.
\end{prp}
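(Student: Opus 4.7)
The plan is to reduce the identity to a computation on $\Xbd$ using the quotient map $R : \Xbd \to \Xq$ and the facts collected in Remark \ref{quotient map}. Since $R$ is onto (Proposition \ref{quotient onto prime space}), its adjoint $R^*$ is injective, so to verify the claimed formula for $c_\ga^{\prime*}$ it suffices to show that both sides have the same image under $R^*$.

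The first and key step is to establish the commutation
\[
P_{E_\eta'}'\circ R \;=\; R\circ P_{E_\eta}.
\]
This I would check on a basis element $d_\zeta$ of $\Xbd$: if $\zeta \in \Ga\setminus\Ga'$ then $R(d_\zeta)=0$ by Remark \ref{quotient map}(ii) and $R(P_{E_\eta}(d_\zeta))=0$ as well, while if $\zeta \in \De_{q_s}'$ for some $s$, then $R(d_\zeta) = d_\zeta'$ lies in $M_s'$, and both sides equal $d_\zeta'$ precisely when $q_s\in E_\eta$, i.e.\ when $s\in E_\eta'$. The definition $E_\eta' = \{s : q_s\in E_\eta\}$ is designed exactly to make this work.

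Once the commutation is in hand, for $\eta\in F\subset\Ga'$ I compute, using Remark \ref{quotient map}(iii) (i.e.\ $R^*(e_\eta^*)=e_\eta^*$),
\[
R^*\!\left(e_\eta^*\circ P_{E_\eta'}'\right)(x) \;=\; e_\eta^*\bigl(P_{E_\eta'}'(R(x))\bigr) \;=\; e_\eta^*\bigl(R(P_{E_\eta}(x))\bigr) \;=\; e_\eta^*\bigl(P_{E_\eta}(x)\bigr)
\]
for every $x\in\Xbd$. Summing with the scalars $\la_\eta$ gives
\[
R^*\!\left(\sum_{\eta\in F}\la_\eta\, e_\eta^*\circ P_{E_\eta'}'\right) \;=\; \sum_{\eta\in F}\la_\eta\, e_\eta^*\circ P_{E_\eta} \;=\; c_\ga^*.
\]

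To finish, I invoke Remark \ref{quotient map}(v), which gives $R^*(c_\ga^{\prime*}) = c_\ga^*$. Hence $R^*$ sends $c_\ga^{\prime*} - \sum_{\eta\in F}\la_\eta e_\eta^*\circ P_{E_\eta'}'$ to zero, and the injectivity of $R^*$ forces the desired equality in $(\Xq)^*$. I expect the only delicate point to be writing the commutation lemma $P_{E'}'\circ R = R\circ P_E$ cleanly on the FDDs $(M_q)_q$ and $(M_s')_s$; everything else is formal manipulation using the identities already established in Remark \ref{quotient map}.
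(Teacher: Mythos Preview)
Your argument is correct. The paper's proof is a slightly more direct version of the same idea: rather than passing through $R^*$ and its injectivity, it simply checks that the two functionals agree on every basis vector $d_\xi'$ of $\Xq$, using Remark~\ref{quotient map}(i),(v) to write $c_\ga^{\prime*}(d_\xi') = c_\ga^*(d_\xi)$ and then unwinding both sides exactly as in your commutation check. Your commutation relation $P_{E'}'\circ R = R\circ P_E$ is in fact what the paper records immediately afterward as Remark~\ref{bars cancel out and stuff}, so the two arguments differ only in packaging; the paper avoids invoking Proposition~\ref{quotient onto prime space} (surjectivity of $R$), but nothing is gained or lost either way.
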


\begin{proof}
We will show that for all $\xi$ in $\Ga'$, $c_\ga^{\prime*}(d_\xi') = \sum_{\eta\in F}\la_\eta e_\eta^*\circ P_{E_\eta'}'(d_\xi')$. Fix $\xi\in\Ga'$ with $\xi\in\De_{q_s}'$.  Remark \ref{quotient map} (i) and (v) yield $c_\ga^{\prime*}(d_\xi') = c_\ga^*(d_\xi)$ and hence, setting $F' = \{\eta\in F: q_s\in E_\eta\}$, we obtain
\begin{eqnarray*}
c_\ga^{\prime*}(d_\xi')&=& \sum_{\eta\in F}\la_\eta e_\eta^*\circ P_{E_\eta}(d_\xi) = \sum_{\eta\in F'}\la_\eta e_\eta^*(d_\xi)\\ &=& \sum_{\eta\in F'}\la_\eta e_\eta^*(d_\xi') = \sum_{\eta\in F}\la_\eta e_\eta^*\circ P_{E_\eta'}'(d_\xi').
\end{eqnarray*}
\end{proof}

\begin{rmk}\label{bars cancel out and stuff}
The above argument actually yields that if $\ga$, $\xi$ are in $\Ga'$ and $E$ is an interval of $\N$, then $e_\ga^*\circ P_E(d_\xi) = e_\ga^*\circ P_{E'}'(d_\xi')$.
\end{rmk}

\begin{rmk}
In \cite{FOS} the authors introduce a method of embedding a separable Banach space $X$ into a Bourgain-Delbaen space $Z$ satisfying certain properties, in particular if $X$ has separable dual then $Z^*$ is isomorphic to $\ell_1$. This method comprises of two steps. In the first one, $X$ is embedded into a Bourgain-Delbaen space $Y = \Xbd$ whereas in the second one, the space $\Xbd$ is ``augmented'' to obtain a space $Z = \mathfrak{X}_{(\bar{\Ga}_q,\bar{i}_q)}$, with $\bar{\Ga} = \cup_q\bar{\Ga}_q$ a suitable superset of $\Ga = \cup_q\Ga_q$ so that a copy of $X$ is naturally preserved in $Z$. As it is stated in that paper, the restriction operator $R$ onto $\Ga$ maps elements of $Z$ to elements of $Y$. We observe that $\Ga$ is a self determined subset of $\bar{\Ga}$ and hence, $R:Z\rightarrow Y$ is a quotient map which moreover preserves a copy of $X$. In \cite{7a} it is shown that if $X$ is super-reflexive, $Z$ can be chosen to satisfy the scalar-plus-compact property. As we will also comment later,
 the Argyros-Haydon space from \cite{AH} is a quotient of the mixed-Tsirelson Bourgain-Delbaen space $\BmT$ defined in that paper.
\end{rmk}

\section{The definition of the space $\X$}

In this section we define the space $\X$, combining the method from \cite{AH} with that form \cite{AM2}. We start by recalling the definition of the Bourgain-Delbaen mixed-Tsirelson space $\mathfrak{B}_{\mathrm{mT}}$ from \cite{AH}, in fact a slight variation of it, and then we define the space $\X$ as a quotient of $\mathfrak{B}_{\mathrm{mT}}$ by selecting an appropriate self-determined subset of the set $\bar \Ga$ associated to $\mathfrak{B}_{\mathrm{mT}}$. We follow the notation from both papers \cite{AH} and \cite{AM2} and when they come in conflict, we shall use the one from \cite{AH}.

\subsection{The space $\BmT$}
In \cite{AH} a Bourgain-Delbaen space $\mathfrak{B}_{\mathrm{mT}}$ is presented which is based on a mixed-Tsirelson space. We slightly modify this space but still denote it by $\mathfrak{B}_{\mathrm{mT}}$. This construction can, by now, be considered standard and therefore we do not include all of the details. We shall use notation such as $\bar{\Ga}$, $\bar{\Ga}_q$, $\bar{i}_q$, $\bar{c}_\ga^*$, $\bar{d}_\ga^*$, $\bar{d}_\ga$, $(\bar{M}_q)_q$ and $\bar{P}_E$ to refer to the corresponding components of the space $\mathfrak{B}_{\mathrm{mT}}$, as we reserve the casual notation (i.e. $\Ga_q$, $c_\ga^*$ etc) for the space $\X$ that we will define in the sequel. We start with a sequence of pairs of natural numbers $(m_j,n_j)_{j=1}^\infty$ satisfying \cite[Assumption 2.3, page 5]{AH}. We shall use, without mentioning it, properties of this sequence such as $\sum_j1/m_j < 1/3$ and $\sum_{j>i}1/m_j < 1/m_i$, however we make the additional assumption that $m_1\geqslant 8$, which is not needed in \cite{AH}.

\begin{prp}\label{mtbd}
There exists a Bourgain-Delbaen space $\mathfrak{B}_{\mathrm{mT}} = \mathfrak{X}_{(\bar{\Ga}_q,\bar{i}_q)}$, with $\sup_q\|\bar{i}_q\|\leqslant 2$, so that $\bar{\De}_1 = \{0\}$ and for $q\inn$
\begin{equation*}
\begin{split}
\bar{\De}_{q+1} =& \bigcup_{j=0}^{q+1} \left\{(q+1, m_j^{-1}, b^*): b^*\in B_{0,n}\right\}\\
&\begin{split}\cup\bigcup_{p=0}^{q-1}\bigcup_{j=0}^p \{&(q+1, \xi, m_j^{-1}, b^*): \xi\in\bar{\De}_p, \we(\xi) = m_j^{-1}, \ag(\xi) < n_j,\\
& b^*\in B_{p,q}\}
\end{split}
\end{split}
\end{equation*}
where for each $0\leqslant p < q$, $B_{p,q}$ is the set of all linear combinations
\begin{equation}\label{some small but needed change in the definition of the dense subsets}
b^* = \sum_{\eta\in\bar{\Ga}_q\setminus\bar{\Ga}_p}\la_\eta e_\eta^*\circ \bar{P}_{E_\eta}
\end{equation}
where $\sum_{\eta\in\bar{\Ga}_q\setminus\bar{\Ga}_p}|\la_\eta|\leqslant 1$, each $\la_\eta$ is a rational number with denominator dividing the quantity $\mathcal{N}_{q+1} = (2^q\#\bar{\Ga}_q)!$ and each $E_\eta$ is an interval of $(p,q]$. For each $\ga\in \bar{\De}_{q+1}$, $\ra(\ga) = q+1$ and if $\ga = (q+1, m_j^{-1}, b^*)$, $\ag(\ga) = 1$, $\we(\ga) = m_j^{-1}$ and
\begin{subequations}
\begin{equation}\label{age one bar}
\bar{c}_\ga^* = \frac{1}{m_j}b^*
\end{equation}
whereas if $\ga = (q+1, \xi, m_j^{-1}, b^*)$, then $\ag(\ga) = \ag(\xi) + 1$, $\we(\ga) = m_j^{-1}$ and
\begin{equation}\label{older age bar}
\bar{c}_\ga^* = e_\xi^* + \frac{1}{m_j}b^*.
\end{equation}
\end{subequations}
\end{prp}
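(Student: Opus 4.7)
The construction proceeds by induction on $q$ along the usual Bourgain-Delbaen pattern of \cite[Section 3]{AH}; the only novelties are that coefficients in $B_{p,q}$ are discretised to rationals with common denominator $\mathcal{N}_{q+1}$ (which keeps $\bar{\Ga}$ countable) and that the extra hypothesis $m_1 \geqslant 8$ is imposed. I would begin with $\bar{\Ga}_1 = \bar{\De}_1 = \{0\}$ and $\bar{i}_1 = \operatorname{id}$, and at the inductive step assume $(\bar{\Ga}_p, \bar{i}_p)_{p \leqslant q}$, the rank/age/weight labels on $\bar{\Ga}_q$, and the uniform bound $\|\bar{i}_p\| \leqslant 2$ have been constructed. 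Since every formal tuple in the recipe for $\bar{\De}_{q+1}$ refers only to data already present in $\bar{\Ga}_q$, the set $\bar{\De}_{q+1}$ is finite and canonically determined; put $\bar{\Ga}_{q+1} = \bar{\Ga}_q \cup \bar{\De}_{q+1}$, assign rank, age, weight to each new $\ga$ as prescribed, read off $\bar{c}_\ga^*$ from the tuple via \eqref{age one bar}--\eqref{older age bar}, and define the extension by $\bar{i}_{q+1}(x)(\ga) = x(\ga)$ for $\ga \in \bar{\Ga}_q$ and $\bar{i}_{q+1}(x)(\ga) = \bar{c}_\ga^*(\bar{i}_q(x|_{\bar{\Ga}_q}))$ for $\ga \in \bar{\De}_{q+1}$. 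Compatibility of $(\bar{i}_p)_{p \leqslant q+1}$ is immediate from this formula and the inductive compatibility of $(\bar{i}_p)_{p \leqslant q}$.

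The only non-trivial step is the uniform bound $\|\bar{i}_{q+1}\| \leqslant 2$, and this is essentially the argument of \cite[Theorem 3.6]{AH}. Concretely, for $\ga \in \bar{\De}_{q+1}$ one unwinds $\bar{c}_\ga^*$ recursively via $\bar{c}_\ga^* = e_\xi^* + m_j^{-1} b^*$ along the age chain of $\ga$ (of length strictly less than $n_j$), evaluates at $\bar{i}_q(x|_{\bar{\Ga}_q})$ using the inductive bound, and applies the standing parameter estimates $\sum_j 1/m_j < 1/3$ and $\sum_{j > i} 1/m_j < 1/m_i$ to sum the contributions coming from different weights. The extra hypothesis $m_1 \geqslant 8$ is precisely what tightens the estimate from the default AH constant to the value $2$ claimed here.

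The main obstacle is therefore this norm estimate; everything else is bookkeeping. The rational discretisation of $B_{p,q}$ does not affect the calculation, since the inequality used is only $\sum_\eta |\la_\eta| \leqslant 1$ and no arithmetic property of the $\la_\eta$ enters; the factorial growth of $\mathcal{N}_{q+1} = (2^q\#\bar{\Ga}_q)!$ is chosen only to guarantee that at later stages of the construction any real coefficient can be approximated to arbitrary accuracy by a rational with the prescribed denominator. Once $\sup_q\|\bar{i}_q\| \leqslant 2$ is established, Definition \ref{definition BD} delivers the Bourgain-Delbaen space $\mathfrak{B}_{\mathrm{mT}}$ together with all the claimed structural data.
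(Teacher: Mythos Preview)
Your approach is essentially the same as the paper's: the paper does not give an independent proof of this proposition but simply declares the construction ``standard'' and defers to \cite[Section 4]{AH}, which is exactly what you do by invoking \cite[Section 3, Theorem 3.6]{AH} and carrying out the routine inductive bookkeeping.

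One correction, however: your claim that the extra hypothesis $m_1\geqslant 8$ ``is precisely what tightens the estimate from the default AH constant to the value $2$'' is wrong. The bound $\sup_q\|\bar{i}_q\|\leqslant 2$ already holds in \cite{AH} under the weaker parameter assumptions there (roughly, $m_1\geqslant 4$ suffices via the standard $(1-2/m_1)^{-1}$ bound). The paper says explicitly that $m_1\geqslant 8$ ``is not needed in \cite{AH}''; it is imposed here for later use (see for instance the end of Case~2 in the proof of Proposition~\ref{adx zero}, where $4 + 15/m_1 \leqslant 7$ is required). So drop that sentence; the norm bound goes through exactly as in \cite{AH} with no new input.
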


\begin{rmk}\label{we need more convex combinations, MORE}
Although the definition of the space $\mathfrak{B}_{\mathrm{mT}}$ is formulated slightly differently than in \cite{AH}, the only actual difference lies in the sets $B_{p,q}$, namely the $E_\eta$'s that appear in \eqref{some small but needed change in the definition of the dense subsets} are only allowed to be the interval $(p,q]$ in \cite[Section 4]{AH}.
\end{rmk}

\subsection{Constraints in the setting of the Bourgain-Delbaen construction method}

We adapt some notation used in papers such as \cite{ABM}, \cite{AM2} to the setting of our construction. We remind that such constraints have also been used in \cite{AGM}.
\begin{itemize}

\item[(i)] Let $\Xi$ be a subset of $\bar{\Ga}$. A functional $b^*$ in $\BmT^*$ is called an $\al$-average of $\Xi$ of size $s(b^*) = n$, if there exist $1\leqslant d\leqslant n$, successive intervals $(E_i)_{i=1}^d$ of $\N$, signs $(\e_i)_{i=1}^d$ in $\{-1,1\}$ and $(\ga_i)_{i=1}^d$ in $\Xi$ so that
$$b^* = \frac{1}{n}\sum_{i=1}^d\e_ie_{\ga_i}^*\circ \bar{P}_{E_i}.$$
Observe that if $p < \min E_1$, $\max E_d \leqslant q$ and $n \leqslant (2^q\#\bar{\Ga}_q)!$, then $b^*\in B_{p,q}$.

\item[(ii)] A (finite or infinite) sequence of $\al$-averages $(b^*_k)_k$ of $\Xi$ is called very fast growing, if there are non-negative integers $0\leqslant p_1 < q_1 < p_2 < q_2 < \cdots$ so that $b_k^*\in B_{p_k,q_k}$ for $k = 1,2,\ldots$ and $s(b_k^*) \geqslant \mathcal{N}_{q_{k-1}+1}$ for $k > 1$.

\end{itemize}
For the definition of $\mathcal{N}_q$ see below \eqref{some small but needed change in the definition of the dense subsets}. Note that a subsequence of a very fast growing sequence is itself very fast growing.

\begin{rmk}
\label{sizes indeed increase}
The above definition implies that if $(b_k^*)_k$ is very fast growing, then $s(b_k^*)<s(b_{k+1}^*)$ for all $k$.
\end{rmk}

\subsection{The tree of special sequences}\label{subsection tree}
We denote by $\mathcal{Q}$ the set of all finite sequences of pairs $\{(\ga_1, x_1),\ldots,(\ga_k, x_k)\}$ satisfying the following:
\begin{itemize}

\item[(i)] $\ga_i\in\bar{\Ga}$ with $\ra(\ga_i)\geqslant \min \ran x_i$ for $i=1,\ldots,k$ and

\item[(ii)] the $x_1,\ldots,x_k$ are finite linear combinations of $(\bar{d}_\ga)_{\ga\in\bar{\Ga}}$ with rational coefficients, which are successive with respect to the FDD $(\bar{M}_q)_q$.

\end{itemize}
We choose  a one-to-one function $\sigma:\mathcal{Q}\rightarrow\N$, called the coding function, so that for every $\{(\ga_1, x_1),\ldots,(\ga_k, x_k)\}\in\mathcal{Q}$
\begin{equation}\label{coding growth}
\sigma\left(\left\{\left(\ga_1, x_1\right),\ldots,\left(\ga_k, x_k\right)\right\}\right) > \we(\ga_k)^{-1}\max \supp x_k
\end{equation}
where the support $x_k$ is considered with respect to the FDD $(\bar{M}_q)_q$.

A finite sequence $\{(\ga_k, x_k)\}_{k=1}^d\in\mathcal{Q}$ is called a special sequence if:
\begin{itemize}

\item[(i)] $\we(\ga_1) = m_1^{-1}$ and

\item[(ii)] if $d\geqslant 2$ then $\we(\ga_k) = m_{\sigma((\ga_1, x_1),\ldots,(\ga_{k-1}, x_{k-1}))}^{-1}$ for $k=2,\ldots,d$.

\end{itemize}
We note by $\mathcal{U}$ the tree of all special sequences, endowed with the natural ordering ``$\sqsubseteq$'' of initial segments.

\begin{rmk}\label{special weights decreasing}
Note that if $\{(\ga_k, x_k)\}_{k=1}^d$ is a special sequence, then by \eqref{coding growth} $\we(\ga_1) > \cdots > \we(\ga_d)$.
\end{rmk}

\begin{dfn}\label{incomparable naturals}
We say that two distinct natural numbers $i$, $j \geqslant 2$  are incomparable if one of the following holds:
\begin{itemize}

\item[(i)] neither $i$ nor $j$ is in $\sigma(\mathcal{Q})$ or

\item[(ii)] both $i$ and $j$ are in $\sigma(\mathcal{Q})$ and $\sigma^{-1}(i)$, $\sigma^{-1}(j)$ are incomparable in the ordering of $\mathcal{U}$.

\end{itemize}
\end{dfn}

\subsection{The \ac-averages}
In a similar manner as in \cite{AM2}, we define specific types of averages, based on the tree $\mathcal{U}$ and the notion of comparability of natural numbers from Definition \ref{incomparable naturals}.

\begin{dfn}\label{def averages}
Let $\Xi$ be a subset of $\bar{\Ga}$, $d\inn$, $E_1<\cdots<E_d$ be intervals of $\N$ and $\ga_i\in\Xi$ with $\ra(\ga_i)\geqslant \min E_i$ for $i=1,\ldots, d$  and $(\we(\ga_i))_{i=1}^d$ is strictly decreasing.

\begin{itemize}

\item[(i)] The sequence of pairs $(\ga_i, E_i)_{i=1}^d$ is called incomparable, if choosing $j_i$ so that $\we(\ga_i) = m_{j_i}^{-1}$, then the natural numbers $j_i$, $i=1,\ldots,d$ are pairwise incomparable, in the sense of Definition \ref{incomparable naturals}. In this case, if $n\inn$ with $d\leqslant n$ and $(\e_i)_{i=1}^d$ are any signs in $\{-1,1\}$ we call the average
$$ b^* = \frac{1}{n}\sum_{i=1}^d\e_ie_{\ga_i}^*\circ \bar{P}_{E_i}$$
an \ic-average of $\Xi$.

\item[(ii)] The sequence of pairs $(\ga_i, E_i)_{i=1}^d$ is called comparable, if there exist $m\inn$ with $d\leqslant m$, $\{(\eta_1, x_1),\ldots,(\eta_m, x_m)\}\in\mathcal{U}$ and $1\leqslant k_1<\cdots<k_d\leqslant m$ so that the following are satisfied:
    \begin{itemize}

    \item[(a)] $\we(\eta_{k_i}) = \we(\ga_i)$,

    \item[(b)] if $d\geqslant 4$ then $|e_{\ga_i}^*\circ\bar{P}_{E_i}(x_{k_i}) - e_{\ga_j}^*\circ\bar{P}_{E_j}(x_{k_j})| < 1/2^i$ for $2\leqslant i < j\leqslant d-1$.

    \end{itemize}
    In this case, if $n\inn$ with $d\leqslant n$ and $(\e_i)_{i=1}^d$ is a sequence of alternating signs in $\{-1,1\}$ we call the average
    $$ b^* = \frac{1}{n}\sum_{i=1}^d\e_ie_{\ga_i}^*\circ \bar{P}_{E_i}$$
    a \co-average of $\Xi$.

\item[(iii)] The sequence of pairs $(\ga_i, E_i)_{i=1}^d$ is called irrelevant, if there exist $m\inn$ with $d\leqslant m$, $\{(\eta_1, x_1),\ldots,(\eta_m, x_m)\}\in\mathcal{U}$ and $1\leqslant k_1<\cdots<k_d\leqslant m$ so that the following are satisfied:
    \begin{itemize}

    \item[(a)] $\we(\eta_{k_i}) = \we(\ga_i)$ and

    \item[(b)] if $d\geqslant 3$ then $|e_{\ga_i}^*\circ\bar{P}_{E_i}(x_{k_i})| > 16000$ for $2=1,\ldots,d-1$.

    \end{itemize}
    In this case, if $n\inn$ with $d\leqslant n$ and $(\e_i)_{i=1}^d$ are any signs in $\{-1,1\}$ we call the average
$$ b^* = \frac{1}{n}\sum_{i=1}^d\e_ie_{\ga_i}^*\circ \bar{P}_{E_i}$$
    an \ir-average of $\Xi$.

\item[(iv)] Moreover, we call a basic average of $\Xi$, any average of the form
$$b^* = \frac{1}{n}\sum_{i=1}^d\e_i\bar{d}_{\ga_i}^*$$
where $d\leqslant n$, $\ga_i\in\Xi$ with $\ra(\ga_1) < \cdots < \ra(\ga_d)$ and $(\e_i)_{i=1}^d$ are any signs in $\{-1,1\}$. In this case we do not impose any restrictions on the weights of the $\ga_i$'s.  Note that $\bar{d}_{\ga_i}^* = e_{\ga_i}^*\circ \bar{P}_{\{\ra(\ga_i)\}}$, hence basic averages are $\al$-averages.

\end{itemize}
Any average which is of one of the forms defined above, shall be called an \ac-average of $\Xi$.
\end{dfn}

\begin{rmk}\label{subseq preserve type}
A sequence of pairs $(\ga_i, E_i)_{i=1}^d$ can be of none or of more than one of the types described in Definition \ref{def averages}. If it is of any of the first three types, then any of its subsequences is of the same type as well. Moreover, if $b^*$ is an \ac-average of $\Xi$ and $E$ is an interval of $\N$ so that $e_\ga^*\circ \bar{P}_E \neq 0$, then $e_\ga^*\circ \bar{P}_E$ is also an \ac-average of $\Xi$.
\end{rmk}

\begin{prp}\label{building averages}
Let $\Xi$ be a subset of $\bar{\Ga}$, $(\ga_i)_i$ be a sequence in $\Xi$ and $(E_i)_i$ be a sequence of successive intervals of $\N$ so that $\ra(\ga_i)\geqslant\min E_i$ for all $i\inn$ and the set $\{(\we(\ga_i))^{-1}:i\inn\}$ is unbounded. Then there exists an infinite subset $L$ of $\N$ so that for every $d\inn$ and $i_1 <\cdots < i_d$, the sequence $(\ga_{i_j}, E_{i_j})_{j=1}^d$ satisfies either (i), (ii) or (iii) of Definition \ref{def averages}.
\end{prp}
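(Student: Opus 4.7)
The plan is a multistage Ramsey argument. Write $\we(\ga_i) = m_{j_i}^{-1}$. Since $\{m_{j_i}\}_i$ is unbounded, I first pass to a subsequence on which $(j_i)_i$ is strictly increasing, so $(\we(\ga_i))_i$ is strictly decreasing, which is the standing hypothesis of all three clauses of Definition \ref{def averages}.

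I then apply Ramsey to singletons, colouring by membership of $j_i$ in $\sigma(\mathcal{Q})$ and, as a refinement, by whether $\sigma^{-1}(j_i) \in \mathcal{U}$. If some colour class in which $j_i \notin \sigma(\mathcal{Q})$ (or $\sigma^{-1}(j_i) \notin \mathcal{U}$) is selected, Definition \ref{incomparable naturals} makes every pair $j_i, j_{i'}$ incomparable, so every finite subsequence realises Definition \ref{def averages}(i) and I take $L$ to be this subsequence. Otherwise every $\sigma^{-1}(j_i)$ is a genuine special sequence. I then apply Ramsey to pairs, colouring $\{i < i'\}$ by whether $\sigma^{-1}(j_i)$ and $\sigma^{-1}(j_{i'})$ are $\sqsubseteq$-incomparable in $\mathcal{U}$, $\sigma^{-1}(j_i) \sqsubset \sigma^{-1}(j_{i'})$, or $\sigma^{-1}(j_{i'}) \sqsubset \sigma^{-1}(j_i)$. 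The incomparable colour again gives (i); the descending colour cannot persist on an infinite set, as that would produce an infinite descending chain from the finite special sequence $\sigma^{-1}(j_{i_1})$.

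In the remaining ascending case the $\sigma^{-1}(j_{i_\ell})$ form an increasing $\sqsubset$-chain in $\mathcal{U}$, so their directed union unambiguously defines a sequence $(\eta_k, x_k)_k$ whose initial segment of length $m_\ell$ equals $\sigma^{-1}(j_{i_\ell})$, with $(m_\ell)_\ell$ strictly increasing. The defining equation of a special sequence yields
$$\we(\eta_{m_\ell + 1}) = m^{-1}_{\sigma(\{(\eta_1,x_1),\ldots,(\eta_{m_\ell},x_{m_\ell})\})} = m^{-1}_{j_{i_\ell}} = \we(\ga_{i_\ell}).$$
For any $d$ and $i_1 < \cdots < i_d$ from the eventual $L$ I will therefore use the genuinely finite special sequence $\sigma^{-1}(j_{i_{d+1}}) \in \mathcal{U}$ of length $m_{d+1} \geq m_d + 1$, together with positions $k_\ell = m_\ell + 1$ for $\ell = 1, \ldots, d$; this realises condition (a) of both Definition \ref{def averages}(ii) and (iii).

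To arrange condition (b), I perform one last extraction on the scalars $\alpha_\ell = e_{\ga_{i_\ell}}^* \circ \bar{P}_{E_{i_\ell}}(x_{m_\ell + 1})$. If $(\alpha_\ell)_\ell$ is unbounded I thin so that $|\alpha_\ell| > 16000$ throughout, realising the irrelevant case (iii). If it is bounded, Bolzano--Weierstrass supplies a convergent subsequence; after reindexing so that $|\alpha_\ell - \alpha| < 2^{-(\ell + 2)}$ for its limit $\alpha$, one has $|\alpha_\ell - \alpha_{\ell'}| < 2^{-\ell}$ whenever $\ell < \ell'$, yielding the comparable case (ii). The one genuinely delicate point, as opposed to the routine Ramsey and $\mathrm{BW}$ extractions, is realising condition (a) for every $d$ simultaneously inside a single honestly finite element of $\mathcal{U}$, rather than only inside the (possibly infinite) limit object $(\eta_k, x_k)_k$; this is handled precisely by using the next chain element $\sigma^{-1}(j_{i_{d+1}})$, whose length strictly exceeds $m_d$ and hence accommodates the last required position $k_d = m_d + 1$.
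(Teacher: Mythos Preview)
Your proof is correct and follows essentially the same route as the paper's: a Ramsey dichotomy between pairwise incomparability (yielding (i)) and an ascending chain in $\mathcal{U}$, followed in the chain case by an unbounded/convergent split on the scalars $e_{\ga_{i_\ell}}^*\circ\bar P_{E_{i_\ell}}(x_{m_\ell+1})$ to obtain (iii) or (ii). Your explicit care in verifying that condition (a) is witnessed by a \emph{finite} element of $\mathcal{U}$ (via $\sigma^{-1}(j_{i_{d+1}})$) is a detail the paper's brief argument suppresses.
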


\begin{proof}
Choose $j_i$ so that $\we(\ga_i) = m_{j_i}^{-1}$. A Ramsey argument yields that passing to a subsequence either the $j_i$'s are pairwise incomparable in the sense of Definition \ref{incomparable naturals}, or the sequence $(\sigma^{-1}(i_j))_j$ is a chain of elements of $\mathcal{U}$. In the first case it easily follows that (i) is satisfied. Otherwise, we conclude that there is a sequence of pairs $\{(\eta_k, x_k)\}_{k=1}^\infty$ and a strictly increasing sequence $(d_i)_i$ of $\N$ so that for all $i\inn$, $\sigma^{-1}(j_i) = \{(\eta_k, x_k)\}_{k=1}^{d_i}$. If, passing to a subsequence, for all $i\inn$ $|e_{\ga_i}^*\circ\bar{P}_{E_i}(x_{d_{i}+1})| > 16000$ we conclude that (ii) is satisfied. Otherwise, a compactness arguments yields that passing to a further subsequence (iii) is satisfied.
\end{proof}

\subsection{The space $\X$}

We recursively choose subsets $\De_q$ of $\bar{\De}_q$ as follows: we set $\De_1 = \bar{\De}_1$ and if for $q\inn$ we have chosen the sets $\De_1,\ldots,\De_q$, set $\Ga_q = \cup_{p=1}^q\De_p$ and
\begin{equation*}
\begin{split}
\De_{q+1} =& \left\{(q+1, m_j^{-1}, b^*)\in\bar{\De}_{q+1}: b^*\;\text{is an \ac-average of}\;\Ga_q\right\}\\
&
\begin{split}\cup\left\{\vphantom{2^{\ra(\xi)}}\right.&(q+1, \xi, m_j^{-1}, b^*)\in\bar{\De}_{q+1}: \xi\in\Ga_q,\;b^*\;\text{is an \ac-average of}\;\Ga_q\\
&\left.\text{with size}\; s(b^*) \geqslant \mathcal{N}_{\ra(\xi)}\right\}.
\end{split}
\end{split}
\end{equation*}
For the definition of $\mathcal{N}_{\ra(\xi)}$ see below \eqref{some small but needed change in the definition of the dense subsets}. Note that for all $q$ the set $\De_q$ is non-empty as for $q>1$, $(q,m_1^{-1}, \bar{d}_0^*)\in\De_q$ (recall $\De_1 = \bar{\De}_1 = \{0\}$). We define $\Ga = \cup_{q}\Ga_q$.

\begin{prp}
The set $\Ga$ is a self-determined subset of $\bar{\Ga}$, hence it defines a Bourgain-Delbaen space $\Xbd = \X$ so that the restriction from $\bar{\Ga}$ to $\Ga$ defines a quotient operator $R:\BmT\rightarrow\X$.
\end{prp}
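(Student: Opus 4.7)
The plan is to verify condition (c) of Proposition \ref{self-determinacy is all those nice things}. Since $\bar{\De}_1 = \{0\} \subset \Ga$, we have $q_0 = 1$ in the notation of that proposition. The base case is immediate: because $0 \in \bar{\De}_1$ has no predecessors, $\bar{c}_0^* = 0$, so $\bar{d}_0^* = e_0^*$ and therefore $\langle\{\bar{d}_0^*\}\rangle = \langle\{e_0^*\}\rangle$.

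For the inductive inclusion, I would fix $q \geqslant 1$ and pick an arbitrary $\ga \in \De_{q+1}$, then inspect the two possible forms dictated by the recursive definition of $\De_{q+1}$. If $\ga = (q+1, m_j^{-1}, b^*)$, then formula \eqref{age one bar} gives $\bar{c}_\ga^* = \frac{1}{m_j}b^*$, while if $\ga = (q+1, \xi, m_j^{-1}, b^*)$ with $\xi \in \Ga_q$, then formula \eqref{older age bar} gives $\bar{c}_\ga^* = e_\xi^* + \frac{1}{m_j}b^*$. In both cases, the definition of $\De_{q+1}$ guarantees that $b^*$ is an \ac-average of $\Ga_q$, and so by Definition \ref{def averages} it has the form
\[
b^* = \frac{1}{n}\sum_{i=1}^d \e_i\, e_{\ga_i}^* \circ \bar{P}_{E_i}
\]
with every $\ga_i \in \Ga_q$. (In the basic-average case the intervals are singletons $\{\ra(\ga_i)\}$, but this fits the same template.) Since the remaining term $e_\xi^*$ is itself of the form $e_\xi^* \circ \bar{P}_E$ for the interval $E = [1,\ra(\xi)]$ with $\xi \in \Ga_q$, we conclude that
\[
\bar{c}_\ga^* \in \langle\{e_\eta^*\circ \bar{P}_E : \eta \in \Ga_q,\ E \subseteq \N\cup\{0\}\}\rangle,
\]
which is exactly condition (c) of Proposition \ref{self-determinacy is all those nice things}. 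Hence $\Ga$ is self-determined in $\bar{\Ga}$.

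Once self-determinacy is established, the rest is a citation: Proposition \ref{self-determined defines bd space} produces the compatible sequence $(i_{q_s}')_s$ and hence the Bourgain-Delbaen space $\mathfrak{X}_{(\Ga_{q_s}',i_{q_s}')_s}$, which we designate $\X$; Proposition \ref{quotient onto prime space} then gives that the restriction map $R : \BmT \to \X$ is surjective with kernel equal to $\overline{\langle\{\bar{d}_\ga : \ga \in \bar{\Ga}\setminus\Ga\}\rangle}$, so $R$ is a quotient operator.

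I do not anticipate any real obstacle in this proof: the entire construction of $\De_{q+1}$ was engineered precisely so that the extension functionals $\bar{c}_\ga^*$ attached to $\ga \in \Ga$ only ``see'' coordinates already placed in $\Ga$. The verification therefore reduces to reading off the definitions of the two classes of elements of $\De_{q+1}$ and the definition of \ac-averages; the only point to be mildly careful about is that the free term $e_\xi^*$ appearing in the age-$\geq 2$ case is genuinely indexed by $\xi \in \Ga_q$, which is built into the definition of $\De_{q+1}$.
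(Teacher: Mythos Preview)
Your proposal is correct and follows essentially the same route as the paper: both verify condition (c) of Proposition \ref{self-determinacy is all those nice things} by reading off the two possible forms of $\bar c_\ga^*$ from \eqref{age one bar}--\eqref{older age bar} and observing that the defining \ac-average $b^*$ (and the extra term $e_\xi^*$, with $\xi\in\Ga_q$) lie in $\langle\{e_\eta^*\circ\bar P_E:\eta\in\Ga_q,\ E\subset\N\}\rangle$. Your write-up is simply a more detailed version of the paper's one-sentence proof, including the explicit treatment of the base level $\De_1=\{0\}$ and the citation of Propositions \ref{self-determined defines bd space} and \ref{quotient onto prime space} for the quotient conclusion.
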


\begin{proof}
We will use Proposition \ref{self-determinacy is all those nice things} (c). As it clearly follows from Proposition \ref{mtbd} and the definition of the set $\Ga$, for every $q\inn$, if $\ga\in\De_{q+1}$ then there is $b^*$ in $\langle\{e_\eta^*\circ \bar{P}_E:\eta\in\Ga_q, E\subset\N\}\rangle$ and $j\inn$ so that either $\bar{c}_\ga^* = (1/m_j)b^*$, or $\bar{c}_\ga^* = e_\eta^* + (1/m_j)b^*$ for some $\eta\in\Ga_q$. We conclude that condition (c) of Proposition \ref{self-determinacy is all those nice things} is indeed satisfied.
\end{proof}

\subsection{Some remarks on the space $\X$}
As we have mentioned earlier, for the space $\X$ we shall use the standard notation $d_\ga$, $c_\ga^*$, $P_E$ etc. Henceforth, whenever we say \ac-average, we shall mean an \ac-average of $\Ga$. Moreover, in the light of Proposition \ref{analysis invariant}, it makes sense to identify any such average either with a basic average of $\Ga$, i.e. $b^* = (1/n)\sum_{i=1}^d\e_id_{\ga_i}^*$, or with a functional $b^* = (1/n)\sum_{i=1}^d\e_ie_{\ga_i}^*\circ P_{E_i}$ so that $d$, $n$, $(E_i)_{i=1}^d$ $(\e_{i})_{i=1}^d$ and $(\ga_{i})_{i=1}^d\in \Ga_q$ satisfy one of (i), (ii) or (iii) of Definition \ref{def averages}. We remark that this is not independent of the set $\bar{\Ga}$. Nevertheless, if $\ga\in\Ga$ with $\ga = (q+1, m_j^{-1}, b^*)$ or $\ga = (q+1, \xi, m_j^{-1}, b^*)$, we can assume that $b^*$ is an \ac-average as it was just described. Furthermore, if $\ga = (q+1, m_j^{-1}, b^*)$ then by \eqref{age one bar}
\begin{subequations}
\begin{equation}
c_\ga^* = \frac{1}{m_j}\frac{1}{n}\sum_{i=1}^d\e_ie_{\ga_i}^*\circ P_{E_i},
\end{equation}
whereas if $\ga = (q+1, \xi, m_j^{-1}, b^*)$ then moreover $\ra(\xi) <\min E_1$, $n > 2^{\ra(\xi)}$ and by \eqref{older age bar}
\begin{equation}
c_\ga^* = e_{\xi}^* + \frac{1}{m_j}\frac{1}{n}\sum_{i=1}^d\e_ie_{\ga_i}^*\circ P_{E_i}.
\end{equation}
\end{subequations}
We also note that $\|i_q\| \leqslant 2$ for all $q$, hence $\|P_E\| \leqslant 4$ for all intervals $E$ of $\N$ which implies that for every \ac-average $b^*$, $\|b^*\| \leqslant 4$.

The following is a restatement of \cite[Proposition 4.5]{AH} in the present setting.

\begin{prp}\label{evaluation analysis prp}
Let $q\inn$ and $\ga\in\De_{q+1}$ with $\we(\ga) = m_j$ and $\ag(\ga) = a \leqslant n_j$. Then there exist natural numbers $0 = p_0 < p_1 < \cdots < p_a = q+1$, elements $\xi_1,\ldots,\xi_a = \ga$ of weight $m_j$ with $\xi_r \in \De_{p_r}$ for $r=1,\ldots,a$ and a very fast growing sequence of \ac-averages $(b_r^*)_{r=1}^a$  with $b_r^*\in B_{p_{r-1},p_r - 1}$ for $r=1,\ldots,a$ such that
\begin{equation}\label{evaluation analysis}
e_\ga^* = \sum_{r=1}^ad^*_{\xi_r} + \frac{1}{m_j}\sum_{r=1}^ab_r^*  = \sum_{r=1}^ad^*_{\xi_r} + \frac{1}{m_j}\sum_{r=1}^ab_r^*\circ P_{(p_{r-1},p_r)}.
\end{equation}
Moreover, if $1\leqslant t < a$, then
\begin{equation}\label{evaluation analysis detail}
e_\ga^* = e_{\xi_t}^* + \sum_{r=t+1}^ad^*_{\xi_r} + \frac{1}{m_j}\sum_{r=t+1}^ab_r^*.
\end{equation}
The form \eqref{evaluation analysis} of $e_\ga^*$ is called the evaluation analysis of $\ga$.
\end{prp}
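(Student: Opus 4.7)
The plan is to prove Proposition 2.16 by induction on the age $a$ of $\gamma$, mimicking the proof of \cite[Proposition 4.5]{AH} but tracking that each $b_r^*$ is an \ac-average of $\Ga$, which is automatic from how $\De_{q+1}$ was selected as a subset of $\bar{\De}_{q+1}$.

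For the base case $a=1$, I would observe that $\gamma\in\De_{q+1}$ with $\ag(\gamma)=1$ must be of the form $\ga = (q+1, m_j^{-1}, b^*)$, and by the construction of $\De_{q+1}$, $b^*$ is an \ac-average of $\Ga_q$ lying in $B_{0,q}$. Since $e_\ga^* = d_\ga^* + c_\ga^*$ and $c_\ga^* = (1/m_j) b^*$ by equation (2.1a), setting $p_0 = 0$, $p_1 = q+1$, $\xi_1 = \ga$ and $b_1^* = b^*$ proves the base case.

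For the inductive step, assume the result for age $a-1$. Given $\ga\in\De_{q+1}$ of age $a$, it is of the form $\ga = (q+1, \xi, m_j^{-1}, b^*)$ with $\xi\in\De_p$ of weight $m_j^{-1}$, age $a-1$, and where $b^*$ is an \ac-average of $\Ga_q$ with $s(b^*)\geqslant \mathcal{N}_{\ra(\xi)} = \mathcal{N}_p$ lying in $B_{p,q}$. By equation (2.1b) and $e_\ga^* = d_\ga^* + c_\ga^*$, one has $e_\ga^* = d_\ga^* + e_\xi^* + (1/m_j)b^*$. Applying the inductive hypothesis to $\xi$ yields an analysis $e_\xi^* = \sum_{r=1}^{a-1} d_{\xi_r}^* + (1/m_j)\sum_{r=1}^{a-1} b_r^*$ with $0 = p_0 < p_1 <\cdots < p_{a-1} = p$, $\xi_{a-1} = \xi$, $\xi_r\in\De_{p_r}$ of weight $m_j^{-1}$, and $(b_r^*)_{r=1}^{a-1}$ a very fast growing sequence of \ac-averages with $b_r^*\in B_{p_{r-1}, p_r - 1}$. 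Setting $p_a = q+1$, $\xi_a = \ga$ and $b_a^* = b^*$, substitution gives the desired identity
\begin{equation*}
e_\ga^* = \sum_{r=1}^a d_{\xi_r}^* + \frac{1}{m_j}\sum_{r=1}^a b_r^*.
\end{equation*}

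The main point to verify is that the extended sequence $(b_r^*)_{r=1}^a$ remains very fast growing. Here $b_a^* = b^*\in B_{p_{a-1}, p_a - 1}$, and the size condition requires $s(b_a^*) \geqslant \mathcal{N}_{(p_a-1 - 1)+1}$ relative to its predecessor's endpoint; tracing the definition, the needed bound is $s(b_a^*)\geqslant \mathcal{N}_{p_{a-1}}$, which is exactly the condition $s(b^*)\geqslant\mathcal{N}_{\ra(\xi)}$ built into the definition of $\De_{q+1}$. The equality with $(1/m_j)\sum_r b_r^*\circ P_{(p_{r-1}, p_r)}$ is then immediate from the definition of $B_{p_{r-1}, p_r-1}$ in equation (3.1): every constituent $e_\eta^*\circ\bar{P}_{E_\eta}$ of $b_r^*$ has $\eta\in\bar{\Ga}_{p_r-1}\setminus\bar{\Ga}_{p_{r-1}}$ and $E_\eta\subset(p_{r-1}, p_r-1] = (p_{r-1}, p_r)$, so $b_r^* = b_r^*\circ P_{(p_{r-1},p_r)}$, using that FDD projections commute and Remark 2.13 lets us pass between the $\bar{P}$'s and the $P$'s. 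Finally, equation (2.3) follows by splitting the sums in (2.2) at index $t$ and invoking the formula of the proposition applied to $\xi_t$ to recognise the first $t$ terms as $e_{\xi_t}^*$. I expect no real obstacle beyond carefully matching the interval conventions in the definition of very fast growing with the indices $(p_{r-1}, p_r - 1]$.
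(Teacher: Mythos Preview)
Your proposal is correct and follows exactly the approach the paper intends: the paper gives no explicit proof here, merely stating that this is a restatement of \cite[Proposition 4.5]{AH}, whose proof is precisely the induction on $\ag(\ga)$ that you carry out. Your identification of the key point---that the size condition $s(b^*)\geqslant\mathcal{N}_{\ra(\xi)}$ built into the definition of $\De_{q+1}$ is exactly what is needed to extend the very fast growing property to the appended $b_a^*$---is the heart of the matter, and your verification that $b_r^* = b_r^*\circ P_{(p_{r-1},p_r)}$ from the support constraints in $B_{p_{r-1},p_r-1}$ (via Proposition~\ref{analysis invariant} to pass from $\bar P$ to $P$) is correct as well.
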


 A finite inductive argument also yields the following.

\begin{prp}\label{build on vfg}
Let $j\inn$, $1\leqslant a \leqslant n_j$, $0 \leqslant p_0 < p_1 < \cdots < p_a = q+1$ with $j\leqslant p_1$ and $(b_r^*)_{r=1}^a$ be a very fast growing sequence of \ac-averages with $b_r^*\in B_{p_{r-1},p_r - 1}$ for $r=1,\ldots,a$. Then there are $\ga\in\De_{q+1}$ and $\xi_1,\ldots,\xi_a = \ga$, all of weight $m_j$, with $\xi_r \in \De_{p_r}$ for $r=1,\ldots,a$ so that $\ga$ has an evaluation analysis
\begin{equation*}
e_\ga^* = \sum_{r=1}^ad^*_{\xi_r} + \frac{1}{m_j}\sum_{r=1}^ab_r^*.
\end{equation*}
\end{prp}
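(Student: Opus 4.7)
The plan is a straightforward finite induction on the parameter $a$, constructing $\xi_1, \xi_2, \ldots, \xi_a$ one at a time by reading the definition of $\De_{q+1}$ in reverse. The statement is essentially the converse of Proposition \ref{evaluation analysis prp}, and the induction just reverses the recipe used there.

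For the base case $a=1$, set $\xi_1 = \gamma = (p_1, m_j^{-1}, b_1^*)$. To place this in $\De_{p_1}$ one needs $b_1^*$ to be an \ac-average of $\Ga_{p_1-1}$; this is the given hypothesis, combined with $b_1^* \in B_{p_0, p_1 - 1}$. The condition $j\leqslant p_1$ (imposed in the statement) guarantees $\xi_1 \in \bar{\De}_{p_1}$. The age-1 case of \eqref{age one bar} gives $c_{\xi_1}^* = (1/m_j)b_1^*$, so $e_{\xi_1}^* = d_{\xi_1}^* + (1/m_j)b_1^*$, which is the desired analysis.

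For the inductive step, suppose $\xi_{r-1}\in\De_{p_{r-1}}$ has been produced with $\we(\xi_{r-1})=m_j^{-1}$, $\ag(\xi_{r-1})=r-1$, $\ra(\xi_{r-1}) = p_{r-1}$, and the claimed partial evaluation analysis built from $b_1^*,\ldots,b_{r-1}^*$. Define $\xi_r = (p_r, \xi_{r-1}, m_j^{-1}, b_r^*)$. Membership of $\xi_r$ in $\De_{p_r}$ requires checking four items: (a) $\xi_{r-1}\in\Ga_{p_r -1}$, which holds since $p_{r-1}<p_r$; (b) $\we(\xi_{r-1}) = m_j^{-1}$ and $\ag(\xi_{r-1}) = r-1 < n_j$, both immediate from the inductive hypothesis combined with $a \leqslant n_j$; (c) $b_r^*$ is an \ac-average in $B_{p_{r-1}, p_r -1}$, which is given; and (d) the size bound $s(b_r^*) \geqslant \mathcal{N}_{\ra(\xi_{r-1})} = \mathcal{N}_{p_{r-1}}$, which is precisely the very fast growing assumption translated to our indexing (the $q_{k-1}+1$ appearing in the definition of very fast growing becomes $(p_{r-1}-1)+1 = p_{r-1}$). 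Now by \eqref{older age bar}, $c_{\xi_r}^* = e_{\xi_{r-1}}^* + (1/m_j)b_r^*$, and adding $d_{\xi_r}^*$ and invoking the inductive analysis of $e_{\xi_{r-1}}^*$ yields
\begin{equation*}
e_{\xi_r}^* \;=\; d_{\xi_r}^* + e_{\xi_{r-1}}^* + \frac{1}{m_j} b_r^* \;=\; \sum_{i=1}^r d_{\xi_i}^* + \frac{1}{m_j}\sum_{i=1}^r b_i^*,
\end{equation*}
completing the induction at $\gamma = \xi_a$.

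There is no genuine obstacle here; the argument is pure bookkeeping against the definitions. The only point that needs a line of attention is matching the two different size conventions (the $\mathcal{N}_{\ra(\xi)}$ threshold in the definition of $\De_{q+1}$ versus the $\mathcal{N}_{q_{k-1}+1}$ in the definition of very fast growing), and the hypothesis $j\leqslant p_1$ is used exactly once, to legalize the creation of $\xi_1$ inside $\bar{\De}_{p_1}$.
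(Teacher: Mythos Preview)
Your proof is correct and follows exactly the approach the paper indicates, which merely states ``a finite inductive argument also yields the following'' without giving details. One small imprecision: in check (a) you need $\ra(\xi_{r-1}) = p_{r-1} \leqslant p_r - 2$ (not just $p_{r-1} < p_r$) for membership in $\bar\De_{p_r}$, but this follows from the very fast growing hypothesis since $b_r^* \in B_{p_{r-1}, p_r-1}$ forces $p_{r-1} < p_r - 1$.
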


\subsection{Subspaces and quotients of $\mathfrak{X}_{\mathrm{AH}}$ defined by self-determined subsets}\label{sub self determined of AH}
We mention some results that can be derived by considering subspaces and quotients of the Argyros-Haydon space that are defined by self-determined sets.

\begin{rmk}\label{some further remarks on quotients}
The Argyros-Haydon space $\mathfrak{X}_{\mathrm{AH}}$ from \cite{AH} can also be obtained by finding an appropriate self-determined subset $\Ga^{\mathrm{AH}}$ of $\bar{\Ga}$, hence the space $\mathfrak{X}_{\mathrm{AH}}$ is a quotient of $\BmT$ as well.
\end{rmk}

\begin{rmk}\label{remark concerning scalar plus compact on complements of self-determined}
In \cite{KL} T. Kania and J. N. Laustsen choose a $\mathscr{L}_\infty$-subspace $Y$ of $\mathfrak{X}_{\mathrm{AH}}$ so that every bounded linear operator $T:Y\rightarrow\mathfrak{X}_{\mathrm{AH}}$ is a scalar multiple of the inclusion plus a compact operator. Following their notation, the Bourgain-Delbaen space $\mathfrak{X}_{\mathrm{AH}}$ is defined by a set $\Ga^{\mathrm{AH}}$ and the space $Y$ is the closed linear span of a subsequence $(d_{\ga})_{\ga\in\Ga'}$ of the basis, where $\Ga'$ is an appropriately chosen subset of $\Ga^{\mathrm{AH}}$. They prove that this set $\Ga'$ has the property that whenever $\ga\in\Ga'$, then $d_\ga|_{\Ga^{\mathrm{AH}}\setminus\Ga'} = 0$ (\cite[Lemma 2.5]{KL}), which by Proposition \ref{self-determinacy is all those nice things} (d) is equivalent to $\Ga^{\mathrm{AH}}\setminus\Ga'$ being self-determined. Actually, this is the only property of $\Ga'$ that they use to prove the properties of the space $Y$. Hence, they have proved the result below.
\end{rmk}

\begin{prp}\label{scalar plus compact on complements proposition}
Let $\Xbd = \mathfrak{X}_{\mathrm{AH}}$ be the $\mathscr{L}_\infty$-space with the scalar-plus-compact property from \cite{AH}. Let also $\Ga'$ be a self-determined subset of $\Ga^{\mathrm{AH}} = \cup_q\Ga_q$ and $Y = \overline{\langle d_\ga:\ga\in\Ga^{\mathrm{AH}}\setminus \Ga'\rangle}$. Then, every bounded linear operator $T:Y\rightarrow \mathfrak{X}_{\mathrm{AH}}$ is a multiple of the inclusion plus a compact operator.
\end{prp}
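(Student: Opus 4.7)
The plan is to observe that Proposition \ref{scalar plus compact on complements proposition} is essentially a reformulation of the main result of Kania--Laustsen \cite{KL} once the language of self-determined sets is in place. Concretely, set $\tilde{\Ga} = \Ga^{\mathrm{AH}} \setminus \Ga'$, so that $Y = \overline{\langle d_\ga : \ga \in \tilde{\Ga}\rangle}$. The hypothesis that $\Ga'$ is self-determined, combined with Proposition \ref{self-determinacy is all those nice things}(d), gives $e_\eta^*(d_\ga) = 0$ for every $\ga \in \tilde{\Ga}$ and $\eta \in \Ga'$, i.e.\ $d_\ga|_{\Ga'} = 0$ for all $\ga \in \tilde{\Ga}$. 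This is precisely the property that Kania and Laustsen verified (for a specific choice of subset) in \cite[Lemma 2.5]{KL} and subsequently used as the only structural input on their subset.

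Next I would read through \cite{KL} and catalogue each step of their proof that $T : Y \to \mathfrak{X}_{\mathrm{AH}}$ is of the form $\lambda I + K$. The essential ingredients appearing there are: the scalar-plus-compact property of $\mathfrak{X}_{\mathrm{AH}}$ itself from \cite{AH}; the fact that $Y$ is a $\mathscr{L}_\infty$-subspace of $\mathfrak{X}_{\mathrm{AH}}$, which in our abstract setting is Proposition \ref{complement gives subspace} applied to $\Ga'$; and the cancellation property $d_\ga|_{\Ga'} = 0$ for $\ga \in \tilde{\Ga}$. The latter is used to show that block sequences of $(d_\ga)_{\ga \in \tilde{\Ga}}$ inside $Y$ admit, as elements of $\mathfrak{X}_{\mathrm{AH}}$, the same evaluation analysis as they would relative to $Y$, so that rapidly increasing sequences of the Argyros--Haydon space may be constructed inside $Y$ and then fed into the scalar-plus-compact machinery of the ambient space. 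The conclusion $T = \lambda I + K$ is then extracted in the standard way.

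Once it is confirmed that no further feature of Kania--Laustsen's concrete $\Ga'$ is invoked anywhere, their proof can be transcribed without substantive change, yielding the claim for our arbitrary self-determined $\Ga'$. The main obstacle is not analytic but bibliographic: performing the audit that every step of their proof is controlled by the single structural property $d_\ga|_{\Ga'} = 0$ and by the ambient scalar-plus-compact property, and is not tacitly exploiting some additional feature of their explicit construction. Remark \ref{remark concerning scalar plus compact on complements of self-determined} already asserts that this audit succeeds; making it explicit and citing \cite[Lemma 2.5]{KL} via Proposition \ref{self-determinacy is all those nice things}(d) completes the argument.
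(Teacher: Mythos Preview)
Your proposal is correct and follows exactly the paper's approach: the paper does not give an independent proof but simply notes (in Remark \ref{remark concerning scalar plus compact on complements of self-determined}) that the only property of their set used by Kania--Laustsen in \cite{KL} is the vanishing condition equivalent, via Proposition \ref{self-determinacy is all those nice things}(d), to the complement being self-determined, so their argument applies verbatim. Your write-up is a more explicit unpacking of this same observation.
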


\begin{lem}\label{lemma about inclusion plus compact subsets}
Let $X$ be a Banach space with a basis $(e_i)_i$ and  assume that $A$ is a subset of $\N$ so that every bounded linear operator $T:Y = \overline{\langle\{ e_i:i\in A\}\rangle}\rightarrow X$ is a multiple of the inclusion plus a compact operator. If $B$ is a subset of $\N$ so that $Y$ isomorphically embeds into $Z  = \overline{\langle\{ e_i:i\in B\}\rangle}$, then the set $A\setminus B$ is finite. In particular, if the set $A\setminus B$ is infinite, then every bounded linear operator $T:Y\rightarrow Z$ is compact.
\end{lem}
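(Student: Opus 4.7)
The plan is to prove the stronger ``in particular'' clause, which immediately implies the main claim by a compactness obstruction. Assume $A\setminus B$ is infinite and fix an infinite sequence $(i_n)_n\subset A\setminus B$. Given a bounded linear operator $T\colon Y\to Z$, I post-compose with the inclusion $\iota_Z\colon Z\hookrightarrow X$ and apply the hypothesis on $A$ to write $\widetilde T := \iota_Z\circ T = \lambda\,\iota_Y + K$, where $\iota_Y\colon Y\hookrightarrow X$ is the inclusion, $\lambda$ is a scalar and $K\colon Y\to X$ is compact. The goal is to force $\lambda = 0$, which immediately makes $\widetilde T = K$ compact, and hence $T$ compact (its image lies in the closed subspace $Z\subseteq X$).

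The mechanism for producing $\lambda = 0$ is to evaluate $\widetilde T$ on the vectors $e_{i_n}\in Y$ and pair with the biorthogonal functionals $e_{i_n}^*$. Since $Z$ is the closed linear span of $\{e_j:j\in B\}$, every coordinate functional $e_i^*$ with $i\notin B$ vanishes on $Z$; in particular $e_{i_n}^*(\widetilde T(e_{i_n})) = 0$ because $\widetilde T(e_{i_n}) = T(e_{i_n}) \in Z$ and $i_n\notin B$. On the other hand the decomposition $\widetilde T = \lambda\,\iota_Y + K$ together with biorthogonality gives $e_{i_n}^*(\widetilde T(e_{i_n})) = \lambda + e_{i_n}^*(K(e_{i_n}))$. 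Hence $e_{i_n}^*(K(e_{i_n})) = -\lambda$ for every $n$.

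To conclude, I use compactness of $K$: since $(e_{i_n})_n$ is bounded, a subsequence of $\bigl(K(e_{i_n})\bigr)_n$ converges in norm to some $y\in X$. The biorthogonal functionals $(e_j^*)_j$ of any semi-normalized Schauder basis of $X$ are uniformly bounded in $X^*$ and weak*-null, so along that subsequence the scalars $e_{i_n}^*(K(e_{i_n})) = e_{i_n}^*(K(e_{i_n}) - y) + e_{i_n}^*(y)$ tend to $0$. Comparing with the previous identity yields $\lambda = 0$, completing the proof that $T$ is compact. For the main statement, an isomorphic embedding $S\colon Y\to Z$ would be compact by the above; but $Y$ is infinite-dimensional (since $A\supseteq A\setminus B$ is infinite), and an isomorphic embedding of an infinite-dimensional Banach space is never compact, a contradiction. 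The only delicate point in this plan is the standard weak*-nullity and uniform boundedness of the biorthogonal functionals of $(e_i)_i$, which should be acknowledged as the tacit semi-normalization hypothesis on the basis.
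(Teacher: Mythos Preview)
Your proof is correct and follows essentially the same approach as the paper's own proof: pick an infinite sequence in $A\setminus B$, use the decomposition $T=\lambda\iota_Y+K$, observe that $e_{i_n}^*(Te_{i_n})=0$ since $Te_{i_n}\in Z$ and $i_n\notin B$, then use compactness of $K$ together with the weak*-nullity of the biorthogonal functionals to force $\lambda=0$. The paper likewise begins by assuming the basis is seminormalized, so your identification of that tacit hypothesis is on point; your derivation of the main clause from the ``in particular'' clause via the non-compactness of isomorphic embeddings of infinite-dimensional spaces is a detail the paper leaves implicit.
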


\begin{proof}
We may clearly assume that the basis is seminormalized. If the set $A\setminus B$ is infinite, it contains an infinite sequence $(n_k)_k$. Let $T:Y\rightarrow Z$ be a bounded linear operator, then $T = \la I_{Y,X} + K$ with $K$ a compact operator. As $n_k\notin B$, we obtain $e_{n_k}^*(Te_{n_k}) = 0$ for all $k\inn$. By the compactness of $K$ and passing to a subsequence, there is $x_0$ in $X$ so that $(Te_{n_k} - \la e_{n_k})_k$ converges to $x_0$ in norm, which yields  $ 0 = \lim_ke_{n_k}^*(Te_{n_k}) = \la + \lim_ke_{n_k}^*(x_0) = \la$, therefore $T = K$.
\end{proof}

Proposition \ref{scalar plus compact on complements proposition} and Lemma \ref{lemma about inclusion plus compact subsets} immediately yield the following.

\begin{cor}\label{from bigger to smaller compact}
Let $\Ga_1$, $\Ga_2$ be two self-determined subsets of $\Ga^{\mathrm{AH}}$ so that $\Ga_2\setminus \Ga_1$ is infinite. If $Y = \overline{\langle d_\ga:\ga\in\Ga^{\mathrm{AH}}\setminus \Ga_1\rangle}$ and $Z = \overline{\langle d_\ga:\ga\in\Ga^{\mathrm{AH}}\setminus \Ga_2\rangle}$, then every bounded linear operator $T: Y\rightarrow Z$ is compact.
\end{cor}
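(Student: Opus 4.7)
The plan is to see the corollary as a direct combination of Proposition \ref{scalar plus compact on complements proposition} and Lemma \ref{lemma about inclusion plus compact subsets}; there is really nothing more to prove once we identify the right index sets.

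First, I would observe that the hypothesis needed to apply Lemma \ref{lemma about inclusion plus compact subsets} is already furnished by Proposition \ref{scalar plus compact on complements proposition}. Indeed, since $\Ga_1$ is self-determined, Proposition \ref{scalar plus compact on complements proposition} applied to $\Ga_1$ tells us that every bounded linear operator $T: Y \rightarrow \mathfrak{X}_{\mathrm{AH}}$ has the form $\la I_{Y,\mathfrak{X}_{\mathrm{AH}}} + K$ with $K$ compact. Enumerating the countable set $\Ga^{\mathrm{AH}}$ so that $(d_\ga)_{\ga\in\Ga^{\mathrm{AH}}}$ becomes a Schauder basis indexed by $\N$, we are precisely in the setting of Lemma \ref{lemma about inclusion plus compact subsets} with $X = \mathfrak{X}_{\mathrm{AH}}$ and $A$ corresponding to the index set $\Ga^{\mathrm{AH}}\setminus \Ga_1$.

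Next, I would set $B$ to be the index set corresponding to $\Ga^{\mathrm{AH}}\setminus \Ga_2$, so that $Z = \overline{\langle\{e_i: i\in B\}\rangle}$ in the notation of the lemma. Then
\[
A\setminus B \;=\; (\Ga^{\mathrm{AH}}\setminus\Ga_1)\cap\Ga_2 \;=\; \Ga_2\setminus\Ga_1,
\]
which is infinite by hypothesis. Given any bounded linear $T: Y\rightarrow Z$, composition with the inclusion $Z\hookrightarrow \mathfrak{X}_{\mathrm{AH}}$ lets us regard $T$ as an operator into $\mathfrak{X}_{\mathrm{AH}}$, so the ``in particular'' clause of Lemma \ref{lemma about inclusion plus compact subsets} directly yields that $T$ is compact.

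There is no serious obstacle here beyond a bookkeeping check: namely that Proposition \ref{scalar plus compact on complements proposition} really does supply the hypothesis of Lemma \ref{lemma about inclusion plus compact subsets} for \emph{every} bounded linear $T$ from $Y$ into the ambient space $\mathfrak{X}_{\mathrm{AH}}$ (not merely those landing in $Z$), which is exactly what the ``in particular'' part of the lemma needs in order to produce the scalar $\la$ before testing on the coordinates $n_k\in A\setminus B$. Once this alignment of hypotheses is in place, the proof is a one-line citation.
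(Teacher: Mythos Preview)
Your proof is correct and follows exactly the approach the paper takes: the paper simply states that Proposition \ref{scalar plus compact on complements proposition} and Lemma \ref{lemma about inclusion plus compact subsets} immediately yield the corollary, and you have spelled out precisely how, including the identification $A\setminus B = \Ga_2\setminus\Ga_1$.
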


It is not very difficult to find a continuum of self-determined subsets of $\Ga^{\mathrm{AH}}$ that pairwise satisfy the assumptions of Corollary \ref{from bigger to smaller compact}. We choose these sets in such a way that the corresponding quotients have similar properties as well. Recall that the set $\Ga^{\mathrm{AH}}$ is built using a sequence of weights $(m_j,n_j)_j$, where the even weights are used freely to define new coordinates, whereas some restrictions are applied to the odd weights. For each infinite subset $L$ of $\N$, one can define a self-determined subset $\Ga_L$ of $\Ga^{\mathrm{AH}}$, only using the weights $(m_{2j},n_{2j})_{j\in L}$. This is done so that the weights $(m_{4j},n_{4j})_{j\in L}$ are used unconditionally, whereas the weights $(m_{4j-2},n_{4j-2})_{j\in L}$ are used conditionally, i.e. they assume the role of the odd weights in the construction from \cite{AH}. For this last part, a coding function specific to the subset $\Ga_L$ needs to be used.

We observe that the subset $\Ga_L$ of $\Ga^{\mathrm{AH}}$ induces a Bourgain-Delbaen space which is qualitatively identical to the space $\mathfrak{X}_{\mathrm{AH}}[(\mathscr{A}_{n_j},1/m_j)_{j\in2L}]$ defined in \cite[Subsection 10.2]{AH}. Hence, if we choose a continuum $\{L_\alpha:\alpha\in\mathfrak{c}\}$ of infinite subsets of $\N$, with pairwise finite intersections, set $\Ga_\alpha = \Ga_{L_\alpha}$, and define the spaces $Y_\alpha = \overline{\langle\{d_\ga:\ga\in\Ga\setminus \Ga_\alpha\}\rangle}$, then the spaces $\{Y_\alpha: \alpha\in\mathfrak{c}\}$ satisfy the assumptions of Corollary \ref{from bigger to smaller compact} and the spaces $X_\alpha = \mathfrak{X}_{\mathrm{AH}}/Y_\alpha$, $\alpha\in\mathfrak{c}$ satisfy the conclusion of \cite[Theorem 10.4]{AH}, i.e. the following holds.

\begin{thm}\label{spc quotients of AH}
There is a continuum of $\mathscr{L}_\infty$-subspaces  $\{Y_\alpha:\alpha \in\mathfrak{c}\}$ of $\mathfrak{X}_{\mathrm{AH}}$, satisfying the following.
\begin{itemize}

\item[(i)] Each space $Y_\alpha$ has the scalar-plus-compact property and for every $\alpha \neq \beta$ every bounded linear operator $T:Y_\alpha\rightarrow Y_\beta$ is compact.

\item[(ii)] Each $X_\alpha = \mathfrak{X}_{\mathrm{AH}}/Y_\alpha$ is a hereditarily indecomposable space with the scalar-plus-compact property and for every $\alpha\neq\beta$ every bounded linear operator $T:X_\alpha\rightarrow X_\beta$ is compact.

\end{itemize}
\end{thm}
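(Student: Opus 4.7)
The plan is to construct the family $\{L_\alpha:\alpha\in\mathfrak{c}\}$ as an almost disjoint family of infinite subsets of $\N$ and then to verify the stated properties of the associated sets $\Ga_\alpha\subset\Ga^{\mathrm{AH}}$, spaces $Y_\alpha$ and quotients $X_\alpha$ using the machinery of this section together with the corresponding results in \cite{AH}. I start by fixing, via a standard argument, an almost disjoint family $\{L_\alpha:\alpha\in\mathfrak{c}\}$ of infinite subsets of $\N$, i.e., $L_\alpha\cap L_\beta$ is finite whenever $\alpha\neq\beta$. For each $\alpha$ I define $\Ga_\alpha=\Ga_{L_\alpha}$ exactly as in the paragraph preceding the theorem: restrict the inductive construction of $\Ga^{\mathrm{AH}}$ so that only weights of the form $m_{2j}$ with $j\in L_\alpha$ appear, using $(m_{4j},n_{4j})_{j\in L_\alpha}$ unconditionally and $(m_{4j-2},n_{4j-2})_{j\in L_\alpha}$ conditionally through a coding function $\sigma_\alpha$ specific to $\alpha$. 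The verification that each $\Ga_\alpha$ is self-determined is essentially identical to the proof given in Section 2 that $\Ga$ is self-determined in $\bar{\Ga}$ and reduces to checking condition (c) of Proposition \ref{self-determinacy is all those nice things}.

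Next I would identify $X_\alpha = \mathfrak{X}_{\mathrm{AH}}/Y_\alpha$ with the Bourgain-Delbaen space furnished by Propositions \ref{self-determined defines bd space} and \ref{quotient onto prime space}. By the very design of $\Ga_\alpha$, this quotient is qualitatively the space $\mathfrak{X}_{\mathrm{AH}}[(\mathscr{A}_{n_j},1/m_j)_{j\in 2L_\alpha}]$ of \cite[Subsection 10.2]{AH}, and hence is a hereditarily indecomposable $\mathscr{L}_\infty$-space with the scalar-plus-compact property. For $Y_\alpha$, Proposition \ref{scalar plus compact on complements proposition} already says that every $T\in\mathcal{L}(Y_\alpha,\mathfrak{X}_{\mathrm{AH}})$ is a multiple of the inclusion plus a compact operator, and specializing the codomain to $Y_\alpha$ itself yields the scalar-plus-compact property for $Y_\alpha$.

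For the cross-compactness statements I would proceed in two parallel steps. For $T:Y_\alpha\to Y_\beta$ with $\alpha\neq\beta$, I apply Corollary \ref{from bigger to smaller compact} with $\Ga_1=\Ga_\alpha$ and $\Ga_2=\Ga_\beta$; the almost disjointness of $L_\alpha$ and $L_\beta$ guarantees that $\Ga_\beta\setminus\Ga_\alpha$ is infinite, forcing $T$ to be compact. For $T:X_\alpha\to X_\beta$, I transfer the argument of \cite[Theorem 10.4]{AH} through the identification of $X_\alpha$ with the AH-type space built from $(m_{2j},n_{2j})_{j\in L_\alpha}$: since $L_\alpha\cap L_\beta$ is finite, an evaluation-analysis argument (cf.\ Proposition \ref{evaluation analysis prp}) based on the incompatibility of the coding functions $\sigma_\alpha$ and $\sigma_\beta$ forces any such $T$ to be compact. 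I expect the main obstacle to be precisely this last step: carefully verifying that the weight-incompatibility of $\Ga_\alpha$ and $\Ga_\beta$, together with the differing coding functions, permits the estimates from \cite[Theorem 10.4]{AH} to be carried over to the quotient level without substantive modification.
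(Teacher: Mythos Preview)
Your proposal is correct and follows essentially the same route as the paper: the paper's argument is precisely the paragraph preceding the theorem, namely choose an almost disjoint continuum $\{L_\alpha\}$, set $\Ga_\alpha=\Ga_{L_\alpha}$ and $Y_\alpha=\overline{\langle d_\ga:\ga\in\Ga^{\mathrm{AH}}\setminus\Ga_\alpha\rangle}$, invoke Proposition~\ref{scalar plus compact on complements proposition} and Corollary~\ref{from bigger to smaller compact} for part~(i), and identify each quotient $X_\alpha$ with $\mathfrak{X}_{\mathrm{AH}}[(\mathscr{A}_{n_j},1/m_j)_{j\in 2L_\alpha}]$ so that \cite[Theorem~10.4]{AH} yields part~(ii). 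One minor correction: the cross-compactness of $T:X_\alpha\to X_\beta$ in \cite[Theorem~10.4]{AH} is driven by the almost disjointness of the \emph{weight sets} $2L_\alpha$ and $2L_\beta$ (forcing horizontal compactness), not by incompatibility of the coding functions $\sigma_\alpha,\sigma_\beta$; the separate coding functions are needed only to secure the HI and scalar-plus-compact structure inside each $X_\alpha$.
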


Observe that for fixed $\alpha$, all three spaces $\mathfrak{X}_{\mathrm{AH}}$, $Y_\alpha$ and $X_\alpha = \mathfrak{X}_{\mathrm{AH}}/Y_\alpha$ are hereditarily indecomposable $\mathscr{L}_\infty$-spaces with the scalar-plus-compact property.

\begin{rmk}\label{non-reflexive as quotient of AH}
A version $\tilde{\mathfrak{X}}_{\mathfrak{nr}}$ of the space $\X$ can be obtained as a quotient of a version $\tilde{\mathfrak{X}}_{\mathrm{AH}}$ of the space $\mathfrak{X}_{\mathrm{AH}}$ (the difference being similar to the one stated in Remark \ref{we need more convex combinations, MORE}). This is achieved by defining a self-determined subset $\tilde \Ga$ of $\Ga^{\mathrm{AH}}$ defined only on coordinates with even weight. This construction also satisfies that if $Y$ is the kernel of the quotient operator $R:\tilde{\mathfrak{X}}_{\mathrm{AH}}\rightarrow\tilde{\mathfrak{X}}_{\mathfrak{nr}}$, then $\tilde{\mathfrak{X}}_{\mathrm{AH}}$, $\tilde{\mathfrak{X}}_{\mathfrak{nr}}$, and $Y$ all have the scalar-plus-compact property.
\end{rmk}

\begin{rmk}
A self-determined subset $\Ga$ of $\Ga^{\mathrm{AH}}$ can be chosen so that the corresponding quotient is isomorphic to $c_0$. This set can be chosen starting with a random point $\ga$ and then only allow operations that result in new coordinates with age at most one. One can then choose a self-determined subset $\Ga'$ of $\Ga^{\mathrm{AH}}$, almost disjoint to $\Ga$, with the same properties. We set $Y = \overline{\langle\{d_\ga:\ga\notin \Ga\}\rangle}$ and $Y' = \overline{\langle\{d_\ga:\ga\notin \Ga'\}\rangle}$. Then $Y$, $Y'$ are subspaces of $\mathfrak{X}_{\mathrm{AH}}$ with the scalar plus compact property so that every operator from one to the other is compact (by Corollary \ref{from bigger to smaller compact}). However, both spaces $\mathfrak{X}_{\mathrm{AH}}/Y$ and $\mathfrak{X}_{\mathrm{AH}}/Y'$ are isomorphic to $c_0$.
\end{rmk}

\section{The $\al$-index}
A tool that has been used in recent constructions involving saturation under constraints is the $\al$-index of a block sequence (\cite{ABM}, \cite{AM1}, \cite{AM2} and more).  This index helps characterize what spreading models a given block sequence admits. However, due to the Bourgain-Delbaen construction and the mixed-Tsirelson setting, in the space $\X$ the index does not fully determine spreading models. Nevertheless, it remains an integral part of the study of spaces constructed with the method of saturation under constraints.

\begin{dfn}\label{idx def}
Let $(x_k)_k$ be a block sequence in $\X$, so that for every very fast growing sequence of \ac-averages $(b_j^*)_j$ and every subsequence $(x_{k_j})_j$ of $(x_k)_k$,
$$\lim_j|b_j^*(x_{k_j})| = 0.$$
Then we say that the $\al$-index of $(x_k)_k$ is zero and write $\adxk = 0$. Otherwise, we write $\adxk > 0$.
\end{dfn}

\begin{prp}\label{adx char}
Let $(x_k)_k$ be a block sequence in $\X$. The following assertions are equivalent.
\begin{itemize}

\item[(i)] The $\al$-index of $(x_k)_k$ is zero.

\item[(ii)] For every $\e>0$ there exist $k_0$ and $j_0\inn$ so that for every $k\geqslant k_0$, interval $E$ of $\N$ and \ac-average $b^*$ with $s(b^*)\geqslant j_0$, $|b^*(P_Ex_k)| < \e$.

\end{itemize}
\end{prp}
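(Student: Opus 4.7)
The plan is to handle both implications, with (ii)$\Rightarrow$(i) being almost immediate and (i)$\Rightarrow$(ii) proved by contraposition. For (ii)$\Rightarrow$(i), fix a very fast growing sequence $(b_j^*)_j$ and a subsequence $(x_{k_j})_j$. Given $\e>0$, apply (ii) to obtain $k_0,j_0$. Since $s(b_j^*)\geq \mathcal{N}_{q_{j-1}+1}\to\infty$ (by the very fast growing condition and Remark \ref{sizes indeed increase}) and $k_j\to\infty$, eventually $s(b_j^*)\geq j_0$ and $k_j\geq k_0$; applying (ii) with $E=\N$ so that $P_E$ is the identity yields $|b_j^*(x_{k_j})|<\e$, hence the limit is zero.

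For the converse, argue contrapositively: assume there exists $\e>0$ such that for every $k_0,j_0$ one can find $k\geq k_0$, an interval $E$, and an \ac-average $b^*$ with $s(b^*)\geq j_0$ and $|b^*(P_Ex_k)|\geq\e$. I will inductively build a very fast growing sequence of \ac-averages $(b_j^*)_j$ and a subsequence $(x_{k_j})_j$ with $|b_j^*(x_{k_j})|\geq\e$ for all $j$, contradicting (i). At step $j$, with the previous endpoint $q_{j-1}$ fixed (take $q_0=0$), use that $(x_k)_k$ is block to pick $k_0>k_{j-1}$ with $\min\supp x_k>q_{j-1}+1$ for all $k\geq k_0$, and set $j_0=\mathcal{N}_{q_{j-1}+1}$. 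The failure of (ii) supplies $k_j\geq k_0$, an interval $E$, and a representation $b^*=(1/n)\sum_{i=1}^d\e_ie_{\ga_i}^*\circ P_{E_i}$ of some \ac-average with $n\geq j_0$ and $|b^*(P_Ex_{k_j})|\geq\e$. Put $E'=E\cap\supp x_{k_j}\subseteq[q_{j-1}+2,\infty)$,
\[
I=\{i:E_i\cap E'\neq\varnothing\text{ and }\ra(\ga_i)\geq\min(E_i\cap E')\},\quad b_j^*=\frac{1}{n}\sum_{i\in I}\e_ie_{\ga_i}^*\circ P_{E_i\cap E'}.
\]
The dropped indices contribute nothing to $b^*(P_Ex_{k_j})$: if $E_i\cap E'=\varnothing$ the term is killed by $P_{E'}$, while if $\ra(\ga_i)<\min(E_i\cap E')$ then $d_{\ga_i}^*(P_{E_i\cap E'}x_{k_j})=0$ (wrong FDD level) and Proposition \ref{functional properties}(iii) places $c_{\ga_i}^*\in\langle\{d_\eta^*:\ra(\eta)<\ra(\ga_i)\}\rangle$, so $c_{\ga_i}^*(P_{E_i\cap E'}x_{k_j})=0$ by biorthogonality. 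Hence $b_j^*(x_{k_j})=b^*(P_Ex_{k_j})$ and $|b_j^*(x_{k_j})|\geq\e$.

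It remains to slot $b_j^*$ into the very fast growing framework. By Remark \ref{subseq preserve type}, $b_j^*$ is an \ac-average of the same type as $b^*$ with $s(b_j^*)=n\geq\mathcal{N}_{q_{j-1}+1}$. Setting $p_j=q_{j-1}+1$ and choosing $q_j$ to be any integer larger than $\max_{i\in I}\{\max(E_i\cap E'),\ra(\ga_i)\}$ and large enough that $n\leq(2^{q_j}\#\bar{\Ga}_{q_j})!$, one has $b_j^*\in B_{p_j,q_j}$: for every $i\in I$, $E_i\cap E'\subseteq[p_j+1,q_j]\subseteq(p_j,q_j]$ and $\ra(\ga_i)\geq\min(E_i\cap E')\geq p_j+1$, and $p_j>q_{j-1}$ as required. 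This closes the induction. The principal obstacle is exactly this truncation step: the data $(E,b^*)$ produced by the negation of (ii) must be converted into a genuine element of a correctly indexed $B_{p_j,q_j}$ with $p_j>q_{j-1}$, and accomplishing this requires coupling the block hypothesis (so that $\supp x_{k_j}$ overshoots $q_{j-1}$) with the Bourgain-Delbaen relation $e_\ga^*=c_\ga^*+d_\ga^*$ together with Proposition \ref{functional properties}(iii) in order to zero out the indices with $\ra(\ga_i)<\min(E_i\cap E')$.
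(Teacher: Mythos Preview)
The paper states this proposition without proof (it is treated as routine, with analogous characterizations appearing in the references \cite{ABM}, \cite{AM1}, \cite{AM2}), so there is nothing to compare against; your argument is correct and supplies the details appropriately. The key truncation step is handled well: since the terms you drop from $b^*\circ P_{E'}$ are zero as functionals (not merely zero on $x_{k_j}$), your $b_j^*$ coincides with $b^*\circ P_{E'}$, and the second clause of Remark~\ref{subseq preserve type} applies directly to confirm it is an \ac-average.

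Two cosmetic points. First, write $E'=E\cap\ran x_{k_j}$ rather than $E\cap\supp x_{k_j}$, so that $E'$ (and hence each $E_i\cap E'$) is guaranteed to be an interval. Second, for membership in $B_{p_j,q_j}$ you need $n\mid\mathcal{N}_{q_j+1}=(2^{q_j}\#\bar\Ga_{q_j})!$; this follows from $n\leqslant 2^{q_j}\#\bar\Ga_{q_j}$ (since any integer at most $m$ divides $m!$), whereas $n\leqslant(2^{q_j}\#\bar\Ga_{q_j})!$ alone does not suffice. The paper's own parenthetical after the definition of $\al$-averages is phrased the same way, so this is a harmless slip inherited from the source.
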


\begin{rmk}\label{if adx zero small on bounded sums of vfg}
Using the above characterization and that for every \ac-average $b^*$, $\|b^*\| \leqslant 4$, it easily follows that if $(x_k)_k$ is a block sequence in $\X$ with $\adxk = 0$, then for every $a\inn$ and $\e > 0$ there exists $k_0\inn$ so that for all $k\geqslant k_0$ and very fast growing sequence of \ac-averages $(b_r^*)_{r=1}^a$, $\sum_{r=1}^a|b_r^*(x_k)| < 4\|x_k\| + \e$
\end{rmk}

The proof of the next result is an easy consequence of Definition \ref{idx def} and Proposition \ref{build on vfg}.

\begin{prp}\label{adx positive}
Let $(x_k)_k$ be a seminormalized block sequence in $\X$ with $\adxk > 0$. Then there exist $\theta > 0$ and a subsequence of $(x_k)_k$, again denoted by $(x_k)_k$, so that for all natural numbers $j\leqslant k_1 < \cdots < k_{n_j}$ and scalars $(\la_i)_{i=1}^{n_j}$,
$$\left\|\sum_{i=1}^{n_j}\la_ix_{k_i}\right\| \geqslant \theta\frac{1}{m_j}\sum_{i=1}^{n_j}|\la_i|.$$
\end{prp}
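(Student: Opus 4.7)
The plan is to use the hypothesis $\adxk>0$ to extract a very fast growing sequence of \ac-averages $(b_r^*)_r$ and a subsequence of $(x_k)_k$ witnessing a uniform lower bound $|b_r^*(x_r)|\geqslant\theta_1$, and then, given any admissible linear combination, to apply Proposition \ref{build on vfg} twice to produce two coordinates $\ga^+,\ga^-\in\Ga$ of weight $m_j^{-1}$ whose evaluation functionals capture the positive-coefficient and the negative-coefficient parts of the combination separately.

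Step 1 (one-time preparation). Definition \ref{idx def} provides a very fast growing $(b_r^*)_r$, a subsequence of $(x_k)_k$ that I relabel as $(x_r)_r$, and $\theta_1>0$ with $|b_r^*(x_r)|\geqslant\theta_1$ for every $r$. By Remark \ref{subseq preserve type} I may replace each $b_r^*$ by $b_r^*\circ P_{\ran(x_r)}$, preserving its status as an \ac-average and its size while forcing $\supp(b_r^*)\subset\ran(x_r)$. A further subsequence extraction then arranges (a) at least one FDD-gap between every pair of consecutive ranges $\ran(x_r)$ and (b) $s(b_r^*)\geqslant\mathcal{N}_{\min\ran(x_r)}$ for every $r$; the latter is possible because sizes along a very fast growing sequence tend to infinity, and it ensures that any subsequence of $(b_r^*)_r$ placed into intervals chosen from the gaps will itself be very fast growing with respect to the new parameters.

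Step 2 (construction of $\ga^\pm$). Fix $j$ and $j\leqslant k_1<\cdots<k_{n_j}$, set $y=\sum_i\la_ix_{k_i}$, and partition $\{1,\ldots,n_j\}=A\sqcup B$ according to the sign of $\la_i$. Every \ac-average remains an \ac-average of the same type under global negation (for \co-averages, an alternating sign pattern simply becomes the opposite alternating pattern), so after possibly replacing each $b_{k_i}^*$ by its negative I may assume $b_{k_i}^*(x_{k_i})\geqslant\theta_1$. Writing $A=\{i_1<\cdots<i_{|A|}\}$, pick integers $0=p_0^+<p_1^+<\cdots<p_{|A|}^+$ with $p_r^+$ strictly between $\max\ran(x_{k_{i_r}})$ and $\min\ran(x_{k_{i_{r+1}}})$, with $p_{|A|}^+>\max\ran(x_{k_{n_j}})$, and with $j\leqslant p_1^+$. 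Then $b_{k_{i_r}}^*\in B_{p_{r-1}^+,p_r^+-1}$, and Proposition \ref{build on vfg} yields $\ga^+\in\De_{p_{|A|}^+}$ of weight $m_j^{-1}$ with
$$e_{\ga^+}^*=\sum_{r=1}^{|A|}d^*_{\xi_r^+}+\frac{1}{m_j}\sum_{r=1}^{|A|}b_{k_{i_r}}^*,\qquad\xi_r^+\in\De_{p_r^+}.$$
Since each $p_r^+$ lies outside every $\ran(x_{k_\ell})$, we have $d^*_{\xi_r^+}(y)=0$; and since $\supp(b_{k_{i_r}}^*)\subset\ran(x_{k_{i_r}})$ all cross-terms vanish, so
$$e_{\ga^+}^*(y)=\frac{1}{m_j}\sum_{r=1}^{|A|}\la_{i_r}b_{k_{i_r}}^*(x_{k_{i_r}})\geqslant\frac{\theta_1}{m_j}\sum_{i\in A}|\la_i|.$$
A symmetric construction using $-b_{k_i}^*$ for $i\in B$ produces $\ga^-$ with $e_{\ga^-}^*(y)\geqslant(\theta_1/m_j)\sum_{i\in B}|\la_i|$. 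Since $\|e_{\ga^\pm}^*\|\leqslant1$ on $\X$, adding the two estimates gives $2\|y\|\geqslant(\theta_1/m_j)\sum_i|\la_i|$, so $\theta=\theta_1/2$ works.

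The main difficulty is organizational rather than analytic: conditions (a) and (b) of Step 1 must be set up once, simultaneously, so that for \emph{every} subsequent choice of $j$ and indices the hypotheses of Proposition \ref{build on vfg} apply both to the $A$-indexed and the $B$-indexed sub-averages; a single diagonal extraction driving $s(b_r^*)$ rapidly upward handles this. The only genuinely non-trivial analytic point is that \co-averages do not allow per-coordinate sign freedom, and this is precisely what forces the splitting of $y$ into a positive and a negative part and the use of two coordinates $\ga^\pm$, accounting for the factor $1/2$ appearing in the final constant.
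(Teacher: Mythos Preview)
Your overall approach is correct and is precisely what the paper has in mind (the paper only says ``an easy consequence of Definition~\ref{idx def} and Proposition~\ref{build on vfg}''). There is, however, one genuine technical slip and one unnecessary complication.

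\textbf{The technical slip.} When you replace $b_r^*$ by $b_r^*\circ P_{\ran(x_r)}$ you correctly force the \emph{intervals} $E_i$ into $\ran(x_r)$, but the \emph{coordinates} $\gamma_i$ appearing in the $\al_c$-average representation are unchanged: if $b_r^*\in B_{p_r,q_r}$ then each $\gamma_i$ still lies in $\bar\Ga_{q_r}\setminus\bar\Ga_{p_r}$, and $q_r$ may well exceed $\max\ran(x_r)$. Membership in $B_{p_{r-1}^+,p_r^+-1}$ requires $\gamma_i\in\bar\Ga_{p_r^+-1}$, so your claim ``$b_{k_{i_r}}^*\in B_{p_{r-1}^+,p_r^+-1}$'' is not justified with $p_r^+$ chosen merely in the gap after $\ran(x_{k_{i_r}})$. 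The fix is easy: in Step~1, track the parameters $(p_r,q_r)$ from the VFG definition together with $\ran(x_r)$, and extract the subsequence so that the \emph{combined} intervals $[\min(p_r+1,\min\ran x_r),\max(q_r,\max\ran x_r)]$ are successive with gaps and so that $s(b_r^*)\geqslant\mathcal{N}_{\max(q_{r-1},\max\ran x_{r-1})+1}$. Then choose $p_r^+>\max(q_{k_{i_r}},\max\ran x_{k_{i_r}})$ and everything goes through. No restriction of $b_r^*$ is actually needed.

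\textbf{The unnecessary split.} Your $A/B$ decomposition and the construction of two separate coordinates $\ga^\pm$ are not needed, and your closing explanation (``\co-averages do not allow per-coordinate sign freedom'') reflects a confusion: the objects $b_{k_i}^*$ are \emph{separate} $\al_c$-averages forming a VFG sequence, not the internal summands of a single \co-average. Since the global negative of any $\al_c$-average is again an $\al_c$-average of the same type and size (as you yourself note), you may, at the moment of building $\ga$, replace each $b_{k_i}^*$ by $\mathrm{sgn}(\la_i)\cdot b_{k_i}^*$ and apply Proposition~\ref{build on vfg} once to obtain a single $\ga$ with $e_\ga^*(y)\geqslant(\theta_1/m_j)\sum_i|\la_i|$. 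This recovers $\theta=\theta_1$ rather than $\theta_1/2$, though of course either constant suffices.
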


\begin{prp}\label{adx zero}
Let $(x_k)_k$ be a normalized block sequence in $\X$ with $\adxk = 0$ and $\lim_k\sup_{\ga\in\Ga}|d_\ga^*(x_k)| = 0$. Then $(x_k)_k$ has a subsequence, which we also denote by $(x_k)_k$, that generates a spreading model isometric to $c_0$. Moreover, there exists a strictly increasing sequence of natural numbers $(j_k)_k$ so that for every natural numbers $n \leqslant k_1 <\cdots<k_n$, scalars $(\la_i)_{i=1}^n$, $\ga\in\Ga$ with $\we(\ga) = m_j^{-1} > m_{j_n}^{-1}$ and interval $E$ of $\N$,
\begin{equation}\label{sums ris}
\left|e_\ga^*\circ P_E\left(\sum_{i=1}^n\la_ix_{k_i}\right)\right| \leqslant \frac{C}{m_j}\max_{1\leqslant i\leqslant n}|\la_i|,
\end{equation}
where $C = 8$.
\end{prp}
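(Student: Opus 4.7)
The strategy is to build a subsequence together with $(j_k)_k$ by a diagonal procedure, then establish \eqref{sums ris} by induction on $\ra(\ga)$, and finally deduce the isometric $c_0$-spreading model. Using Proposition \ref{adx char} together with the hypothesis $\sup_\ga|d_\ga^*(x_k)|\to 0$, the plan is to pass to a subsequence (retaining the notation $(x_k)_k$) and choose a strictly increasing $(j_k)_k$ so that, writing $\de_k=\sup_\ga|d_\ga^*(x_k)|$, for every $k$: (i) $\de_k n_{j_k}m_{j_k}\leqslant 1$; (ii) every \ac-average $b^*$ with $s(b^*)\geqslant j_{k-1}$ satisfies $|b^*(P_Fx_k)|\leqslant\de_k$ for every interval $F$; and (iii) $\min\ran(x_k)>\min\{p:\mathcal{N}_p\geqslant j_{k-1}\}$. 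Condition (iii) guarantees that \ac-averages supported on ranks below the relevant thresholds cannot touch $x_k$, thanks to Remark \ref{sizes indeed increase}.

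\textbf{Deriving \eqref{sums ris}.} Fix $n\leqslant k_1<\cdots<k_n$, $(\la_i)$, $\ga$ with $\we(\ga)=m_j^{-1}$ and $j<j_n$, and $E$; set $y=\sum\la_ix_{k_i}$ and $M=\max|\la_i|$. I would proceed by induction on $\ra(\ga)$. Applying the evaluation analysis from Proposition \ref{evaluation analysis prp}, $e_\ga^*=\sum_{r=1}^ad_{\xi_r}^*+\frac{1}{m_j}\sum_{r=1}^ab_r^*\circ P_{(p_{r-1},p_r)}$ with $a\leqslant n_j$, the diagonal part is bounded by $n_jM\de_n\leqslant M/m_j$ from (i), using that for each $r$ at most one $i$ has $p_r\in\ran(x_{k_i})$. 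Let $r_0$ be the least $r$ with $(p_{r-1},p_r-1]\cap\ran(y)\neq\varnothing$. When $r_0\geqslant 2$, absorb the prefix via \eqref{evaluation analysis detail} with $t=r_0-1$ into $e_{\xi_{r_0-1}}^*$, which vanishes on $P_Ey$ because $p_{r_0-1}<\min\ran(y)$. For $r>r_0$, the very-fast-growing condition combined with (iii) forces $s(b_r^*)\geqslant\mathcal{N}_{p_{r-1}}\geqslant j_{n-1}$, so by (ii) each $|b_r^*\circ P_{F_r}(x_{k_i})|\leqslant\de_{k_i}$, producing a negligible total after summing over $r$ and $i$.

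\textbf{Main obstacle.} The critical term is the single average $b_{r_0}^*$, whose size is not controlled from below by the very-fast-growing condition. The plan is to expand $b_{r_0}^*=(1/s)\sum_{l=1}^d\e_le_{\ga_l}^*\circ P_{E_l}$ (by Definition \ref{def averages}) and invoke the inductive hypothesis on $\ra(\ga_l)\leqslant p_{r_0}-1<\ra(\ga)$. For $\ga_l$ with $\we(\ga_l)>m_{j_n}^{-1}$, the induction yields $|e_{\ga_l}^*\circ P(y)|\leqslant 8M/m_{j_l}\leqslant M$, using $m_1\geqslant 8$. For $\ga_l$ with $\we(\ga_l)\leqslant m_{j_n}^{-1}$, a secondary bootstrap is required: one would combine an a priori norm bound $\|y\|\leqslant KM$ (derived separately from $\al=0$) with the trivial inequality $|e_{\ga_l}^*\circ P(y)|\leqslant 4\|y\|$, exploiting the tiny prefactor $1/m_{j_l}\leqslant 1/m_{j_n}$ inside $b_{r_0}^*$. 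Careful accounting then produces $|b_{r_0}^*\circ P(y)|\leqslant 7M$, and hence $|e_\ga^*(P_Ey)|\leqslant 8M/m_j$, which is \eqref{sums ris} with $C=8$.

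\textbf{Spreading model.} The lower bound $\|y\|\geqslant(1-o(1))M$ comes from picking $\ga\in\Ga$ nearly norming the single $x_{k_{i_0}}$ with $|\la_{i_0}|=M$ and using $\de_k\to 0$ to kill the cross-terms. For the upper bound, $\|y\|=\sup_\ga|e_\ga^*(y)|$ splits into two cases: when $\we(\ga)>m_{j_n}^{-1}$, \eqref{sums ris} gives $\leqslant 8M/m_1\leqslant M$; when $\we(\ga)\leqslant m_{j_n}^{-1}$, the same decomposition combined with $1/m_j\leqslant 1/m_{j_n}$ yields an $o(1)M$ contribution as $n\to\infty$. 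This establishes the isometry to $c_0$.
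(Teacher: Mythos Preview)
Your overall skeleton---pass to a subsequence, induct on $\ra(\ga)$ via the evaluation analysis, and isolate the single uncontrolled average $b_{r_0}^*$---matches the paper. The gap is in your treatment of $b_{r_0}^*$. You expand $b_{r_0}^*=(1/s)\sum_l\e_le_{\ga_l}^*\circ P_{E_l}$ and, for those $\ga_l$ with $\we(\ga_l)\leqslant m_{j_n}^{-1}$, you appeal to an ``a priori norm bound $\|y\|\leqslant KM$ derived separately from $\al=0$'' together with a ``tiny prefactor $1/m_{j_l}$ inside $b_{r_0}^*$''. Neither ingredient is available. There is no $1/m_{j_l}$ inside $b_{r_0}^*$; the only weight factor present is the external $1/m_j$ with $j<j_n$, which is not small. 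And the bound $\|y\|\leqslant KM$ is essentially the $c_0$-upper estimate you are trying to prove: the hypothesis $\al((x_k)_k)=0$ controls only \ac-averages, not arbitrary $e_\ga^*$, so any derivation of $\|y\|\leqslant KM$ must again go through the evaluation analysis and face the same $b_{r_0}^*$ term. The argument is circular.

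The paper resolves this by strengthening the inductive hypothesis: one proves \emph{simultaneously}, for all $\ga$ of rank $\leqslant q$ and all intervals $E$, the uniform estimate $|e_\ga^*\circ P_E(y)|\leqslant 7M$ (regardless of weight) together with the weight-dependent estimate $|e_\ga^*\circ P_E(y)|\leqslant 8M/m_j$ when $j<j_n$. The uniform bound then disposes of $b_{r_0}^*$ in one line: each component $e_{\ga_l}^*\circ P_{E_l\cap E}$ is $\leqslant 7M$ by the inductive hypothesis (since $\ra(\ga_l)<\ra(\ga)$), and averaging over $l\leqslant d\leqslant s$ gives $|b_{r_0}^*\circ P_E(y)|\leqslant 7M$ with no case split on $\we(\ga_l)$. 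The paper also needs a case analysis on where $j$ sits relative to the $j_{k_i}$'s (in particular when $j_{k_{i_0}}\leqslant j<j_{k_{i_0+1}}$ or $j\geqslant j_{k_n}$) to establish the uniform bound itself; your sketch omits this. In short, you must fold the uniform estimate into the induction rather than hope to import it from outside.
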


\begin{proof}
Using Proposition \ref{adx char} and $\lim_k\sup_{\ga\in\Ga}|d_\ga^*(x_k)| = 0$, we pass to a subsequence of $(x_k)_k$, again denoted by $(x_k)_k$, and choose a strictly increasing sequence of natural numbers $(j_k)$ so that the following are satisfied:
\begin{itemize}

\item[(i)] for every $k\inn$, $j_{k+1} > \max\supp x_k$,

\item[(ii)] for every $k\inn$, $\sum_{m\geqslant k}\sup_{\ga\in\Ga}|d_\ga^*(x_m)| < 1/(2km_{j_{k}}n_{j_k})$ and

\item[(iii)] for every $k_0$, $k\inn$ with $k\geqslant k_0$, every interval $E$ of $\N$ and every \ac-average $b^*$ with $s(b^*) \geqslant \min\supp x_k$, $|b^*(P_Ex_k)| < 1/(2k_0n_{j_{k_0}})$.

\end{itemize}

We claim that $(x_k)_k$ satisfies the conclusion. By induction on $q$ we shall prove the following: for every $\ga\in\Ga_q$, interval $E$ of $\N$, natural numbers $n \leqslant k_1 <\cdots < k_n$ and scalars $\la_1,\ldots,\la_n$ in $[-1,1]$:
\begin{equation}\label{green mushroom}
\left|e_\ga^*\left(\sum_{i=1}^n\la_ix_{k_i}\right)\right| < 1 + \frac{15}{m_{j_n}}\quad\text{and}\quad\left|e_\ga^*\circ P_E\left(\sum_{i=1}^n\la_ix_{k_i}\right)\right| \leqslant 7.
\end{equation}
If moreover $\we(\ga)  = m_j^{-1}$ with $j < j_n$, then
\begin{equation}\label{one up}
\left|e_\ga^*\circ P_E\left(\sum_{i=1}^n\la_ix_{k_i}\right)\right| <  \frac{8}{m_j}.
\end{equation}
The desired conclusion clearly follows from the above.

The case $q=1$ is an easy consequence of the definition of $\De_1$. Assume now that $q$ is such that the conclusion holds for every $\ga\in\Ga_q$ and let $\ga\in \Ga_{q+1}$ with $\we(\ga) = m_j^{-1}$ and $E$ be an interval of $\N$. Let $e_\ga^* = \sum_{t=1}^ad_{\xi_t}^* + (1/m_j)\sum_{t=1}^ab_t^*$ be the evaluation analysis of $\ga$, according to Proposition \ref{evaluation analysis}. Then $1\leqslant a \leqslant n_j$ and $(b_t^*)_{t=1}^a$ is a very fast growing sequence of \ac-averages of $\Ga_q$. Assuming that $\ra(\ga)\geqslant\min\supp x_{k_1}$ (otherwise the estimates appearing in \eqref{green mushroom} and \eqref{one up} are all zero), set $t_0 = \min\{t:\;\max\supp b_q^*\geqslant \min\supp x_{k_1}\}$. The inductive assumption easily implies that for $1\leqslant i_0\leqslant n$,
\begin{equation}\label{grumble grumble}
\left|b^*_{t_0}\circ P_E\left(\sum_{i=1}^{i_0}\la_ix_{k_i}\right)\right| \leqslant 7.
\end{equation}
We shall distinguish three cases concerning the weight of $\ga$.

{\em Case 1:} $j<j_{k_1}$. Since the sequence $(b^*_q)_{q=1}^d$ is very fast growing, for $t>t_0$ we have $s(b_t^*) > \max\supp b_{t_0}^* \geqslant \min\supp x_{k_1}$. Also $a\leqslant n_j < n_{j_{k_1}}$ and $n \leqslant k_1$, hence by (iii) we conclude:
\begin{equation}\label{goomba}
\sum_{t=t_0 + 1}^a\left|b_t^*\circ P_E\left(\sum_{i=1}^n\la_ix_{k_i}\right)\right| < a\frac{n}{2k_1n_{j_{k_1}}} \leqslant \frac{1}{2}.
\end{equation}
By (ii) we obtain
\begin{equation}\label{parakoopa}
\left|\sum_{t = 1}^ad_{\xi_{t}}^*\circ P_E \left(\sum_{i=1}^n\la_ix_{k_i}\right)\right| < a\frac{n}{2k_1m_{j_{k_1}}n_{j_{k_1}}} \leqslant \frac{1}{2m_{j_{k_1}}}.
\end{equation}
Combining \eqref{grumble grumble} with \eqref{goomba} and \eqref{parakoopa}:
\begin{equation}
\left|e_\ga^*\circ P_E\left(\sum_{i=1}^n\la_ix_{k_i}\right)\right| <  \frac{8}{m_j}.
\end{equation}
This concludes the proof of the first case and also \eqref{one up} of the inductive assumption (the first part of \eqref{green mushroom} follows if we set $E= \N$ and use $m_1\geqslant 8$).

{\em Case 2:} there is $1\leqslant i_0 < n$ so that $j_{k_{i_0}} \leqslant j < j_{k_{i_0+1}}$. Arguing in a similar manner as in the previous case, we obtain
\begin{equation}\label{koopa}
\left|e_\ga^*\circ P_E\left(\sum_{i>i_0}\la_ix_{k_i}\right)\right| < \frac{8}{m_{j_{k_{i_0}}}}.
\end{equation}
Note that (i) and $\we(d_{\xi_t}) = m_j^{-1}$ imply $\ra(\xi_t) > \max\supp x_{i_0-1}$ which yields $\sum_{t = 1}^ad_{\xi_{t}}^*\circ P_E (\sum_{i<i_0}\la_ix_{k_i}) = 0$ and $\sum_{t=t_0 + 1}^ab_t^*\circ P_E(\sum_{i<i_0}\la_ix_{k_i}) = 0$. Combining this with \eqref{grumble grumble}:
\begin{equation}\label{orange}
\left|e_\ga^*\circ P_E\left(\sum_{i<i_0}\la_ix_{k_i}\right)\right| = \frac{1}{m_j}\left|b^*_{t_0}\circ P_E\left(\sum_{i<i_0}\la_ix_{k_i}\right)\right|\ <  \frac{7}{m_{j_{k_{i_0}}}}.
\end{equation}
As $\|e_\ga^*\circ P_E\| \leqslant 4$ we obtain that $|e_\ga^*\circ P_E(x_{k_{i_0}})| \leqslant 4$, which in conjunction with \eqref{koopa} and \eqref{orange} yields,
\begin{equation}
\left|e_\ga^*\circ P_E\left(\sum_{i=1}^n\la_ix_{k_i}\right)\right| \leqslant 4+ \frac{15}{m_{j_{k_{i_0}}}} \leqslant 4 + \frac{15}{m_1} \leqslant 7.
\end{equation}
Similarly, for $E = \N$ and using $|e_\ga^*(x_{k_{i_0}})| \leqslant 1$ we obtain,
\begin{equation}
\left|e_\ga^*\left(\sum_{i=1}^n\la_ix_{k_i}\right)\right| < 1 + \frac{15}{m_{j_{k_{i_0}}}} \leqslant 1 + \frac{1}{m_{j_n}}.
\end{equation}
This concludes the proof of the second case. The third case, in which $j\geqslant j_{k_n}$, is treated in a similar manner as the second one.
\end{proof}

\begin{rmk}
\label{comparing spreading models with dual methods}
We point out that the space without reflexive subspaces constructed in \cite{AM2} admits precisely three spreading models in every subspace, namely the unit vector basis of $\ell_1$, the unit vector basis of $c_0$, and the summing basis of $c_0$. This is no longer true for the space $\X$ presented in this paper, as this space admits a large variety of spreading models. This is due to the $\mathscr{L}_\infty$ structure and mainly due to the mixed-Tsirelson frame used to define the norm, as opposed to the Tsirelson frame used in \cite{AM2}. We also point out that in \cite{AM2} the $\al$-index alone is sufficient to fully describe the spreading models admitted by a block sequence. Here, this is no longer the case and the condition $\adxk = 0$ is not sufficient for a sequence to generate a $c_0$ spreading model and $\lim_k\sup_{\ga\in\Ga}|d_\ga^*(x_k)| = 0$ is necessary as well. As it was shown in the proof \cite[Proposition 10.1]{AH}, the sequence $(y_q)_q$, with $y_q = \sum_{\ga\in\De_q}d_\ga$, generates an $\ell_1$ spreading model. The same sequence in $\X$ generates an $\ell_1$ spreading model as well, however it can be shown that $\adyq = 0$. In the special case when $(x_k)_k$ is a subsequence of the basis $(d_\ga)_{\ga\in\Ga}$, we have $\adxk = 0$ and $\lim_k\sup_{\ga\in\Ga}|d_\ga^*(x_k)|\neq 0$. However, $(x_k)_k$ has a subsequence generating a $c_0$ spreading model. This is proved by replacing the condition $\lim_k\sup_{\ga\in\Ga}|d_\ga^*(x_k)| = 0$ with the conclusion of the following lemma.
\end{rmk}

\begin{lem}\label{ramsey on basis}
Let $\{\ga_k:k\inn\}$ be an infinite subset of $\Ga$. Then there exists an infinite subset $L$ of $\N$ satisfying the following: for every $\ga\in\Ga$, if $e_\ga^* = \sum_{r=1}^ad_{\xi_r}^* + (1/m_j)\sum_{r=1}^ab_r^*$ is the evaluation analysis of $\ga$, then the set $\{\xi_r:r=1,\ldots,a\}\cap\{\ga_k:k\in L\}$ is at most a singleton.
\end{lem}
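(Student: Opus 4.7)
The strategy is to exploit a tree structure on coordinates of a fixed weight and extract an infinite antichain. The reduction proceeds by pigeonhole on the weights. Two cases arise: either $(\we(\ga_k))_k$ attains infinitely many distinct values, or by pigeonhole there is some $j\in\N$ such that $\we(\ga_k) = m_j^{-1}$ for infinitely many $k$. In the first case I extract a subsequence with pairwise distinct weights and let $L$ be the corresponding index set; since all the $\xi_r$ appearing in a single evaluation analysis share the common weight $\we(\ga)$, at most one of them can equal a $\ga_k$ from this subsequence, and we are done.

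So assume from now on that, after passing to a subsequence, every $\ga_k$ has weight $m_j^{-1}$ for a fixed $j\in\N$. On $T_j = \{\eta \in \Ga: \we(\eta) = m_j^{-1}\}$ define the binary relation $\eta \preceq \zeta$ to mean that $\eta$ appears as some $\xi_r$ in the evaluation analysis of $\zeta$ (allowing $\eta = \zeta$). The key structural input is equation \eqref{evaluation analysis detail} of Proposition \ref{evaluation analysis prp}: if $\eta = \xi_r$ in the evaluation analysis $e_\zeta^* = \sum_{s=1}^ad_{\xi_s}^* + (1/m_j)\sum_{s=1}^ab_s^*$ of $\zeta$, then
\[
e_\eta^* = \sum_{s=1}^r d_{\xi_s}^* + \frac{1}{m_j}\sum_{s=1}^r b_s^*,
\]
so that the evaluation analysis of $\eta$ is precisely the initial segment $\xi_1,\ldots,\xi_r$ of the analysis of $\zeta$. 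From this nesting it follows at once that $\preceq$ is reflexive and transitive; antisymmetry is automatic because $\eta \prec \zeta$ forces $\ag(\eta) < \ag(\zeta)$. Hence $(T_j,\preceq)$ is a partial order.

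Since every element $\eta\in T_j$ satisfies $\ag(\eta) \leqslant n_j$, every chain in $(T_j,\preceq)$ has length at most $n_j$. The age map partitions $T_j$ into the $n_j$ antichains $\{\eta\in T_j: \ag(\eta) = r\}$, $r = 1,\ldots,n_j$. Applying the pigeonhole principle to the infinite set $\{\ga_k: k\in\N\}\subset T_j$, we obtain an infinite $L\subset\N$ such that $\{\ga_k: k\in L\}$ is contained in one of these antichains. For an arbitrary $\ga\in\Ga$ with evaluation analysis $\xi_1,\ldots,\xi_a$: if $\we(\ga)\neq m_j^{-1}$, then no $\xi_r$ lies in $T_j$ and the desired intersection is empty; if $\we(\ga) = m_j^{-1}$, then $\xi_1 \prec \xi_2 \prec \cdots \prec \xi_a$ is a chain in $(T_j,\preceq)$, whose intersection with the antichain $\{\ga_k: k\in L\}$ contains at most one element.

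The only mildly delicate point is the verification that $\preceq$ is well-defined and transitive, which is where the uniqueness and nested structure of evaluation analyses from Proposition \ref{evaluation analysis prp} is essential; once this is in place, the proof reduces to the elementary observation that an infinite subset of a poset of finite height must contain an infinite antichain.
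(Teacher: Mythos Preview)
Your proof is correct, and the route differs from the paper's. Both arguments rest on the same structural fact, namely that the ``in the analysis of'' relation is a partial order (transitivity coming from the nesting in \eqref{evaluation analysis detail}), and that the $\xi_r$'s in a single evaluation analysis form a chain for this order. The paper does not split cases on weight; it simply observes that this partial order has no infinite chains, passes to a subsequence with strictly increasing ranks, and applies the infinite Ramsey theorem on pairs to extract an infinite subset on which, for $k<m$, $\ga_k$ is never in the analysis of $\ga_m$. Your argument is more elementary: the weight case-split lets you first dispose of the trivial situation (pairwise distinct weights), and in the fixed-weight case you exploit the explicit bound $\ag\leqslant n_j$ to stratify $T_j$ into finitely many antichains by age, so that plain pigeonhole (rather than Ramsey) produces the infinite antichain. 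The paper's version is cleaner in that it avoids the case distinction and does not use the numerical bound $n_j$; yours avoids invoking Ramsey and makes the finiteness completely explicit.
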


\begin{proof}
For $\eta$, $\ga\in\Ga$ with $\ra(\eta) < \ra(\ga)$, we shall say that $\eta$ is in the analysis of $\ga$, if $(d^*_{\xi_r})_{r=1}^a$ is the sequence appearing in the evaluation analysis of $\ga$ as in \eqref{evaluation analysis}, then there is some $1\leqslant r\leqslant a$ so that $\xi_r = \eta$. Note that \eqref{evaluation analysis detail} implies that this property is transitive and it also easily follows that there exists no infinite chain with this property.

By passing to an infinite subset we may assume that $(\ra(\ga_k))_k$ is strictly increasing and using an easy Ramsey argument we may also assume that for $k < m$, $\ga_k$ is not in the analysis of $\ga_m$, which implies the desired result.
\end{proof}

An application of the above lemma and arguments similar to those used in the proof of Proposition \ref{adx zero} yield the next result.

\begin{prp}\label{basis c0}
Let $(d_{\ga_k})_k$ be a subsequence of the basis $(d_\ga)_{\ga\in\Ga}$ of $\X$. Then it admits a subsequence generating an isometric $c_0$ spreading model. Furthermore, there exists a constant $C>0$ so that for every natural numbers $n \leqslant k_1 <\cdots<k_n$, scalars $(\la_i)_{i=1}^n$, $\ga\in\Ga$ with $\we(\ga) = m_j^{-1} > m_{j_n}^{-1}$ and interval $E$ of $\N$,
\begin{equation}\label{sums basis ris}
\left|e_\ga^*\circ P_E\left(\sum_{i=1}^n\la_id_{\ga_{k_i}}\right)\right| \leqslant \frac{C}{m_j}\max_{1\leqslant i\leqslant n}|\la_i|.
\end{equation}
If the set $\{(\we({\ga_k}))^{-1}:k\inn\}$ is unbounded, then $C=8$. Otherwise there is $j_0\inn$ with $ C = 2 + m_{j_0}$.
\end{prp}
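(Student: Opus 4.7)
The plan is to mimic the inductive scheme of Proposition~\ref{adx zero} step-for-step, substituting Lemma~\ref{ramsey on basis} for the hypothesis $\lim_k\sup_{\ga\in\Ga}|d_\ga^*(x_k)|=0$. A preliminary observation is that any basis subsequence satisfies $\al((d_{\ga_k})_k)=0$, since for a very fast growing sequence of $\al_c$-averages $(b_j^*)$ one has $|b_j^*(P_E d_{\ga_k})|\leq 4\|d_{\ga_k}\|/s(b_j^*)\to 0$; consequently Proposition~\ref{adx char} and Remark~\ref{if adx zero small on bounded sums of vfg} are at our disposal. First, apply Lemma~\ref{ramsey on basis} to pass to an infinite subset so that, for every $\ga\in\Ga$ with evaluation analysis $e_\ga^*=\sum_r d_{\xi_r}^*+(1/m_j)\sum_r b_r^*$, at most one $\xi_r$ lies in the subsequence $\{\ga_k\}$. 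Then split into two cases depending on whether $\{\we(\ga_k)^{-1}\}$ is unbounded or bounded.

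In the unbounded case, pass to a further subsequence with $\we(\ga_k)^{-1}$ strictly increasing and pick $(j_k)$ strictly increasing so that $\we(\ga_k)=m_{j'_k}^{-1}$ with $j'_k\geq j_k$, together with $j_{k+1}>\ra(\ga_k)$ and the analogue of condition (iii) in the proof of Proposition~\ref{adx zero}. Under this arrangement, whenever $\we(\ga)=m_j^{-1}>m_{j_n}^{-1}$ and $i\leq n$, we have $\we(\ga_{k_i})=m_{j'_{k_i}}^{-1}\leq m_{j_n}^{-1}<\we(\xi_r)$, which rules out any match $\xi_r=\ga_{k_i}$. Hence $\sum_t d_{\xi_t}^*(\sum_i\la_i d_{\ga_{k_i}})=0$, and the inductive bookkeeping of Proposition~\ref{adx zero} transfers unchanged, yielding $C=8$.

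In the bounded case, fix $j_0$ with $\we(\ga_k)^{-1}\leq m_{j_0}$ for all $k$ and choose $(j_k)$ with the same auxiliary properties. Lemma~\ref{ramsey on basis} still permits at most one match $\xi_{t_0}=\ga_{k_{i_0}}$, contributing $|\la_{i_0}|\leq\max_i|\la_i|$; such a match forces $\we(\ga)=\we(\ga_{k_{i_0}})\geq m_{j_0}^{-1}$, i.e.\ $m_j\leq m_{j_0}$, so this contribution is at most $(m_{j_0}/m_j)\max_i|\la_i|$. Added to the $(2/m_j)\max_i|\la_i|$ bound on the $b_t^*$-terms inherited from the proof of Proposition~\ref{adx zero}, the overall constant becomes $C=2+m_{j_0}$. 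The isometric $c_0$ spreading model conclusion then follows from the estimate \eqref{sums basis ris} and the seminormalization of the basis exactly as in Proposition~\ref{adx zero}. The main obstacle is the synchronization of $(j_k)$ with the weight sequence $(\we(\ga_k))$ in the unbounded case, needed to rule out all matches uniformly in $n$; this is routine once $(j_k)$ and the subsequence are chosen in tandem.
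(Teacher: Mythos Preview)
Your proposal is correct and follows exactly the approach the paper indicates: replace condition (ii) of Proposition~\ref{adx zero} by the conclusion of Lemma~\ref{ramsey on basis}, note that $\al((d_{\ga_k})_k)=0$ so the analogue of condition (iii) is available, and then rerun the induction of Proposition~\ref{adx zero} with the two weight cases handled separately. The only point worth tightening is the bookkeeping in the bounded-weight case---the ``$(2/m_j)\max_i|\la_i|$'' bound on the $b_t^*$-terms requires carrying a suitable analogue of \eqref{green mushroom} through the induction rather than quoting it directly---but this is routine and does not affect the strategy.
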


\begin{lem}\label{tools to build on c0}
Let $(x_k)_k$ be a block sequence in $\X$ generating a $c_0$ spreading model and let $(\ga_k)_k$ be a sequence in $\Ga$ so that $|e_{\ga_k}^*(x_k)| > (3/4)\|x_k\|$ for all $k\inn$. If the set $\{(\we(\ga_k))^{-1}:k\inn\}$ is bounded, then there exist $\e>0$, an infinite subset $L$ of $\N$ and a sequence $(\eta_k)_{k\in L}$ of $\Ga$, so that $|d_{\eta_k}^*(x_k)| > \e$ for all $k\in L$.
\end{lem}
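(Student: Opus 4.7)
The plan is to argue by contradiction. Suppose the conclusion fails; then for every $\e>0$ only finitely many $k$ admit some $\eta\in\Ga$ with $|d_\eta^*(x_k)|>\e$, since otherwise along an infinite set of such indices one could select $(\eta_k)_k$ witnessing the conclusion. Equivalently, $\lim_k\sup_{\eta\in\Ga}|d_\eta^*(x_k)|=0$. Under this hypothesis and the bounded-weight assumption, the strategy is to bound $|e_{\ga_k}^*(x_k)|$ strictly below $(3/4)\|x_k\|$ for $k$ large via the evaluation analysis of $\ga_k$, contradicting the hypothesis.

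The first step is to verify $\adxk=0$. Since $(x_k)_k$ generates a $c_0$ spreading model, it is seminormalized and admits a uniform upper bound $\|\sum_{i=1}^{n}x_{k_i}\|\leqslant C_0$ whenever $n\leqslant k_1<\cdots<k_n$ and $k_1$ is large. If instead $\adxk>0$, Proposition \ref{adx positive} yields (along a subsequence) $\theta>0$ such that $\|\sum_{i=1}^{n_j}x_{k_i}\|\geqslant\theta n_j/m_j$ whenever $j\leqslant k_1<\cdots<k_{n_j}$. Because $n_j/m_j\to\infty$, for $j$ large this contradicts $C_0$; hence $\adxk=0$.

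Since $\{\we(\ga_k)^{-1}:k\inn\}$ is bounded, I pass to a subsequence along which $\we(\ga_k)=m_{j_*}^{-1}$ is constant, and then to a further subsequence making the age $a_k\leqslant n_{j_*}$ constant, say $a_k=a$. Proposition \ref{evaluation analysis prp} then furnishes, for every $k$, an evaluation analysis
\[
e_{\ga_k}^*=\sum_{r=1}^a d_{\xi_r^k}^*+\frac{1}{m_{j_*}}\sum_{r=1}^a b_r^{k*},
\]
where $(b_r^{k*})_{r=1}^a$ is a very fast growing sequence of $\al_c$-averages. Applying both sides to $x_k$ and bounding gives
\[
|e_{\ga_k}^*(x_k)|\leqslant a\sup_{\eta\in\Ga}|d_\eta^*(x_k)|+\frac{1}{m_{j_*}}\sum_{r=1}^a|b_r^{k*}(x_k)|.
\]

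The first term tends to $0$ by the contradiction hypothesis. For the second, Remark \ref{if adx zero small on bounded sums of vfg}, applied with the fixed value $a$ and any $\e'>0$, yields $\sum_{r=1}^a|b_r^{k*}(x_k)|<4\|x_k\|+\e'$ for $k$ large. Using $m_{j_*}\geqslant m_1\geqslant 8$ produces $|e_{\ga_k}^*(x_k)|<\|x_k\|/2+2\e'$ for $k$ large, which combined with $|e_{\ga_k}^*(x_k)|>(3/4)\|x_k\|$ forces $\|x_k\|<8\e'$. Choosing $\e'$ smaller than one-eighth of $\liminf_k\|x_k\|>0$ contradicts seminormalization. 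The main obstacle is the derivation of $\adxk=0$ from the $c_0$ spreading model hypothesis, since it essentially exploits the mixed-Tsirelson growth $n_j/m_j\to\infty$; once this is available, Remark \ref{if adx zero small on bounded sums of vfg} handles the $\al_c$-average portion of the evaluation analysis, the contradiction hypothesis controls the diagonal portion, and the standing assumption $m_1\geqslant 8$ made in Subsection 2.1 provides exactly the numerical gap needed to close the inequality.
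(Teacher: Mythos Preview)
Your proof is correct and uses essentially the same ingredients as the paper's: the fact that a $c_0$ spreading model forces $\adxk=0$ via Proposition \ref{adx positive}, the evaluation analysis of $\ga_k$ after stabilizing weight and age, Remark \ref{if adx zero small on bounded sums of vfg} to bound the $\al_c$-average part, and the assumption $m_1\geqslant 8$. The only difference is cosmetic: the paper argues directly, showing $|\sum_{r=1}^a d_{\xi_r^k}^*(x_k)|>(1/8)\|x_k\|$ and then pigeonholing to pick $\eta_k$ among the $\xi_r^k$ with explicit $\e=\inf_k\|x_k\|/(8a)$, whereas you wrap the same computation in a contradiction.
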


\begin{proof}
Passing to a subsequence, there are $j\inn$ and $1\leqslant a\leqslant n_j$ so that each $\ga_k$ has an evaluation analysis $e_{\ga_k}^* = \sum_{r=1}^ad^*_{\xi_r^k} + (1/m_j)\sum_{r=1}^ab_{k,r}^*$. Since $(x_k)_k$ generates a $c_0$ spreading model, by Proposition \ref{adx positive} we conclude that $\adxk = 0$. By Remark \ref{if adx zero small on bounded sums of vfg} we can assume that $|(1/m_j)\sum_{r=1}^ab_{k,r}^*(x_k)| < (5/m_j)\|x_k\| \leqslant (5/8)\|x_k\|$, which yields $|\sum_{r=1}^ad^*_{\xi_r^k}(x_k)| > 1/8\|x_k\|$, for all $k\inn$. Setting $\e = \inf_k\|x_k\|/(8a)$, the result easily follows.
\end{proof}

\begin{lem}\label{first step building}
Let $(x_k)_k$ be a block sequence in $\X$ generating a $c_0$ spreading model. Then there are $\e > 0$ and a subsequence of $(x_k)_k$, again denoted by $(x_k)_k$, so that for every natural numbers $n\leqslant k_1 < \cdots < k_n$ and sequence of alternating signs $(\e_i)_{i=1}^n$, if $y = \sum_{i=1}^n\e_ix_i$ there is an \ac-average $b^*$ of size $s(b^*) = n$ so that $b^*(y) > \e$.
\end{lem}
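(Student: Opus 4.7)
The plan is to extract, for each $x_k$, a near-norming functional $e_{\ga_k}^*$ and then dichotomize on whether the weights $\we(\ga_k)^{-1}$ are bounded or unbounded. Since a $c_0$ spreading model is seminormalized, after passing to a subsequence I would assume $\|x_k\| \geq c > 0$; using $\X \subset \ell_\infty(\Ga)$, I would choose $\ga_k \in \Ga$ with $|e_{\ga_k}^*(x_k)| > (3/4)\|x_k\|$. After a further subsequence on which $\sgn(e_{\ga_k}^*(x_k))$ is constant, and globally replacing $(x_k)$ with $(-x_k)$ if necessary (which is harmless, since the conclusion is invariant under relabeling the alternating signs $(\e_i)$), one has $e_{\ga_k}^*(x_k) > 3c/4$.

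In the bounded-weights case I would invoke Lemma \ref{tools to build on c0} to obtain $\e_0 > 0$ and $(\eta_k)$ with $|d_{\eta_k}^*(x_k)| > \e_0$; then, for any $n \leq k_1 < \cdots < k_n$ and alternating signs $(\e_i)_{i=1}^n$, the basic \ac-average
\[ b^* = \frac{1}{n}\sum_{i=1}^n \e_i \, \sgn\!\left(d_{\eta_{k_i}}^*(x_{k_i})\right) d_{\eta_{k_i}}^* \]
would do the job: $\eta_{k_i} \in \supp x_{k_i}$ ensures that the ranks $\ra(\eta_{k_i})$ are strictly increasing (so $b^*$ is a valid basic \ac-average of size $n$), and biorthogonality together with the disjointness of supports gives $b^*(y) = (1/n)\sum_i |d_{\eta_{k_i}}^*(x_{k_i})| > \e_0$.

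In the unbounded-weights case I would first pass to a subsequence with $\we(\ga_k) = m_{j_k}^{-1}$ and $(j_k)$ strictly increasing (so the strict-decrease-of-weights requirement of Definition \ref{def averages} is automatic), and then apply Proposition \ref{building averages} with $E_k = \ran x_k$ to trap the entire sequence $(\ga_k, E_k)_k$ inside exactly one of the three types \ic, \co, or \ir. For any $n \leq k_1 < \cdots < k_n$ and alternating $(\e_i)_{i=1}^n$, the candidate
\[ b^* = \frac{1}{n}\sum_{i=1}^n \e_i \, e_{\ga_{k_i}}^* \circ P_{E_{k_i}} \]
is an \ac-average of size $n$: the rank condition $\ra(\ga_{k_i}) \geq \min E_{k_i}$ is automatic because $e_\ga^*(d_\eta) = 0$ whenever $\ra(\ga) < \ra(\eta)$ (and $e_{\ga_{k_i}}^*(x_{k_i}) \neq 0$), while $P_{E_{k_i}}(x_{k_j}) = \delta_{ij} x_{k_i}$ gives $b^*(y) = (1/n)\sum_i e_{\ga_{k_i}}^*(x_{k_i}) > 3c/4$. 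Setting $\e = \min(\e_0, 3c/4)$ concludes the plan.

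The main subtlety I expect is the sign bookkeeping in the \co\ case, where the signs appearing inside the average are themselves required to be alternating. The preliminary reduction to constant $\sgn(e_{\ga_k}^*(x_k))$ is exactly what allows the alternating signs $(\e_i)$ from $y$ to be copied verbatim into $b^*$, thereby producing a valid \co-average without breaking the positivity of $b^*(y)$; in the \ic\ and \ir\ cases the signs are unconstrained and for basic averages the absorbed $\sgn(d^*_{\eta_{k_i}}(x_{k_i}))$ factor similarly creates no issue.
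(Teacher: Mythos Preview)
Your proof is correct and follows essentially the same approach as the paper's: choose near-norming coordinates $\ga_k$, normalize signs, dichotomize on whether $\{\we(\ga_k)^{-1}\}$ is bounded, and in the unbounded case apply Proposition~\ref{building averages} with $E_k=\ran x_k$ while in the bounded case invoke Lemma~\ref{tools to build on c0} and use basic averages. Your treatment is somewhat more explicit than the paper's (which dispatches the bounded case with ``argue in a similar manner''), and your discussion of the sign bookkeeping in the \co\ case is exactly the point that makes the alternating-sign hypothesis on $(\e_i)$ necessary.
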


\begin{proof}
Choose a sequence $(\ga_k)_k$ in $\Ga$ so that $|e_{\ga_k}^*(x_k)| > (3/4)\|x_k\|$ for all $k\inn$. Passing to a subsequence, and perhaps considering the sequence $(-x_k)_k$, we may assume that $e_{\ga_k}^*(x_k) > (3/4)\|x_k\|$ for all $k\inn$. If the set $\{(\we(\ga_k))^{-1}:k\inn\}$ is unbounded, set $E_k = \ran x_k$ and pass to a subsequence satisfying the conclusion of Proposition \ref{building averages}. Setting $\e = \inf_k\|x_k\|/4$, it easily follows that for $n \leqslant k_1 < \cdots < k_n$ and alternating signs $(\e_i)_{i=1}^n$, $b^* = (1/n)\sum_{i=1}^n\e_ie_{\ga_{k_i}}^*\circ P_{\ran x_{k_i}}$ is the desired \ac-average. Otherwise, we apply Lemma \ref{tools to build on c0} and argue in a similar manner.
\end{proof}

\begin{rmk}\label{first step building better}
The proof of Lemma \ref{first step building} actually yields that in the case $(x_k)_k$ generates a $c_0$ spreading model and $\lim_k\sup_{\ga\in\Ga}|d_\ga^*(x_k)| = 0$, if $(\ga_k)_k$ satisfies $e_{\ga_k}^*(x_k) > (3/4)\|x_k\|$ for all $k$, we can choose a subsequence of $(x_k)_k$, again denoted by $(x_k)_k$, so that for any natural numbers $n\leqslant k_1 < \cdots < k_n$ and sequence of alternating signs $(\e_i)_{i=1}^n$, if $y = \sum_{i=1}^n(\e_i/e_{\ga_i}^*(x_i))x_i$ then  $b^* = (1/n)\sum_{i=1}^n\e_ie_{\ga_i}^*P_{\ran x_{k_i}}$ an \ac-average of size $s(b^*) = n$ so that $b^*(y) = 1$.
\end{rmk}

It immediately follows that if $(x_k)_k$ is a sequence generating a $c_0$ spreading model, then it has a further block sequence $(y_k)_k$ with $\adyk > 0$, hence by Proposition \ref{adx positive} we deduce the following.

\begin{cor}\label{no c0}
The space $\X$ does not contain $c_0$.
\end{cor}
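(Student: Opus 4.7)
The plan is to argue by contradiction: if $c_0$ embedded into $\X$, then by a standard perturbation/gliding-hump argument I would obtain a normalized block sequence $(x_k)_k$ of $\X$ equivalent to the unit vector basis of $c_0$, so there is a constant $C \geqslant 1$ with $\|\sum_k a_k x_k\| \leqslant C \max_k |a_k|$. In particular $(x_k)_k$ generates a $c_0$ spreading model, meeting the hypothesis of Lemma \ref{first step building}.

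I would then apply that lemma to pass to a subsequence (retaining the name $(x_k)_k$) and obtain $\e > 0$ such that for every $n$ and every alternating-sign sum $y = \sum_{i=1}^n \e_i x_{k_i}$ with $n \leqslant k_1 < \cdots < k_n$, some \ac-average of size exactly $n$ tests $y$ above $\e$. I would then inductively construct a block sequence $(y_m)_m$ of $(x_k)_k$, with $y_m$ an alternating-sign sum of length $n_m$ of successive $x_k$'s, together with corresponding \ac-averages $b_m^*$ of size $s(b_m^*) = n_m$ satisfying $b_m^*(y_m) > \e$. The design choice is to let the $n_m$ grow rapidly enough that $(b_m^*)_m$ becomes very fast growing: once $b_{m-1}^* \in B_{p_{m-1},q_{m-1}}$ is fixed, I demand $n_m \geqslant \mathcal{N}_{q_{m-1}+1}$ before selecting the next block $y_m$. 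This guarantees $\adym > 0$, witnessed by the vfg sequence $(b_m^*)_m$.

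Finally I would invoke Proposition \ref{adx positive} on $(y_m)_m$ to obtain $\theta > 0$ and a further subsequence, still named $(y_m)_m$, such that
\[
\left\|\sum_{i=1}^{n_j} y_{m_i}\right\| \geqslant \frac{\theta\, n_j}{m_j}
\]
whenever $j \leqslant m_1 < \cdots < m_{n_j}$. On the other hand, $\sum_{i=1}^{n_j} y_{m_i}$ is a $\{-1,+1\}$-combination of distinct entries of the original block sequence $(x_k)_k$, hence by the $c_0$-upper estimate its norm is at most $C$. Combining the two bounds yields $\theta n_j \leqslant C m_j$ for all sufficiently large $j$, contradicting the standing growth condition $n_j/m_j \to \infty$ on the sequence $(m_j,n_j)_j$ imported from \cite{AH}. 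The only real obstacle is the organizational bookkeeping in the middle step: one must schedule the supports and sizes $n_m$ so that the averages $b_m^*$ inherit the very-fast-growing condition while the $(y_m)_m$ remain successive blocks of $(x_k)_k$; once this is set up the $\al$-index dichotomy provided by Propositions \ref{adx positive} and \ref{build on vfg} finishes the argument cleanly.
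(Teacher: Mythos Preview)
Your argument is correct and is essentially the paper's own approach, spelled out in detail: the paper compresses the middle step into the single sentence preceding the corollary (``it immediately follows that if $(x_k)_k$ is a sequence generating a $c_0$ spreading model, then it has a further block sequence $(y_k)_k$ with $\adyk > 0$'') and then cites Proposition \ref{adx positive}. Your construction of the very fast growing $(b_m^*)_m$ from Lemma \ref{first step building}, the seminormalization of $(y_m)_m$ via $b_m^*(y_m)>\e$ and the $c_0$-upper bound, and the final comparison $\theta n_j/m_j \leqslant C$ are exactly the steps the paper leaves implicit.
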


\section{Exact pairs and dependent sequences}
In this section we define exact pairs and dependent sequences and we also show that they can be found in every block subspace. They are important tools used in the sequel to deduce all the properties of the space. The definition of a dependent sequence is based on that from \cite{AH} and has been slightly modified in order to obtain a stronger result.

\subsection{Rapidly increasing sequences and $\ell_1^n$-averages} We recall the definition of a rapidly increasing sequence (RIS), state the basic inequality, for which we do not include a proof (for details see \cite[Section 5]{AH}), and also remind the notion of normalized $\ell_1^n$-averages (see also \cite[Section 8]{AH}). The auxiliary space used for the basic inequality is $T[(\mathscr{A}_{3n_j},m_j^{-1})_j]$ (see \cite[Sections 2.4]{AH}).

\begin{dfn}\label{ris def}
A (finite or infinite) block sequence $(x_k)_k$ is called a $C$-rapidly increasing sequence, or a $C$-RIS, where $C\geqslant 1$, if there is a strictly increasing sequence of natural numbers $(j_k)_k$ , so that the following hold:
\begin{itemize}

\item[(i)] $\|x_k\| \leqslant C$,

\item[(ii)] $j_{k+1} > \max\supp x_k$ and

\item[(iii)] $|e_\ga^*(x_k)| < C/m_j$ whenever $\we(\ga) = m_j^{-1}$ and $j < j_k$,  for all $k$.

\end{itemize}
\end{dfn}

\begin{rmk}\label{on subseq ris}
Note that if an infinite block sequence satisfies (i) and (iii) of Definition \ref{ris def}, for some $C$ and a strictly increasing sequence $(j_k)_k$, then it has a subsequence which is a $C$-RIS.
\end{rmk}

The following Proposition has been essentially proven in \cite{AH}. It is a consequence of the basic inequality and it follows by combining \cite[Corollary 5.5]{AH} with \cite[Proposition 5.6]{AH}. Statement \eqref{raspberries} in particular follows from applying \cite[Lemma 5.3]{AH} to \cite[Proposition 5.6 (1)]{AH}.
\begin{prp}\label{all basics in one}
If $(x_k)_k$ is a $C$-RIS, then for any scalars $(\la_k)_k$ we have
\begin{equation}\label{ris dominated}
\left\|\sum_k\la_kx_k\right\| \leqslant 10C\left\|\sum_k\la_ke_k\right\|_{T[(\mathscr{A}_{3n_j},m_j^{-1})_j]},
\end{equation}
where the right-hand norm is taken in $T[(\mathscr{A}_{3n_j},m_j^{-1})_j]$. More precisely, if $j\inn$ and $(x_k)_{k=1}^{n_j}$ is a $C$-RIS, then:
\begin{equation}\label{currant}
\left\|\frac{m_j}{n_j}\sum_{k=1}^{n_j}x_k\right\| \leqslant 10C
\end{equation}
and if $\ga\in\Ga$ with $\we(\ga) = m_i^{-1}$ and $E$ is an interval of $\N$, then:
\begin{equation}\label{raspberries}
\left|e_\ga^*\circ P_E\left(\frac{m_j}{n_j}\sum_{k=1}^{n_j}x_k\right)\right| \leqslant \left\{
\begin{array}{ll}
\frac{112C}{m_i},& \text{if } i < j\;\text{and}\\
\\
\frac{16Cm_j}{n_j} + \frac{24Cm_j}{m_i} + \frac{80C}{m_i},& \text{if}\;i \geqslant j.
\end{array}
\right.
\end{equation}
\end{prp}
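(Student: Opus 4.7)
The plan is to establish Proposition \ref{all basics in one} through the standard route of a \emph{basic inequality}: for any $C$-RIS $(x_k)_k$, scalars $(\la_k)_k$, $\ga\in\Ga$ and interval $E$ of $\N$, one produces a functional $\phi$ in the norming set of the auxiliary space $T[(\mathscr{A}_{3n_j},m_j^{-1})_j]$ such that
\[
\left|e_\ga^*\circ P_E\!\left(\sum_k\la_kx_k\right)\right|\;\leqslant\; 10C\,\phi\!\left(\sum_k|\la_k|e_k\right).
\]
The three assertions of the proposition then drop out by taking suprema over $\ga,E$ for \eqref{ris dominated}, and by plugging in the explicit vector $(m_j/n_j)\sum_{k=1}^{n_j}e_k$ and computing in the auxiliary Tsirelson space for \eqref{currant} and \eqref{raspberries}.

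The basic inequality is proved by induction on $\we(\ga)^{-1}$. Fixing $\ga$ with $\we(\ga)=m_j^{-1}$, expand $e_\ga^*$ via the evaluation analysis of Proposition \ref{evaluation analysis prp} as $e_\ga^*=\sum_{r=1}^ad_{\xi_r}^*+\frac{1}{m_j}\sum_{r=1}^ab_r^*$, with $a\leqslant n_j$ and $(b_r^*)_r$ a very fast growing sequence of \ac-averages. The ``diagonal'' contribution $\sum_r d_{\xi_r}^*(x_k)$ is small: for each $k$ there is at most one $r$ with $\ra(\xi_r)\in\ran x_k$, and the remaining $\xi_r$'s contribute $\le C/m_j$ per $k$ by condition (iii) of Definition \ref{ris def}. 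The ``average'' contribution is the main one. Each $b_r^*$ is an \ac-average, and by the very fast growing property combined with the RIS condition, $|b_r^*(x_k)|$ is either of order $C$ (for the single $k$ where $\ran x_k$ meets $\ran b_r^*$ essentially) or of order $C/m_j$ (for indices where the average size has already exceeded the relevant threshold). Packaging these estimates, the sequence $(b_r^*)_{r=1}^a$ gives rise to an $\mathscr{A}_{3n_j}$-admissible family in the auxiliary space (the factor $3$ absorbs the diagonal $d_{\xi_r}^*$ terms and the at most two boundary corrections from $P_E$ cutting through supports), and $1/m_j$ becomes the outer weight. The induction then applies to each $b_r^*$, whose constituent $e_{\ga_i}^*\circ P_{E_i}$ have strictly smaller weight, closing the recursion.

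For \eqref{currant}, a direct computation in $T[(\mathscr{A}_{3n_j},m_j^{-1})_j]$ using the obvious test functional $\frac{1}{m_j}\sum_{k=1}^{n_j}e_k^*$ yields $\|(m_j/n_j)\sum_{k=1}^{n_j}e_k\|\leqslant 1$, so the basic inequality gives the bound $10C$. For \eqref{raspberries}, one applies the basic inequality to the specific $\ga$ of weight $m_i^{-1}$. When $i<j$, the outer weight $1/m_i$ of $\ga$ dominates and, by unconditionality in the auxiliary Tsirelson space, one bounds the action by roughly $C/m_i$ times a constant arising from how $\mathscr{A}_{3n_i}$ sees the flat average. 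When $i\geqslant j$, one separates (a) the at most two indices $k$ for which $E$ cuts through $\ran x_k$ strictly — these contribute at most $\|P_E\|\cdot\|x_k\|\leqslant 4C$ each, multiplied by $m_j/n_j$, producing the $16Cm_j/n_j$ term — and (b) the interior indices, for which $e_\ga^*\circ P_E$ acts as $e_\ga^*$; here the competition between the $\mathscr{A}_{n_i}$-operation inside $e_\ga^*$ and the $\mathscr{A}_{n_j}$-structure of the flat average produces the mixed terms $24Cm_j/m_i$ and $80C/m_i$ via the standard Tsirelson-type arithmetic for $(\mathscr{A}_{3n_j},m_j^{-1})_j$.

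The principal obstacle is the inductive construction of the auxiliary functional in the basic inequality: one must verify that the family of smaller functionals produced at each step really does fit into the $\mathscr{A}_{3n_j}$-admissible schema with the correct outer weight. The $3$ in $\mathscr{A}_{3n_j}$ is essential and non-cosmetic, as it is precisely what accommodates the diagonal $d_{\xi_r}^*$ contributions and the boundary losses from $P_E$ beyond the core $n_j$ averages. The second, more arithmetic, difficulty is the case $i\geqslant j$ of \eqref{raspberries}, where one must carefully book-keep the $m_j/n_j$ and $m_j/m_i$ factors to land on the stated constants; this is exactly the content of the ``middle weight'' estimate that \cite[Lemma 5.3]{AH} packages in the Argyros--Haydon framework.
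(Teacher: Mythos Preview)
Your approach is the same as the paper's: the paper does not give an independent proof but simply states that the result has been essentially proven in \cite{AH} (it cites \cite[Corollary 5.5, Proposition 5.6, Lemma 5.3]{AH}), and your sketch is an outline of exactly that basic-inequality argument.

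There is, however, one genuine slip in your outline. You write that the induction is on $\we(\ga)^{-1}$ and that ``the induction then applies to each $b_r^*$, whose constituent $e_{\ga_i}^*\circ P_{E_i}$ have strictly smaller weight.'' This is false: in the evaluation analysis $e_\ga^* = \sum_r d_{\xi_r}^* + (1/m_j)\sum_r b_r^*$, the $b_r^*$ are \ac-averages built from coordinates $\ga_i\in\Ga_q$ with $q<\ra(\ga)$, and there is no constraint whatsoever forcing $\we(\ga_i)$ to be smaller than $\we(\ga)$ (indeed, Definition \ref{def averages} only requires the weights \emph{within} a single average to be strictly decreasing). The induction in \cite{AH} is on $\ra(\ga)$, and this is not cosmetic: induction on weight would not close. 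Once you replace ``weight'' by ``rank'' throughout, the recursion does terminate and the rest of your outline --- the role of the factor $3$ in $\mathscr{A}_{3n_j}$, the splitting of the diagonal and average parts, and the arithmetic for the case $i\geqslant j$ in \eqref{raspberries} --- is an accurate summary of the \cite{AH} argument.
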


\begin{dfn}
An element $x$ of $\X$ will be called a $C$-$\ell_1^n$-average if there exists a block sequence $(x_k)_{k=1}^n$ in $\X$ such that $x=(1/n)\sum_{k=1}^nx_k$ and $x_k \leqslant C$ for all $k$. We say that $x$ is a normalized $C$-$\ell_1^n$ average if, in addition, $\|x\| =1$.
\end{dfn}

Proposition \ref{adx positive} implies that a sequence $(x_k)_k$ with positive $\al$-index supports normalized $C$-$\ell_1^n$-averages. This can be deduced using an argument similar to that in the proof of \cite[Lemma II.22, page 33]{ATo}.

\begin{lem}\label{averages if adx posititive}
Let $(x_k)_k$ be a seminormalized block sequence in $\X$ with $\adxk > 0$. Then for every $C>1$ and $n\inn$ there exist further normalized block vectors $(y_k)_{k=1}^n$ of $(x_k)_k$, so that $y = (1/n)\|\sum_{k=1}^ny_k\| \geqslant 1/C$. In particular, the vector $(1/\|y\|)y$ is a normalized $C$-$\ell_1^n$-average.
\end{lem}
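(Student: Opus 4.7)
The plan is to argue by contradiction along the lines of \cite[Lemma~II.22]{ATo}. Suppose the conclusion fails for some $C>1$ and $n\in\N$, so that every choice of $n$ normalized successive blocks $(y_k)_{k=1}^n$ of $(x_k)_k$ satisfies $\|(1/n)\sum_{k=1}^n y_k\|<1/C$. The target is to contradict the $\ell_1^{n_j}$-lower bound produced by Proposition~\ref{adx positive}.

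First, I would apply Proposition~\ref{adx positive} to pass to a subsequence, still denoted $(x_k)_k$, and fix $\theta>0$ with $\|\sum_{i=1}^{n_j}\lambda_i x_{k_i}\|\geq(\theta/m_j)\sum_{i=1}^{n_j}|\lambda_i|$ for all $j\leq k_1<\cdots<k_{n_j}$ and scalars $(\lambda_i)$. Setting $M=\sup_k\|x_k\|$ and $y_i=x_{k_i}/\|x_{k_i}\|$, the choice $\lambda_i=1/\|x_{k_i}\|$ gives $\|(1/n_j)\sum_{i=1}^{n_j} y_i\|\geq\theta/(m_jM)$, so the normalized block sequence $(y_i)$ itself inherits an $\ell_1^{n_j}$-lower bound with constant $\theta/(m_jM)$.

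Next, I would iterate the failure hypothesis through a tree of averages: starting from $(y_i)$ at level $0$, at each subsequent level partition the current normalized blocks into consecutive groups of size $n$, form their means (each of norm less than $1/C$ by hypothesis) and renormalize. After $r$ levels, a careful bookkeeping --- in which each intermediate renormalization is controlled via the basic inequality (Proposition~\ref{all basics in one}) transferred to the unconditional auxiliary space $T[(\mathscr{A}_{3n_j},m_j^{-1})_j]$, after arranging that the tree-averages at each level form a rapidly increasing sequence in the sense of Definition~\ref{ris def} --- shows that the mean of $n^r$ suitably chosen normalized blocks of $(x_k)_k$ has norm of order at most $1/C^r$. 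Combined with the lower bound $\theta/(m_jM)$ applied to any choice of $n_j\geq n^r$ vectors, this forces $C^r\leq m_jM/\theta$. The rapid growth of $(n_j)_j$ relative to $(m_j)_j$ prescribed by \cite[Assumption~2.3]{AH} allows one to pick $r$ and $j$ violating this inequality, producing the desired contradiction.

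The main obstacle is performing the iteration cleanly in a hereditarily indecomposable space: the triangle inequality alone yields only the uniform bound $1/C$ regardless of the number of levels, since intermediate normalizations cannot be absorbed into a single $\max$ factor in the absence of unconditionality. Overcoming this requires transferring the upper estimate at each level to the unconditional auxiliary Tsirelson space via the basic inequality; for this to apply legitimately, one must arrange that the tree-averages satisfy the RIS conditions, which is ensured by spacing the groupings across widely separated ranges of the index set and exploiting the FDD structure of $\X$ together with the rapid decay of coordinates of averages provided by the very fast growing structure of the underlying \ac-averages.
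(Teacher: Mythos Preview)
Your broad outline --- contradiction via a tree of repeated $n$-averagings, combined with the $\ell_1^{n_j}$-lower bound from Proposition~\ref{adx positive} and the lacunarity of $(m_j,n_j)_j$ --- is exactly what the paper intends; it simply cites \cite[Lemma~II.22]{ATo}. However, the ``main obstacle'' you describe is not real, and your proposed remedy via the basic inequality and RIS conditions is both unnecessary and unworkable (there is no reason the intermediate tree-averages should satisfy the weight estimates in Definition~\ref{ris def}).

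The point you are missing is that Proposition~\ref{adx positive} gives the lower $\ell_1$-estimate for \emph{arbitrary} scalars, not only for constant coefficients. This replaces unconditionality entirely. Concretely: with $y_i^{(0)}=x_{k_i}/\|x_{k_i}\|$ and, for $1\leqslant\ell\leqslant r$, $y_j^{(\ell)}=(\sum_{i\in G_j}y_i^{(\ell-1)})/c_j^{(\ell)}$ where $c_j^{(\ell)}=\|\sum_{i\in G_j}y_i^{(\ell-1)}\|<n/C$ by the failure hypothesis, the top vector satisfies
\[
y^{(r)}=\sum_{i=1}^{n^r}\mu_i\,y_i^{(0)},\qquad \mu_i=\prod_{\ell=1}^r\frac{1}{c_{j_\ell(i)}^{(\ell)}}>\Big(\frac{C}{n}\Big)^r.
\]
Applying Proposition~\ref{adx positive} with the scalars $\la_i=\mu_i/\|x_{k_i}\|$ (and padding with zero coefficients up to $n_j$ terms, which is harmless) gives
\[
1=\|y^{(r)}\|\geqslant\frac{\theta}{m_j}\sum_{i=1}^{n^r}\frac{\mu_i}{\|x_{k_i}\|}\geqslant\frac{\theta}{Mm_j}\sum_{i=1}^{n^r}\mu_i>\frac{\theta}{Mm_j}\,C^r,
\]
so $C^r<Mm_j/\theta$. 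Choosing $j$ large (and $k_1\geqslant j$) with $r$ satisfying $n^r\leqslant n_j$ and $C^r\geqslant Mm_j/\theta$ --- possible since $\log n_j/\log m_j\to\infty$ under \cite[Assumption~2.3]{AH} --- yields the contradiction. No transfer to an auxiliary space is needed.
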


A standard argument yields the following result (for a proof see e.g. \cite[Lemma 3.3]{ABM}).

\begin{lem}\label{on averages adx zero}
Let $y$ be a normalized $C$-$\ell_1^n$-average and $b^*$ be an \ac-average. Then $|b^*(y)| < 4C/s(b^*) + 8C/n$. In particular, if $(y_k)_k$ is a block sequence in $\X$ so that each $y_k$ is a $C$-$\ell_1^{r_k}$-average with $(r_k)_k$ strictly increasing, then $\adyk = 0$ and $\lim_k\sup_{\ga\in\Ga}|d_\ga^*(y_k)| = 0$.
\end{lem}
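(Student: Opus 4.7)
The plan is to unfold $b^*(y)$ as a double sum and then to estimate the contribution of each pair of indices based on how the $i$-th interval appearing in $b^*$ meets the range of the $k$-th summand of $y$. Writing $y = (1/n)\sum_{k=1}^n x_k$ with $\|x_k\|\leqslant C$ and, up to an overall sign, $b^* = (1/s)\sum_{i=1}^d \e_i e_{\ga_i}^*\circ P_{E_i}$ with $d\leqslant s = s(b^*)$ and successive intervals $E_1<\cdots<E_d$ of $\N$ (in the basic-average case $E_i = \{\ra(\ga_i)\}$), I would start from the expansion
\[
b^*(y) = \frac{1}{ns}\sum_{i,k} \e_i e_{\ga_i}^*\bigl(P_{E_i} x_k\bigr),
\]
noting that only pairs $(i,k)$ with $E_i\cap\ran(x_k)\neq\varnothing$ contribute.

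Then I would classify the contributing pairs into three types according to the relative position of $E_i$ and $\ran(x_k)$: (I) $\ran(x_k)\subseteq E_i$, in which case $P_{E_i}x_k = x_k$ and $|e_{\ga_i}^*(x_k)|\leqslant \|x_k\|\leqslant C$; (II) $E_i\subseteq \ran(x_k)$; and (III) the two intervals overlap but neither contains the other. The key counting exploits disjointness on both sides: the $(\ran(x_k))_k$ are pairwise disjoint because $(x_k)_k$ is a block sequence, and the $(E_i)_i$ are pairwise disjoint because they are successive. This produces at most $n$ pairs of type (I) (at most one $i$ per $k$), at most $d$ pairs of type (II) (at most one $k$ per $i$), and at most $2d$ pairs of type (III) (for each $i$, at most one $\ran(x_k)$ sticks out through the left endpoint of $E_i$ and at most one through the right endpoint).

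For types (II) and (III) I would use $|e_{\ga_i}^*(P_{E_i}x_k)|\leqslant \|P_{E_i}\|\,\|x_k\|\leqslant 4C$, whereas type (I) retains the sharper bound $C$. Summing the three contributions and dividing by $ns$ gives $|b^*(y)|\leqslant C/s + 4Cd/(ns) + 8Cd/(ns)$, which after using $d\leqslant s$ and absorbing the type (I) term into a $4C/s$ term yields $|b^*(y)|<4C/s(b^*) + 8C/n$. The bound is homogeneous in the scale of $y$, so it extends verbatim to a non-normalized $C$-$\ell_1^n$-average by rescaling.

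For the ``in particular'' statement, applying the previous bound to each $y_k$ gives $|b^*(y_k)|\leqslant 4C/s(b^*) + 8C/r_k$ for every \ac-average $b^*$. Given a very fast growing sequence $(b_j^*)_j$ and any subsequence $(y_{k_j})_j$, Remark \ref{sizes indeed increase} yields $s(b_j^*)\to\infty$ and the strict monotonicity of $(r_k)_k$ yields $r_{k_j}\to\infty$, so both terms tend to $0$ and hence $\adyk=0$. For the uniform decay of $d_\ga^*(y_k)$ in $\ga$, I would use that $d_\ga^* = e_\ga^*\circ P_{\{\ra(\ga)\}}$ has norm at most $4$ and that, since the summands $(x_{k,i})_i$ of $y_k$ are successive, at most one of them has $\ra(\ga)$ in its FDD-range, giving $|d_\ga^*(y_k)|\leqslant 4C/r_k$ uniformly in $\ga\in\Ga$. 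The main obstacle is the combinatorial step producing the bound $|\mathrm{III}|\leqslant 2d$ rather than $2n$: this is exactly what forces the second error term to be $8C/n$ independent of $s$ rather than of order $s/n$, and it requires the boundary incidences to be counted per $E_i$ while simultaneously using the disjointness of both families of intervals.
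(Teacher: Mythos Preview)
Your approach is the standard one (the paper itself defers to \cite[Lemma 3.3]{ABM} without giving a proof), and the ``in particular'' part is handled correctly. There is, however, a small bookkeeping slip in the main estimate: summing your three terms gives $C/s + 4Cd/(ns) + 8Cd/(ns) \leqslant C/s + 12C/n$, which does not reduce to $4C/s + 8C/n$ by any absorption. The fix is immediate once you notice that types (II) and (III) are mutually exclusive for a fixed $i$: if $E_i \subseteq \ran(x_k)$ for some $k$, then no other $x_{k'}$ can meet $E_i$ at all, so for each $i$ the total number of $k$'s of type (II) or (III) is at most $2$. Merging the two types into a single category with at most $2d$ pairs, each bounded by $4C$, gives contribution $\leqslant 8Cd/(ns) \leqslant 8C/n$, and together with the type (I) bound $C/s$ this yields $|b^*(y)| \leqslant C/s + 8C/n < 4C/s(b^*) + 8C/n$ as claimed.
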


\begin{rmk}\label{geburtstagstorte}
As it is shown in \cite[Lemma 8.4]{AH}, a sequence of $C$-$\ell_1^n$-averages with strictly increasing $n$'s, has a subsequence which is a $2C$-RIS.
\end{rmk}

A standard argument using either Lemma \ref{on averages adx zero} and Proposition \ref{adx zero} or Remark \ref{geburtstagstorte} and \eqref{currant}, yields the following.
\begin{prp}\label{shrinking}
The FDD of $\X$ is shrinking. In particular, $\Xstar$ is isomorphic to $\ell_1$.
\end{prp}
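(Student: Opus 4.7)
The plan is to argue by contradiction. Suppose the FDD $(M_q)_q$ of $\X$ is not shrinking; then, after passing to a subsequence and replacing some $x_k$ by $-x_k$ if necessary, we may find a norm-one functional $f\in\Xstar$, a seminormalized block sequence $(x_k)_k$ with respect to the FDD, and a $\de>0$ such that $f(x_k)\geqslant\de$ for every $k\inn$. The strategy is to manufacture from $(x_k)_k$ a very fast growing sequence of $\ell_1^n$-averages on which $f$ is forced to be unbounded, while \eqref{currant} keeps the corresponding $m_j/n_j$-weighted sums of bounded norm.

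First, I would produce a block sequence $(y_k)_k$ of normalized $C$-$\ell_1^{n_k}$-averages (with $n_k$ strictly increasing), each $y_k$ a convex combination with nonnegative coefficients of finitely many $x_i$'s. The dichotomy on $\adxk$ handles this: if $\adxk>0$, apply Lemma \ref{averages if adx posititive} iteratively on successive tails of $(x_k)_k$; if $\adxk=0$, invoke Corollary \ref{no c0} together with the standard folklore lemma that the absence of $c_0$ forces $\ell_1^n$-averages inside every block subspace, or alternatively settle this subcase directly by Proposition \ref{adx zero} (if $\lim_k\sup_{\ga\in\Ga}|d_\ga^*(x_k)|=0$ a $c_0$-spreading model immediately contradicts $f(x_k)\geqslant\de$) with a short ad-hoc argument for the remaining sub-subcase. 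Because each $y_k$ is a nonnegative convex combination of vectors on which $f$ is at least $\de$, we retain $f(y_k)\geqslant\de$.

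Second, by Remark \ref{geburtstagstorte} a subsequence of $(y_k)_k$ is a $2C$-RIS, which we still denote by $(y_k)_k$. Fix $j\inn$ large and, after a shift, arrange that $(y_k)_{k=1}^{n_j}$ is a $2C$-RIS whose RIS indices exceed $j$. Then \eqref{currant} gives
\begin{equation*}
\left\|\frac{m_j}{n_j}\sum_{k=1}^{n_j}y_k\right\|\leqslant 20C,
\end{equation*}
while linearity of $f$ yields
\begin{equation*}
f\left(\frac{m_j}{n_j}\sum_{k=1}^{n_j}y_k\right)=\frac{m_j}{n_j}\sum_{k=1}^{n_j}f(y_k)\geqslant m_j\de.
\end{equation*}
Combining the two estimates, $\|f\|\geqslant m_j\de/(20C)$, which is absurd as $m_j\to\infty$. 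This contradiction shows that the FDD is shrinking, and then $\Xstar\simeq\ell_1$ follows immediately from Proposition \ref{functional properties}(iv).

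The main obstacle is the $\adxk=0$ branch of the $\ell_1^n$-average construction, because Lemma \ref{averages if adx posititive} does not directly apply there. The cleanest way around it is to use Corollary \ref{no c0} together with the classical James-type fact that a Banach space with a Schauder basis and no copy of $c_0$ admits $\ell_1^n$-averages of every size inside every block subspace; this reduces both branches to the uniform RIS-plus-\eqref{currant} argument sketched above.
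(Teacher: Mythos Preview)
Your overall strategy---produce $\ell_1^n$-averages from the non-weakly-null block sequence, pass to a $2C$-RIS via Remark~\ref{geburtstagstorte}, and apply \eqref{currant} to force $\|f\|\geqslant m_j\de/(20C)$---is correct and is precisely the second of the two routes the paper indicates. The conclusion $\Xstar\simeq\ell_1$ via Proposition~\ref{functional properties}(iv) is also the intended one.

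However, your dichotomy on $\adxk$ is an unnecessary detour, and the $\adxk>0$ branch as written has a small gap: Lemma~\ref{averages if adx posititive} produces normalized block vectors $(y_k)_{k=1}^n$ of $(x_k)_k$, not necessarily nonnegative combinations of the $x_i$'s, so you cannot conclude $f(y_k)\geqslant\de$ from it. The fix is that the whole dichotomy is superfluous: the hypothesis $f(x_k)\geqslant\de$ already gives you the averages for free. For any $n$ and any successive $x_{k_1},\ldots,x_{k_n}$, the vector $u=(1/n)\sum_{i=1}^n x_{k_i}$ satisfies $f(u)\geqslant\de$, hence $\|u\|\geqslant\de$, and $u/\|u\|$ is a normalized $(1/\de)$-$\ell_1^n$-average with $f(u/\|u\|)\geqslant\de$. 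No appeal to Lemma~\ref{averages if adx posititive}, to Corollary~\ref{no c0}, or to any James-type fact is needed. Once you observe this, the rest of your argument goes through verbatim with $C=1/\de$.
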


\begin{prp}\label{c0 sm in every subspace}
Every block subspace $X$ of $\X$ contains a normalized block sequence $(y_k)_k$ with $\adyk = 0$ and $\lim_k\sup_{\ga\in\Ga}|d_\ga^*(y_k)| = 0$. In particular, every subspace of $\X$ admits a $c_0$ spreading model.
\end{prp}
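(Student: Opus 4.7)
Plan. The plan is to exhibit in $X$ a normalized block sequence $(y_k)_k$ whose terms are $C$-$\ell_1^{r_k}$-averages with $(r_k)_k$ strictly increasing; Lemma \ref{on averages adx zero} then automatically yields $\adyk = 0$ together with $\lim_k \sup_{\ga\in\Ga}|d_\ga^*(y_k)| = 0$, and Proposition \ref{adx zero} converts this data into an isometric $c_0$ spreading model inside $X$, which is the ``in particular'' statement.

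To produce the averages, I would first pick any normalized block sequence $(x_k)_k$ in $X$. If $(x_k)_k$ admits a further block subsequence with positive $\al$-index, then Lemma \ref{averages if adx posititive} directly supplies, for each prescribed $r\in\N$ and any $C>1$, a normalized $C$-$\ell_1^r$-average supported on a tail of $(x_k)_k$; iterating for successively later tails assembles these averages into a block sequence $(y_r)_r$ in $X$ with $r_k\to\infty$. If instead every block subsequence of $(x_k)_k$ has $\al$-index zero, I invoke the standard mixed-Tsirelson iterative construction of $\ell_1^n$-averages: at each stage $j$ one takes successive normalized blocks produced at the previous stage, picks coordinates $(\ga_i)\subset\Ga$ on which they are large (via $\X\subset\ell_\infty(\Ga)$), uses Proposition \ref{building averages} to arrange these with their ranges into an incomparable (or comparable, or irrelevant) family, and then applies Proposition \ref{build on vfg} to manufacture a coordinate of weight $m_j^{-1}$ whose evaluation functional tests the block sum from below, producing a new block of norm bounded below by a fixed constant times the sum; iterating this process a finite number of times (to compensate for the $m_j$ factors in the mixed-Tsirelson estimate) yields $\ell_1^{r_k}$-averages with constant $C$ independent of $r_k$.

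The principal obstacle is this second case, where the $\al$-index machinery is silent and one must exploit the mixed-Tsirelson hierarchy of the norming set $\Ga$ directly in order to secure the lower bounds $\|\sum_i z_i\|\geq r_k/C$ that are required for a genuine $C$-$\ell_1^{r_k}$-average. Once the block sequence $(y_k)_k$ of averages is assembled in either case, the two desired properties follow from Lemma \ref{on averages adx zero} and the passage to a $c_0$ spreading model from Proposition \ref{adx zero}, closing the proof.
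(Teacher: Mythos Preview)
Your Case 1 is fine and matches the paper's endgame (Lemmas \ref{averages if adx posititive} and \ref{on averages adx zero}, then Proposition \ref{adx zero}). The gap is in Case 2.

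You propose that when every block subsequence of $(x_k)_k$ has $\al$-index zero you can still manufacture normalized $C$-$\ell_1^{r_k}$-averages by the ``standard mixed-Tsirelson iterative construction''. But that construction (the argument behind Lemma \ref{averages if adx posititive}, i.e.\ \cite[Lemma II.22]{ATo}) rests on a lower $\ell_1$-type estimate of the form in Proposition \ref{adx positive}, and that estimate is exactly the manifestation of \emph{positive} $\al$-index. Your substitute---picking $\ga_i$'s with $|e_{\ga_i}^*(z_i)|>3/4$, organizing them via Proposition \ref{building averages} into an \ac-family, and then invoking Proposition \ref{build on vfg}---does not deliver the needed bound $\|\sum_{k=1}^{N} z_k\|\geqslant cN/m_j$. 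The reason is the very-fast-growing constraint: to use $a$ many \ac-averages $b_r^*$ in the evaluation analysis, the sizes $s(b_r^*)$ must blow up, so the total number $N=\sum_r s(b_r^*)$ of $z$'s consumed is enormous compared to $a\leqslant n_j$, and the functional you build only witnesses $\|\sum z_k\|\gtrsim a/m_j$, not $N/m_j$. Iterating does not repair this. Indeed, if a block sequence already satisfies $\adxk=0$ together with $\lim_k\sup_\ga|d_\ga^*(x_k)|=0$, it generates a $c_0$ spreading model by Proposition \ref{adx zero}, and no uniform-constant $\ell_1^n$-averages exist along it---so the programme of ``always build averages'' cannot succeed in Case 2.

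The paper's proof is organized differently and avoids the issue. It first uses Corollary \ref{no c0} (absence of $c_0$) together with the fact that $(d_\ga^*)_\ga$ is weak-star null to pass to a further block subspace $Z\subset X$ in which \emph{every} bounded block sequence automatically satisfies $\lim_k\sup_\ga|d_\ga^*(z_k)|=0$. After this reduction only the $\al$-index needs to be examined: if a normalized block sequence in $Z$ has $\al$-index zero, it is already the desired sequence; if its $\al$-index is positive, one builds the $\ell_1^n$-averages exactly as in your Case 1. In particular, with the paper's reduction in hand, your Case 2 is actually vacuous (any block subspace contains a block sequence with positive $\al$-index, via Proposition \ref{adx zero} and Lemma \ref{first step building}), but establishing that vacuity is precisely what requires the no-$c_0$ step you omitted.
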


\begin{proof}
As the sequence $(d_\ga^*)_{\ga\in\Ga}$ is weak-star null, Corollary \ref{no c0} implies that there is a further block subspace $Z$ of $X$ so that for every bounded block sequence $(z_k)_k$ in $Z$, $\lim_k\sup_{\ga\in\Ga}|d_\ga^*(z_k)| = 0$. We fix any normalized block sequence $(z_k)_k$ in $Z$. If $\adzk = 0$ then this is the desired sequence. Otherwise, $\adzk > 0$ and Lemmas \ref{averages if adx posititive} and \ref{on averages adx zero} yield that there is a further block sequence $(y_k)_k$ of $(z_k)_k$ satisfying the conclusion. The second part follows from Proposition \ref{adx zero}.
\end{proof}

\begin{prp}\label{not L infty saturated}
The space $\X$ is not $\mathscr{L}_\infty$-saturated. More precisely, if $X$ is generated by a skipped-block sequence of $\X$, then $X$ does not contain a $\mathscr{L}_\infty$-subspace.
\end{prp}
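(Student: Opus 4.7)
The plan is to proceed by contradiction. Assume $X=\overline{\langle x_n:n\inn\rangle}$ is generated by a skipped-block sequence $(x_n)$ of $\X$ and contains an infinite-dimensional $\mathscr{L}_\infty$-subspace $Z$, with $\mathscr{L}_\infty$-constant $\lambda\geqslant 1$. The goal is to produce inside $Z$ a normalized sequence equivalent to the unit vector basis of $c_0$, which would contradict Corollary~\ref{no c0}.

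The first step extracts a well-behaved sequence from $Z$. Since $(x_n)$ is a Schauder basis of $X$, every element of $Z$ can be approximated by finite block combinations of the $x_n$'s. Apply Proposition~\ref{c0 sm in every subspace} inside the block subspace of $X$ generated by a weakly null normalized sequence in $Z$: this yields a normalized block sequence $(y_k)$ in $Z$ with $\adyk=0$ and $\lim_k\sup_{\ga\in\Ga}|d_\ga^*(y_k)|=0$. By Proposition~\ref{adx zero}, after a further subsequence, $(y_k)$ satisfies a uniform $c_0$-upper estimate: there exists a constant $C_1$ with
\[
\left\|\sum_k\la_k y_k\right\|\leqslant C_1\max_k|\la_k|
\]
for every finite scalar sequence $(\la_k)_k$ (derived from the bound $1+15/m_{j_n}$ on individual coordinates established inside the proof of Proposition~\ref{adx zero}).

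The second step uses the $\mathscr{L}_\infty$-hypothesis on $Z$: for every $n$ there is an $n$-dimensional subspace $F_n\subseteq Z$ with $d(F_n,\ell_\infty^n)\leqslant\lambda$, giving $n$ vectors in $Z$ that are $\lambda$-equivalent to the $\ell_\infty^n$-basis. By a diagonal/sliding-hump extraction, made possible by the skipped-block hypothesis on $X$ which supplies the necessary freedom in the FDD $(M_q)_q$ of $\X$ to realign the $F_n$'s into a single consistent sequence, we pass to a further subsequence of $(y_k)$, again denoted $(y_k)$, for which each initial segment $(y_1,\dots,y_n)$ is $(2\lambda)$-equivalent to the $\ell_\infty^n$-basis. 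This yields the matching lower estimate
\[
\frac{1}{2\lambda}\max_k|\la_k|\leqslant\left\|\sum_k\la_k y_k\right\|
\]
for all finite scalar sequences. Combined with the first step, $(y_k)$ is equivalent to the unit vector basis of $c_0$, so $c_0$ embeds isomorphically into $Z\subseteq\X$, contradicting Corollary~\ref{no c0}.

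The main obstacle is the second step, namely upgrading the abstract $\mathscr{L}_\infty$-structure of $Z$ into a concrete alignment with the fixed sequence $(y_k)$. Note that $\X$ itself is $\mathscr{L}_\infty$ yet does not contain $c_0$, so the $\mathscr{L}_\infty$-definition alone does \emph{not} supply a nested chain of $\ell_\infty^n$-isomorphic subspaces whose bases combine into a $c_0$-equivalent sequence. The essential use of the skipped-block hypothesis on $X$ is that the gaps in $(x_n)$ relative to the FDD $(M_q)_q$ allow one to reposition $\mathscr{L}_\infty$-witnessing subspaces of $Z$ onto initial segments of $(y_k)$ without disturbing the $\adyk=0$ condition or the pointwise decay $\lim_k\sup_\ga|d_\ga^*(y_k)|=0$. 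The precise realization of this alignment is expected to rely on the tree structure $\mathcal{U}$ of Subsection~\ref{subsection tree} together with the fact that $Z^*$ embeds in $\X^*\simeq\ell_1$, selecting the $F_n$'s compatibly with the $(x_n)$-gap pattern so that the resulting alignment is consistent across $n$.
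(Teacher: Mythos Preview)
Your proposal has a genuine gap, and it lies precisely where you suspect it does: the second step does not go through, and the attempted rescue via ``alignment'' cannot be made to work.

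First, a smaller issue: Proposition~\ref{adx zero} does \emph{not} give the upper estimate $\|\sum_k\la_k y_k\|\leqslant C_1\max_k|\la_k|$ for every finite scalar sequence. The bound $1+15/m_{j_n}$ is proved only under the spreading-model constraint $n\leqslant k_1<\cdots<k_n$; this is exactly the difference between generating a $c_0$ spreading model and being equivalent to the $c_0$ basis. So even if the alignment worked, the two estimates would not combine as you claim.

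The main problem is the alignment itself. The $\mathscr{L}_\infty$-hypothesis gives you, for each $n$, some $n$-dimensional $F_n\subset Z$ with $d(F_n,\ell_\infty^n)\leqslant\lambda$, but there is no mechanism for forcing the bases of these $F_n$'s to sit on initial segments of a \emph{fixed} sequence $(y_k)$ chosen in advance. Neither the skipped-block gaps nor the tree $\mathcal{U}$ provide such a mechanism; the gaps are a condition on the ambient FDD, not a tool for moving finite-dimensional subspaces of $Z$ around inside $Z$. As you yourself note, $\X$ is $\mathscr{L}_\infty$ and fails this conclusion, so some genuinely different input is needed.

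The paper's argument is dual and avoids this obstacle entirely. From $Z^*\simeq\ell_1$ (Lewis--Stegall plus shrinking), the restrictions $f_i = b_i^*|_Z$ of a very fast growing sequence $(b_i^*)_i$ with $b_i^*(y_i)>\varepsilon$ form a seminormalized weak$^*$-null sequence in $Z^*$, hence have a subsequence equivalent to the $\ell_1$-basis. Now the skipped-block hypothesis is used exactly once: when building a $\ga\in\Ga$ via Proposition~\ref{build on vfg} from $(b_{i_r}^*)_{r=1}^{n_j}$, the auxiliary functionals $d_{\xi_r}^*$ can be placed in the gaps of $X$ and therefore annihilate $Z$. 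This forces $\|\sum_{r=1}^{n_j}f_{i_r}\|_{Z^*}\leqslant m_j$, contradicting the $\ell_1$-equivalence since $n_j/m_j\to\infty$.
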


\begin{proof}
Assume that there is $X$ as in the statement that contains a $\mathscr{L}_\infty$-subspace $Y$. Proposition \ref{shrinking} and \cite[Corollary Page 182]{LS} yield that $Y^*$ is isomorphic to $\ell_1$. By Proposition \ref{c0 sm in every subspace} we may then find a normalized block sequence in $Y$, which is a perturbation of a skipped block sequence $(x_i)_i$, and generates a $c_0$ spreading model. By Lemma \ref{first step building} there are a normalized block sequence $(y_i)_i$ of $(x_i)_i$, $\e > 0$, and a very fast growing sequence of \ac-averages $(b_i^*)_i$ so that $b_i^*(y_i) > \e$ for all $i\inn$. It follows that if $f_i$ is the restriction of $b_i^*$ onto $Y$, then $(f_i)_i$ has a subsequence equivalent to the unit vector basis of $\ell_1$. By Proposition \ref{build on vfg}, for every $j\inn$ we can find $i_1 < \cdots < i_{n_j}$ and $(d_{\xi_r}^*)_r$ in the annihilator of $X$, and hence also of $Y$, so that $\|\sum_{r=1}^{n_j}d_{\xi_r}^* + (1/m_j)\sum_{r=1}^{n_j}b_{i_r}^*\| \leqslant 1$, which
implies that $\|\sum_{r=1}^{n_j}f_{i_r}^*\|\leqslant m_j$. We conclude that $(f_i)_i$ cannot be equivalent to the basis of $\ell_1$, a contradiction.
\end{proof}

\subsection{Exact vectors and exact pairs}
An exact pair is a pair of the form  $(\ga,x)$, where $\ga\in\Ga$ and $x\in\X$, which satisfies certain properties. The terms of dependent sequences, which we study in the next subsection, are such pairs. The coordinate $x$ of an exact pair is an exact vector. Its definition below is worth comparing to \cite[Definition 6.1]{AH}, as the constraints appear in the present case.

\begin{dfn}\label{exact vector def}
Let $C\geqslant 1$,  and $j\inn$. A finitely supported vector $x$ in $\X$ is called a $(C,j)$-exact vector if
 \begin{itemize}

 \item[(i)] $\sup_{\eta\in\Ga}|d_\eta^*(x)| \leqslant Cm_j/n_j$,

 \item[(ii)] $\|x\| \leqslant C$  and

 \item[(iii)] for every $i>j$, $\eta\in\Ga$ with $\we(\eta) = m_i^{-1}$ and interval $F$ of $N$,
 \begin{equation*}
|e_\eta^*\circ P_E(x)| <  \frac{C}{m_j},
 \end{equation*}

 \item[(iv)] for every $i<j$, $a\leqslant n_i$  and very fast growing sequence of \ac-averages $(b_r^*)_{r=1}$,
 \begin{equation*}
 \sum_{r=1}^a|b_r^*(x)| < \frac{C}{s(b_1^*)} + \frac{Cm_i}{m_j}
 \end{equation*}
 and

 \item[(v)] $\min\supp x \geqslant m_j$.

 \end{itemize}
\end{dfn}

\begin{dfn}\label{exact pair def}
 Let $C\geqslant 1$, $\theta > 0$ and $j\inn$. A pair $(\ga, x)$ is called a $(C,j,\theta)$-exact pair, if $\ga\in\Ga$ with $\we(\ga) = m_j^{-1}$, $x$ is a $(C,j)$-exact vector and $e_\ga^*(x) = \theta$.
\end{dfn}

\begin{rmk}\label{on exact vectors biorthogonals and index zero}
If $(x_k)_k$ is a block sequence so that each $x_k$ is a $(C,j_k)$-exact vector with $(j_k)_k$ strictly increasing, then (i) and (iv) of Definition \ref{exact vector def} easily imply that $\lim_k\sup_{\ga\in\Ga}|d_{\ga}^*(x_k)| = 0$ and $\adxk = 0$.
\end{rmk}

The following estimate is similar to \cite[Proposition 2.5]{AM1} in a mixed-Tsirelson setting.

\begin{lem}\label{like scc in tsirelson}
Let  $j\inn$ and $k\leqslant n_j$. Then
\begin{equation}
\left\|\frac{m_j}{n_j}\sum_{i=1}^ke_i\right\|_{T[(\mathscr{A}_{4n_j},m_j^{-1})_j]} \leqslant \frac{k}{n_j} + \frac{1}{m_j},
\end{equation}
where the  the norm is taken in $T[(\mathscr{A}_{4n_j},m_j^{-1})_j]$.
\end{lem}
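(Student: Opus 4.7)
The assertion is a norm estimate in the mixed-Tsirelson auxiliary space $Y = T[(\mathscr{A}_{4n_j}, m_j^{-1})_j]$. By scaling it is equivalent to proving
\[
\phi(k) := \Big\|\sum_{i=1}^k e_i\Big\|_Y \leqslant \frac{k}{m_j} + \frac{n_j}{m_j^2} \qquad \text{for all } k \leqslant n_j,
\]
and this is what I plan to establish by strong induction on $k$. The norm $\phi$ satisfies the standard dual recursion coming from the norming set: for $k \geqslant 1$,
\[
\phi(k) \leqslant \max\left\{1,\ \sup_{j'}\ \sup_{\substack{k_1+\cdots+k_d \leqslant k \\ d \leqslant 4n_{j'}}}\frac{1}{m_{j'}}\sum_{r=1}^d \phi(k_r)\right\}.
\]
The trivial term $1 \leqslant k/m_j + n_j/m_j^2$ follows from the growth assumption $n_j \geqslant m_j^2$ implicit in \cite[Assumption 2.3]{AH}, since then $n_j/m_j^2 \geqslant 1 - 1/m_j$.

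For the recursive part I split on the top weight $1/m_{j'}$. When $j' \geqslant j$, the estimate is immediate: combining $1/m_{j'} \leqslant 1/m_j$ with the triangle inequality $\phi(k_r) \leqslant k_r$ gives $(1/m_{j'})\sum_r\phi(k_r) \leqslant k/m_j$, which already lies below the desired bound. So the real content is the opposite case $j' < j$, where the outer coefficient $1/m_{j'}$ exceeds $1/m_j$ while the number of pieces is restricted to $d \leqslant 4n_{j'}$.

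In the case $j' < j$ I plan to interpolate between two bounds on each piece: the triangle bound $\phi(k_r) \leqslant k_r$ and the inductive bound $\phi(k_r) \leqslant k_r/m_j + n_j/m_j^2$. The two agree at roughly $k_r \approx n_j/m_j^2$; using the triangle bound for the pieces below this threshold and the inductive bound for those above it, the total splits as
\[
\frac{1}{m_{j'}}\sum_r\phi(k_r) \leqslant \frac{S_{\mathrm{small}}}{m_{j'}} + \frac{S_{\mathrm{large}}}{m_{j'} m_j} + \frac{d_{\mathrm{large}}}{m_{j'}}\cdot\frac{n_j}{m_j^2},
\]
where $d_{\mathrm{large}}$ is bounded in terms of $k$ and the threshold. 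The $S_{\mathrm{large}}$-term contributes at most $k/m_j$, while the remaining two terms are controlled by the rapid growth of $(m_j, n_j)$ from \cite[Assumption 2.3]{AH}, which ensures that relevant ratios such as $4n_{j'}/m_{j'}$ against $n_j/m_j^2$ are favorable for every $j' < j$.

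The main obstacle is precisely the arithmetic in this last case: the naive inductive bound accumulates the overhead $n_j/m_j^2$ a factor of $d \leqslant 4n_{j'}$ times, which is disastrous, and the thresholding trick is what allows the constants to collapse to the stated bound. The argument is essentially the one carried out for Schreier families in \cite[Proposition 2.5]{AM1}, transposed to the $\mathscr{A}_n$-setting where only cardinality (not spreading) is needed.
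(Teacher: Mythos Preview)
Your inductive scheme on $k$ does not close, and the gap is exactly in the case $j'<j$ that you flag as ``the main obstacle.'' Write $C=n_j/m_j^2$. Using the inductive bound on every piece gives
\[
\frac{1}{m_{j'}}\sum_{r=1}^d\phi(k_r)\leqslant \frac{k}{m_{j'}m_j}+\frac{4n_{j'}}{m_{j'}}\,C,
\]
and the growth conditions force $4n_{j'}/m_{j'}\gg 1$, so the overhead explodes. Your threshold at $T\approx C$ replaces this by $\min(k,4n_{j'}C)/m_{j'}$, but neither branch is adequate: for $k\leqslant 4n_{j'}C$ you get $k/m_{j'}$, which exceeds $k/m_j+C$ once $k\gtrsim m_{j'}C$; for $k>4n_{j'}C$ you get $4n_{j'}C/m_{j'}$, and the slack $k(1-1/m_{j'})/m_j$ in the linear term only absorbs this when $k$ is a fixed positive fraction of $n_j$. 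There is an uncovered intermediate range of $k$ (try $m_1=8$, $n_1=64$, $m_2=64$, $j'=1$, $j=2$: the two estimates leave a gap roughly between $9C$ and $2000C$). The number $d_{\mathrm{large}}\leqslant k/T$ does not help, since $d_{\mathrm{large}}\cdot C\leqslant k$ only returns you to the $k/m_{j'}$ bound. The analogy with \cite[Proposition~2.5]{AM1} is also misleading: that argument lives in the Schreier setting and uses the hereditary Schreier structure, not a bare cardinality recursion.

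The paper's proof avoids recursion on $k$ altogether. Given $f$ in the norming set and $x=(m_j/n_j)\sum_{i=1}^k e_i$, split $f=f_1+f_2$ \emph{coordinatewise} according to whether $|f(e_i)|\leqslant 1/m_j$ or not. Trivially $|f_1(x)|\leqslant (m_j/n_j)\cdot k\cdot(1/m_j)=k/n_j$. The key structural observation is that $f_2$ lies in $W[(\mathscr{A}_{4n_i},m_i^{-1})_{i\neq j}]$: in the tree analysis of $f$, the value $|f(e_i)|$ is the product of the weights along the branch to $e_i$, so $|f(e_i)|>1/m_j$ forces that branch to avoid the weight $1/m_j$. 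The last statement of \cite[Proposition~2.5]{AH} then gives $|f_2(x)|\leqslant 1/m_j$ directly. This functional-side decomposition, not a vector-side induction on $k$, is what makes the constants collapse.
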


\begin{proof}
Let $x = (m_j/n_j)\sum_{i=1}^ke_i$ and $f$ be a functional in the norming set of $W[(\mathscr{A}_{4n_i},m_i^{-1})_i]$. Define $E_1 = \{i: |f(e_i)|\leqslant 1/m_j\}$, $f_1 = E_1f$ and $f_2 = f - f_1$. Clearly, $|f_1(x)|\leqslant k/n_j$. One can also verify that $f_2\in W[(\mathscr{A}_{4n_i},m_i^{-1})_{i\neq j}]$. By the last statement of \cite[Proposition 2.5]{AH} we obtain $|f_2(x)| \leqslant 1/m_j$.
\end{proof}

The following estimate is a refinement of Lemma \ref{on averages adx zero} for rapidly increasing sequences. It is based on \eqref{ris dominated} and Lemma \ref{like scc in tsirelson}. Its proof is very similar to that of \cite[Lemma 3.7]{AM1}, however we include it for completeness.

\begin{lem}
\label{estimate of average on ris}
Let $j\inn$, $(x_k)_{k=1}^{n_j}$ be a $C$-RIS, $x = (m_j/n_j)\sum_{k=1}^{n_j}x_k$ and $b^*$ be a \ac-average. If $K = \#\{k: \ran b^*\cap\ran x_k\neq\varnothing\}$, then
\begin{equation*}
\left|b^*(x)\right| < \min\left\{m_j\frac{CK/n_j}{s(b^*)}, \frac{10CK/n_j}{s(b^*)} + \frac{10C}{m_j}\right\} + 8C\frac{m_j}{n_j}.
\end{equation*}
\end{lem}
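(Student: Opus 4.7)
The plan is to isolate the contribution of at most two ``extreme'' indices $x_k$ whose range extends past that of $b^*$, and then derive two independent estimates for the bulk of the sum, one yielding each candidate expression inside the $\min$.

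\medskip

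\textbf{Setup.} Let $A=\{k:\ran b^*\cap\ran x_k\neq\varnothing\}$, so $|A|=K$, and
\[
b^*(x)=\frac{m_j}{n_j}\sum_{k\in A}b^*(x_k)
\]
since $b^*$ vanishes on $x_k$ for $k\notin A$. At most two indices $k\in A$ have $\ran x_k$ reaching past $\min\ran b^*$ or $\max\ran b^*$; for these extreme terms, the crude bound $|b^*(x_k)|\le\|b^*\|\|x_k\|\le 4C$ yields a total contribution of at most $8Cm_j/n_j$, supplying the additive boundary summand of the lemma. Let $A'$ denote the remaining non-extreme indices and $\tilde x=(m_j/n_j)\sum_{k\in A'}x_k$, so it remains to estimate $|b^*(\tilde x)|$ by each of the two candidate expressions.

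\medskip

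\textbf{First estimate.} Write $b^*=(1/s(b^*))\sum_{i=1}^d\varepsilon_i e_{\gamma_i}^*\circ P_{E_i}$. Classify each $k\in A'$ as \emph{interior} if $\ran x_k\subseteq E_{i_k}$ for some $i_k$, or \emph{straddling} otherwise. Interior $k$'s give $b^*(x_k)=(\varepsilon_{i_k}/s(b^*))e_{\gamma_{i_k}}^*(x_k)$ and hence $|b^*(x_k)|\le C/s(b^*)$; straddling $k$'s give $|b^*(x_k)|\le(I_k/s(b^*))\cdot 4C$, where $I_k$ is the number of $E_i$'s met by $\ran x_k$. Since successive $x_k$'s can straddle each of the $d-1$ interstitial gaps at most once, $\sum_{\mathrm{str}}I_k\le 2(d-1)\le 2s(b^*)$, and summing produces $\sum_{k\in A'}|b^*(x_k)|\le CK/s(b^*)+O(C)$. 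Multiplying by $m_j/n_j$ yields the first candidate $m_jCK/(n_js(b^*))$, the $O(Cm_j/n_j)$ error being absorbed into the boundary summand.

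\medskip

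\textbf{Second estimate.} From $|b^*(\tilde x)|\le(1/s(b^*))\sum_{i=1}^d\|P_{E_i}\tilde x\|$, observe that for each $i$ the vector $P_{E_i}\tilde x$ is the $(m_j/n_j)$-rescaled partial sum of those $x_k$'s meeting $E_i$, modulo at most two boundary truncations. By Proposition \ref{all basics in one} applied through \eqref{ris dominated}, together with Lemma \ref{like scc in tsirelson},
\[
\|P_{E_i}\tilde x\|\le 10C\!\left(\frac{K_i}{n_j}+\frac{1}{m_j}\right)+O\!\left(\frac{Cm_j}{n_j}\right),
\]
where $K_i=|\{k\in A':\ran x_k\cap E_i\neq\varnothing\}|$. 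Summing over $i$, using $d\le s(b^*)$ and $\sum_iK_i\le K+O(d)$, produces
\[
|b^*(\tilde x)|\le\frac{10CK}{n_js(b^*)}+\frac{10C}{m_j}+O\!\left(\frac{Cm_j}{n_j}\right),
\]
the second candidate. Taking the minimum of the two estimates and adding the boundary contribution completes the proof.

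\medskip

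\textbf{Main obstacle.} The combinatorial bookkeeping of straddling indices and the boundary pieces of $P_{E_i}x_k$: one must verify that all the $O(\cdot)$ error terms from both estimates absorb into the single additive summand $8Cm_j/n_j$, while simultaneously keeping the leading constants $1$ and $10$ correct. The $\min$ in the statement reflects the fact that the direct termwise first estimate is sharper when $K$ is small (despite the $m_j$ prefactor), whereas the Basic-Inequality second estimate wins when $K$ is comparable to $n_j$ and the $10C/m_j$ contribution is the dominant one.
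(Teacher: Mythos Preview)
Your overall strategy matches the paper's: split off the ``bad'' indices to produce the additive $8Cm_j/n_j$ term, and then give two independent estimates on the remaining ``good'' part. However, your particular decomposition does not achieve the stated constant $8$, and the bookkeeping cannot be absorbed as you claim.

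The paper splits $G=\{k:\ran b^*\cap\ran x_k\neq\varnothing\}$ into $A_1=\{k\in G:\ran x_k\subset E_i\text{ for some }i\}$ and $A_2=G\setminus A_1$. Your ``extreme'' indices (those with $\ran x_k\not\subset\ran b^*$) together with your ``straddling'' indices (those in $A'$ not contained in any $E_i$) are \emph{exactly} the paper's $A_2$. The paper handles all of $A_2$ at once via the bound $|b^*(x_k)|\leqslant (4C/s(b^*))\#J_k$ with $J_k=\{i:E_i\cap\ran x_k\neq\varnothing\}$, and then uses $\sum_{k\in A_2}\#J_k\leqslant 2s(b^*)$ to obtain $8Cm_j/n_j$ in a single stroke. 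You instead bound the two extreme indices crudely by $4C$ each (already spending the full $8Cm_j/n_j$), and then the straddling indices in your first estimate contribute a further $(m_j/n_j)\cdot 4C\cdot 2(d-1)/s(b^*)\leqslant 8Cm_j/n_j$, for a total of $16Cm_j/n_j$. The same issue recurs in your second estimate: because your $\tilde x$ still contains straddling vectors, $P_{E_i}\tilde x$ genuinely truncates up to two $x_k$'s per $i$, and those boundary pieces produce another $O(Cm_j/n_j)$ after summing and dividing by $s(b^*)$.

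The fix is simple and recovers the paper's argument verbatim: do not separate extreme from straddling. Remove all of $A_2$ at once using the finer bound $4C\#J_k/s(b^*)$ (which applies equally to the extreme indices), and then work only with $A_1$, where every $x_k$ sits entirely inside some $E_i$. On $A_1$ there are no truncations at all, so both the triangle-inequality first estimate and the basic-inequality second estimate (via \eqref{ris dominated} and Lemma~\ref{like scc in tsirelson}) go through cleanly with the correct leading constants and no residual $O(Cm_j/n_j)$ terms.
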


\begin{proof}
If $b^* = (1/p)\sum_{i=1}^d\e_ie_{\ga_i}^*\circ P_{E_i}$ with $1\leqslant d\leqslant p$, define $G = \{k: \ran b^*\cap \ran x_k\neq\varnothing\}$,
\begin{align*}
A_1 & = \{k\in G:\text{ there exists }1\leqslant i\leqslant d\text{ with }\ran x_k\subset E_i\},\\
A_2 & = G\setminus A_1,\text{ and for each }k\in A_2\text{ set}\\
J_k & = \{1\leqslant i\leqslant d: E_i\cap \ran x_k\neq\varnothing\}.
\end{align*}
It is easy to see that $\sum_{k\in A_2}(\#J_k) \leqslant 2\#(\cup_{k\in A_2}J_k) \leqslant 2p$, hence we obtain
\begin{equation}
\label{estimate of average on ris eq1}
\begin{split}
\left|b^*\left(\frac{m_j}{n_j}\sum_{k\in A_2}x_k\right)\right| &\leqslant \frac{m_j}{pn_j}\sum_{k\in A_2}\sum_{i\in J_k}\left|e^*_{\ga_i}\circ P_{E_i}\left(x_k\right)\right|\\
& \leqslant\frac{m_j}{pn_j}\sum_{k\in A_2}\sum_{i\in J_k}\left\|P_{E_k}\right\|\left\|x_k\right\| \leqslant \frac{4Cm_j}{pn_j}\sum_{k\in A_2}(\#J_k)\\
&\leqslant \frac{8Cm_j}{n_j}.
\end{split}
\end{equation}
We used that $\sup\{\|P_E\|: E\text{ is an interval of }\N\}\leqslant 2$.

We now estimate the action of $b^*$ on $A_1$. Define $S = \{1\leqslant i\leqslant d:\text{ there exists }k\in A_1\text{ with }\ran x_k\subset E_i\}$ and for $i\in S$ define $S_i = \{k\in A_1: \ran x_k\subset E_i\}$. Note that $(S_i)_{i\in S}$ defines a partition of $A_1$ into disjoint sets. We evaluate
\begin{align}
\left|b^*\left(\frac{m_j}{n_j}\sum_{k\in A_1}x_k\right)\right| &= \frac{1}{p}\left|\sum_{i\in S}e_{\ga_i}^*\circ P_{E_i}\left(\frac{m_j}{n_j}\sum_{k\in S_i}x_k\right)\right|\nonumber\\
&= \frac{1}{p}\left|\sum_{i\in S}e_{\ga_i}^*\left(\frac{m_j}{n_j}\sum_{k\in S_i}x_k\right)\right|\leqslant \frac{1}{p}\sum_{i\in S}\left\|\left(\frac{m_j}{n_j}\sum_{k\in S_i}x_k\right)\right\|.\label{estimate of average on ris eq2}
\end{align}
We shall treat \eqref{estimate of average on ris eq2} in two different ways. We first just apply the triangle inequality to obtain
\begin{equation}
\label{estimate of average on ris eq3}
\begin{split}
\left|b^*\left(\frac{m_j}{n_j}\sum_{k\in A_1}x_k\right)\right| &\leqslant \frac{1}{p}\sum_{i\in S}(\#S_i)\frac{m_j}{n_j}C = \frac{Cm_j}{pn_j}\#A_1\leqslant \frac{Cm_j}{pn_j}K\\
& = m_j\frac{C(K/n_j)}{s(b^*)}.
\end{split}
\end{equation}
The second way is to apply \eqref{ris dominated} to \eqref{estimate of average on ris eq2} and then combine the result with Lemma \ref{like scc in tsirelson} as follows:
\begin{align}
\left|b^*\left(\frac{m_j}{n_j}\sum_{k\in A_1}x_k\right)\right| &\leqslant \frac{1}{p}\sum_{i\in S}10C\left\|\frac{m_j}{n_j}\sum_{k\in S_i}e_k\right\|_{T[(\mathscr{A}_{4n_j},m_j^{-1})_j]}\nonumber\\
&\leqslant \frac{10C}{p}\sum_{i\in S}\left(\frac{\#S_i}{n_j} + \frac{1}{m_j}\right)\nonumber\\
& = \frac{10C}{s(b^*)}\frac{\sum_{i\in S}\#S_i}{n_j} + \frac{10C}{m_j}\frac{\#S}{p}\leqslant \frac{10C}{s(b^*)}\frac{\#A_1}{n_j} + \frac{10C}{m_j}\frac{d}{p}\nonumber\\
&\leqslant \frac{10C(K/n_j)}{s(b^*)} + \frac{10C}{m_j}.\label{estimate of average on ris eq4}
\end{align}
We combine \eqref{estimate of average on ris eq1}, \eqref{estimate of average on ris eq3}, and \eqref{estimate of average on ris eq4} to obtain the desired conclusion.
\end{proof}

\begin{lem}\label{this is just a small part of the next one}
Let $j > i$ be natural numbers, $(x_k)_{k=1}^{n_j}$ be a $C$-RIS with $\min\supp x_k \geqslant j-1$ and set $x = (m_j/n_j)\sum_{k=1}^{n_j}x_k$. Let moreover $(b_r^*)_{r=1}^a$ be a very fast growing sequence of \ac-averages, with $a\leqslant n_{i}$ and assume that for every $1\leqslant r \leqslant a$ there is at most one $1\leqslant k\leqslant n_j$ so that $\ran b^*\cap\ran x_k\neq\varnothing$. Then
\begin{equation*}
\sum_{r=1}^a|b_r^*(x)| < \frac{24Cm_i}{m_{j}}.
\end{equation*}
\end{lem}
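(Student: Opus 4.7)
My plan is to apply Lemma \ref{estimate of average on ris} termwise with $K_r \leq 1$, control the resulting tail $\sum 1/s(b_r^*)$ via the very fast growing condition, and absorb the remaining prefactor $a \leq n_i$ using the growth hypotheses on $(m_j,n_j)_j$ from \cite[Assumption 2.3]{AH}.

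The hypothesis of the lemma says precisely that for each $1 \leq r \leq a$,
\[
K_r := \#\{k : \ran b_r^* \cap \ran x_k \neq \varnothing\} \leq 1,
\]
and indices with $K_r = 0$ contribute nothing to the left-hand side. For those with $K_r = 1$, Lemma \ref{estimate of average on ris} (choosing the first branch of the minimum) yields
\[
|b_r^*(x)| < \frac{Cm_j}{n_j\, s(b_r^*)} + \frac{8Cm_j}{n_j},
\]
so summing over $r$ gives
\[
\sum_{r=1}^{a} |b_r^*(x)| \leq \frac{Cm_j}{n_j}\sum_{r : K_r = 1}\frac{1}{s(b_r^*)} + \frac{8a\,Cm_j}{n_j}.
\]

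Next I would estimate each piece. By the very fast growing condition, $s(b_r^*) \geq \mathcal{N}_{q_{r-1}+1}$ for $r \geq 2$, a lower bound of factorial size; thus $\sum_{r : K_r = 1}1/s(b_r^*) \leq 2$ and the first piece is at most $2Cm_j/n_j$. For the second, $a \leq n_i$ gives $8n_iCm_j/n_j$. Combined, $\sum|b_r^*(x)| \leq 10 n_i Cm_j/n_j$. To conclude, I would invoke the growth hypothesis from \cite[Assumption 2.3]{AH}: for $j > i$, $n_j$ dominates $n_i m_j^2/m_i$ by a large factor, so $10n_i m_j/n_j \leq 24 m_i/m_j$, yielding the desired bound $\sum|b_r^*(x)| < 24Cm_i/m_j$.

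The main subtlety is the choice of branch in the minimum of Lemma \ref{estimate of average on ris}. Selecting the first branch places $1/s(b_r^*)$ out front, so the very fast growing control renders the sum harmless. Selecting the second branch would carry an additive constant $10C/m_j$ per term, and summing $\leq n_i$ copies would produce a spurious $10Cn_i/m_j$ that no reasonable growth assumption on $(m_j,n_j)_j$ can absorb into $O(m_i/m_j)$. The hypothesis $K_r \leq 1$ is exactly what makes the first branch effective termwise, and that is the structural input genuinely driving the estimate.
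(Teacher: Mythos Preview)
Your argument is correct, and it takes a genuinely different route from the paper. The paper does not apply Lemma~\ref{estimate of average on ris} termwise; instead it first restricts to $x' = (m_j/n_j)\sum_{k\in G}x_k$, where $G$ is the set of at most $n_i$ indices $k$ whose range meets some $b_r^*$, adjusts signs so that $|b_r^*(x)| = b_r^*(x')$, and then invokes Proposition~\ref{build on vfg} to assemble the $b_r^*$ into a single functional $e_\gamma^*$ of weight $m_i^{-1}$. The estimate on $e_\gamma^*(x')$ then comes from \eqref{ris dominated} combined with Lemma~\ref{like scc in tsirelson}, plus a separate bound $\sum_r|d_{\xi_r}^*(x')|\leqslant 4Cn_im_j/n_j$. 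This is a global argument: the bound on the sum is obtained by viewing it (up to the $d^*$ terms) as the action of one coordinate functional on a short piece of a RIS.

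Your approach is local: each $|b_r^*(x)|$ is controlled individually via the first branch of Lemma~\ref{estimate of average on ris} with $K_r\leqslant 1$, and the very fast growing condition handles $\sum 1/s(b_r^*)$. This is arguably more transparent and avoids the auxiliary construction of $\gamma$ via Proposition~\ref{build on vfg}; the price is that you need Lemma~\ref{estimate of average on ris} as input, which itself already packages~\eqref{ris dominated} and Lemma~\ref{like scc in tsirelson}. Your final growth requirement $n_j \gtrsim n_i m_j^2/m_i$ is in fact slightly weaker than what the paper's own terms demand (the paper produces a term $4Cn_im_im_j/n_j$, which needs $n_j\gtrsim n_im_j^2$), so the appeal to \cite[Assumption 2.3]{AH} is legitimate. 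Your closing remark about why the second branch would fail is also correct and shows you understand where the hypothesis $K_r\leqslant 1$ is genuinely used.
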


\begin{proof}
Set $G = \{k:\text{there is}\;r\;\text{with}\;\ran b_r\cap\ran x_k\neq\varnothing\}$ and $x' = (m_j/n_j)\sum_{k\in G}x_k$. Note that $\#G\leqslant a\leqslant n_i$. By changing the signs of the $b_r^*$ and, perhaps, omitting some of the first few terms, we may assume that $\max\supp b_1^* \geqslant \min\supp x_1\geqslant j$ and that $|b_r^*(x)|  = b_r^*(x')$ for $r=1,\ldots,a$. Proposition \ref{build on vfg} implies that there are $\ga$ and $(\xi_r)_{r=1}^a$ in $\Ga$, so that
\begin{equation}
\frac{1}{m_i}\sum_{r=1}^a|b_{r}^*(x)| = \frac{1}{m_i}\sum_{r=1}^a b_r^*\left(x'\right) = e_\ga^*\left(x'\right) - \sum_{r=1}^ad_{\xi_r}^*\left(x'\right)
\end{equation}
Lemma \ref{like scc in tsirelson} and \eqref{ris dominated} imply that $|e_\ga^*(x')|\leqslant 10Cn_i/n_j + 10C/m_j$ while it easily follows that $\sum_{r=1}^a|d_{\xi_r}^*(x')| \leqslant 4Cn_im_j/n_j$. We obtain $\sum_{r=1}^a|b_r^*(x)| < 10Cn_im_i/n_j + 10Cm_i/m_j + 4Cn_im_im_j/n_j$. The choice of the sequences $(m_j)_j$, $(n_j)_j$ yields the desired estimate.
\end{proof}

The following Lemma is proved using Lemmas \ref{estimate of average on ris} and \ref{this is just a small part of the next one} and arguments very similar to those used in the proof of \cite[Lemma 3.8]{AM1}. We include a proof for completeness.

\begin{lem}\label{this yields that on ris one can construct exact vector}
Let $j>i$ be natural numbers, $(x_k)_{k=1}^{n_j}$ be a $C$-RIS with $\min\supp x_k \geqslant m_jn_j$ and set $x = (m_j/n_j)\sum_{k=1}^{n_j}x_k$. Let moreover $(b_r^*)_{r=1}^a$ be a very fast growing sequence of \ac-averages, with $a\leqslant n_{i}$. Then
\begin{equation*}
\sum_{r=1}^a|b_r^*(x)| < \frac{10C}{s(b_1^*)} + \frac{50Cm_i}{m_j}.
\end{equation*}
\end{lem}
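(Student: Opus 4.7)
The plan is to parallel the proof of \cite[Lemma 3.8]{AM1}: partition the index set $\{1,\ldots,a\}$ according to whether each $b_r^*$ intersects at most one or at least two of the summands of $x$, estimating the two partial sums by Lemmas \ref{this is just a small part of the next one} and \ref{estimate of average on ris} respectively. For each $r$, set $K_r = \#\{k : \ran b_r^* \cap \ran x_k \neq \varnothing\}$, and define $R_1 = \{r : K_r \leqslant 1\}$ and $R_2 = \{r : K_r \geqslant 2\}$.

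For the sum over $R_1$, note that $(b_r^*)_{r\in R_1}$ is itself very fast growing (a subsequence of a very fast growing sequence) and has length at most $a \leqslant n_i$; the hypothesis $\min\supp x_k \geqslant m_j n_j$ certainly implies $\min\supp x_k \geqslant j-1$. Lemma \ref{this is just a small part of the next one} therefore applies and yields
\[ \sum_{r\in R_1}|b_r^*(x)| < \frac{24Cm_i}{m_j}. \]

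For $R_2$, the key combinatorial observation is that the ranges of the $b_r^*$'s are pairwise disjoint successive intervals of $\N$ (by the very fast growing property), while the $x_k$'s have successive ranges creating $n_j-1$ internal gaps, each of which can be straddled by at most one $b_r^*$. Hence $|R_2|\leqslant n_j-1$ and $\sum_{r\in R_2}(K_r-1)\leqslant n_j-1$, so that $\sum_{r\in R_2}K_r\leqslant 2(n_j-1)$. Applying Lemma \ref{estimate of average on ris} to each $b_r^*$, $r\in R_2$, with the second option in the minimum and using $s(b_r^*)\geqslant s(b_1^*)$ from Remark \ref{sizes indeed increase}, and summing, gives an upper bound of the form $20C/s(b_1^*) + 10C|R_2|/m_j + 8C|R_2|m_j/n_j$. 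Since $|R_2|\leqslant n_i$, the last two summands are absorbed into a term of order $Cm_i/m_j$ by the growth conditions on $(m_j,n_j)$ coming from \cite[Assumption 2.3]{AH}.

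The main obstacle is sharpening the $20C/s(b_1^*)$ contribution down to $10C/s(b_1^*)$. For this one exploits the fact that for every $r>1$ the very fast growing condition forces $s(b_r^*)\geqslant \mathcal{N}_{q_{r-1}+1}$, which is astronomically large compared to any fixed quantity depending on $m_j$ and $n_j$; hence only the $r=1$ term contributes non-negligibly to $\sum_{r\in R_2}10CK_r/(n_js(b_r^*))$, and that term is at most $10CK_1/(n_js(b_1^*)) \leqslant 10C/s(b_1^*)$ since $K_1\leqslant n_j$. Once this refinement is in place, adding the partial bounds from $R_1$ and $R_2$ and collecting constants yields $\sum_{r=1}^a|b_r^*(x)| < 10C/s(b_1^*) + 50Cm_i/m_j$, as desired.
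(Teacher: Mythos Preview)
Your decomposition into $R_1$ and $R_2$ matches the paper's approach, and your treatment of $R_1$ via Lemma \ref{this is just a small part of the next one} is correct. The gap is in the $R_2$ estimate.

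You apply the \emph{second} branch of the minimum in Lemma \ref{estimate of average on ris} to every $r\in R_2$, picking up a term $10C/m_j$ for each such $r$. Summing gives $10C|R_2|/m_j$, and you then assert that since $|R_2|\leqslant n_i$ this is ``of order $Cm_i/m_j$''. That is false: the growth conditions force $n_i\geqslant m_i^2$, so $10Cn_i/m_j \geqslant 10Cm_i^2/m_j$, which swamps the target $50Cm_i/m_j$. Neither the bound $|R_2|\leqslant n_i$ nor $|R_2|\leqslant n_j-1$ is tight enough here, and nothing in \cite[Assumption 2.3]{AH} makes $n_i/m_j$ comparable to $m_i/m_j$.

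The remedy---and this is precisely what the paper does---is to isolate $r_1=\min\{r:\ran b_r^*\cap\ran x\neq\varnothing\}$ and use the second branch only for $r_1$, contributing the single term $10C/s(b_{r_1}^*)+10C/m_j+8Cm_j/n_j$. For every other $r\in R_2$ one has $r>r_1$, whence the very fast growing condition gives $s(b_r^*)\geqslant \mathcal{N}_{q_{r_1}+1}>2^{\min\supp x}\geqslant 2^{m_jn_j}$; now the \emph{first} branch of the minimum applies and the whole sum $\sum_{r\in R_2,\,r>r_1}|b_r^*(x)|$ is bounded by $n_iCm_j/2^{m_jn_j}+8Cn_im_j/n_j$, which is indeed negligible. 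You had this ``astronomical size'' observation in your last paragraph but applied it to the wrong summand: it is needed to kill the $10C/m_j$ contributions, not to sharpen $20C/s(b_1^*)$ to $10C/s(b_1^*)$ (the latter comes for free once only $r_1$ uses the second branch). Also note your phrasing ``for every $r>1$'' is slightly off: the size $s(b_r^*)$ is only guaranteed to exceed $2^{m_jn_j}$ once $r>r_1$, since earlier $b_r^*$'s may have range entirely below $\min\supp x$.
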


\begin{proof}
We define $r_1$ to be the minimum $r$ for which $\ran b_r^*\cap\ran x\neq\varnothing$. Define the sets $R_1 = \{r>r_1:\text{ there is at most one } k \text{ with }\ran b_r^*\cap\ran x_k\neq\varnothing\}$ and $R_2 = \{r_1+1,\ldots,a\}\setminus R_1$. By Lemma  \ref{this is just a small part of the next one} we obtain
\begin{equation}
\label{this yields that on ris one can construct exact vector eq1}
\sum_{r\in R_1}|b_r^*(x)| < \frac{24Cm_i}{m_j}.
\end{equation}
On the other hand, for $r\in R_2$, by Lemma \ref{estimate of average on ris} we obtain the estimate $|b_r^*(x)| < m_jC/s(b_r^*) + 8Cm_j/n_j < m_jC/2^{\min\supp x} + 8Cm_j/n_j$ and hence
\begin{equation}
\label{this yields that on ris one can construct exact vector eq2}
\sum_{r\in R_2}|b_r^*(x)| < n_i\frac{10Cm_j}{2^{m_jn_j}} + n_i\frac{8Cm_j}{n_j}.
\end{equation}
For $r=1$ by Lemma \ref{estimate of average on ris} we obtain
\begin{equation}
\label{this yields that on ris one can construct exact vector eq3}
|b_{r_1}^*(x)| < \frac{10C}{s(b_{r_1}^*)} + \frac{10C}{m_j} + \frac{8Cm_j}{n_j}.
\end{equation}
We obtain the result by combining \eqref{this yields that on ris one can construct exact vector eq1}, \eqref{this yields that on ris one can construct exact vector eq2}, and \eqref{this yields that on ris one can construct exact vector eq3} and using the lacunarity properties of $(m_j,n_j)_j$
\end{proof}

\begin{prp}\label{on RIS are exact vectors and pairs}
Let $j\inn$ and  $(x_k)_{k=1}^{n_j}$ be a $C$-RIS with $\min\supp x_1 \geqslant m_jn_j$. Then $x = (m_j/n_j)\sum_{k=1}^{n_j}x_k$ is a $(112C,j)$-exact vector. If moreover $(x_k)_{k=1}^{n_j}$ is skipped, there are $\theta > 0$ and a very fast growing sequence of \ac-averages $(b_k^*)_{k=1}^{n_j}$ so that $b_k^*(x_k)_k = \theta$ for $k=1,\ldots,n_j$, then there is $\ga\in\Ga$ so that $(\ga,x)$ is a $(112C,j,\theta)$-exact pair.
\end{prp}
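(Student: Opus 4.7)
The plan is to verify the five conditions of Definition \ref{exact vector def} with constant $112C$ for the first part, and then to invoke Proposition \ref{build on vfg} for the second part, using the skipped hypothesis to place the d-functionals in the ``gaps.''

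For the first part, condition (v) is immediate since $\min\supp x=\min\supp x_1\geqslant m_jn_j\geqslant m_j$, and condition (ii) is the estimate \eqref{currant} of Proposition \ref{all basics in one}, which already yields $\|x\|\leqslant 10C$. For condition (i), note that $d_\eta^* = e_\eta^*\circ P_{\{\ra(\eta)\}}$, so $|d_\eta^*(x_k)|\leqslant \|P_{\{\ra(\eta)\}}\|\|x_k\|\leqslant 4C$; since the $x_k$ are successive, at most one term contributes and thus $|d_\eta^*(x)|\leqslant 4C\,m_j/n_j$. For condition (iii), apply the second branch of \eqref{raspberries} from Proposition \ref{all basics in one}: for $i>j$, the lacunarity Assumption 2.3 of \cite{AH} gives $m_j/m_i\leqslant 1/m_j$, $1/m_i\leqslant 1/m_j$ and $m_j/n_j\leqslant 1/m_j$, so the three summands combine to at most $(16+24+80)C/m_j=120C/m_j$; a slightly sharper bookkeeping (or simply a larger constant) gives $112C/m_j$. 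Condition (iv) is precisely Lemma \ref{this yields that on ris one can construct exact vector}, which already yields $10C/s(b_1^*)+50Cm_i/m_j$, well within the required $112C/m_j+112Cm_i/m_j$ budget.

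For the second part, replace each $b_k^*$ by $b_k^*\circ P_{\ran x_k}$, which by Remark \ref{subseq preserve type} remains an \ac-average, preserves the equation $b_k^*(x_k)=\theta$ (since $P_{\ran x_k}x_k=x_k$), and still forms a very fast growing sequence after a harmless reindexing of the witnesses $p_{k-1}<q_{k-1}$. Using the skipped condition on $(x_k)$, choose natural numbers $0=p_0<p_1<\cdots<p_{n_j}$ with each $p_k$ lying strictly between $\max\supp x_k$ and $\min\supp x_{k+1}$ and $b_k^*\in B_{p_{k-1},p_k-1}$; this is possible since $(b_k^*)$ is very fast growing. By Proposition \ref{build on vfg}, there exist $\ga\in\Ga$ and $\xi_1,\ldots,\xi_{n_j}=\ga$, each of weight $m_j^{-1}$ with $\xi_k\in\De_{p_k}$, realizing the evaluation analysis
\begin{equation*}
e_\ga^*=\sum_{k=1}^{n_j}d_{\xi_k}^*+\frac{1}{m_j}\sum_{k=1}^{n_j}b_k^*.
\end{equation*}
Since each $p_k$ lies in a skip of $(x_m)$, we have $P_{\{p_k\}}x_m=0$ for all $m$, hence $d_{\xi_k}^*(x)=0$ for every $k$. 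On the other hand, $b_k^*(x_m)=0$ for $m\neq k$ by the restriction to $\ran x_k$, so $b_k^*(x)=(m_j/n_j)b_k^*(x_k)=(m_j/n_j)\theta$, giving
\begin{equation*}
e_\ga^*(x)=\frac{1}{m_j}\sum_{k=1}^{n_j}b_k^*(x)=\frac{1}{m_j}\cdot n_j\cdot\frac{m_j}{n_j}\theta=\theta,
\end{equation*}
as required.

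The main technical obstacle is condition (iii), because for $i>j$ the raw estimate \eqref{raspberries} contains a term of the form $24Cm_j/m_i$ that is \emph{not} automatically of order $C/m_j$; it is crucial to invoke the lacunarity of $(m_j)_j$ from \cite[Assumption 2.3]{AH} to conclude $m_j/m_i\leqslant 1/m_j$. The construction of the pair is essentially a bookkeeping exercise: the skipped hypothesis is used precisely to place the $p_k$'s outside $\bigcup_m\supp x_m$, killing the $d_{\xi_k}^*$ contributions, while the restriction trick handles any possible overlap between $b_k^*$ and $x_m$ for $m\neq k$.
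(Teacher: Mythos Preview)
Your proposal is correct and follows exactly the route the paper takes: the paper's own proof is the single sentence ``The first part follows from \eqref{currant}, \eqref{raspberries} and Lemma \ref{this yields that on ris one can construct exact vector}, while the second part follows from Proposition \ref{build on vfg}.'' You have simply unpacked this.

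One small point worth tightening: your crude bound for condition (iii) gives $120C/m_j$, and ``or simply a larger constant'' is not an option since $112C$ is fixed by the statement. The sharper bookkeeping you allude to is this: for $i>j$ Assumption~2.3 of \cite{AH} yields $m_i\geqslant m_{j+1}\geqslant m_j^2$, so together with $m_j\geqslant m_1\geqslant 8$ one has $80C/m_i\leqslant 80C/m_j^2\leqslant 10C/m_j$, and the three terms of the second branch of \eqref{raspberries} add to at most $(16+24+10)C/m_j=50C/m_j<112C/m_j$.
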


\begin{proof}
The first part follows from \eqref{currant}, \eqref{raspberries} and Lemma \ref{this yields that on ris one can construct exact vector}, while the second part follows from Proposition \ref{build on vfg}.
\end{proof}

\subsection{Dependent sequences} We finally define dependent sequences an describe how they can be found in every block subspace. Their definition is based on the tree $\mathcal{U}$ of special sequences (see Subsection \ref{subsection tree}). Note that $\mathcal{U}$ was defined using the space $\BmT$. Here, we naturally identify finitely supported vectors in $\X$ with ones in $\BmT$.

\begin{ntt}
For a finitely supported vector $x = \sum_{\ga\in\Ga}\la_\ga d_\ga$ in $\X$, we denote by $\bar{x}$ the vector $\sum_{\ga\in\Ga}\la_\ga \bar{d}_\ga$ in $\BmT$.
\end{ntt}

\begin{rmk}\label{forget those bars}
Remark \ref{bars cancel out and stuff} yields that if $\ga\in\Ga$ and $E$ is an interval of $\N$, then $e_\ga^*\circ \bar P_{E}(\bar{x}) = e_\ga^*\circ P_E(x)$.
\end{rmk}

\begin{dfn}\label{dependent sequence def}
Let $C\geqslant 0$ and $\theta > 0$. A sequence of pairs $\{(\ga_k,x_k)\}_{k=1}^\ell$, where $\ga_k\in\Ga$ and $x_k$ is a finitely supported vector of $(d_\ga)_{\ga\in\Ga}$ with rational coefficients for $k=1,\ldots,\ell$, is called a $(C,\theta)$-dependent sequence, if the following are satisfied:
\begin{itemize}

\item[(i)] $(\ga_k,x_k)$ is  a $(C,j_k,\theta)$-exact pair, where $\we(\ga_k) = m_{j_k}^{-1}$,

\item[(ii)] $\{(\ga_k, \bar{x}_k)\}_{k=1}^{\ell}$ is a special sequence (i.e. it is in $\mathcal{U}$) and

\item[(iii)] $\min\supp x_{k+1} > \max\{\max_{i\leqslant k} \ra(\ga_i), \max\supp x_k\}$ for $k<\ell$.

\end{itemize}
An infinite sequence of pairs $\{(\ga_k,x_k)\}_{k=1}^\infty$, so that for each $\ell\inn$ the first $\ell$-terms $\{(\ga_k,x_k)\}_{k=1}^\ell$ define a $(C,\theta)$-dependent sequence, will be called a $(C,\theta)$-dependent sequence as well.
\end{dfn}

\begin{rmk}
Let $0 < C\leqslant 16000$, $\theta > 0$ and $\{(\ga_k,x_k)\}_{k=1}^\ell$ be a $(C,\theta)$-dependent sequence. If $E_k = \ran x_k$ for $k=1,\ldots,\ell$, then Remark \ref{forget those bars} easily implies that $\{(\ga_k,E_k)\}_{k=1}^\ell$ is comparable, in the sense of Definition \ref{def averages}.
\end{rmk}

\begin{prp}\label{exact pairs exist}
Let $(x_k)_k$ be a normalized block sequence with rational coefficients in $\X$, which satisfies $\adxk = 0$ and $\lim_k \sup_{\ga\in\Ga} |d_\ga^*(x_k)| = 0$. Let also $(\eta_k)_k$ be a sequence in $\Ga$ so that $|e_{\eta_k}^*(x_k)| > (3/4)\|x_k\|$ for all $k\inn$. Then for every $j\inn$ there exists a $(3584,j,1)$-exact pair $(\ga,y)$, so that,
\begin{equation}\label{buinlding blocks look like this}
\begin{split}
y& = \frac{m_{j}}{n_{j}}\sum_{r=1}^{n_{j}}\sum_{i\in F_{r}}\la_i\e_ix_i\;\text{and}\\
e_{\ga}^*& = \frac{1}{m_{j}}\sum_{r=1}^{n_{j}}\frac{1}{\#F_{r}}\sum_{i\in F_{r}}\e_ie_{\eta_i}^*\circ P_{E_i} + \sum_{r=1}^{n_{j}}d_{\xi_{r}}^*
\end{split}
\end{equation}
where $0<|\la_i| <4/3$, actually $\la_i = 1/e^*_{\ga_i}(x_i)$, $(F_{r})_{r}$ are successive subsets in $\mathcal{S}_1$, $(\e_i)_{i\in F_{r}}$ are alternating signs for $r\inn$ and $\ra(d_{\xi_{r}}^*)\notin \ran x_m$ for all $r$, $m$.
\end{prp}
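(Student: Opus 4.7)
The plan is to reduce to Proposition \ref{on RIS are exact vectors and pairs} by constructing a rapidly increasing sequence $(y_r)_{r=1}^{n_j}$ whose members are block averages of the $x_i$'s that are each ``tested'' exactly by an \ac-average of size $\#F_r$, and then invoking Proposition \ref{build on vfg} to assemble the $\ga$ with the prescribed evaluation analysis.

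First, I would do the preparatory passes to subsequences. Since $\adxk = 0$ and $\lim_k\sup_{\ga\in\Ga}|d_\ga^*(x_k)| = 0$, Proposition \ref{adx zero} lets me pass to a subsequence of $(x_k)_k$, again denoted $(x_k)_k$, that generates an isometric $c_0$ spreading model and satisfies estimate \eqref{sums ris} with $C = 8$. By a further pass I may assume $(x_k)_k$ is \emph{skipped}, i.e.\ $\max\supp x_k + 1 < \min\supp x_{k+1}$, so that between consecutive supports there are coordinates not used by any $x_m$. Finally, by Remark \ref{first step building better} applied along $(\eta_k)_k$, I may pass once more to a subsequence so that for every $F\in\mathcal{S}_1$ and every alternating sign choice $(\e_i)_{i\in F}$, the vector $y_F = \sum_{i\in F}(\e_i/e_{\eta_i}^*(x_i))x_i$ and the \ac-average $b_F^* = (1/\#F)\sum_{i\in F}\e_ie_{\eta_i}^*\circ P_{\ran x_i}$ (of size $\#F$) satisfy $b_F^*(y_F) = 1$, with $|\la_i| = 1/|e_{\eta_i}^*(x_i)| < 4/3$.

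Next I pick successive $F_1 < \cdots < F_{n_j}$ in $\mathcal{S}_1$ with $\#F_r$ growing extremely fast: large enough that $\min\supp x_{\min F_1} \geqslant m_j n_j$, and so that if one sets $p_r$ equal to a coordinate lying strictly between $\max\ran b_{F_r}^*$ and $\min\ran b_{F_{r+1}}^*$ and \emph{not} belonging to any $\ran x_m$ (possible by the skipped property), then $\#F_{r+1} \geqslant \mathcal{N}_{p_r+1}$. This makes $(b_r^*)_{r=1}^{n_j}$ a very fast growing sequence of \ac-averages with $b_r^* \in B_{p_{r-1},p_r - 1}$. Set $y_r = y_{F_r}$. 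By Lemma \ref{on averages adx zero} the block sequence $(y_r)_r$ has $\al$-index zero and $\lim_r \sup_\ga|d_\ga^*(y_r)| = 0$; applying Proposition \ref{adx zero} to it (after a final subsequence extraction, which can be absorbed in the above choices) yields estimate \eqref{sums ris} for $(y_r)$ with constant $8$, while the $c_0$-spreading-model estimate \eqref{sums ris} for $(x_k)_k$ and $|\la_i|<4/3$ bounds $\|y_r\| \leqslant 32$. Thus $(y_r)_{r=1}^{n_j}$ is a $32$-RIS.

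Now I set $y = (m_j/n_j)\sum_{r=1}^{n_j} y_r$. Proposition \ref{on RIS are exact vectors and pairs} gives that $y$ is a $(112\cdot 32, j) = (3584,j)$-exact vector, and since the very fast growing $(b_r^*)_{r=1}^{n_j}$ satisfies $b_r^*(y_r) = 1$ for every $r$, Proposition \ref{build on vfg} produces $\ga\in\De_{p_{n_j}}$ of weight $m_j^{-1}$ and $\xi_r\in\De_{p_r}$ with
\[
e_\ga^* = \sum_{r=1}^{n_j} d_{\xi_r}^* + \frac{1}{m_j}\sum_{r=1}^{n_j}b_r^*.
\]
Because $p_r = \ra(\xi_r) \notin \cup_m\ran x_m$, each $d_{\xi_r}^*(y) = 0$, and then $e_\ga^*(y) = (1/m_j)\sum_r b_r^*(y) = (1/n_j)\sum_r b_r^*(y_r) = 1$, making $(\ga,y)$ a $(3584,j,1)$-exact pair. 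Expanding $b_r^* = (1/\#F_r)\sum_{i\in F_r}\e_i e_{\eta_i}^*\circ P_{\ran x_i}$ and $y_r = \sum_{i\in F_r}\la_i\e_i x_i$ yields exactly the form \eqref{buinlding blocks look like this}.

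The main obstacle is the simultaneous coordination in Step 2: choosing the $F_r$'s so that the $b_r^*$'s are very fast growing \emph{and} the ranks $p_r = \ra(\xi_r)$ produced by Proposition \ref{build on vfg} avoid every $\ran x_m$. The skipped-block reduction at the start is what makes room for such $p_r$, and taking $\#F_r \geqslant \mathcal{N}_{p_{r-1}+1}$ is what arranges very fast growth; once these are compatible the rest of the argument is essentially bookkeeping and constant tracking, the latter being optimized by the isometric $c_0$ spreading model constant $C=8$ combined with the scalar bound $|\la_i|<4/3$.
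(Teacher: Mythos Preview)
Your approach is correct and follows the same route as the paper's (very terse) proof: pass to a subsequence via Remark \ref{first step building better}, apply Proposition \ref{adx zero} to $(x_k)$, build the skipped blocks $(y_r)$, verify they form a $32$-RIS, and invoke Proposition \ref{on RIS are exact vectors and pairs} together with Proposition \ref{build on vfg}.

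One technical correction worth making: the detour through Lemma \ref{on averages adx zero} and a second application of Proposition \ref{adx zero} to $(y_r)$ is both unnecessary and not quite legitimate as written. The $y_r$'s are not $C$-$\ell_1^n$-averages with a uniform $C$ (writing $y_r$ as an average forces the constant to be of order $\#F_r$), and Proposition \ref{adx zero} is stated for normalized sequences. You do not need either of these. The RIS conditions for $(y_r)$ follow directly from what you already extracted for $(x_k)$: the norm bound on $y_r$ comes from the $c_0$ spreading model of $(x_k)$ (equivalently, from \eqref{green mushroom} in the proof of Proposition \ref{adx zero}), not from \eqref{sums ris}, which only controls evaluations by small-weight coordinates; and condition (iii) of Definition \ref{ris def} comes from \eqref{sums ris} applied to the $x_i$'s with scalars $\la_i\e_i$, scaled by $\max_i|\la_i|<4/3$. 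With that replacement the argument is clean and matches the paper.
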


\begin{proof}
Pass to a subsequence satisfying the assumption of Remark \ref{first step building better} and define $\la_k = 1/e^*_{\ga_k}(x_k)$ for all $k\inn$. The conclusion follows from Proposition \ref{adx zero} then applying Proposition \ref{on RIS are exact vectors and pairs}.
\end{proof}

\begin{rmk}\label{dependent sequences exist}
Proposition \ref{c0 sm in every subspace} and Proposition \ref{exact pairs exist} immediately yield that, up to a perturbation, in every infinite dimensional subspace $X$ of $\X$, there is a $(3584,1)$-dependent sequence $\{(\ga_k,x_k)\}_{k=1}^\infty$, so that $x_{k}\in X$  for all $k\inn$.
\end{rmk}

Using Proposition \ref{basis c0}, the following result can be shown, where the sequence $(x_k)_k$ satisfying $\adxk = 0$ and $\lim_k \sup_{\ga\in\Ga} |d_\ga^*(x_k)| = 0$ can be replaced with a subsequence of the basis.

\begin{prp}\label{dependent sequence on subsequence of the basis}
Let $(d_{\ga_i})$ be a subsequence of the basis of $\X$. Then there exist $\theta > 0$ and a $(3584,\theta)$-dependent sequence $\{(\ga_k,y_k')\}_{k=1}^\infty$, so that for each $k$,
\begin{equation}\label{capital control for the win}
\begin{split}
y_k& = \frac{m_{j_k}}{n_{j_k}}\sum_{r=1}^{n_{j_k}}\theta\sum_{i\in F_{r,k}} \e_id_{\ga_i}\;\text{and}\\
e_{\ga_k'}^*& = \frac{1}{m_{j_k}}\sum_{r=1}^{n_{j_k}}\frac{1}{\#F_{r,k}}\sum_{i\in F_{r,k}} \e_id^*_{\ga_i} + \sum_{r=1}^{n_{j_k}}d_{\xi_{r,k}}^*
\end{split}
\end{equation}
where $(F_{r,k})_{r,k}$ are successive subsets in $\mathcal{S}_1$ and $(\e_i)_{i\in F_{r}}$ are alternating signs for $r\inn$.
\end{prp}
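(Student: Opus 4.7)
The proof will closely parallel that of Proposition \ref{exact pairs exist}, with Proposition \ref{basis c0} and Lemma \ref{ramsey on basis} substituting for Proposition \ref{adx zero} and the hypothesis $\lim_k\sup_{\ga\in\Ga}|d_\ga^*(x_k)|=0$. First, I would pass to a subsequence of $(d_{\ga_i})$ so that both the conclusion of Proposition \ref{basis c0} holds with some constant $C\geq 1$, and the conclusion of Lemma \ref{ramsey on basis} is in force. I then fix $\theta>0$ small enough (depending on $C$) so that for any $F\in\mathcal{S}_1$ with $\min F$ sufficiently large and any alternating signs $(\e_i)_{i\in F}$, the vector $u = \theta\sum_{i\in F}\e_i d_{\ga_i}$ satisfies $\|u\|\leq 32$ and, by \eqref{sums basis ris}, $|e_\eta^*\circ P_E(u)|\leq C\theta/m_p\leq 32/m_p$ for every $\eta\in\Ga$ of weight $m_p^{-1}$ with $p$ below the threshold associated to $\min F$.

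Next I build the dependent sequence inductively using the coding function $\sigma$. Suppose $(\ga_r',y_r')_{r=1}^k$ has been constructed; set $j_1=1$ and $j_{k+1} = \sigma((\ga_1',\bar y_1'),\ldots,(\ga_k',\bar y_k'))$ to force the special-sequence weight condition. Then I pick successive finite sets $F_1<\cdots<F_{n_{j_{k+1}}}$ in $\mathcal{S}_1$ with $\min F_1$ large enough to dominate $\max\supp y_k'$, $\max_{i\leq k}\ra(\ga_i')$, and the RIS threshold indices from Proposition \ref{basis c0}; alternating signs are chosen on each $F_r$. Setting $u_r = \theta\sum_{i\in F_{r}}\e_i d_{\ga_i}$ and choosing the $F_r$ so that $\min F_{r+1}$ exceeds the rank-level at which $u_r$ is supported, the sequence $(u_r)_{r=1}^{n_{j_{k+1}}}$ becomes a $32$-RIS in the sense of Definition \ref{ris def}.

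I then apply Proposition \ref{on RIS are exact vectors and pairs} to the RIS $(u_r)_{r}$: the vector $y_{k+1}' = (m_{j_{k+1}}/n_{j_{k+1}})\sum_r u_r$ is a $(3584, j_{k+1})$-exact vector. For the coordinate $\ga_{k+1}'$, I invoke Proposition \ref{build on vfg} with the very fast growing sequence of basic \ac-averages $b_r^* = (1/\#F_r)\sum_{i\in F_r}\e_i d_{\ga_i}^*$ (these are \ac-averages by Definition \ref{def averages}(iv), and they are very fast growing because the $F_r$ are successive with rapidly increasing minima). The resulting $\ga_{k+1}'$ has $\we(\ga_{k+1}') = m_{j_{k+1}}^{-1}$, age $n_{j_{k+1}}$, and evaluation analysis of exactly the form \eqref{capital control for the win}; a direct computation gives $e_{\ga_{k+1}'}^*(y_{k+1}')=\theta$, so $(\ga_{k+1}',y_{k+1}')$ is a $(3584,j_{k+1},\theta)$-exact pair. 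The three conditions of Definition \ref{dependent sequence def} now hold: (i) by construction, (ii) by the choice of $j_{k+1}$ via $\sigma$ (applied to $(\ga_r',\bar y_r')_{r=1}^k$), and (iii) by the support-threshold choices made on each $F_r$.

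The main obstacle is verifying the RIS decay condition (iii) of Definition \ref{ris def} for the blocks $u_r$, because for basis elements one cannot invoke $\lim_k\sup_{\ga}|d_\ga^*(x_k)|=0$. This is precisely where Proposition \ref{basis c0} and Lemma \ref{ramsey on basis} become essential: the Ramsey-type pruning in the lemma controls the positions of the basis indices $\ga_i$ inside the evaluation analyses of functionals of low weight, allowing \eqref{sums basis ris} to replace \eqref{sums ris}. Once that bound is established with uniform constant $C\theta$, everything else in the argument of Proposition \ref{exact pairs exist} carries over verbatim.
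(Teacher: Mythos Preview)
Your proposal is correct and follows essentially the same approach the paper indicates: the paper simply says that the result is obtained by repeating the proof of Proposition~\ref{exact pairs exist} with Proposition~\ref{basis c0} replacing the hypotheses $\adxk=0$ and $\lim_k\sup_\ga|d_\ga^*(x_k)|=0$, and this is exactly what you carry out in detail. Your explicit invocation of Lemma~\ref{ramsey on basis} is slightly redundant (its content is already absorbed into Proposition~\ref{basis c0}), but otherwise the argument matches the intended one.
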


\section{Estimations on dependent sequences}

In the previous section we defined dependent sequences and proved their existence in every block subspace. In this section we provide an estimate for the norm of finite sums of consecutive terms of such sequences, which yields that the space $\X$ contains no boundedly complete sequence and that it is hereditarily indecomposable. We also observe that every subspace of $\X$ fails the PCP and hence also the RNP.

\begin{lem}\label{components of averages practicaly just estimate larger weights than theirselves}
Let $1\leqslant C\leqslant 4000$, $\theta > 0$, $\{(\ga_k,x_k)\}_{k=1}^\ell$ be a $(C,\theta)$-dependent sequence and $1\leqslant n\leqslant m \leqslant \ell$ be natural numbers. Let also $(\eta_j)_{j=1}^d$ be a sequence in $\Ga$, $(E_j)_{j=1}^d$ be a sequence of intervals of $\N$ and $(\e_j)_{j=1}^d$ be a sequence of signs, so that one of the following is satisfied:
\begin{itemize}

\item[(i)] the sequence $(\eta_j, E_j)_{j=1}^d$ is comparable and the signs $(\e_j)_{j=1}^d$ are alternating or

\item[(ii)] the sequence $(\eta_j, E_j)_{j=1}^d$ is either incomparable or irrelevant.

\end{itemize}
If for $j=1,\ldots,d$ we define $D_j = \{n\leqslant k\leqslant m: \we(\ga_k) < \we(\eta_j)\}$, then
\begin{equation*}
\begin{split}
\left|\sum_{j=1}^d\e_je_{\eta_j}^*\circ P_{E_j}\left(\sum_{k=n}^mx_k\right) - \sum_{j=1}^d\e_je_{\eta_j}^*\circ P_{E_j}\left(\sum_{k\in D_j}x_k\right)\right|\\
\leqslant 9C + 2dC\we(\ga_n).
\end{split}
\end{equation*}
\end{lem}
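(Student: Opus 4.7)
The strategy is to swap the order of summation and separate contributions according to whether $\we(\ga_k)$ strictly exceeds $\we(\eta_j)$ or matches it. Write $\we(\ga_k) = m_{j_k}^{-1}$ and $\we(\eta_j) = m_{i_j}^{-1}$; by Remark~\ref{special weights decreasing} the sequence $(j_k)_k$ is strictly increasing, and $(i_j)_j$ is strictly increasing by the standing convention in Definition~\ref{def averages}. After swapping the sum, the expression to bound equals
\[
\sum_{k=n}^{m}\sum_{j \in A^k}\e_j\, e_{\eta_j}^{*}\circ P_{E_j}(x_k), \qquad A^k := \{j : i_j \geq j_k\},
\]
and I split $A^k = A^{k,<} \sqcup A^{k,=}$ according to strict inequality $i_j > j_k$ or equality $i_j = j_k$; strict monotonicity of $(i_j)_j$ implies $|A^{k,=}| \leq 1$.

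For the strict part, property~(iii) of Definition~\ref{exact vector def} applied to the exact vector $x_k$ gives $|e_{\eta_j}^{*}\circ P_{E_j}(x_k)| < C/m_{j_k} = C\we(\ga_k)$ whenever $j \in A^{k,<}$. Summing at most $d$ such terms yields $\leq dC\we(\ga_k)$ for each $k$, and the lacunarity $\sum_{i>i_0} 1/m_i < 1/m_{i_0}$ together with strict monotonicity of $(j_k)_k$ delivers $\sum_{k=n}^{m}\we(\ga_k) < 2\we(\ga_n)$. Hence the strict part contributes at most $2dC\we(\ga_n)$, the second term in the bound.

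The matching part $M := \{(j,k) : i_j = j_k\}$ is where the structural hypotheses enter. The key observation is that a match $(j,k) \in M$ forces $\sigma^{-1}(i_j) = \sigma^{-1}(j_k)$ by injectivity of the coding $\sigma$, so the length-$(k-1)$ initial segment of the dependent sequence $\{(\ga_l, \bar x_l)\}$ agrees pointwise with the length-$(b_j-1)$ initial segment of the special sequence $\{(\tilde\eta_l, \tilde x_l)\} \in \mathcal U$ that witnesses the $\al_c$-average type of $(\eta_a, E_a)_{a=1}^d$; in particular $k = b_j$. Iterating over two matches $(j_1,k_1), (j_2,k_2) \in M$ with $k_1 < k_2$ then gives $\bar x_{k_1} = \tilde x_{b_{j_1}}$, so the evaluation $e_{\eta_{j_1}}^{*}\circ P_{E_{j_1}}(x_{k_1})$ equals the witness value $e_{\eta_{j_1}}^{*}\circ \bar P_{E_{j_1}}(\tilde x_{b_{j_1}})$ constrained by Definition~\ref{def averages}. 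This identification is the technical crux: it makes the closeness (respectively lower-bound) condition~(b) in parts~(ii) (respectively (iii)) of Definition~\ref{def averages} applicable to the actual evaluations $e_{\eta_j}^{*}\circ P_{E_j}(x_k)$.

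With this identification, I would resolve the three sub-cases. In the \emph{incomparable} case, $\{\sigma^{-1}(i_j)\}$ is an antichain and $\{\sigma^{-1}(j_k)\}$ is a chain of $\mathcal U$, so $|M| \leq 1$ and the contribution is $\leq 4C$ using $\|P_{E_j}\| \leq 4$ and $\|x_k\| \leq C$. In the \emph{irrelevant} case, any non-last match $(j,k) \in M$ with $j \in [2, d-1]$ would force $|e_{\eta_j}^{*}\circ P_{E_j}(x_k)| > 16000$, contradicting $|e_{\eta_j}^{*}\circ P_{E_j}(x_k)| \leq 4C \leq 16000$; hence $M$ retains only boundary matches at $j \in \{1, d, J_2\}$, each contributing at most $4C$. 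In the \emph{comparable with alternating signs} case, the matches form an interval $\{(j, b_j) : J_1 \leq j \leq J_2\}$; condition~(b) gives $|u_j - u_{j'}| < 1/2^j$ for non-last matches with $j, j' \in [2, d-1]$, where $u_j := e_{\eta_j}^{*}\circ P_{E_j}(x_{b_j})$. Fixing a middle reference value $v^*$, the alternating-sign telescope $|\sum_j \e_j| \leq 1$ combined with $\sum|u_j - v^*| < 1$ yields $|\sum_{j=J_1}^{J_2-1}\e_j u_j| \leq |v^*| + 1 \leq 4C + 1$, while the at most three boundary matches at $j \in \{1, d, J_2\}$ each add a further $4C$. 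Careful bookkeeping of these bounded contributions produces the claimed constant $9C$.
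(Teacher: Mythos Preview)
Your proposal is correct and follows essentially the same route as the paper's proof. Both arguments decompose the difference into a ``strict weight'' part (your $A^{k,<}$, the paper's $C_k$) bounded by exact-vector property~(iii) together with the lacunarity $\sum_{k\geq n}\we(\ga_k)<2\we(\ga_n)$, and a ``matching weight'' part (your $A^{k,=}$, the paper's $A_k$) handled via injectivity of the coding $\sigma$, which forces the dependent special sequence and the witness special sequence to share a long common initial segment. The paper organises the match analysis slightly differently---it first disposes of the case of at most two matches uniformly, then shows that three or more matches exclude both the incomparable and the irrelevant types, leaving only the comparable case---whereas you run the three types in parallel; but the substance, including the key identification $\bar x_{k}=\tilde x_{b_j}$ for non-last matches, is identical. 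Both accounts defer some combinatorial fine print to \cite[Lemma~5.8]{AM2} and are a little loose on the final bookkeeping that yields the constant $9C$ (your irrelevant-case count of three boundary matches would give $12C$ unless tightened, but the paper's own argument that $s\geqslant3$ is outright impossible for irrelevant sequences gives $s\leqslant2$ and hence $8C$).
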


\begin{proof}
The goal of this Lemma is to show that the two sums in the absolute value of the statement are sufficiently close to each other. The concept behind the proof is to compare the weights of the $\ga_k$'s and the weights of the $\eta_j$'s. It is first shown that the action of all $e_{\eta_j}^*$'s on all $x_k$'s that have different weights is negligible. This is done using standard techniques from HI constructions. On the other other hand, to show that the action of $e_{\eta_j}^*$'s with weights equal to some of those of the $\ga_k$'s it is necessary to use the notion of comparable sequences (Definition \ref{def averages} (ii)). Actually, the treatment of this case is the main reason for introducing this notion.

For $k=1,\ldots,\ell$ we define $A_k = \{j: \we(\eta_j) = \we(\ga_k)\}$, $B_k = \{j: \we(\eta_j) > \we(\ga_k)\}$ and $C_k = \{j: \we(\eta_j) < \we(\ga_k)\}$. By Definition \ref{exact vector def} (iii), for $k=1,\ldots,m$ we obtain
\begin{equation}\label{gnrsioghaorkoeaw}
\left|\sum_{j\in C_k}\e_je_{\eta_j}^*\circ P_{E_j}(x_k)\right| < C\we(\ga_k).
\end{equation}
Observe that
$$\sum_{j=1}^d\e_je_{\eta_j}^*\circ P_{E_j}\left(\sum_{k\in D_j}x_k\right) = \sum_{k=n}^m\sum_{j\in B_k}\e_je_{\eta_j}^*\circ P_{E_j}(x_k),$$
therefore
\begin{eqnarray*}
\left|\sum_{j=1}^d\e_je_{\eta_j}^*\circ P_{E_j}\left(\sum_{k=n}^mx_k\right) - \sum_{j=1}^d\e_je_{\eta_j}^*\circ P_{E_j}\left(\sum_{k\in D_j}x_k\right)\right| =\\
\left|\sum_{k=n}^m\sum_{j\in A_k}\e_je_{\eta_j}^*\circ P_{E_j}(x_k) + \sum_{k=n}^m\sum_{j\in C_k}\e_je_{\eta_j}^*\circ P_{E_j}(x_k)\right|\leqslant\\
\left|\sum_{k=n}^m\sum_{j\in A_k}\e_je_{\eta_j}^*\circ P_{E_j}(x_k)\right| + Cd\sum_{k=n}^m\we(\ga_k),
\end{eqnarray*}
where the inequality follows \eqref{gnrsioghaorkoeaw}. By the choice of the sequence $(m_j)_j$, we obtain $\sum_{k=n}^m\we(\ga_k) \leqslant 2\we(\ga_n)$. Hence, all that remains to be shown is $|\sum_{k=n}^m\sum_{j\in A_k}\e_je_{\eta_j}^*\circ P_{E_j}(x_k)| \leqslant 9C$.

Observe that each set $A_k$ is either empty or a singleton and in particular, we note that $j\in A_k$ if and only if $\we(\eta_j) = \we(\ga_k)$. If the sets $A_k$ are all empty there is nothing to prove. Otherwise, let $k_1 < \cdots <k_s$ be all the $k$'s in $\{1,\ldots,\ell\}$ satisfying $A_{k_i} \neq \varnothing$. Let also $1\leqslant j_1 < \cdots < j_s\leqslant d$ be so that for each $i$, $j_i$ is the unique element of $A_{k_i}$, and hence $\we(\eta_{j_i}) = \we(\ga_{k_i})$  for $i=1,\ldots,s$.

Is $s\leqslant 2$, then the desired estimate follows from $\|x_k\| \leqslant C$ and $\|e_{\eta_i}^*\circ P_{E_i}\|\leqslant 4$. Otherwise, $s\geqslant 3$ which implies that the sequence $(\eta_j,E_j)_{j=1}^d$ (for a detailed argument see the proof of \cite[Lemma 5.8]{AM2}).

We conclude that the sequence $(\eta_j,E_j)_{j=1}^d$ is either comparable, or irrelevant and therefore there exists $m'\inn$ with $d\leqslant m'$, natural numbers $1\leqslant k_1' <\cdots < k_d'\leqslant m'$ and $\{(\xi_k, y_k)\}_{k=1}^{m'}$ in $\mathcal{U}$, so that $\we(\eta_j) = \we(\xi_{k_j'})$ for $j=1,\ldots,d$. It follows that
\begin{itemize}

\item[(a)] $j_i = i$ for $i=1,\ldots,s$,

\item[(b)] $k_i' = k_i$ for $i=1,\ldots,s$ and

\item[(c)] $\xi_k = \ga_k$,  $y_k = \bar{x}_k$ for $k=1,\ldots,k_s-1$.

\end{itemize}
For a detailed argument explaining the above see once more \cite[Lemma 5.8]{AM2}. We observe that the sequence is not irrelevant. Indeed, the opposite would imply $16000 <|e_{\eta_2}^*\circ \bar{P}_{E_2}(\bar{x}_{k_2})| = |e_{\eta_2}^*\circ {P}_{E_2}({x}_{k_2})| \leqslant 4C\leqslant 16000$, where the equality follows from Remark \ref{forget those bars} and the first inequality from $\|x_k\| \leqslant C$ and $\|e_{\eta_2}^*\circ P_{E_2}\| \leqslant 4$.

We have therefore shown that the sequence $(\eta_j,E_j)_{j=1}^d$ is comparable. Define $J = \{i:\;k_i\in\{n,\ldots,m\}\}$, observe that $J$ is an interval of $\{1,\ldots,s\}$ and choose successive two-point intervals $J_1,\ldots,J_p$ of $J\setminus\{\max J,\min J\}$, so that $J\setminus\cup_{i=1}^p{J_i}$ has at most three elements. The fact that the sequence $(\eta_j,E_j)_{j=1}^d$ is comparable yields
\begin{equation}\label{rhubarb}
\sum_{i=1}^p\left|\sum_{j\in J_i}\e_je_{\eta_j}^*\circ P_{E_j}(x_{k_j})\right| \leqslant 1
\end{equation}
and hence
\begin{equation*}
\left| \sum_{k=n}^m\sum_{j\in A_k}\e_je_{\eta_j}^*\circ P_{E_j}(x_k)\right| = \left|\sum_{j\in J}\e_je_{\eta_j}^*\circ P_{E_j}(x_{k_j})\right| \leqslant 8C + 1 \leqslant 9C.
\end{equation*}
For a more detailed explanation of \eqref{rhubarb} see the proof of \cite[Lemma 5.8]{AM2}.
\end{proof}

The following result is the main estimate of this section.

\begin{prp}\label{estimate on dependent sequence}
Let $1\leqslant C\leqslant 4000$, $\theta > 0$, and $\{(\ga_k,x_k)\}_{k=1}^\ell$ be a $(C,\theta)$-dependent sequence.
\begin{itemize}

\item[(i)] Let $\ga\in\Ga$ and $E$ be an interval of $\N$. If for some natural numbers $1\leqslant n\leqslant m\leqslant \ell$ we set $D = \{k\in[n,m]: \we(\ga_k) < \we(\ga)\}$, then:
\begin{equation*}
\left|e_{\ga}^*\circ P_E\left(\sum_{k\in D} x_k\right)\right| \leqslant 63C\we(\ga).
\end{equation*}

\item[(ii)] For all natural numbers $1\leqslant n\leqslant m\leqslant \ell$ we have
$$\left\|\sum_{k=n}^mx_k\right\| \leqslant 10 C.$$

\end{itemize}
\end{prp}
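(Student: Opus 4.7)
Fix $\ga\in\Ga$ with $\we(\ga) = m_j^{-1}$ and expand via the evaluation analysis of Proposition \ref{evaluation analysis prp}:
$$e_\ga^*\circ P_E = \sum_{r=1}^a d_{\xi_r}^*\circ P_E + \frac{1}{m_j}\sum_{r=1}^a b_r^*\circ P_{E_r},$$
where $a\leqslant n_j$, $(b_r^*)_{r=1}^a$ is a very fast growing sequence of \ac-averages, and $E_r = E\cap(p_{r-1},p_r)$. I will estimate the two pieces separately on $\sum_{k\in D}x_k$. For the diagonal part, each $d_{\xi_r}^*$ meets at most one $x_k$, and condition (i) of Definition \ref{exact vector def} bounds $|d_{\xi_r}^*(x_k)|$ by $Cm_{j_k}/n_{j_k}$; the rapid growth of $(m_j,n_j)_j$ makes the whole contribution negligible compared with $C/m_j$. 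For the averaging part, for each $k\in D$ set $r_k = \min\{r : \ran b_r^*\cap\ran x_k\neq\varnothing\}$; the sub-sequence $(b_r^*\circ P_{E_r})_{r\geqslant r_k}$ remains very fast growing and has length at most $n_j\leqslant n_{j_k-1}$, so condition (iv) of Definition \ref{exact vector def} applied with $i = j<j_k$ gives $\sum_{r\geqslant r_k}|b_r^*\circ P_{E_r}(x_k)| < C/s(b_{r_k}^*) + Cm_j/m_{j_k}$. Summed over $k\in D$ the second term is at most $Cm_j\sum_{j'>j}1/m_{j'} < C$ by lacunarity, while the first is negligible because condition (v) forces $\min\supp x_k\geqslant m_{j_k}$, whence $s(b_{r_k}^*) \geqslant \mathcal{N}_{q_{r_k-1}+1}\geqslant \mathcal{N}_{m_{j_k}}$ whenever $r_k>1$. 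After multiplication by $1/m_j$ the grand total sits well below $63C/m_j$.

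\textbf{Plan for part (ii).} For any $\ga\in\Ga$ with $\we(\ga) = m_j^{-1}$, partition $[n,m] = D\cup E'\cup F$, where $D$ is as in (i), $E' = \{k:\we(\ga_k) = \we(\ga)\}$ and $F = \{k:\we(\ga_k) > \we(\ga)\}$. By Remark \ref{special weights decreasing} the weights of the special sequence are strictly decreasing, so $E'$ contains at most one index $k_0$, for which $|e_\ga^*(x_{k_0})|\leqslant \|x_{k_0}\|\leqslant C$. Part (i) with $E = \N$ yields $|e_\ga^*(\sum_{k\in D}x_k)|\leqslant 63C/m_j \leqslant 63C/m_1 < 8C$, using $m_1\geqslant 8$. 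For $F$, each $x_k$ is a $(C,j_k)$-exact vector with $j_k<j$, so condition (iii) of Definition \ref{exact vector def} gives $|e_\ga^*(x_k)| < C/m_{j_k}$; summing, $|e_\ga^*(\sum_{k\in F}x_k)| < C\sum_{j'<j}1/m_{j'} < C/3$. Since $\|\sum_{k=n}^m x_k\| = \sup_\ga |e_\ga^*(\sum_{k=n}^m x_k)|$, adding the three contributions produces the bound $10C$.

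\textbf{Main obstacle.} The delicate point is controlling the error term $C/s(b_{r_k}^*)$ uniformly in $k\in D$ when piecing together part (i). This is exactly what conditions (iv) and (v) of Definition \ref{exact vector def} are engineered to ensure: the lower bound $\min\supp x_k\geqslant m_{j_k}$, combined with the super-exponential size of $\mathcal{N}_q$, forces the sizes $s(b_{r_k}^*)$ to grow at least as fast as $\mathcal{N}_{m_{j_k}}$, which renders that error summable and negligible. Once this point is secured, everything else reduces to routine bookkeeping with the lacunarity of $(m_j,n_j)_j$ and the strict monotonicity of weights in a special sequence.
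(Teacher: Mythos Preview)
Your argument for (ii) is fine and in fact cleaner than the paper's: you use condition (iii) of Definition \ref{exact vector def} to control the set $F=\{k:\we(\ga_k)>\we(\ga)\}$ directly, whereas the paper invokes the coding growth \eqref{coding growth} to show that most of the evaluation analysis of $\ga$ misses those $x_k$ entirely. Both routes work once (i) is in hand.

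The gap is in (i). Your bound for the averaging part reads
\[
\frac{1}{m_j}\sum_{k\in D}\Big(\frac{C}{s(b_{r_k}^*)}+\frac{Cm_j}{m_{j_k}}\Big),
\]
and the second sum is indeed $<C/m_j$. But the first sum $\sum_{k\in D}C/s(b_{r_k}^*)$ is \emph{not} controlled by the argument you give. Two problems: first, for $r_k=1$ you offer no bound at all, and nothing prevents $r_k=1$ for many $k$ simultaneously (a single $b_1^*$ with large range can be the first average to touch every $x_k$). Second, even for $r_k>1$ the chain $\min\supp x_k\geqslant m_{j_k}\Rightarrow q_{r_k-1}+1\geqslant m_{j_k}$ is false: from $r_k>1$ you only get $\max\ran b_{r_k-1}^*<\min\ran x_k$, while in the evaluation analysis $q_{r_k-1}+1=p_{r_k-1}$; there is no reason for $p_{r_k-1}$ to dominate $\min\ran x_k$, since $b_{r_k-1}^*$'s range may end far below $p_{r_k-1}$. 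A single average $b_r^*$ of small size can cover many $x_k$'s, and condition (iv) applied to each $x_k$ separately gives you nothing summable.

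What is missing is exactly what Lemma \ref{components of averages practicaly just estimate larger weights than theirselves} provides. The paper proves (i) by \emph{induction on $\ra(\ga)$}: the inductive hypothesis, together with that lemma (which exploits the comparable/incomparable/irrelevant structure of \ac-averages against the special sequence $\{(\ga_k,\bar x_k)\}$), yields the intermediate estimate
\[
\Big|b^*\Big(\sum_{k=n}^m x_k\Big)\Big|\leqslant \frac{30C}{s(b^*)}+2C\we(\ga_n)
\]
for any \ac-average $b^*$ built from coordinates of smaller rank. This bounds $b_r^*$ on a \emph{block} $\sum_{k\in D_r}x_k$ at once, rather than term by term, and that is what makes the sum over $r$ converge. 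Your direct approach via condition (iv) alone cannot replace this; you need the recursion on rank and the comparability lemma.
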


\begin{proof}
We use a standard argument to prove the second statement given the first one. Let $1\leqslant n\leqslant m\leqslant \ell$ and $\ga\in\Ga$ Define $k_0 = \min \{n\leqslant k \leqslant m: \we(\ga_k) \leqslant \we(\ga)\}$ (if $k_0$ is not well defined, then the result follows from a similar argument as the one we shall use in this case). We conclude that $|e_\ga^*(\sum_{k=k_0+1}^mx_k)| \leqslant 63C\we(\ga)$ and $|e_\ga^*(x_{k_0})| \leqslant C$. To compute the action on the rest of the vectors we use the evaluation analysis of $\ga$, $e_\ga^* = \we(\ga)\sum_{r=1}^ab_r^* + \sum_{r=1}^ad_{\xi_r}^*$. If $\we(\ga) = m_{j_0}^{-1}$, then $\we(\xi_r) = m_{j_0}^{-1}$ as well, which yields $\ra(\xi_r) \geqslant j_0$ for $r = 1,\ldots,a$. Recall that $\{(\ga_k,\bar{x}_k)\}_{k=1}^\ell$, which in conjunction with \eqref{coding growth} implies that $j_0 > \max\supp x_{k_0-1}\we(\ga_{k_0 -1})$. We obtain
$$\left|e_\ga^*\left(\sum_{k=n}^{k_0-1}x_k\right)\right| = \we(\ga)\left|b_1^*\left(\sum_{k=n}^{k_0-1}x_k\right)\right| \leqslant \we(\ga)4Ck_0 \leqslant C.$$
Combining the above the conclusion follows.

We now proceed to prove the first statement by induction on the rank of $\ga$. The case $\ra(\ga) = 1$ is easy, so let $p\inn$ such that for every $\ga\in\Ga$ with $\ra(\ga)\leqslant p$ and interval $E$ of $\N$ the conclusion is satisfied.

We remark the following: let $b^*$ be an \ac-average of $B_{0,p}$ and $n\leqslant m$, then
\begin{equation}\label{raspberry}
\left|b^*\left(\sum_{k=n}^mx_k\right)\right| \leqslant \frac{30C}{s(b^*)} + 2C\we(\ga_n).
\end{equation}
This follows from Definition \ref{exact vector def} (i) in case $b^*$ is a basic average and from the inductive assumption and Lemma \ref{components of averages practicaly just estimate larger weights than theirselves} and the choice of $(m_j)_j$ otherwise.

Let now $\ga\in\Ga$ with $\ra(\ga) = p+1$ and $E$ be an interval of $\N$. Let $e_\ga^*  = (1/m_j)\sum_{r=1}^ab_r^* + \sum_{r=1}^ad_{\xi_r}^*$, with $a\leqslant n_j$, be the evaluation analysis of $\ga$ and note that $e_\ga^*  = (1/m_j)\sum_{r=1}^a\bar{b}_r^{*} + \sum_{\ra(\xi_r)\in E}d_{\xi_r}^*$, where $\bar{b}_r^{*} = b_r^*\circ P_E$. We note that $(\bar{b}_r^*)_{r=1}^a$ is a very fast growing sequence of \ac-averages. Let $1\leqslant n\leqslant m\leqslant \ell$, set $D = \{k\in[n,m]: \we(\ga_k) < \we(\ga)\}$ and for $k\in D$ set $M_k = \{r: \ran \bar{b}^*_r\cap\ran x_k\neq\varnothing\}$, $N_k = \{r\in M_k: s(\bar{b}_r^*) \geqslant (\we(\ga_k))^{-1}\}$. By Definition \ref{exact vector def} (iv) we obtain
\begin{equation}\label{passion fruit}
\left|\sum_{r=1}^a\bar{b}_r^*\left(\sum_{k\in D}x_k\right)\right|\leqslant \left|\sum_{k\in D}\sum_{q\in M_k\setminus N_k}\bar{b}_r^*(x_k)\right| + 2C\sum_{k\in D}\frac{\we(\ga_k)}{\we(\ga)}.
\end{equation}

Define $A = \cup_{k\in D} M_k\setminus N_k$, for $r\in A$ set $D_r = \{k\in D:\;r\in M_k\setminus N_k\}$ and observe that
\begin{equation}\label{donut}
\left|\sum_{k\in D}\sum_{r\in M_k\setminus N_k}\bar{b}_r^*(x_k)\right| = \left|\sum_{r\in A}\bar{b}_r^*\left(\sum_{k\in D_r}x_k\right)\right|.
\end{equation}
Following the arguments used in the proof of \cite[Proposition 5.9]{AM2} and using Definition \ref{exact vector def} (iv), we conclude that the $D_r$'s are disjoint intervals of $\{n,\ldots,m\}$. We set $n_r = \min D_r$ and note that the $n_r$'s are strictly increasing. By \eqref{raspberry} we obtain
\begin{equation}\label{scissors}
\left|\sum_{r\in A}\bar{b}_r^*\left(\sum_{k\in D_r}x_k\right)\right| \leqslant \sum_{r\in A}\left(\frac{30C}{s(\bar{b}_r^*)} + 2C\we(\ga_{n_r})\right).
\end{equation}
The fact that $(\bar{b}_r^*)_{r=1}^a$ is very fast growing, the choice of the sequence $(m_j)_j$ and \eqref{passion fruit}, \eqref{donut}, \eqref{scissors} yield
\begin{equation}\label{sausage}
\left|\sum_{r=1}^a\bar{b}_r^*\left(\sum_{k\in D}x_k\right)\right|\leqslant 62C
\end{equation}
whereas Definition \ref{exact vector def} (i) and the choice of $(m_j)_j$ imply
\begin{equation}\label{sauerkraut}
\sum_{\ra(\xi_r)\in E}d_{\xi_r}^*(\sum_{k\in D}x_k) \leqslant C\we(\ga).
\end{equation}
Combining \eqref{sausage} and \eqref{sauerkraut} we conclude $|e_\ga^*\circ P_E(\sum_{k\in D}x_k)| \leqslant 63C\we(\ga)$.
\end{proof}

Remark \ref{dependent sequences exist} and Proposition \ref{estimate on dependent sequence} (ii) yield that $\X$ contains no boundedly complete sequence, which yields the following.

\begin{thm}\label{no reflexive subspace}
The space $\X$ contains no reflexive subspace.
\end{thm}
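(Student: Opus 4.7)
The plan is to show that every infinite dimensional subspace of $\X$ contains a bounded basic sequence whose partial sums are uniformly bounded but whose terms stay bounded away from zero; such a sequence cannot be boundedly complete, and hence its closed span (a subspace of the putatively reflexive subspace) cannot be reflexive.

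First I would take an arbitrary infinite dimensional subspace $X$ of $\X$ and, by Remark \ref{dependent sequences exist}, extract (after a small perturbation) a $(3584,1)$-dependent sequence $\{(\ga_k,x_k)\}_{k=1}^\infty$ with each $x_k\in X$. Since the $x_k$'s are successive with respect to the FDD $(M_q)_q$, they form a block sequence, and therefore a basic sequence. Applying Proposition \ref{estimate on dependent sequence}(ii) to this dependent sequence gives
\[
\left\|\sum_{k=1}^n x_k\right\|\leq 10\cdot 3584
\]
for every $n\in\N$, so the partial sums are uniformly bounded.

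Next I would observe that the $x_k$'s are seminormalized from below. Indeed, since $(\ga_k,x_k)$ is a $(3584,j_k,1)$-exact pair, $e_{\ga_k}^*(x_k)=1$, and because $\X$ is a subspace of $\ell_\infty(\Ga)$ the coordinate evaluation $e_{\ga_k}^*$ has norm at most $1$, giving $\|x_k\|\geq 1$ for all $k$. Consequently, the partial sums $s_n=\sum_{k=1}^n x_k$ are bounded in norm but $\|s_n-s_{n-1}\|\geq 1$, so they do not converge. This means $(x_k)$ is a bounded basic sequence that fails to be boundedly complete.

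Finally, if some subspace $X$ of $\X$ were reflexive, then the closed linear span $Y=\overline{\mathrm{span}}\{x_k:k\in\N\}\subseteq X$ would itself be reflexive, and by the classical James theorem its basis $(x_k)$ would have to be both shrinking and boundedly complete. This contradicts the previous paragraph, completing the proof. The substantial work has already been done in Proposition \ref{estimate on dependent sequence}(ii); once the uniform upper bound on consecutive sums of a dependent sequence is available, combined with the automatic lower bound on $\|x_k\|$ coming from the exact pair condition and the fact that $\X\subseteq\ell_\infty(\Ga)$, the failure of reflexivity is immediate via James.
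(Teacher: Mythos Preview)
Your proof is correct and follows exactly the same route as the paper, which simply states that Remark \ref{dependent sequences exist} and Proposition \ref{estimate on dependent sequence}(ii) yield that $\X$ contains no boundedly complete sequence. You have merely supplied the details the paper leaves implicit: that the dependent sequence is basic, that $\|x_k\|\geq 1$ via $e_{\ga_k}^*(x_k)=1$ and $\|e_{\ga_k}^*\|\leq 1$, and that James' theorem then rules out reflexivity of the closed span.
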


\begin{prp}
\label{differences on dependent sequences blow up}
Let $1\leqslant C\leqslant 4000$, $\theta > 0$, and $\{(\ga_k,x_k)\}_{k=1}^\ell$ be a $(C,\theta)$-dependent sequence. Then
$$\sup_n\left\|\sum_{k=1}^n(-1)^kx_k\right\| = \infty.$$
\end{prp}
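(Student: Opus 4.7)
The strategy is the standard HI lower-bound argument, tailored to the present setting of \co-averages. The crucial fact is that in a dependent sequence the pairs $(\ga_k,\ran x_k)$ are comparable (the remark following Definition \ref{dependent sequence def}), so any contiguous sub-block, paired with alternating signs, is the support of a \co-average. Choosing those signs to be $(-1)^k$ makes the resulting average evaluate on $\sum_k(-1)^kx_k$ to exactly $\theta$ (the two factors of $(-1)^k$ cancel), and feeding $n_{j_0}$ such \co-averages into Proposition \ref{build on vfg} yields a coordinate $\ga$ of weight $m_{j_0}^{-1}$ whose evaluation gains the amplification factor $n_{j_0}/m_{j_0}$, which can be made arbitrarily large.

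Fix $M>0$. Choose $j_0$ with $n_{j_0}\theta/m_{j_0}\geqslant M+2$, then pick $k_0$ so large that $j_{k_0}>j_0$ and $n_{j_0}C\cdot m_{j_{k_0}}/n_{j_{k_0}}<1$; such a $k_0$ exists because \eqref{coding growth} and Remark \ref{special weights decreasing} force the weights $m_{j_k}^{-1}$ to decrease extremely rapidly along the dependent sequence. Partition $\{k_0,k_0+1,\ldots\}$ into $n_{j_0}$ successive intervals $I_1<\cdots<I_{n_{j_0}}$ whose sizes $|I_r|$ are chosen inductively so that the \ac-averages
\[
b_r^* = \frac{1}{|I_r|}\sum_{k\in I_r}(-1)^k e_{\ga_k}^*\circ P_{\ran x_k}
\]
form a very fast growing sequence belonging to appropriate sets $B_{p_{r-1},p_r-1}$, with $0=p_0<p_1<\cdots<p_{n_{j_0}}$ and $j_0\leqslant p_1$. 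Each $b_r^*$ is a \co-average, since comparability descends to sub-sequences (Remark \ref{subseq preserve type}) and $(-1)^k$ alternates on the contiguous set $I_r$. Apply Proposition \ref{build on vfg} to obtain $\ga\in\Ga$ of weight $m_{j_0}^{-1}$ and age $n_{j_0}$ with
\[
e_\ga^* = \sum_{r=1}^{n_{j_0}}d_{\xi_r}^* + \frac{1}{m_{j_0}}\sum_{r=1}^{n_{j_0}}b_r^*.
\]

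Let $n=\max I_{n_{j_0}}$ and decompose $y=\sum_{k=1}^n(-1)^kx_k = y_1+y_2$ with $y_1=\sum_{k<k_0}(-1)^kx_k$ and $y_2=\sum_{k\in\bigcup_rI_r}(-1)^kx_k$. All ranges $\ran b_r^*$ and all ranks $\ra(\xi_r)=p_r$ lie strictly above $\max\ran x_{k_0-1}$, hence $e_\ga^*(y_1)=0$. Since the $x_k$ have pairwise disjoint FDD ranges and $e_{\ga_k}^*(x_k)=\theta$, a direct computation yields $b_r^*(y_2)=\theta$ for every $r$. Each $d_{\xi_r}^*$ hits at most one $x_k$ with $k\geqslant k_0$, and its contribution is at most $Cm_{j_k}/n_{j_k}$ by Definition \ref{exact vector def}(i), so the choice of $k_0$ gives $\left|\sum_rd_{\xi_r}^*(y_2)\right|\leqslant n_{j_0}C\cdot m_{j_{k_0}}/n_{j_{k_0}}<1$. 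Using $\|e_\ga^*\|\leqslant 1$ (as $\X\subset\ell_\infty(\Ga)$),
\[
\|y\|\geqslant |e_\ga^*(y)|\geqslant \frac{n_{j_0}\theta}{m_{j_0}}-1 \geqslant M+1,
\]
and $M$ being arbitrary finishes the proof. The main technical burden is bookkeeping the constraints of Proposition \ref{build on vfg}: that $|I_r|$ is large enough for the very-fast-growing condition, small enough for membership of $b_r^*$ in $B_{p_{r-1},p_r-1}$, and that $j_0\leqslant p_1$. All three follow from the lacunarity of $(m_j,n_j)_j$ and the freedom to take $k_0$ and the $|I_r|$ arbitrarily large.
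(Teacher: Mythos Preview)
Your proof is correct and follows essentially the same approach as the paper's: build \co-averages $b_r^*$ out of consecutive blocks of $(\ga_k,\ran x_k)$ with signs $(-1)^k$, feed a very fast growing collection of them into Proposition \ref{build on vfg} to obtain a coordinate $\ga$ of weight $m_{j_0}^{-1}$, and observe that $e_\ga^*$ acts on the alternating partial sum with value approximately $n_{j_0}\theta/m_{j_0}$. The only noteworthy difference is in the treatment of the error term $\sum_r d_{\xi_r}^*$: the paper places the ranks $p_r$ in the gaps guaranteed by Definition \ref{dependent sequence def}(iii) so that these functionals vanish identically on every $x_k$, whereas you instead bound their contribution via Definition \ref{exact vector def}(i), absorbing it with the choice of $k_0$. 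Both routes are valid and equally short.
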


\begin{proof}
Inductively choose successive a sequence $(E_r)_r$ of finite intervals  of $\N$ so that $\cup_r E_r = \N$, for each $r\in\N$
$$b_r^* = \frac{1}{\#E_n}\sum_{k\in E_n}(-1)^ke_{\ga_k}^*\circ P_{\ran x_k}$$
is an \ac-average, and the sequence $(b_r^*)_r$ is very fast growing. Note that for $n\geqslant\max E_r$ we have $b_r^*(\sum_{k=1}^n(-1)^kx_k) = \theta$. For $j\in\N$ we construct a $\gamma\in\Ga$ with $e_\ga^* = (1/m_{j_0})\sum_{r=r_0+1}^{r_0+n_j}b_r^* + \sum_{r = r_0+1}^{r_0+n_j}d_{\xi_r}^*$, for $r_0\in\N\cup\{0\}$ appropriately large so that $d_{\xi_r}^*(x_k) = 0$ for $r_0 < r\leqslant r_0 + n_j$ and $k\in\N$. Finally, observe that for $k\geqslant E_{r_0+n_j}$ we have $e_\ga^*(\sum_{k=1}^n(-1)^kx_k) = (n_j/m_j)\theta$, which yields the desired result.
\end{proof}

\begin{rmk}
\label{subsets of dependent sequences blow up}
A very similar proof also yields that if  $1\leqslant C\leqslant 4000$, $\theta > 0$, $\{(\ga_k,x_k)\}_{k=1}^\ell$ is a $(C,\theta)$-dependent sequence, and $L$ is an infinite subset of $\N$ with $\N\setminus L$ infinite, then
$$\sup_n\Big\|\sum_{\left\{\substack{k\in L\\k\leqslant n}\right\}}x_k\Big\| = \infty.$$
\end{rmk}

\begin{thm}\label{'tis hi}
The space $\X$ is hereditarily indecomposable.
\end{thm}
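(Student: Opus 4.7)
The plan is to verify the standard criterion for hereditary indecomposability: for any two infinite-dimensional closed subspaces $Y$ and $Z$ of $\X$ and every $\e > 0$, I will produce $y\in Y$ and $z\in Z$ such that $\|y+z\| < \e\|y-z\|$. A standard gliding hump argument reduces the problem to the case in which $Y$ and $Z$ are block subspaces of $(d_\ga)_{\ga\in\Ga}$, at the cost only of a negligible perturbation.

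The heart of the argument is the recursive construction of an infinite $(3584,1)$-dependent sequence $\{(\ga_k,w_k)\}_{k=1}^\infty$ whose exact vectors $w_k$ lie in $Y$ for $k$ odd and in $Z$ for $k$ even. By Proposition \ref{c0 sm in every subspace} each of $Y,Z$ contains a normalized block sequence with vanishing $\al$-index and $\lim_i\sup_{\ga\in\Ga}|d_\ga^*(x_i)|=0$; after a harmless perturbation the coefficients may be taken rational, and for each normalized term $x$ of such a sequence one can pick $\eta\in\Ga$ with $|e_\eta^*(x)|>3/4$ since $\|x\|=\sup_\ga|e_\ga^*(x)|$. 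Having chosen $(\ga_i,w_i)_{i<k}$, set $j_1=1$ and for $k\geqslant 2$
\[
 j_k = \sigma\bigl((\ga_1,\bar{w}_1),\ldots,(\ga_{k-1},\bar{w}_{k-1})\bigr),
\]
so that the weight of the next term is forced by the tree $\mathcal{U}$. Letting $M_k = \max\{\max\supp w_{k-1},\,\max_{i<k}\ra(\ga_i)\}$, apply Proposition \ref{exact pairs exist} with index $j_k$ to the tail (whose supports begin strictly above $M_k$) of the block sequence in $Y$ when $k$ is odd and of the block sequence in $Z$ when $k$ is even; this yields a $(3584,j_k,1)$-exact pair $(\ga_k,w_k)$ whose vector coordinate lies in the desired subspace. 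By construction $\{(\ga_k,\bar{w}_k)\}_{k=1}^\infty$ belongs to $\mathcal{U}$, and the gap condition of Definition \ref{dependent sequence def}(iii) is ensured by the choice of $M_k$, so $\{(\ga_k,w_k)\}_{k=1}^\infty$ is a $(3584,1)$-dependent sequence.

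Fix $\e>0$. Proposition \ref{differences on dependent sequences blow up} applied to this infinite dependent sequence furnishes an even $N\in\N$ with
\[
 \left\|\sum_{k=1}^N (-1)^k w_k\right\| > \frac{10\cdot 3584}{\e}.
\]
Set $y = \sum_{k\leqslant N,\,k\text{ odd}}w_k \in Y$ and $z = \sum_{k\leqslant N,\,k\text{ even}}w_k \in Z$. Then $y+z = \sum_{k=1}^N w_k$ satisfies $\|y+z\|\leqslant 10\cdot 3584$ by Proposition \ref{estimate on dependent sequence}(ii), while $y-z = -\sum_{k=1}^N(-1)^k w_k$ has norm strictly larger than $10\cdot 3584/\e$. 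Consequently $\|y+z\| < \e\|y-z\|$, which establishes the hereditary indecomposability of $\X$.

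The main subtlety lies in the recursive construction: at each stage the coding function $\sigma$ dictates a specific weight $m_{j_k}^{-1}$, and one must produce an exact pair of precisely that weight whose vector coordinate is drawn from a tail of the prescribed block sequence. This is exactly the content of Proposition \ref{exact pairs exist}, which permits $j$ to be chosen arbitrarily, so the construction proceeds without further obstruction.
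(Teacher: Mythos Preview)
Your proof is correct and follows essentially the same approach as the paper: construct a $(3584,1)$-dependent sequence alternating between the two subspaces (which the paper invokes via Remark \ref{dependent sequences exist}), then apply Proposition \ref{estimate on dependent sequence}(ii) to bound $\|y+z\|$ and Proposition \ref{differences on dependent sequences blow up} to make $\|y-z\|$ arbitrarily large. Your write-up simply spells out the recursive construction of the dependent sequence in more detail than the paper does.
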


\begin{proof}
As it is stated in Remark \ref{dependent sequences exist}, every infinite dimensional subspace contains a perturbation of a $(3584,1)$-dependent sequence $\{(\ga_k,x_k)\}_{k=1}^\infty$. Given two infinite dimensional subspaces $X$ and $Y$, the dependent sequence can be chosen so that for $k$ even $x_k$ is in $X$ and for $k$ odd $x_k$ is in $Y$, at least up to a small enough perturbation. Set $u_n = \sum_{k=1}^nx_{2k}$ and $w_n = \sum_{k=1}^nx_{2k-1}$ for all $n\in\N$. Then $u_n\in X$, $w_n\in Y$ for all $n\inn$ and by Proposition \ref{estimate on dependent sequence} (ii) $\sup_n\|u_n + w_n\| <\infty$,  whereas by Proposition \ref{differences on dependent sequences blow up} $\sup_n\|u_n - w_n\| = \infty$. This yields that $\X$ is hereditarily indecomposable.
\end{proof}

\begin{rmk}
Bourgain posed the question whether there exists a $\mathscr{L}_\infty$-space not containing $c_0$ that also fails the RNP \cite[Problem 3, page 46]{B}. This was answered positively in \cite{FOS}, where is was proved that every Banach space $X$ with separable dual embeds into a $\mathscr{L}_\infty$-space $Y$ with separable dual as well. If $X$ contains no copy of $c_0$, $Y$ can be chosen to contain no copy of $c_0$ either. The space in this paper also provides an answer to Bourgain's question in a strong way. Indeed, by \cite[Theorem 4.1]{EW}, every subspace of $\X$ fails the PCP and hence also the RNP.
\end{rmk}

\section{Operators on the space $\X$}
The goal of this section is to prove that the space $\X$ satisfies the scalar-plus-compact property. We initially characterize strictly singular operators with respect to their behavior on certain sequences generating $c_0$ spreading models. Then, we prove that the space has the scalar-plus-strictly singular property. Finally, we use the aforementioned characterization to deduce that strictly singular operators on $\X$ are compact. Recall that in \cite{AH}, an operator $T:\mathfrak{X}_{\mathrm{AH}}\rightarrow\mathfrak{X}_{\mathrm{AH}}$ is compact if and only if it maps all rapidly increasing sequences to norm-null ones. In the present case, non-compact operators $T:\X\rightarrow\X$ always map some rapidly increasing sequence to a sequence with $\al$-index positive.

Definition \ref{exact pair def} (i) and the fact that the extension operators have norm at most two, yields the following result.

\begin{rmk}\label{some remark that lets you find a coordinate and interval providing the estimate that you more or less require}
Let $(\ga,x)$ be a $(C,j,\theta)$-exact pair and $\rho\in[0,\theta]$. Then there is an interval $E$ of $\ran x$ so that $|e_\ga^*\circ P_E(x) - \rho| < 2Cm_j/n_j$.
\end{rmk}

\begin{prp}\label{ss char}
Let $X$ be an infinite dimensional closed subspace of $\X$ and $T:X\rightarrow\X$ be a bounded linear operator. The following assertions are equivalent.
\begin{itemize}

\item[(i)] The operator $T$ is strictly singular.

\item[(ii)] There is a normalized weakly null sequence $(x_k)_k$ in $X$ so that $(Tx_k)_k$ is norm-null.

\item[(iii)] For every normalized sequence $(x_k)_k$ in $X$ generating a $c_0$ spreading model and $\lim_k\sup_{\ga\in\Ga}|d_\ga^*(x_k)| = 0$, $(Tx_k)_k$ is norm-null.

\end{itemize}
\end{prp}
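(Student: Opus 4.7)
The proof proceeds via the cycle $(iii)\Rightarrow(ii)\Rightarrow(i)\Rightarrow(iii)$. For $(iii)\Rightarrow(ii)$, Propositions \ref{c0 sm in every subspace} and \ref{adx zero} jointly produce, up to a small perturbation inside $X$, a normalized sequence $(x_k)\subset X$ with $\al((x_k)_k)=0$, $\lim_k\sup_{\gamma\in\Gamma}|d_\gamma^*(x_k)|=0$, and a $c_0$ spreading model. Since the FDD of $\X$ is shrinking (Proposition \ref{shrinking}), $(x_k)$ is weakly null, and applying $(iii)$ gives $\|Tx_k\|\to 0$.

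For $(ii)\Rightarrow(i)$ the key tool is the HI property of $X$. Assume for contradiction that $T$ is bounded below by $c>0$ on an infinite-dimensional subspace $W\subset X$. Since $\X^*\simeq\ell_1$ (Proposition \ref{shrinking}), $\X$ contains no copy of $\ell_1$, so I pass to a basic subsequence of $(x_k)$ with basis constant $K$ and thin further so that $\|Tx_k\|<2^{-k}$. Because $X$ inherits HI from $\X$ (Theorem \ref{'tis hi}), for each $n$ there exist unit vectors $u_n\in\overline{\langle\{x_k:k\geq n\}\rangle}$ and $w_n\in W$ with $\|u_n-w_n\|<1/n$, whence $\|Tu_n\|\geq c/2$ eventually. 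On the other hand, approximating $u_n$ by a finite linear combination $\tilde u_n=\sum_{k=n}^{N_n}a_{n,k}x_k$ with $|a_{n,k}|\leq 2K$ (the standard coordinate-functional bound for normalized basic sequences) yields $\|T\tilde u_n\|\leq 2K\sum_{k\geq n}2^{-k}\to 0$, contradicting the lower bound.

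The hard direction is $(i)\Rightarrow(iii)$. Let $(y_k)\subset X$ satisfy the hypotheses of $(iii)$ and suppose, toward contradiction, $\|Ty_k\|\geq\varepsilon$ along a subsequence. For each $k$ pick $\eta_k\in\Gamma$ with $|e_{\eta_k}^*(Ty_k)|\geq 3\varepsilon/4$; passing to a further subsequence, Proposition \ref{building averages} arranges the pairs $(\eta_k,\ran Ty_k)$ into an incomparable, comparable, or irrelevant configuration, so that the averages $b^*=(1/n)\sum_i\varepsilon_ie_{\eta_{k_i}}^*\circ P_{\ran Ty_{k_i}}$ are \ac-averages of $\Gamma$. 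Proposition \ref{exact pairs exist} meanwhile produces exact pairs $(\zeta^{(j)},y^{(j)})$ built from $(y_k)$ whose images $Ty^{(j)}$ evaluate against such \ac-averages with a lower bound $|b^*(Ty^{(j)})|\geq\delta(\varepsilon)>0$. Assembling these into a $(C,\theta)$-dependent sequence $\{(\gamma_k,u_k)\}_k\subset X$ and applying Proposition \ref{estimate on dependent sequence}(ii) bounds $\|\sum_{k=1}^n u_k\|\leq 10C$, whence $\|T\sum_{k=1}^n u_k\|\leq 10C\|T\|$ for all $n$. Combining the \ac-averages via Proposition \ref{build on vfg}, one then constructs coordinate functionals $e_\gamma^*$ whose evaluation on $\sum_{k=1}^n Tu_k$ grows with $n$, contradicting the previous bound.

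The principal obstacle in $(i)\Rightarrow(iii)$ is the simultaneous coordination of two constructions: the exact-pair and dependent-sequence assembly on the pre-image side inside $X$ using $(y_k)$, and the \ac-average construction on the image side inside $\X$ using $(Ty_k)$ and the coordinates $\eta_k$. The lower bound $|b^*(Ty^{(j)})|\geq\delta$ must be preserved through the averaging procedure and the assembly into a dependent sequence, which requires aligning the special-sequence tree structure on both sides so that the weights and ranges agree with the definitions from Sections 3 and 4.
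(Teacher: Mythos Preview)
Your implications $(iii)\Rightarrow(ii)$ and $(ii)\Rightarrow(i)$ are fine and match the paper's approach (the paper simply cites the latter as a general HI fact, but your explicit argument is correct).

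The direction $(i)\Rightarrow(iii)$, however, has a genuine gap: you never use a sequence on which $T$ vanishes, and without it the argument cannot be completed. The paper actually proves $(ii)\Rightarrow(iii)$, and the sequence $(z_k)$ supplied by (ii), with $Tz_k\to 0$, is indispensable. The dependent sequence $\{(\gamma_k,y_k)\}$ is built by \emph{alternating}: for $k$ even $(\gamma_k,y_k)$ is an exact pair constructed from the bad sequence $(x_i)$, while for $k$ odd it is an exact pair constructed from $(z_i)$, so that $Ty_k=0$ for all odd $k$. On the image side one then chooses $(\zeta_k,J_k)$ with $\we(\zeta_k)=\we(\gamma_k)$, built with the same structure as $\gamma_k$ but using the norming functionals $\theta_i$ of the $Tx_i$'s; the values $e^*_{\zeta_k}\circ P_{J_k}(y_k)$ are arranged to cluster near a common $\rho$ (using Remark~\ref{some remark that lets you find a coordinate and interval providing the estimate that you more or less require} for odd $k$), which is exactly condition~(ii)(b) of Definition~\ref{def averages}, so that $(\zeta_k,J_k)$ is comparable.

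Here is why your single-sequence approach breaks. Because the weights $\we(\zeta_k)=\we(\gamma_k)$ lie along a single branch of $\mathcal U$, the pairs $(\zeta_k,J_k)$ are forced into the comparable case, and \co-averages carry \emph{alternating} signs. If every $y_k$ is built from $(x_i)$ alone, then all $e^*_{\zeta_k}\circ P_{J_k}(Ty_k)$ have the same sign and roughly the same magnitude, so the alternating signs in $b^*=(1/n)\sum_k(-1)^k e^*_{\zeta_k}\circ P_{J_k}$ produce cancellation and you cannot make $e_\gamma^*(\sum Ty_k)$ grow. The alternation with $(z_k)$ is precisely what prevents this: for odd $k$ the image term vanishes, so the signed sum reduces to a sum of same-sign terms over even $k$ and blows up (this is the mechanism of Proposition~\ref{differences on dependent sequences blow up}). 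Your invocation of Proposition~\ref{building averages} on the raw pairs $(\eta_k,\ran Ty_k)$ is also off target: those $\eta_k$ have weights unrelated to the special-sequence weights $\we(\gamma_k)$, so they do not mesh with the dependent-sequence estimate on the domain side. The fix is to first extract from (i) the sequence $(z_k)$ of (ii), and then run the paper's interleaved construction.
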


\begin{proof}
The equivalence of (i) and (ii) is a general property of hereditarily indecomposable Banach spaces, whereas the implication (iii)$\Rightarrow$(i), is an immediate consequence of Proposition \ref{c0 sm in every subspace}. It remains to prove that (ii) implies (iii) and towards a contradiction assume that this is not the case, i.e. there are a normalized weakly null sequence $(z_k)_k$ in $X$ so that $(Tz_k)_k$ is norm-null and a normalized sequence $(x_k)_k$ in $X$ generating a $c_0$ spreading model with $\lim_k\sup_{\ga\in\Ga}|d_\ga^*(x_k)| = 0$, so that $(Tx_k)_k$ is not norm-null. We note that $(Tx_k)_k$ only admits the unit vector basis of $c_0$ as a spreading model.  We apply Proposition \ref{c0 sm in every subspace}, perturb and scale the operator $T$, perhaps defining it on a different subspace $X'$ of $\X$, so that $(x_k)_k$, $(Tx_k)_k$ $(z_k)_k$ are all block sequences with rational coefficients, $\adzk = 0$ as well as  $\lim_k\sup_{\ga\in\Ga}|d_\ga^*(z_k)| = 0$, $\|Tx_k\| = 1$ and $Tz_k = 0$ for all
$k\inn$.

By perhaps changing the signs of some of the sequences or the operator $T$ and passing to subsequences, we may choose three sequences $(\eta_k)_k$, $(\eta_k')_k$, $(\theta_k)_k$ of $\Ga$ so that $\sum_k|e_{\eta_k}^*(x_k) - 1| < \infty$, $\sum_k|e_{\eta_k'}^*(z_k) - 1| < \infty$ and $\sum_k|e_{\theta_k}^*(Tx_k) - 1| < \infty$. We consider two cases, namely whether the set $\{(\we(\theta_k))^{-1}:k\inn\}$ is bounded or not. We shall only treat the second case, as the first one follows using Lemma \ref{tools to build on c0} and the fact that basic averages are always \ac-averages. We therefore assume that the set $\{(\we(\theta_k))^{-1}:k\inn\}$ is unbounded.

We define $E_k = \ran x_k$, $E_k' = \ran z_k$ and $F_k$ to be the smallest interval containing $E_k$ as well as $\ran Tx_k$. By passing to a subsequence we may choose $\rho\in[0,1]$ so that $\sum_{k}|e_{\theta_k}^*\circ P_{F_k}((1/e_{\eta_k}^*(x_k))x_k) - \rho| < \infty$.

Using Proposition \ref{exact pairs exist} we carefully construct a $(3584,1)$-dependent sequence $\{(\ga_k,y_k)\}$, so that for $k$ even $(\ga_k,y_k)$ is built on $(\eta_i)_i$, $(x_i)_i$, whereas for $k$ odd it is built on  $(\eta_i')_i$, $(z_i)_i$, as in \eqref{buinlding blocks look like this}. For $k$ even, $\ga_k$ can be built in such a manner that there is $\zeta_k\in \Ga$ ($\zeta_k$ is built on the $\theta_i$'s) and an interval $J_k$ of $\N$ ($J_k$ can be chosen to be the smallest interval containing the ranges of both $y_k$ and $Ty_k$) with $e_{\zeta_k}^*\circ P_{J_k}(Ty_k) > 1/2$ with $\we(\zeta_k) = \we(\ga_k)$ and $|e_{\zeta_k}^*\circ P_{J_k}(y_k) - \rho| < 1/2^{k+1}$. For $k$ odd, by Remark \ref{some remark that lets you find a coordinate and interval providing the estimate that you more or less require}, we can choose $\zeta_k\in\Ga$ $\we(\zeta_k) = \we(\ga_k)$ (actually $\zeta_k = \ga_k$) and an interval $J_k$ of $\N$ so that $|e_{\zeta_k}^*\circ P_{J_k}(y_k) - \rho| < 7168m_{j_k}/n_{j_k}$ (where
$\we(\ga_k) = m_{j_k}^{-1}$), which, using \eqref{coding growth}, can be chosen to be below $1/2^{k+1}$. It is also important to note that for $k$ odd, $Ty_k = 0$. We conclude that the sequence $(\zeta_k, J_k)$ is comparable, for $k$ odd $e_{\zeta_k}^*\circ P_{J_k}(Ty_k) > 1/2$ and for $k$ even $e_{\zeta_k}^*\circ P_{J_k}(Ty_k) = 0$, hence, using an argument very similar to that in the proof of Proposition \ref{differences on dependent sequences blow up} we can find $n\inn$ so that the norm of $\sum_{k=1}^nTy_k$ is arbitrarily large, whereas by Proposition \ref{estimate on dependent sequence} (ii) $\|\sum_{k=1}^ny_k\|\leqslant 35840$. This means that $T$ is unbounded which completes the proof.
\end{proof}

\begin{rmk}\label{coordinates versus projections onto singletons}
We point out a fact that we will use to prove the next result. Recall that for each $n\inn$, $(d_\ga)_{\ga\in\De_n}$ is 2-equivalent to the unit vector basis of $\ell_\infty^n$. This easily implies the following: if $(x_k)_k$ is a block sequence in $\X$, then $\lim_k\sup_{\ga\in\Ga}|d_\ga^*(x_k)| = 0$ if and only if $\lim_k\sup_{n\inn}\|P_{\{n\}}x_k\| = 0$, where $P_{\{n\}}$ denotes the Bourgain-Delbaen projection onto the $n$-th coordinate of the FDD of $\X$.
\end{rmk}

\begin{prp}\label{scalar plus strictly singular}
For every bounded linear operator $T:\X\rightarrow\X$ there is a scalar $\la$, so that $T - \la I$ is strictly singular.
\end{prp}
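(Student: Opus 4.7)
The strategy is to assume $T\in\mathcal L(\X)$ is not of the form $\la I+S$ with $S$ strictly singular, and to reach a contradiction. The proof proceeds in three steps, combining the dependent-sequence machinery of Sections 4--5 with the characterization of strict singularity in Proposition \ref{ss char}.

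\emph{Step 1 (local scalar).} I first claim that for every normalized block sequence $(x_k)_k$ in $\X$ with $\adxk=0$ and $\lim_k\sup_{\ga\in\Ga}|d_\ga^*(x_k)|=0$, after passing to a subsequence there exists $\la=\la((x_k)_k)\in\R$ with $\|Tx_k-\la x_k\|\to 0$. The candidate $\la$ is extracted as in Proposition \ref{exact pairs exist}: choose $\eta_k\in\Ga$ with $|e_{\eta_k}^*(x_k)|\ge 3/4$, set $\mu_k=e_{\eta_k}^*(Tx_k)/e_{\eta_k}^*(x_k)$ and pass to a subsequence so that $\mu_k\to\la$. Suppose, for contradiction, that $z_k:=Tx_k-\la x_k$ satisfies $\|z_k\|\ge\delta>0$ infinitely often. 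Since $(x_k)_k$ is weakly null and $T$ is bounded, $(z_k)_k$ is weakly null, and Proposition \ref{shrinking} allows us, after a subsequence and a small perturbation, to assume $(z_k)_k$ is a block sequence with $e_{\eta_k}^*(z_k)\to 0$. I then reproduce the dependent-sequence construction from the proof of Proposition \ref{ss char} (ii)$\Rightarrow$(iii): build a $(3584,1)$-dependent sequence $\{(\ga_k,y_k)\}$ whose blocks $y_k$ are drawn from $(x_i)_i$ in the form \eqref{buinlding blocks look like this}, and construct a single $\ga\in\Ga$ via Proposition \ref{build on vfg} so that $|e_\ga^*((T-\la I)\sum_{k=1}^n y_k)|$ grows without bound, while $\|\sum_{k=1}^n y_k\|\le 35840$ by Proposition \ref{estimate on dependent sequence}(ii). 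This contradicts the boundedness of $T$.

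\emph{Step 2 (uniqueness of the scalar).} Suppose two good sequences $(u_k)_k$, $(v_k)_k$, chosen with interleaved block positions, produce distinct scalars $\la_1\ne\la_2$. Following Remark \ref{dependent sequences exist} and the proof of Theorem \ref{'tis hi}, I build a single $(3584,1)$-dependent sequence $\{(\ga_k,w_k)\}_{k=1}^\infty$ with $w_k$ constructed from $(u_i)_i$ when $k$ is odd and from $(v_i)_i$ when $k$ is even. By Proposition \ref{estimate on dependent sequence}(ii), $\|\sum_{k=1}^n w_k\|\le 35840$ for every $n$. From $Tw_k\approx\la_1 w_k$ for $k$ odd and $Tw_k\approx\la_2 w_k$ for $k$ even we obtain
\begin{equation*}
T\sum_{k=1}^n w_k \;\approx\; \frac{\la_1+\la_2}{2}\sum_{k=1}^n w_k \;+\; \frac{\la_1-\la_2}{2}\sum_{k=1}^n(-1)^{k+1}w_k.
\end{equation*}
The first summand on the right is $O(1)$, while by Proposition \ref{differences on dependent sequences blow up} the second has norm tending to infinity, contradicting $T\in\mathcal L(\X)$.

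\emph{Step 3 (conclusion) and main obstacle.} Let $\la_T$ be the common scalar produced by Steps 1 and 2. If $T-\la_T I$ were not strictly singular, Proposition \ref{ss char} would supply a good sequence $(y_k)_k$ with $\|(T-\la_T I)y_k\|\ge\delta>0$ along a subsequence. Applying Step 1 to $(y_k)_k$ yields a scalar $\la'$ with $\|Ty_k-\la' y_k\|\to 0$; the triangle inequality forces $|\la_T-\la'|\ge\delta$, so $\la'\ne\la_T$, contradicting Step 2. Hence $T-\la_T I$ is strictly singular. The main obstacle is Step 1: identifying the candidate $\la$ via $e_{\eta_k}^*$ is routine, but showing that the residual $(z_k)_k$ actually converges to zero requires both a passage through the shrinking FDD of $\X$ to realize $(z_k)_k$ (up to perturbation) as a block sequence, and a careful choice of comparable coordinates $\zeta_k$ inside the dependent sequence whose action on $Ty_k$ isolates the $z_i$-contribution while suppressing the absorbing $\la x_k$-component, in the same delicate spirit as the coordinate-and-interval argument in the proof of Proposition \ref{ss char}.
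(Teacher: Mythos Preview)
Your three-step outline is a coherent route and Steps~2 and~3 are fine, but Step~1 has a genuine gap that you yourself flag without resolving. You claim you can ``reproduce the dependent-sequence construction from the proof of Proposition~\ref{ss char} (ii)$\Rightarrow$(iii)'' using only the single sequence $(x_k)_k$. But that proof essentially requires \emph{two} sequences: one (the $(z_k)_k$ there) on which $T$ already vanishes, so that for odd $k$ one may take $\zeta_k=\ga_k$ and freely adjust $J_k$ via Remark~\ref{some remark that lets you find a coordinate and interval providing the estimate that you more or less require} to hit any prescribed $\rho$, while automatically $e_{\zeta_k}^*\circ P_{J_k}(Ty_k)=0$. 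With only one sequence you have to build \emph{all} the $\zeta_k$ from functionals $\theta_i$ detecting $z_i=(T-\la)x_i$; then $e_{\zeta_k}^*\circ P_{J_k}((T-\la)y_k)$ is roughly the same positive quantity for every $k$, and since a \co-average carries alternating signs, the action of the resulting $e_\ga^*$ on $\sum(T-\la)y_k$ collapses instead of blowing up. Your final sentence (``suppressing the absorbing $\la x_k$-component'') identifies the right difficulty but gives no mechanism for solving it: with general block vectors $x_k$ there is no reason a functional detecting $z_k$ should vanish, or even be controllable, on $x_k$.

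The paper avoids this entirely by working not with an arbitrary good sequence but with a subsequence $(d_{\ga_k})_k$ of the \emph{basis}. The scalar $\la$ is chosen as an accumulation point of $(d_\ga^*(Td_\ga))_\ga$, so that $d_{\ga_k}^*(Sd_{\ga_k})=0$ with $S=T-\la I$. This single-coordinate orthogonality is exactly what makes the separation possible: any $\eta_k$ detecting $Sd_{\ga_k}$ (whether via $d_{\eta_k}^*$ or via $e_{\eta_k}^*\circ P_{D_k}$ with $\ra(\ga_k)\notin D_k$) automatically annihilates $d_{\ga_k}$, so the coordinates $\zeta_k$ built from these satisfy $e_{\zeta_k}^*\circ P_{J_k}(y_k)=0$, giving comparability with $\rho=0$ for free. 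The dependent sequence is then produced by Proposition~\ref{dependent sequence on subsequence of the basis} rather than Proposition~\ref{exact pairs exist}. Once $(Sd_{\ga_k})_k$ is shown to be norm-null, strict singularity of $S$ follows from Proposition~\ref{ss char} directly, with no need for your Steps~2 and~3. In short, the paper's choice to anchor the argument on basis vectors is not incidental: it supplies precisely the orthogonality that your Step~1 is missing.
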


\begin{proof}
We choose an accumulation point $\la$ of the sequence $(d_\ga^*(Td_\ga))_\ga$ and we will show that the operator $S = T - \la I$ is strictly singular. Passing to a subsequence $(d_{\ga_k})_k$ of the basis and adding a compact perturbation, we may assume that $d_{\ga_k}^*(Sd_{\ga_k}) = 0$ for all $k\inn$. By Proposition \ref{ss char}, it suffices to show that  $(Sd_{\ga_k})_k$ converges to zero in norm. Towards a contradiction, we assume that this is not the case. We shall follow steps similar to those used in the proof of Proposition \ref{ss char} to blow up the norm of $T$, in a slightly different way. More precisely, the goal is to use  Proposition \ref{dependent sequence on subsequence of the basis} to find a $(3584,\theta)$-dependent sequence $\{(\ga_k',y_k)\}_{k=1}^\infty$ and a sequence $\{(\zeta_k, J_k)\}_k$ with $\we(\zeta_k) = \we(\ga_k')$, $e_{\zeta_k}^*\circ P_{J_k}((-1)^kSy_k) > \e$, for some $\e>0$, and $e_{\zeta_k}^*\circ P_{J_k}(y_k) = 0$ for all $k\inn$. This last part in particular implies
that $\{(\zeta_k, J_k)\}_k$ is comparable and using the fact $e_{\zeta_k}^*\circ P_{J_k}((-1)^kSy_k) > \e$ and an argument similar to that used in the proof of Proposition \ref{differences on dependent sequences blow up} implies that one can find $n\inn$ so that the norm of $\sum_{k=1}^nSy_k$ is arbitrarily large, which in conjunction with Proposition \ref{estimate on dependent sequence} (ii) implies that $S$ is unbounded.

It remains to describe how to find $\{(\ga_k',y_k)\}_k$ and $\{(\zeta_k, J_k)\}_k$. By Proposition \ref{basis c0}, $(d_{\ga_k})_k$ admits only a $c_0$ spreading model and hence, the same is true for $(Sd_{\ga_k})_k$ which we may assume is a normalized block sequence. We distinguish two cases, namely whether $\lim_k\sup_{\ga\in\Ga}|d_\ga^*(Sd_{\ga_k})|$ is zero or not. We treat the second case, i.e. on some subsequence there are $\e >0$ and $(\eta_k)_k$ so that $d_{\eta_k}^*Sd_{\ga_k} > \e$ for all $k\inn$. As $d_{\ga_k}^*(Sd_{\ga_k}) = 0$, we obtain $\ga_k\neq\eta_k$ and keeping this in mind we can apply Proposition \ref{dependent sequence on subsequence of the basis} to find a $(3584,\theta)$-dependent sequence $\{(\ga_k',y_k)\}_{k=1}^\infty$ as in \eqref{capital control for the win} and a sequence $\{(\zeta_k, J_k)\}_k$, where $\zeta_k$ is built on $(d_{\eta_k}^*)_k$ using appropriate signs, $J_k = \ran Sd_{\ga_k}$,  with $\we(\zeta_k) = \we(\ga_k')$ and $e_{\zeta_k}^*\circ P_{J_k}((-1)^kSy_k) > \e\theta$.

Otherwise, $\lim_k\sup_{\ga\in\Ga}|d_\ga^*(d_{\ga_k})| = 0$. For each $k\inn$, we define $D_k^- = \ran Sd_{\ga_k}\cap[1,\ra(\ga_k))$ and $D_k^+ = \ran Sd_{\ga_k}\cap(\ra(\ga_k),\infty)$. Remark \ref{coordinates versus projections onto singletons} yields that either $\lim\sup_k\|P_{D_k^-}Sd_{\ga_k}\| > 0$ or $\lim\sup_k\|P_{D_k^+}Sd_{\ga_k}\| > 0$. We shall assume the first, set $D_k = D_k^-$ for all $k\inn$ and note that $\ra (d_{\ga_k})\notin D_k$ for all $k\inn$. Clearly, $\lim_k\sup_{\ga\in\Ga}|d_\ga^*(P_{D_k}Sd_{\ga_k})| = 0$, and since $(Sd_{\ga_k})_k$ only admits $c_0$ as a spreading model, we can deduce that $\al((P_{D_k}Sd_{\ga_k})_k) = 0$. We pass to a subsequence, and perhaps consider $-S$, to choose a sequence $(\eta_k)_k$ with $e_{\eta_k}^*(P_{D_k}Sd_{\ga_k}) > 3/4\|P_{D_k}Sd_{\ga_k}\|$ for all $k\inn$. Since $e_{\eta_k}^*\circ P_{D_k}d_{\ga_k} = 0$ for all $k$, we can use Remark \ref{first step building better} and Proposition \ref{dependent sequence on subsequence of the basis} to proceed as in
the previous case, that is, we can find a $(3584,\theta)$-dependent sequence $\{(\ga_k',y_k)\}_{k=1}^\infty$ as in \eqref{capital control for the win} and a sequence $\{(\zeta_k, J_k)\}_k$, where $\zeta_k$ is built on $(e_{\eta_k}^*\circ P_{D_k})_k$ using appropriate signs, with $\we(\zeta_k) = \we(\ga_k')$ and $e_{\zeta_k}^*\circ P_{J_k}((-1)^kSy_k) > \e$.
\end{proof}

\begin{rmk}
The same result can be proved, using similar arguments (see also \cite[Lemma 8.8]{AM2}), for operators $T:X\rightarrow X$, where $X$ is a block subspace of $\X$ generated by a block sequence $(x_k)_k$ which is either a subsequence of the basis, or satisfies $\adxk = 0$ as well as $\lim_k\sup_{\ga\in\Ga}|d_\ga^*(x_k)| = 0$.
\end{rmk}

\begin{rmk}\label{inclusion plus ss on subsequence of the basis}
The proof of Proposition \ref{scalar plus strictly singular} yields that if $(d_{\ga_k})_k$ is a subsequence of the basis of $\X$ and $Y = \overline{\langle\{d_{\ga_k}:k\inn\}\rangle}$, then every bounded linear operator $T:Y\rightarrow\X$ is a multiple of the inclusion plus a strictly singular operator.
\end{rmk}

Recall that a Banach space $X$ is called an $\ell_1$-predual if $X^*$ is isomorphic to $\ell_1$, or equivalently (\cite[Corollary, Page 182]{LS}), if $X$ is a $\mathscr{L}_\infty$-space with separable dual.

\begin{lem}\label{first step for compact operators}
Let $X$ be an $\ell_1$-predual, $T: X\rightarrow\X$ be a bounded linear operator, and assume that for every very fast growing sequence of \ac-averages $(b_k^*)_k$, $(T^*b_k^*)_k$ is norm-null. Then,
\begin{itemize}

\item[(i)] for every subsequence $(\ga_k)_k$ of $\Ga$, $(T^*d_{\ga_k}^*)_k$ is norm-null,

\item[(ii)] for every subsequence $(\ga_k)_k$ of $\Ga$ with $\{(\we(\ga_k))^{-1}:k\inn\}$ unbounded and successive intervals $(E_k)_k$ of $\N$, $(T^*(e_{\ga_k}^*\circ P_{E_k}))_k$ is norm null,

\item[(iii)] for every very fast growing sequence of \ac-averages $(b_k^*)_k$ and successive intervals $(E_k)_k$ of $\N$, $(T^*(b_{k}^*\circ P_{E_k}))_k$ is norm null.

\end{itemize}
\end{lem}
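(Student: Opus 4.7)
The plan is to prove (iii) by a direct appeal to the hypothesis after verifying that the restricted sequence is still a very fast growing (vfg) sequence of $\alpha_c$-averages, and to prove (i) and (ii) by the same contradiction strategy, leveraging the Schur property of $X^*\simeq\ell_1$ together with the weak*-to-weak* continuity of $T^*$.

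For (iii), I would write $b_k^*=(1/n_k)\sum_i\e_i e_{\gamma_i^{(k)}}^*\circ P_{F_i^{(k)}}$ and observe that $b_k^*\circ P_{E_k}=(1/n_k)\sum_i\e_i e_{\gamma_i^{(k)}}^*\circ P_{F_i^{(k)}\cap E_k}$. By Remark \ref{subseq preserve type}, each $b_k^*\circ P_{E_k}$ is again an $\alpha_c$-average of size $n_k$, and since $(E_k)_k$ is successive, the ranges $(p_k,q_k]\cap E_k$ remain strictly successive while the size condition $s(b_k^*\circ P_{E_k})=s(b_k^*)\geq\mathcal{N}_{q_{k-1}+1}$ is preserved. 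Thus $(b_k^*\circ P_{E_k})_k$ is itself vfg of $\alpha_c$-averages, and the hypothesis yields the claim.

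For (i) and (ii), I would argue by contradiction. Assuming the relevant image sequence is not norm-null, pass to a subsequence with $\|T^*(\cdot)\|>\e$. Since $X^*\simeq\ell_1$ has the Schur property, Rosenthal's $\ell_1$-theorem gives two alternatives: either a further subsequence is weakly Cauchy (hence norm Cauchy by Schur, converging to some $y\in X^*$), or a further subsequence is equivalent to the unit vector basis of $\ell_1$. The first alternative I would rule out as follows: the FDD of $\X$ is shrinking (Proposition \ref{shrinking}), so $(d_{\gamma_k}^*)_k$ is weak*-null in $\X^*$; likewise $(e_{\gamma_k}^*\circ P_{E_k})_k$ is weak*-null, since the successive projections $P_{E_k}x$ tend to zero in norm for each $x\in\X$. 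Weak*-to-weak* continuity of $T^*$ then forces $y=0$, contradicting the lower bound on the norms. This leaves the $\ell_1$-equivalent alternative, which produces a constant $c>0$ with $\|\sum_i a_i T^*(\cdot)_i\|\geq c\sum_i|a_i|$ for all scalars $(a_i)_i$.

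It then remains to manufacture a vfg sequence of $\alpha_c$-averages out of the chosen functionals and check that its $T^*$-image has norm at least $c$, contradicting the hypothesis. For (i), after thinning to make ranks strictly increasing, the basic averages $b_n^*=(1/|F_n|)\sum_{i\in F_n}d_{\gamma_{k_i}}^*$ (which are $\alpha_c$-averages by Definition \ref{def averages}(iv)) suffice, once $|F_n|$ and the gaps between successive blocks are chosen large enough to satisfy the vfg size requirement. For (ii), I would first invoke Proposition \ref{building averages} (applicable because $\{\we(\gamma_k)^{-1}\}$ is unbounded and $e_{\gamma_k}^*\circ P_{E_k}\neq 0$ forces $\ra(\gamma_k)\geq\min E_k$) to thin the subsequence so that every finite block $(\gamma_{k_i},E_{k_i})$ is of one of the three types of Definition \ref{def averages}, also arranging strictly decreasing weights; using alternating signs $\e_i=(-1)^i$ accommodates the comparable case and is permissible in the other two, so $(1/|F_n|)\sum_{i\in F_n}\e_i e_{\gamma_{k_i}}^*\circ P_{E_{k_i}}$ is an $\alpha_c$-average, and the same spacing argument produces a vfg sequence. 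The main obstacle I expect is the combinatorial bookkeeping in (ii), simultaneously enforcing strictly decreasing weights, a consistent comparability type, vfg size growth, and alternating signs, while preserving the $\ell_1$-equivalence lower bound; the other delicate point, eliminating the weakly Cauchy alternative in Rosenthal's dichotomy, hinges cleanly on the shrinking FDD of $\X$ forcing weak*-nullity of the test functionals.
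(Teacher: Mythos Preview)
Your proposal is correct and follows essentially the same approach as the paper. The only cosmetic difference is that the paper reaches the $\ell_1$-equivalent subsequence more tersely: it observes that $(T^*x_k^*)_k$ is weak*-null in $X^*\simeq\ell_1$ and invokes directly that a bounded, non-norm-null sequence in $\ell_1$ has a subsequence equivalent to the unit vector basis, whereas you unfold this via Rosenthal's theorem plus the Schur property; the two routes are equivalent, and your version is simply more explicit.
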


\begin{proof}
Note that the third statement immediately follows from Remark \ref{subseq preserve type}, which yields that $(b_k^*\circ P_{E_k})_k$ is a very fast growing sequence of \ac-averages as well. To see the proof of first two statements, note that if for each $k$, $x_k^* = d_{\ga_k}^*$ or $x_k^* = e_{\ga_k}^*\circ P_{E_k} = P^*_{E_k}e_{\ga_k}^*$, then $(x_k^*)_k$ is $w^*$-null and hence, so is $(T^*x_k^*)_k$. We conclude that if it is not norm-null, then it has a subsequence equivalent to the unit vector basis of $\ell_1$. In each case, either because elements of the basis always define \ac-averages, or using Proposition \ref{building averages}, one can find a very fast growing sequence of \ac-averages whose image under $T^*$ is not norm-null, contradicting the initial assumption.
\end{proof}

\begin{lem}\label{second step for compact operators}
Let $X$ be a Banach space and $T:X\rightarrow\X$ be a bounded and non-compact linear operator. Then there are a subsequence $(\ga_k)_k$ of $\Ga$ and a sequence of successive intervals $(E_k)_k$ of $\N$ so that $\lim\sup_k \|T^*(e_{\ga_k}^*\circ P_{E_k})\| > 0$.
\end{lem}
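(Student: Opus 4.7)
The plan is to use non-compactness of $T$ to extract a weakly null sequence in $\X$, convert it via the shrinking FDD of $\X$ into a block perturbation, and read off coordinate functionals that register a definite mass. Since $T$ is bounded and non-compact, I will pick a sequence $(x_n)_n$ in the unit ball of $X$ for which $(Tx_n)_n$ has no norm-Cauchy subsequence and, after one extraction, is $\delta$-separated for some $\delta > 0$. By Proposition \ref{shrinking}, $\X^*$ is isomorphic to $\ell_1$, hence separable; a standard consequence is that $\X$ contains no copy of $\ell_1$ (since such a copy would exhibit $\ell_\infty$ as a quotient of $\X^*$ via the restriction map). Rosenthal's $\ell_1$-theorem applied to $(Tx_n)_n$ then yields a further weakly Cauchy subsequence, so the differences $z_n = Tx_{2n} - Tx_{2n+1}$ form a weakly null sequence in $\X$ with $\|z_n\| \geq \delta$.

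Since the FDD $(M_q)_q$ of $\X$ is shrinking (Proposition \ref{shrinking}), a standard gliding-hump argument applied to $(z_n)_n$ produces, after passing to a subsequence, a block sequence $(y_n)_n$ with respect to $(M_q)_q$ satisfying $\|z_n - y_n\| < \eta_n$ for any preassigned summable sequence $(\eta_n)_n$, and in particular $\|y_n\| \geq \delta/2$. I set $E_n = \ran y_n$, so the $E_n$ are successive finite intervals of $\N$ with $P_{E_n} y_n = y_n$. Identifying $y_n \in \X$ with its representative in $\ell_\infty(\Ga)$ and using $\|y_n\| = \sup_{\ga \in \Ga}|y_n(\ga)|$, I pick $\ga_n \in \Ga$ with $|e_{\ga_n}^*(y_n)| \geq \delta/4$.

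The main obstacle is to arrange the $\ga_n$ to be pairwise distinct, so that $(\ga_n)_n$ is a genuine subsequence of $\Ga$. This is resolved by noting that $(y_n)_n$ is itself weakly null (as a norm perturbation of the weakly null $(z_n)_n$) and that each $e_\ga^*$ lies in $\X^*$; hence for every fixed $\ga \in \Ga$ only finitely many $n$ can satisfy $|e_\ga^*(y_n)| \geq \delta/4$, and one last extraction delivers pairwise distinct $\ga_n$. To conclude, choosing $\eta_n$ small compared to $\delta$ and using the uniform bound $\|e_{\ga_n}^* \circ P_{E_n}\| \leq 4$, we obtain
$$|e_{\ga_n}^* \circ P_{E_n}(z_n)| \;\geq\; |e_{\ga_n}^*(y_n)| - 4\eta_n \;\geq\; \delta/8,$$
and since $z_n = T(x_{2n} - x_{2n+1})$ with $\|x_{2n} - x_{2n+1}\| \leq 2$, this yields $\|T^*(e_{\ga_n}^* \circ P_{E_n})\| \geq \delta/16$ for every $n$, establishing the required positive limit superior.
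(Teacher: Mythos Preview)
Your proof is correct and follows the same core strategy as the paper: extract successive FDD-intervals on which the images carry definite mass, then pick coordinate functionals witnessing this. The paper compresses the gliding-hump step into a single sentence, asserting directly that a bounded sequence $(Tx_k)_k$ with no convergent subsequence admits, after a subsequence, successive intervals $E_k$ with $\|P_{E_k}Tx_k\|>\varepsilon$; you reach the same point via a detour through Rosenthal's theorem and weakly null differences, which is valid but unnecessary (the FDD structure alone yields the gliding hump, and in any case the shrinking FDD already forces weak Cauchy subsequences without invoking Rosenthal). Your care to make the $\gamma_k$ pairwise distinct is likewise harmless but not needed---the paper's proof does not do so, and it is the successiveness of the $E_k$, not distinctness of the $\gamma_k$, that matters in the subsequent application.
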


\begin{proof}
As $T$ is not compact, there is a normalized sequence $(x_k)_k$ and so that $(Tx_k)_k$ has no norm-convergent subsequence. Hence, passing if necessary to a subsequence, there are $\e > 0$ and a sequence of successive intervals $(E_k)_k$ of $\N$ so that $\|P_{E_k}Tx_k\| > \e$ for all $k$. Choose a sequence $(\ga_k)_k$ of $\Ga$ so that $e_{\ga_k}^*(P_{E_k}Tx_k) > \e$ for all $k$ and observe that it is the desired one.
\end{proof}

\begin{prp}\label{dual of non-compact preserves averages}
Let $X$ be an $\ell_1$-predual and $T:X\rightarrow\X$ be a bounded and non-compact linear operator. Then there exists a very fast growing sequence of \ac-averages $(b_k^*)_k$ so that $\lim\sup_k\|T^*b_k^*\| >0$.
\end{prp}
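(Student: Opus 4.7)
The plan is to prove the contrapositive. Assume that $(T^*b_k^*)_k$ is norm null for every very fast growing sequence $(b_k^*)_k$ of \ac-averages, and show that $T$ is compact. Under this hypothesis, Lemma \ref{first step for compact operators} provides (i), (ii), (iii) as tools. Suppose toward contradiction that $T$ is not compact; by Lemma \ref{second step for compact operators} there are a sequence $(\ga_k)_k \subset \Ga$ and successive intervals $(E_k)_k$ with $\|T^*(e_{\ga_k}^* \circ P_{E_k})\| \geq \e$ for all $k$. Lemma \ref{first step for compact operators} (ii) forces $(\we(\ga_k)^{-1})_k$ to be bounded, so by passing to a subsequence we may take $\we(\ga_k) = m_{j_0}^{-1}$ constant and $\ra(\ga_k)$ strictly increasing. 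Proposition \ref{evaluation analysis prp} then gives $e_{\ga_k}^* = \sum_{r=1}^a d_{\xi_r^k}^* + m_{j_0}^{-1}\sum_{r=1}^a b_{k,r}^*$ with $b_{k,r}^* \in B_{p_{r-1}^k, p_r^k - 1}$, $p_r^k = \ra(\xi_r^k)$, $p_0^k = 0$, and the age $a$ constant.

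For each $r$, pass to a further subsequence so that $(\ra(\xi_r^k))_k$ is either bounded (whence $\xi_r^k$ is eventually constant and, since $\min E_k \to \infty$, both $d_{\xi_r^k}^* \circ P_{E_k}$ and $b_{k,r}^* \circ P_{E_k}$ vanish for large $k$) or tends to infinity with $\xi_r^k$ all distinct. Writing
\begin{equation*}
T^*(e_{\ga_k}^* \circ P_{E_k}) = \sum_{r} [\ra(\xi_r^k) \in E_k]\, T^* d_{\xi_r^k}^* + m_{j_0}^{-1}\sum_{r} T^*(b_{k,r}^* \circ P_{E_k}),
\end{equation*}
where the active indices $r$ are those with $\ra(\xi_r^k) \to \infty$, Lemma (i) yields $T^* d_{\xi_r^k}^* \to 0$ term by term. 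For $r \geq 2$ the bound $s(b_{k,r}^*) \geq \mathcal{N}_{p_{r-1}^k + 1}$ combined with $p_{r-1}^k \to \infty$ lets a subsequence extraction arrange $(b_{k,r}^*)_k$ as a very fast growing sequence of \ac-averages, at which point Lemma (iii) gives $T^*(b_{k,r}^* \circ P_{E_k}) \to 0$.

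The hard case is $r = 1$, for which $p_0^k = 0$ prevents assembling $(b_{k,1}^*)_k$ directly as a VFG sequence. The resolution is to expand $b_{k,1}^* = n_k^{-1}\sum_{i=1}^{d_k}\e_i^k e_{\zeta_i^k}^* \circ P_{F_i^k}$: when $(n_k)_k$ is unbounded, a subsequence makes $(b_{k,1}^* \circ P_{E_k})_k$ a VFG sequence of \ac-averages (whose sizes eventually dominate the $\mathcal{N}$-thresholds determined by the successive $E_k$), and one concludes as before; when $(n_k)_k$ is bounded we pass to a subsequence with $n_k, d_k$ constant and examine $(T^*(e_{\zeta_i^k}^* \circ P_{F_i^k \cap E_k}))_k$ for each fixed $i$. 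If $(\we(\zeta_i^k)^{-1})_k$ is unbounded, Lemma (ii) again produces norm null; if the $\zeta_i^k$ are eventually constant, the corresponding functional is a tail of a fixed $\ell_1$-vector and goes to zero in norm; otherwise we recurse on $\zeta_i^k$, whose rank is strictly less than $\ra(\ga_k)$. The recursion terminates because at every stage we either extract a VFG sequence, unlock unbounded weights, or strictly reduce the leading rank; a diagonal consolidation of the finitely many stages yields $T^*(e_{\ga_k}^* \circ P_{E_k}) \to 0$, contradicting $\|T^*(e_{\ga_k}^* \circ P_{E_k})\| \geq \e$. The technical core is controlling this recursion so that the diagonal subsequence preserves both successiveness of intervals and the growth requirements needed to invoke (iii).
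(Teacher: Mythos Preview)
Your decomposition via the evaluation analysis is correct and closely mirrors the paper's setup: after stabilizing the weight at $m_{j_0}^{-1}$ and the age at $a$, one sees that the $d^*_{\xi_r^k}$ contributions vanish (by Lemma~\ref{first step for compact operators}(i)) and that all but one of the $b_{k,r}^*\circ P_{E_k}$ contributions vanish (those with $r$ strictly above the first index $r_0$ for which $p_{r_0}^k\to\infty$ have sizes $\to\infty$, hence become VFG after extraction). A minor correction: the single ``hard'' surviving index is $r_0=\min\{r:\ra(\xi_r^k)\to\infty\}$, which need not equal $1$; your case split should be $r=r_0$ versus $r>r_0$ rather than $r=1$ versus $r\geqslant 2$.

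The genuine gap is the termination argument for your recursion. You claim it terminates because each pass ``strictly reduces the leading rank,'' but this is a pointwise statement: for each fixed $k$ one has $\ra(\zeta_i^k)<\ra(\ga_k)$, yet both sequences of ranks tend to infinity with $k$. Since at every level you retain an \emph{infinite} subsequence on which the ranks are unbounded, nothing prevents the recursion tree from having an infinite branch, and your ``diagonal consolidation of the finitely many stages'' is not justified. The paper sidesteps exactly this difficulty by running the argument in the opposite direction: rather than trying to drive all pieces to zero, it observes that the surviving piece at each stage satisfies
\[
\|T^*(e_{\eta_k}^*\circ P_{F_k})\| > \frac{m_{j_0}}{2}\,\e \geqslant \frac{m_1}{2}\,\e,
\]
because the factor $1/m_{j_0}$ is removed while the $1/\ell$ in the $\al_c$-average is offset by having at most $\ell$ summands. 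Iterating this bootstrapping forces $\|T^*\|\geqslant (m_1/2)^n\e$ for every $n$, which is absurd. This growth of the lower bound is precisely what would make your recursion tree finite (any infinite branch would carry lower bounds exceeding $4\|T\|$), but you do not track it; your stated reason for termination is incorrect, and supplying the correct one amounts to reproducing the paper's argument.
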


\begin{proof}
Let us assume that the conclusion is false, i.e. $T$ satisfies the assumptions of Lemma \ref{first step for compact operators}. We shall prove a statement, which in conjunction with Lemma \ref{second step for compact operators} and a finite inductive argument yields that $\|T\|$ is arbitrarily large, a contradiction. The statement is the following: if $(\ga_k)_k$ is a subsequence of $\Ga$, $(E_k)_k$ is a sequence of successive intervals of $\N$ and $\e>0$ so that $\|T^*(e_{\ga_k}^*\circ P_{E_k})\| > \e$ for all $k$, then there are a subsequence $(\eta_k)_k$ of $\Ga$ and a sequence of successive intervals $(F_k)_k$ of $\N$ so that $\|T^*(e_{\eta_k}^*\circ P_{F_k})\| > (m_1/2)\e$.

Let $(\ga_k)$, $(E_k)_k$ and $\e$ be as above. By Lemma \ref{first step for compact operators} (ii) and passing to a subsequence, there is $j_0$ so that $\we(\ga_k) = m_{j_0}^{-1}$ for all $k\inn$. Considering the evaluation analysis of each $\ga_k$, and passing to a subsequence, there is $a\inn$ so that $e_{\ga_k}^*\circ P_{E_k} = \sum_{r=1}^ad_{\xi_{k,r}}^*\circ P_{E_k} + (1/m_{j_0})\sum_{r=1}^ab_{k,r}^*\circ P_{E_k}$. As for each $k$ and $r$, $d_{\xi_{k,r}}^*\circ P_{E_k}$ is either $d_{\xi_{k,r}}^*$ or zero, Lemma \ref{first step for compact operators} (i) yields that $(T^*(\sum_{r=1}^ad_{\xi_{k,r}}^*\circ P_{E_k}))_k$ is norm-null. Recall that for each $k$, the sequence $(b_{k,r}^*)_{r=1}^a$ is very fast growing, hence Lemma \ref{first step for compact operators} (iii) implies that, passing to a subsequence, there is $\ell\inn$ so that for each $k$ there is $r_k$ so that $s(b_{k,r_k}) = \ell$ and $\|T^*((1/m_{j_0})b_{k,r_k}^*\circ P_{E_k})\| > \e/2$. Moreover, Lemma \ref{first step for compact operators}
 (i) implies that $b_{k,r}^*$ cannot be a basic average, hence $b_{k,r}^* = (1/\ell)\sum_{i=1}^s\e_ie_{\eta_{k,i}}^*\circ P_{F_{k,i}}$ with $s\leqslant \ell$. We conclude that for each $k$ there is $i_k$ so that $\|T^*e_{\eta_{k,i}}^*\circ P_{F_{k,i}\cap E_k}\| > (m_{j_0}/2)\e$ which completes the proof.
\end{proof}


\begin{thm}\label{spc property}
A bounded linear operator $T:\X\rightarrow\X$ is strictly singular if and only if it is compact. Hence, $\X$ satisfies the scalar-plus-compact property.
\end{thm}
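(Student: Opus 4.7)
The direction ``$T$ compact $\Rightarrow$ $T$ strictly singular'' is a general Banach space fact; combined with Proposition \ref{scalar plus strictly singular} (which writes every bounded operator on $\X$ as $\la I+S$ with $S$ strictly singular), the converse immediately yields the scalar-plus-compact property. My plan is therefore to prove, by contradiction, that a strictly singular $T:\X\to\X$ must be compact, following the blow-up strategy used in the proofs of Propositions \ref{scalar plus strictly singular} and \ref{differences on dependent sequences blow up}.

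Assume $T$ is strictly singular but not compact. Since $\X^*\simeq \ell_1$ by Proposition \ref{shrinking}, the space $\X$ is an $\ell_1$-predual and Proposition \ref{dual of non-compact preserves averages} supplies $\delta>0$ and a very fast growing sequence of $\al_c$-averages $(b_k^*)_k$ with $\|T^*b_k^*\|\ge\delta$ for every $k$. Select normalized $z_k\in\X$ with $b_k^*(Tz_k)\ge\delta/2$. Because the supports of the $b_k^*$'s are disjoint and tend to infinity, $(b_k^*)$, and hence $(T^*b_k^*)$, is weak-star null in $\X^*\simeq\ell_1$. The Schur property of $\ell_1$ forbids a weakly convergent subsequence of $(T^*b_k^*)$: any such would be norm-convergent to its weak-star limit $0$, contradicting $\|T^*b_k^*\|\ge\delta$. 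Rosenthal's $\ell_1$-theorem then allows passing to a subsequence on which $(T^*b_k^*)$ is equivalent to the unit vector basis of $\ell_1$. Since $\X$ is HI it contains no copy of $\ell_1$, so Rosenthal's dichotomy produces a further weakly Cauchy subsequence of $(z_k)$; taking consecutive differences and applying a shrinking-FDD perturbation (Proposition \ref{shrinking}), I reduce to the case where $(z_k)$ is a normalized weakly null block sequence still satisfying $|b_k^*(Tz_k)|\ge\delta/4$.

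The contradiction is then obtained by a construction parallel to those used in Propositions \ref{scalar plus strictly singular} and \ref{differences on dependent sequences blow up}. Proposition \ref{c0 sm in every subspace} extracts, inside $\overline{\spn}\{z_k\}$, a normalized block sequence $(y_\ell)_\ell$ with $\al((y_\ell))=0$ and $\lim_\ell\sup_{\ga\in\Ga}|d_\ga^*(y_\ell)|=0$; by Proposition \ref{adx zero} this sequence generates a $c_0$ spreading model, and Proposition \ref{ss char} then forces $\|Ty_\ell\|\to 0$. On the other hand, combining the $\ell_1$-equivalence of $(T^*b_k^*)$ with Propositions \ref{exact pairs exist} and \ref{build on vfg}, I construct coordinates $\ga_\ell\in\Ga$ whose evaluation analyses absorb a suitable sub-collection of the $(b_k^*)$'s, arranging $\{(\ga_\ell,y_\ell)\}_\ell$ to be a $(C,\theta)$-dependent sequence along which $b_{k(\ell)}^*(Ty_\ell)$ remains bounded below. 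Proposition \ref{estimate on dependent sequence} then bounds $\|\sum_{\ell=1}^n y_\ell\|$ uniformly, while an alternating-sign argument following the proof of Proposition \ref{differences on dependent sequences blow up}, applied to $(Ty_\ell)$ using the preserved $b_{k(\ell)}^*$-action, produces $\sup_n\|\sum_{\ell=1}^n(\pm)Ty_\ell\|=\infty$ — contradicting the boundedness of $T$. The main obstacle I expect is exactly the last compatibility step: guaranteeing that the $c_0$-spreading-model refinement $(y_\ell)$ of $(z_k)$ still records a constant-sized value of $b_k^*\circ T$ for a controllable pairing $\ell\mapsto k(\ell)$. This is precisely the balance that the exact-pair and dependent-sequence machinery of Section 5 is designed to maintain, and the final paragraph of the proof of Proposition \ref{scalar plus strictly singular} provides the concrete template I intend to follow.
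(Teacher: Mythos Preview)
Your proposal has a genuine gap at precisely the point you flag as an obstacle, and the tension you identify is in fact fatal to the approach as written. Once you pass to a sequence $(y_\ell)$ in $\overline{\spn}\{z_k\}$ with $\al((y_\ell))=0$ and $\lim_\ell\sup_\ga|d_\ga^*(y_\ell)|=0$, Proposition~\ref{ss char}(iii) forces $\|Ty_\ell\|\to 0$; but then $|b_{k(\ell)}^*(Ty_\ell)|\leqslant 4\|Ty_\ell\|\to 0$ as well, so no pairing $\ell\mapsto k(\ell)$ can keep $b_{k(\ell)}^*(Ty_\ell)$ bounded below. The dependent-sequence blow-up you sketch therefore cannot get off the ground: you are trying to extract two incompatible behaviours from the same sequence. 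The template in Proposition~\ref{scalar plus strictly singular} does not help here, because there one works with the basis $(d_{\ga_k})$, which already sits in a fixed ambient coordinate structure; arbitrary block vectors in $\overline{\spn}\{z_k\}$ do not.

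The paper's proof avoids this trap by \emph{not} building a dependent sequence at all. The key missing idea is an intermediate RIS extraction: after obtaining the block sequence $(x_k)$ with $b_k^*(Tx_k)$ bounded below, one splits each $x_k$ according to the weights of its local support (as in \cite[Proposition~5.11]{AH}) to produce a genuine $C$-RIS $(y_k)$ still satisfying $b_k^*(Ty_k)>\e$. This means $(Ty_k)_k$ has positive $\al$-index. One then takes weighted averages $u_k=(m_{j_k}/n_{j_k})\sum y_i$; because $(y_k)$ is a RIS, Proposition~\ref{on RIS are exact vectors and pairs} makes each $u_k$ an exact vector, so automatically $\al((u_k))=0$ and $\sup_\ga|d_\ga^*(u_k)|\to 0$. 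Simultaneously, the positive $\al$-index of $(Ty_k)$ combined with Proposition~\ref{adx positive} and \eqref{currant} keeps $(Tu_k)$ seminormalized. This directly contradicts Proposition~\ref{ss char}(iii) --- no dependent sequence or blow-up argument is needed. The RIS step via local-support decomposition is the pivot that lets the $c_0$-spreading-model structure on the domain side coexist with a norm lower bound on the image side, and it is exactly what your outline lacks.
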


\begin{proof}
Towards a contradiction, assume that there is a strictly singular operator $T:\X\rightarrow\X$ which is not compact. By Proposition \ref{dual of non-compact preserves averages} there is a very fast growing sequence $(b_k^*)_k$ of \ac-averages so that $\lim\sup_k\|T^*b_k^*\| > 0$. As the sequence $(b_k^*)_k$ is $w^*$-null, so is $(T^*b_k^*)_k$ and hence by a sliding hump argument we can pass to a subsequence and find a normalized block sequence $(x_k)_k$ in $\X$ so that $\lim\sup_kT^*b_k^*(x_k) > 0$. An argument in which each vector $x_k$ is split according to the weights of its local support (\cite[Definition 5.7]{AH}), yields that on some subsequence there are $\e > 0$, $C\geqslant 1$ and a $C$-RIS $(y_k)_k$, so that $b_k^*(Ty_k) = T^*b_k^*(y_k)>\e$ for all $k\inn$. To be more precise, the sequence $(y_k)_k$ is chosen so that is either has bounded local weights or rapidly decreasing local weights (\cite[Definition 5.9]{AH}) which yields that it is a RIS (\cite[Proposition 5.10]{AH}). For a more detailed
exposition of the argument see \cite[Proposition 5.11]{AH}. As $(y_k)_k$ is weakly null, we may assume that $(Ty_k)_k$ is a block sequence, i.e. we have found a $C$-RIS $(y_k)_k$, so that $(Ty_k)_k$ is a block sequence with $\al$-index positive. Combining Proposition \ref{adx positive} and \eqref{currant} of Proposition \ref{all basics in one} we obtain that passing to a subsequence, there is $\de>0$, so that for all $j \leqslant i_1 <\cdots <i_{n_j}$,
\begin{equation}
\frac{\de}{\|T\|} \leqslant  \frac{1}{\|T\|}\left\|\frac{m_j}{n_j}\sum_{k=1}^{n_j}Ty_{i_k}\right\| \leqslant \left\|\frac{m_j}{n_j}\sum_{k=1}^{n_j}y_{i_k}\right\| \leqslant 10C.
\end{equation}
By the above and Proposition \ref{on RIS are exact vectors and pairs}, we can construct a sequence $(u_k)_k$, with each $u_k$ a $(112C,k)$-exact vector, so that both $(u_k)_k$ and $(Tu_k)_k$ are seminormalized. By Remark \ref{on exact vectors biorthogonals and index zero} we obtain $\lim_k\sup_{\ga\in\Ga}|d_\ga^*(u_k)| = 0$ as well as  $\aduk = 0$. This contradicts Proposition \ref{ss char} (iii), since $T$ was assumed to be strictly singular.
\end{proof}

\begin{rmk}\label{compact char}
The above proof actually yields that if an operator $T:\X\rightarrow\X$ is non-compact then it maps some RIS to a sequence with $\al$-index positive (to be more precise, to a weakly null sequence that is a perturbation of a block sequences with $\al$-index positive).
\end{rmk}

\begin{rmk}
\label{some subspaces fail spc}
The space $\X$ does not have the scalar-plus-compact property hereditarily, i.e. there exists a subspace $Y$ of $\X$ and a strictly singular operator $T:Y\to Y$ that is not compact. This is also true for the space $\mathfrak{X}_{\mathrm{AH}}$ constructed in \cite{AH}. We repeat the argument for completeness. As it was explained in Remark \ref{comparing spreading models with dual methods}, the sequence $(y_q)_q$ with $y_q = \sum_{\ga\in\De_q}d_\ga$ generates an $\ell_1$ spreading model. If we set $Y$ to be the closed linear span of $(y_q)_q$, then by Proposition \ref{c0 sm in every subspace} there is a sequence in $Y$ generating a $c_0$ spreading model. By a theorem in \cite{AOST} there exists a strictly singular operator $S:Y\to Y$ that is not compact.
\end{rmk}

\section{Quotients of $\BmT$ with the scalar-plus-compact property}\label{Quotients of BmT with the scalar-plus-compact property}
Recall that the space $\X$ is defined using the tree $\mathcal{U}$ of all finite special sequences $\{(\ga_k,x_k)\}_{k=1}^d$ (see Subsection \ref{subsection tree}). A defining property of the tree $\mathcal{U}$ is that any of its maximal chains is infinite. This is precisely the reason why there are no boundedly complete sequences in the space $\X$.

In a way analogous to that in \cite{AM2}, for each ordinal numbers $2\leqslant \xi <\omega_1$, we can consider the tree $\mathcal{U}_\xi$ of all finite special sequences $\{(\ga_k,x_k)\}_{k=1}^d$ so that $\{\ra(\ga_k):k=1,\ldots,d\}\in\mathcal{S}_\xi$. Each such tree $\mathcal{U}_\xi$ defines a different class of \ac-averages which induce a self-determined subset $\Ga_\xi$ of $\bar{\Ga}$, resulting in a hereditarily indecomposable $\mathscr{L}_\infty$-space $\mathfrak{X}_\xi$ with the scalar plus compact property, which is a quotient of $\BmT$. We note that $\xi\geqslant 2$ is necessary to be able to prove the aforementioned properties.

As the tree $\mathcal{U}_\xi$ is well founded, it can be shown that the space $\mathfrak{X}_\xi$ is reflexively saturated, in particular its FDD is shrinking and every skipped block sequence in the space $\mathfrak{X}_\xi$ is boundedly complete. Therefore, if for some $\la>0$ and block subspace $X$ of $\mathfrak{X}_\xi$ we define the tree $\mathcal{N}$-$\mathcal{BC}_{\mathrm{sk}}(X,\la)$ of all skipped block sequences $(x_k)_{k=1}^d$ in $X$ satisfying $1\leqslant \|\sum_{k=n}^mx_k\| \leqslant \la$ for $1\leqslant n \leqslant m \leqslant d$, then $\mathcal{N}$-$\mathcal{BC}_{\mathrm{sk}}(X,\la)$ is well founded. We conclude that there is an ordinal number $\zeta_\xi$, so that the order of the tree $\mathcal{N}$-$\mathcal{BC}_{\mathrm{sk}}(X,\la)$ is at most $\zeta_\xi$ for every block subspace $X$ of $\mathfrak{X}_\xi$ and $\la > 0$. On the other hand, it can be deduced that for every block subspace $X$ of $\mathfrak{X}_\xi$ and $\eta < \xi$, the order of $\mathcal{N}$-$\mathcal{BC}_{\mathrm{sk}}(X,\la)$ is
at least $\eta$, for $\la$ sufficiently large. This easily yields that for every $\zeta > \zeta_\xi + 1$, the spaces $\mathfrak{X}_\xi$ and $\mathfrak{X}_\zeta$ are totaly incomparable. By passing to a co-final subset of the countable ordinal numbers and relabeling, we can find an uncountable family of pairwise totally incomparable Banach spaces $\{\mathscr{Y}_\xi:\xi<\omega_1\}$ so that each space $\mathscr{Y}_\xi$ is a hereditarily indecomposable and reflexively saturated $\mathscr{L}_\infty$-space with the ``scalar-plus-compact'' property which is moreover a quotient of $\BmT$.

A noteworthy fact is that the spaces in the family $\{\mathscr{Y}_\xi:\xi<\omega_1\}$ are defined using common weights and the same coding function, the difference between any two of them being that for each space a tree of special sequences with different complexity is used. It is also true that for $\zeta\neq\xi$ every bounded operator $T:\mathscr{Y}_\zeta\rightarrow\mathscr{Y}_\xi$ is compact. To see this, note that Proposition \ref{dual of non-compact preserves averages} also holds if $T:\mathscr{Y}_\zeta\rightarrow\mathscr{Y}_\xi$ is non-compact, hence arguing as in the proof Theorem \ref{spc property} one can find a sequence $(x_k)_k$ so that both it and $(Tx_k)_k$ are seminormalized and satisfy the assumptions of Proposition \ref{adx zero}. Using that $T$ is necessarily strictly singular, a construction can be carried out, similar to the one from the proof of Proposition \ref{ss char}, however some extra cases may need to be treated.

Summarizing the above we obtain the following result.

\begin{thm}\label{quotients with scalar plus compact}
There exists an uncountable family of pairwise totally incomparable Banach spaces $\{\mathscr{Y}_\xi:\xi<\omega_1\}$ satisfying the following:
\begin{itemize}

\item[(i)] Each space $\mathscr{Y}_\xi$ is a hereditarily indecomposable and reflexively saturated $\mathscr{L}_\infty$-space with the ``scalar-plus-compact'' property.

\item[(ii)] Each space $\mathscr{Y}_\xi$ is a quotient of $\BmT$.

\item[(iii)] For each $\xi\neq\zeta$, every bounded linear operator $T:\mathscr{Y}_\xi\rightarrow\mathscr{Y}_\zeta$ is compact.

\end{itemize}
\end{thm}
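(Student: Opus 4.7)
The plan is to construct each $\mathscr{Y}_\xi$ by repeating the definition of $\X$ verbatim, with one change: the tree $\mathcal{U}$ of all finite special sequences (Subsection~\ref{subsection tree}) is replaced by its subtree $\mathcal{U}_\xi$ consisting of those special sequences $\{(\ga_k, x_k)\}_{k=1}^d$ for which $\{\ra(\ga_k): 1\leqslant k\leqslant d\}$ lies in the Schreier family $\mathcal{S}_\xi$. This yields a new notion of comparable sequence (Definition~\ref{def averages}(ii)), hence a new class of \ac-averages, which in turn picks out a self-determined subset $\Ga_\xi$ of $\bar{\Ga}$; the resulting Bourgain--Delbaen quotient of $\BmT$ is $\mathscr{Y}_\xi$. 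The hypothesis $\xi\geqslant 2$ is necessary because several arguments require that an $\mathcal{S}_1$-admissible concatenation of building blocks lie in $\mathcal{U}_\xi$ (e.g., the construction of exact pairs in Proposition~\ref{exact pairs exist}).

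Properties (i) and (ii) transfer from the corresponding statements about $\X$. The entire calculus of Sections 3--5 (exact pairs, RIS, $\al$-averages, dependent sequences) relies only on the Ramsey dichotomy of Proposition~\ref{building averages} and on the comparable/incomparable/irrelevant classification, both of which are internal to $\mathcal{U}_\xi$ since $\mathcal{S}_\xi$ is hereditary and spreading. Thus the proofs of Theorems~\ref{'tis hi} and~\ref{spc property} go through inside $\mathscr{Y}_\xi$. The genuinely new property is reflexive saturation: the well-foundedness of $\mathcal{U}_\xi$ (inherited from that of $\mathcal{S}_\xi$) prevents the infinite iteration of the blow-up mechanism of Proposition~\ref{differences on dependent sequences blow up}, so every skipped block sequence in $\mathscr{Y}_\xi$ is boundedly complete; since the FDD is shrinking (the analogue of Proposition~\ref{shrinking}), every skipped block subspace is reflexive.

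For property (iii) one associates an ordinal invariant to each space. Given a block subspace $X$ of $\mathscr{Y}_\xi$ and $\la>0$, consider the tree $\mathcal{N}\text{-}\mathcal{BC}_{\mathrm{sk}}(X,\la)$ of skipped block sequences $(x_k)_{k=1}^d$ in $X$ with $1\leqslant \|\sum_{k=n}^m x_k\|\leqslant \la$ for all $1\leqslant n\leqslant m\leqslant d$, ordered by end-extension. The reflexive-saturation argument shows that this tree is well founded, and a uniform compactness argument produces a single countable ordinal $\zeta_\xi$ bounding its order across all $X$ and $\la$. Conversely, for every $\eta<\xi$, dependent sequences of depth $\eta$ controlled by Proposition~\ref{estimate on dependent sequence}(ii) realize elements of this tree of order at least $\eta$ for $\la$ large enough, inside any block subspace. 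Hence $\zeta_\xi$ grows with $\xi$, and if $\zeta_\zeta$ exceeds $\zeta_\xi+1$ any infinite-dimensional subspace common to $\mathscr{Y}_\xi$ and $\mathscr{Y}_\zeta$ would transport witnesses of large order into the smaller tree, a contradiction. Passing to a cofinal subset of $\{\xi<\omega_1\}$ and relabeling yields the desired uncountable pairwise totally incomparable family.

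The main obstacle is the compactness of operators $T:\mathscr{Y}_\zeta\rightarrow\mathscr{Y}_\xi$ for $\zeta\neq\xi$. The proof of Proposition~\ref{dual of non-compact preserves averages} uses only the $\ell_1$-predual property of the domain and the Bourgain--Delbaen structure of the codomain, so if $T$ is non-compact it supplies a very fast growing sequence of \ac-averages in $\mathscr{Y}_\xi^*$ whose $T^*$-images are seminormalized; repeating the RIS-construction in the proof of Theorem~\ref{spc property} then yields a RIS $(y_k)_k$ in $\mathscr{Y}_\zeta$ such that $(Ty_k)_k$ has positive $\al$-index. One now builds a $(C,\theta)$-dependent sequence on $(y_k)_k$ inside $\mathscr{Y}_\zeta$, and using that both $\mathscr{Y}_\xi$ and $\mathscr{Y}_\zeta$ are defined from the same weights $(m_j,n_j)_j$ and coding function $\sigma$, identifies inside $\mathscr{Y}_\xi^*$ a comparable evaluation sequence detecting the image under $T$. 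The subtle point is ensuring that the evaluating functional that blows up the image genuinely belongs to $\Ga_\xi$ rather than merely to $\bar{\Ga}$; this is arranged by choosing all ranks inside a common initial segment of the ordinal hierarchy shared by $\mathcal{S}_\xi$ and $\mathcal{S}_\zeta$, after which the argument of Proposition~\ref{differences on dependent sequences blow up} produces the required contradiction with the boundedness of $T$.
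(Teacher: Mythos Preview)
Your proposal follows the paper's own argument essentially verbatim: the paper also defines $\mathscr{Y}_\xi$ (there denoted $\mathfrak{X}_\xi$) by replacing $\mathcal{U}$ with the subtree $\mathcal{U}_\xi$ of special sequences whose ranks lie in $\mathcal{S}_\xi$, transfers properties (i) and (ii) from the analysis of $\X$, obtains reflexive saturation from the well-foundedness of $\mathcal{U}_\xi$, and separates the spaces via the order of the very same tree $\mathcal{N}\text{-}\mathcal{BC}_{\mathrm{sk}}(X,\lambda)$ you describe, passing to a cofinal subfamily at the end.

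The only place where your sketch diverges in emphasis from the paper is the final paragraph on compactness of cross-operators. The paper first observes that total incomparability forces any $T:\mathscr{Y}_\zeta\to\mathscr{Y}_\xi$ to be strictly singular, and then runs the construction of Proposition~\ref{ss char} (which already contains the dependent-sequence/blow-up machinery you invoke); it concedes that ``some extra cases may need to be treated'' without spelling them out. Your version instead tries to name the subtle point directly---making the evaluating functional land in $\Ga_\xi$---and resolves it by working inside a portion of the Schreier hierarchy common to both indices. That is the right worry, but your phrasing ``a common initial segment of the ordinal hierarchy'' is imprecise; what one actually uses is that for $\xi,\zeta\geqslant 2$ any finite special sequence can be built with ranks in $\mathcal{S}_2\subset\mathcal{S}_\xi\cap\mathcal{S}_\zeta$, so the comparable average witnessing the blow-up lives simultaneously in both $\Ga_\xi$ and $\Ga_\zeta$. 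Modulo this wording, and the fact that invoking strict singularity up front (as the paper does) streamlines the argument, the two sketches are the same.
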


We recall that in \cite[Section 10.2]{AH} a continuum of pairwise incomparable spaces $\{X_a: a\in\mathfrak{c}\}$ is defined so that for $a\neq b$, every bounded operator $T:X_a\rightarrow X_b$ is compact. This is achieved by defining versions of the Argyros-Haydon space using almost disjoint families of weights and hence also different coding functions. All these spaces are actually quotients of $\BmT$ as well, hence the class $\{X_a: a\in\mathfrak{c}\}$ satisfies the conclusion of Theorem \ref{quotients with scalar plus compact}.

\begin{rmk}
As it was mentioned in Remark \ref{some further remarks on quotients}, a version $\tilde{\mathfrak{X}}_{\mathfrak{nr}}$ of $\X$ can be obtained as a quotient of a version $\tilde{\mathfrak{X}}_{\mathrm{AH}}$ of $\mathfrak{X}_{\mathrm{AH}}$. Actually, all spaces in the classes $\{\mathscr{Y}_\xi:\xi<\omega_1\}$ and $\{X_a: a\in\mathfrak{c}\}$ can be constructed to be quotients of one same Argyros-Haydon space $\mathfrak{X}_{\mathrm{AH}}$.
\end{rmk}

\section{Subspaces and quotients determined by self-determined subsets in $\X$}
In this section we very briefly describe some results concerning mainly quotients of $\X$. It is of some interest that one may find Banach spaces $X_1, X_2, X_3$, each one being a quotient of the previous one, so that $X_1$ and $X_3$ are reflexive saturated whereas $X_2$ contains no reflexive subspaces.

The following is the analogue of Proposition \ref{scalar plus compact on complements proposition} in the case of the space $\X$. The same argument used in that proof is necessary here as well, however some extra care needs to be taken.

\begin{prp}\label{inclusion plus compact on subspaces of script el infinity with no reflexive subspaces and the scalar plus compact property}
Let $\Ga'$ be a self-determined subset of $\Ga$ and let also $Y = \overline{\langle\{d_\ga:\ga\in\Ga\setminus\Ga'\}\rangle}$. Then every bounded linear operator $T:Y\rightarrow\X$ is a scalar multiple of the inclusion plus a compact operator.
\end{prp}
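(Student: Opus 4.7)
The plan is to mirror the two-step argument that yields the scalar-plus-compact property for $\X$ itself: first identify the scalar so that the difference from $T$ is strictly singular, then show that every strictly singular operator $Y\to\X$ is compact. Before starting, observe that $Y$ is a $\mathscr{L}_\infty$-space by Proposition \ref{complement gives subspace}, that $Y^*$ is separable as a quotient of $\Xstar\simeq\ell_1$ (Proposition \ref{shrinking}), and consequently $Y^*\simeq\ell_1$, so $Y$ is an $\ell_1$-predual. In addition, $(d_\ga)_{\ga\in\Ga\setminus\Ga'}$ is a shrinking Schauder basis of $Y$ inherited from the FDD of $\X$, with uniformly bounded biorthogonals.

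Choose an accumulation point $\la$ of the bounded sequence $(d_\ga^*(Td_\ga))_{\ga\in\Ga\setminus\Ga'}$ and set $S=T-\la I_{Y,\X}$. The first step is to show that $S$ is strictly singular by repeating the argument of Proposition \ref{scalar plus strictly singular}. Extract a subsequence $(d_{\ga_k})_k$ with $\ga_k\in\Ga\setminus\Ga'$ and $d_{\ga_k}^*(Sd_{\ga_k})\to 0$; subtracting the compact diagonal operator $y\mapsto\sum_kd_{\ga_k}^*(Sd_{\ga_k})d_{\ga_k}^*(y)d_{\ga_k}$ we may assume $d_{\ga_k}^*(Sd_{\ga_k})=0$ for all $k$. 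By Proposition \ref{ss char} applied to $S:Y\to\X$ (whose statement already allows any infinite dimensional closed subspace of $\X$ as domain), it suffices to prove $\|Sd_{\ga_k}\|\to 0$. If this fails, Proposition \ref{dependent sequence on subsequence of the basis} supplies a $(3584,\theta)$-dependent sequence $\{(\ga_k',y_k)\}$ whose vectors $y_k$ are linear combinations of basis elements $d_{\ga_i}$ with $\ga_i\in\Ga\setminus\Ga'$ and therefore lie in $Y$. One then constructs a companion sequence $\{(\zeta_k,J_k)\}$ built from coordinates $\eta_k$ (or restricted projections $e_{\eta_k}^*\circ P_{D_k}$) witnessing the seminormalization of $(Sd_{\ga_k})_k$, splitting into the two subcases treated in Proposition \ref{scalar plus strictly singular} according to whether $\lim_k\sup_{\ga\in\Ga}|d_\ga^*(Sd_{\ga_k})|$ vanishes. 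The arrangement $\we(\zeta_k)=\we(\ga_k')$, $e_{\zeta_k}^*\circ P_{J_k}((-1)^kSy_k)>\e$, and $e_{\zeta_k}^*\circ P_{J_k}(y_k)=0$ makes $\{(\zeta_k,J_k)\}_k$ comparable, and, as in Proposition \ref{differences on dependent sequences blow up}, produces $\ga\in\Ga$ with $|e_\ga^*(\sum_{k=1}^nSy_k)|$ arbitrarily large while Proposition \ref{estimate on dependent sequence}(ii) bounds $\|\sum_{k=1}^ny_k\|$, contradicting the boundedness of $S$.

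The second step replays the proof of Theorem \ref{spc property}. If the strictly singular $S$ were non-compact, Proposition \ref{dual of non-compact preserves averages} (applicable because $Y$ is an $\ell_1$-predual) would supply a very fast growing sequence of \ac-averages $(b_k^*)_k$ with $\limsup_k\|S^*b_k^*\|>0$. Since $(S^*b_k^*)_k$ is weak-star null in $Y^*$, a sliding hump argument on the FDD of $Y$ yields a normalized block sequence $(x_k)_k$ in $Y$ with $\limsup_k b_k^*(Sx_k)>0$, and the splitting-by-local-weights procedure from \cite[Proposition 5.11]{AH} extracts a $C$-RIS $(y_k)_k$ in $Y$ such that $(Sy_k)_k$ has positive $\al$-index. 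Combining Proposition \ref{adx positive} with \eqref{currant} and invoking Proposition \ref{on RIS are exact vectors and pairs} produces a sequence $(u_k)_k$ in $Y$ of $(112C,k)$-exact vectors with $(u_k)_k$ and $(Su_k)_k$ both seminormalized; by Remark \ref{on exact vectors biorthogonals and index zero} one has $\al((u_k)_k)=0$ and $\lim_k\sup_{\ga\in\Ga}|d_\ga^*(u_k)|=0$, so the strict singularity of $S$ together with Proposition \ref{ss char}(iii) forces $Su_k\to 0$, a contradiction.

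The hard part, as the paper itself forecasts, is not the logical skeleton but the bookkeeping: one must keep every auxiliary vector on the domain side (block vectors $y_k$, RIS, exact vectors $u_k$) inside $Y$ by using only basis elements indexed by $\Ga\setminus\Ga'$, while allowing the dual-side witnesses $\zeta_k$, $\eta_k$, and $b_k^*$ to involve arbitrary coordinates from $\Ga$. That this asymmetry does not disrupt any of the key constructions — the special sequence condition, comparability of the $(\zeta_k,J_k)$, and the evaluation analysis used to blow up the norm — is exactly what the self-determinacy of $\Ga'$ buys us, and it is also what underlies the identification $Y^*\simeq\ell_1$ needed for the dual-side argument in Step 2.
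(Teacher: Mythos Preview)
Your proposal is correct and follows the same two-step architecture as the paper: first reduce to strictly singular by the argument of Proposition \ref{scalar plus strictly singular} (the paper packages this as Remark \ref{inclusion plus ss on subsequence of the basis}), then show strictly singular implies compact by rerunning the proof of Theorem \ref{spc property} on the $\ell_1$-predual $Y$. The one point the paper makes more explicit than you do is that self-determinacy, specifically Proposition \ref{self-determinacy is all those nice things}(d), is what guarantees that the local-support splitting of a block vector $x_k\in Y$ (in the sense of \cite[Definition 5.7]{AH}) again yields vectors in $Y$; you gesture at this in your final paragraph but locate it somewhat diffusely across ``the special sequence condition, comparability, and the evaluation analysis,'' whereas in fact Step 1 works for any subsequence of the basis and the precise place self-determinacy enters Step 2 is keeping the RIS $(y_k)_k$ inside $Y$ after splitting.
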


\begin{proof}
By Remark \ref{inclusion plus ss on subsequence of the basis}, it suffices to show that if $T$ is strictly singular then it is also compact. Recall that by Proposition \ref{complement gives subspace}, $Y$ is a $\mathscr{L}_\infty$-space, hence if $T$ is not compact, then the conclusion of Proposition \ref{dual of non-compact preserves averages} holds. As the set $\Ga'$ is self-determined, Proposition \ref{self-determinacy is all those nice things} (d) yields that the argument used in the proof of Theorem \ref{spc property} can be repeated, since when splitting a vector according to its local support, the components remain in the subspace $Y$.
\end{proof}

By combining Proposition \ref{inclusion plus compact on subspaces of script el infinity with no reflexive subspaces and the scalar plus compact property} and Lemma \ref{lemma about inclusion plus compact subsets} we observe that Corollary \ref{from bigger to smaller compact} also holds for the space $\X$. One can then perform a construction similar to that presented in Subsection \ref{sub self determined of AH}, using e.g. sizes instead of weights, to obtain a family $\{\Ga_\alpha: \alpha<\mathfrak{c}\}$ of subsets of $\Ga$ so that for $\alpha\neq\beta$ the set $\Ga_\alpha\setminus\Ga_\beta$ is infinite. This is achieved by choosing a family $\{L_\alpha: \alpha<\mathfrak{c}\}$ of infinite subsets of $\N$ with pairwise finite intersections. Each set $\Ga_\alpha$ is chosen so that its elements are built only using \ac-averages of sizes from the set $L_\alpha$. Corollary \ref{from bigger to smaller compact} (which as we explained holds for $\X$) yields the following.

\begin{prp}
There exists a continuum of pairwise non-isomorphic $\mathscr{L}_\infty$-subspaces $\{Y_a:\;a\in\mathfrak{c}\}$ of $\X$, each one of which has the scalar-plus-compact property. Moreover, for every $\alpha \neq \beta$ every bounded linear operator $T:Y_\alpha\rightarrow Y_\beta$ is compact.
\end{prp}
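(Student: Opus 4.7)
The proof essentially executes the sketch given in the paragraph preceding the statement. First, I would fix an almost disjoint family $\{L_\alpha:\alpha<\mathfrak{c}\}$ of infinite subsets of $\N$, meaning $L_\alpha\cap L_\beta$ is finite for $\alpha\neq\beta$. For each $\alpha$ I would then construct a subset $\Ga_\alpha$ of $\Ga$ by a recursion parallel to the one defining $\Ga$: set $\De_1^\alpha = \De_1$, and at stage $q+1$ admit into $\De_{q+1}^\alpha$ exactly those $\ga\in\De_{q+1}$ whose defining \ac-average $b^*$ is an \ac-average of $\Ga_q^\alpha = \bigcup_{p\leqslant q}\De_p^\alpha$ with size $s(b^*)\in L_\alpha$, and, in the age-greater-than-one case, whose companion coordinate $\xi$ also lies in $\Ga_q^\alpha$. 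Directly from this construction, for every $\ga\in\De_{q+1}^\alpha$ the functional $c_\ga^*$ lies in $\langle\{e_\eta^*\circ P_E:\eta\in\Ga_q^\alpha,\;E\subseteq\N\}\rangle$, so Proposition \ref{self-determinacy is all those nice things}(c) yields that $\Ga_\alpha$ is self-determined in $\Ga$.

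Setting $Y_\alpha = \overline{\langle\{d_\ga:\ga\in\Ga\setminus\Ga_\alpha\}\rangle}$, Proposition \ref{complement gives subspace} gives that $Y_\alpha$ is a $\mathscr{L}_\infty$-subspace of $\X$. Proposition \ref{inclusion plus compact on subspaces of script el infinity with no reflexive subspaces and the scalar plus compact property} then shows that every bounded operator from $Y_\alpha$ into $\X$ has the form $\la i_\alpha + K$, where $i_\alpha:Y_\alpha\hookrightarrow\X$ is the inclusion and $K$ is a compact operator. Since $i_\alpha$ is an isometric embedding, for any $T:Y_\alpha\to Y_\alpha$ one obtains $T = \la I_{Y_\alpha} + K_0$ with $K_0$ compact, establishing the scalar-plus-compact property for $Y_\alpha$. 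For the compactness of operators $T:Y_\alpha\to Y_\beta$ when $\alpha\neq\beta$, I would invoke the $\X$-analogue of Corollary \ref{from bigger to smaller compact}, obtained by combining the above proposition with Lemma \ref{lemma about inclusion plus compact subsets} applied to the basis $(d_\ga)_{\ga\in\Ga}$; it says that if $\Ga_\beta\setminus\Ga_\alpha$ is infinite then every bounded operator $Y_\alpha\to Y_\beta$ is compact. Pairwise non-isomorphism is then immediate: an isomorphism $Y_\alpha\cong Y_\beta$ for $\alpha\neq\beta$ would be compact, forcing $Y_\alpha$ to be finite-dimensional, which contradicts the $\mathscr{L}_\infty$-property.

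The main obstacle is the bookkeeping needed to verify the infinitude of $\Ga_\beta\setminus\Ga_\alpha$. For any $n\in L_\beta\setminus L_\alpha$ one picks $q$ large enough that $n$ divides $\mathcal{N}_{q+1}$ (possible for every fixed $n$ by the factorial growth of $\mathcal{N}_{q+1}$) and such that $\Ga_q^\beta$ already contains at least $n$ distinct coordinates; then any basic \ac-average $b^* = (1/n)\sum_{i=1}^d\e_i\bar{d}_{\ga_i}^*$ with $\ga_i\in\Ga_q^\beta$ and $d\leqslant n$ has size $n\in L_\beta\setminus L_\alpha$, so the corresponding age-one coordinate $(q+1,m_j^{-1},b^*)$ lies in $\De_{q+1}^\beta\setminus\Ga_\alpha$. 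Since $L_\beta\setminus L_\alpha$ is infinite, this construction can be carried out at infinitely many stages, producing the required infinitude of $\Ga_\beta\setminus\Ga_\alpha$ and completing the proof.
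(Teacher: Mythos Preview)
Your proof is correct and follows essentially the same approach as the paper, which gives only the sketch in the paragraph immediately preceding the statement (almost disjoint family of size-sets $L_\alpha$, self-determined sets $\Ga_\alpha$ built from \ac-averages with sizes in $L_\alpha$, then apply Proposition \ref{inclusion plus compact on subspaces of script el infinity with no reflexive subspaces and the scalar plus compact property} and the $\X$-analogue of Corollary \ref{from bigger to smaller compact}). You have in fact supplied more detail than the paper does, particularly in verifying the infinitude of $\Ga_\beta\setminus\Ga_\alpha$.
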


The situation becomes more involved when considering quotients of the space $\X$. The spaces $\mathscr{Y}_\xi$ defined in the previous section are reflexively saturated versions of the space $\X$, however they are not quotients of $\X$, in fact every bounded linear operator $T:\X\rightarrow\mathscr{Y}_\xi$ is compact. It is possible to obtain spaces similar to the spaces $\mathscr{Y}_\xi$, $\xi<\omega_1$ which are indeed quotients of $\X$, however the construction is a little more delicate. For $2\leqslant \xi <\omega_1$, consider the tree $\mathcal{U}_\xi$ as in Section \ref{Quotients of BmT with the scalar-plus-compact property}. Now consider the set of all those weights $(m_j,n_j)_{j\in L_\xi}$, which are weights of elements $\ga\in\bar{\Ga}$, that appear in elements (finite sequences of pairs) of $\mathcal{U}_\xi$. Now, construct a self-determined subset of $\Ga$, which  allows  to built averages and coordinates using precisely those weights. Note that within $\Ga$, the notion of a \ac-average is predetermined. The resulting quotient $\mathcal{Y}_\xi$ is a reflexively saturated $\mathscr{L}_\infty$ space with the scalar-plus-compact property. We observe that the conclusion of Theorem \ref{quotients with scalar plus compact} is false in the class of spaces $\{\mathcal{Y}_\xi:2\leqslant \xi\leqslant \omega_1\}$. The reason for this is that if $\xi < \zeta$ are such that $\mathcal{S}_\xi\subset\mathcal{S}_\zeta$, then $\mathcal{Y}_\xi$ is a quotient of $\mathcal{Y}_\zeta$. What is interesting however, is that although $\X$ contains no reflexive subspaces, it admits reflexively saturated quotients. Recall also that $\X$ is a quotient of a reflexively saturated space, e.g. $\BmT$ or of a version of $\mathfrak{X}_{\mathrm{AH}}$. Summarizing the preceding discussion we reach the conclusion stated in the result below, which ought to be compared to a classical theorem proved by Johnson and Zippin stating that every quotient of $c_0$ is isomorphic to a subspace of $c_0$ \cite{JZ}. Although quotients of classical $\mathscr{L}_\infty$-spaces have structure similar to those spaces, this is does not happen in non-classical $\mathscr{L}_\infty$-spaces.

\begin{thm}
There exists a triple of infinite dimensional Banach space $X_1$, $X_2$ and $X_3$ so that $X_1$ and $X_3$ are reflexively saturated, $X_2$ contains no reflexive subspace, $X_2$ is a quotient of $X_1$ and $X_3$ is a quotient of $X_2$. All three spaces are $\ell_1$-preduals with the scalar-plus-compact property.
\end{thm}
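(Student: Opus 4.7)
The plan is to read the three spaces off the two quotient constructions already prepared in the paper, producing a chain $X_1 \twoheadrightarrow X_2 \twoheadrightarrow X_3$ where each arrow is a restriction onto a self-determined subset.

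For the pair $(X_1,X_2)$ I would invoke Remark \ref{non-reflexive as quotient of AH}: take $X_1 = \tilde{\mathfrak{X}}_{\mathrm{AH}}$, a version of the Argyros--Haydon space, and $X_2 = \tilde{\X}$, the corresponding version of $\X$. The latter arises as the Bourgain--Delbaen space associated to a self-determined subset $\tilde{\Ga}$ of $\bar{\Ga}^{\mathrm{AH}}$ concentrated on even-weight coordinates, and by Proposition \ref{quotient onto prime space} the restriction map $R:X_1\to X_2$ is a quotient operator. As Remark \ref{non-reflexive as quotient of AH} explicitly states, $X_1$, $X_2$ and $\ker R$ all have the scalar-plus-compact property. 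Being a version of $\mathfrak{X}_{\mathrm{AH}}$, $X_1$ is reflexively saturated, whereas $X_2$ contains no reflexive subspace by Theorem \ref{no reflexive subspace}.

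For the pair $(X_2,X_3)$ I would mimic inside $\tilde{\X}$ the construction of the spaces $\mathcal{Y}_\xi$ outlined in Section \ref{Quotients of BmT with the scalar-plus-compact property}. Fix any countable ordinal $\xi \geqslant 2$, form the well-founded tree $\mathcal{U}_\xi$ of special sequences $\{(\ga_k,x_k)\}_{k=1}^d$ with $\{\ra(\ga_k)\}_{k=1}^d \in \mathcal{S}_\xi$, and let $L_\xi$ be the set of weights appearing in $\mathcal{U}_\xi$. Define $\Ga_\xi \subset \Ga$ to consist of those coordinates whose extension functional uses only weights from $L_\xi$ and only \ac-averages built from $\mathcal{U}_\xi$; self-determinacy of $\Ga_\xi$ inside $\Ga$ is verified via condition (c) of Proposition \ref{self-determinacy is all those nice things}, exactly as for $\Ga$ inside $\bar{\Ga}$. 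Set $X_3 = \mathcal{Y}_\xi$, the Bourgain--Delbaen space defined by $\Ga_\xi$; Proposition \ref{quotient onto prime space} delivers the quotient $X_2 \twoheadrightarrow X_3$. Well-foundedness of $\mathcal{U}_\xi$ forces every skipped block sequence in $X_3$ to be boundedly complete, so $X_3$ is reflexively saturated; the dependent-sequence machinery of Sections 5--7 transfers verbatim to $X_3$ with $\mathcal{U}$ replaced by $\mathcal{U}_\xi$, yielding the scalar-plus-compact property. Finally, all three FDDs are shrinking (Proposition \ref{shrinking}), whence $X_i^* \cong \ell_1$ for $i=1,2,3$ and each $X_i$ is an $\ell_1$-predual.

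The step most worth spelling out carefully will be the second one: confirming that the quantitative estimates driving Propositions \ref{adx zero}, \ref{on RIS are exact vectors and pairs}, \ref{estimate on dependent sequence} and Theorem \ref{spc property} go through under the side constraint $\{\ra(\ga_k)\}\in\mathcal{S}_\xi$, and simultaneously deriving bounded completeness of skipped blocks from well-foundedness of $\mathcal{U}_\xi$. Both amount to a careful rerun of the Section 5--7 arguments with $\mathcal{U}$ truncated to $\mathcal{U}_\xi$; this is the technical core of the theorem and is precisely where the reflexive-saturated/no-reflexive-subspace/reflexive-saturated dichotomy between the three spaces emerges.
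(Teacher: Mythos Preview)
Your proposal is correct and follows essentially the same route as the paper, which presents the theorem as a summary of the discussion immediately preceding it rather than with a separate formal proof. You correctly identify $X_1$ as (a version of) $\mathfrak{X}_{\mathrm{AH}}$ via Remark~\ref{non-reflexive as quotient of AH} rather than $\BmT$, since $\BmT$ lacks the scalar-plus-compact property; and your $X_3=\mathcal{Y}_\xi$ is precisely the quotient of $\X$ described in the final section (not Section~\ref{Quotients of BmT with the scalar-plus-compact property}, whose spaces $\mathscr{Y}_\xi$ are explicitly \emph{not} quotients of $\X$---your construction using the weight set $L_\xi$ is the correct one from the final section). One small point: the paper stresses that within $\Ga$ the notion of an $\al_c$-average is predetermined, so $\Ga_\xi$ is obtained by restricting weights to $L_\xi$ rather than by redefining which averages come from $\mathcal{U}_\xi$; your description blurs this slightly but the outcome is the same.
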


A classical result asserts that every quotient of a $C(K)$ space either is reflexive or it contains isomorphically $c_0$.  This invites the following question that, as far as we know, is open.
\begin{prb*}
Let $X$ be a $\mathscr{L}_\infty$-space  and $Y$ be a quotient of $X$. Does $Y$ have to  be reflexive or contain a $\mathscr{L}_\infty$-subspace? 
\end{prb*}

\end{document}